\newif\ifpaper
\paperfalse 

\ifpaper
\documentclass{article}
\usepackage[final]{neurips_2019}
\else
\documentclass[11pt]{article}
\fi

\usepackage[utf8]{inputenc} 
\usepackage[T1]{fontenc}    
\usepackage{hyperref}       
\usepackage{url}            
\usepackage{booktabs}       
\usepackage{amsfonts}       
\usepackage{nicefrac}       
\usepackage{microtype}      

\usepackage{amsmath,amssymb,amsthm}
\usepackage{graphicx,enumerate}
\usepackage{comment,url,epstopdf}
\usepackage[x11names,usenames,dvipsnames,svgnames,table,rgb]{xcolor}
\usepackage{pgf,tikz,pgfplots}
\usetikzlibrary{arrows}
\usepackage{subcaption}
\numberwithin{equation}{section}
\ifpaper
\else
\usepackage[top=0.5in, bottom=0.8in, left=0.9in, right=0.9in, marginparwidth=1.5in]{geometry}

\setlength\parindent{0pt}
\fi

\newtheorem{theorem}{Theorem}[section]

\newtheorem{proposition}{Proposition}[section]
\newtheorem{lemma}{Lemma}[section]
\theoremstyle{definition}

\newtheorem{definition}{Definition}[section]

\newtheorem{assumption}{Assumption}


\newcommand{\nn}{\mathbb{N}} 
\newcommand{\norm}[1]{\left\Vert {#1} \right\Vert} 
\newcommand{\erl}{\left(-\infty , +\infty\right]} 
\newcommand{\dom}[1]{\mathrm{dom}\,{#1}} 
\newcommand{\idom}[1]{\mathrm{int\,dom}\,{#1}} 
\newcommand{\cdom}[1]{\overline{\mathrm{dom}\,{#1}}} 


\newcommand{\act}[1]{\left\langle {#1} \right\rangle} 

\newcommand{\argmin}{\mathrm{argmin}}

\newcommand{\sgn}{\mathrm{sgn}}


\newcommand{\barc}{\overline{C}}


\newcommand{\HHH}{\mathcal{H}}

\newcommand{\PPP}{\mathcal{P}}
\newcommand{\SSS}{\mathcal{S}}
\newcommand{\R}{\mathbb{R}}

\newcommand{\real}{\mathbb{R}} 

\graphicspath{{./figures/}}



\title{\ifpaper\else\vspace{-1.5em}\fi{\bf Beyond Alternating Updates for Matrix Factorization\\
 with Inertial Bregman Proximal Gradient Algorithms}}
\author{%
  David S.~Hippocampus\thanks{Use footnote for providing further information
    about author (webpage, alternative address)---\emph{not} for acknowledging
    funding agencies.} \\
  Department of Computer Science\\
  Cranberry-Lemon University\\
  Pittsburgh, PA 15213 \\
  \texttt{hippo@cs.cranberry-lemon.edu} \\
}
\ifpaper
\author{Mahesh Chandra Mukkamala\\
Mathematical Optimization Group\\
 Saarland University, Germany\\
 \texttt{mukkamala@math.uni-sb.de}\\
 \And
 Peter Ochs\\
 Mathematical Optimization Group\\
 Saarland University, Germany\\
 \texttt{ochs@math.uni-sb.de}}
\else
\author{Mahesh Chandra Mukkamala\thanks{Mathematical Optimization Group, Saarland University, Germany, E-mail: \texttt{mukkamala@math.uni-sb.de}}  \quad\quad  Peter Ochs\thanks{Mathematical Optimization Group, Saarland University, Germany,  E-mail: \texttt{ochs@math.uni-sb.de}}}
\fi
\date{}

\begin{document}

\maketitle
	\ifpaper\else\vspace{-2em}\fi
	\begin{abstract}
	Matrix Factorization is a popular non-convex optimization problem,  for which alternating minimization schemes are mostly used. They usually suffer from the major drawback that the solution is biased towards one of the optimization variables. A remedy is non-alternating schemes. However, due to a lack of Lipschitz continuity of the gradient in matrix factorization problems, convergence cannot be guaranteed. A recently developed approach relies on the concept of Bregman distances, which generalizes the standard Euclidean distance. We exploit this theory by proposing a novel Bregman distance for matrix factorization problems, which, at the same time, allows for simple/closed form update steps. Therefore,  for non-alternating schemes, such as the recently introduced Bregman Proximal Gradient (BPG) method and an inertial variant Convex--Concave Inertial BPG (CoCaIn BPG), convergence of the whole sequence to a stationary point is proved for Matrix Factorization. In several experiments, we observe a superior performance of our non-alternating schemes in terms of speed and objective value at the limit point.
	\end{abstract}
\ifpaper
\else
	\noindent {\bfseries 2010 Mathematics Subject Classification:} 90C26, 26B25, 90C30, 49M27, 47J25, 65K05, 65F22. 
	\smallskip

	\noindent {\bfseries Keywords:} Composite nonconvex nonsmooth minimization,   non Euclidean distances, Bregman distance, Bregman proximal gradient method, inertial methods, matrix factorization, matrix completion.
\fi
\section{Introduction} \label{Sec:Intro}
	Matrix factorization  has numerous applications in Machine Learning \cite{MS2008,SRJ2005}, Computer Vision \cite{CVR2016,SMF2010,XLYZ2017,HV2019}, Bio-informatics \cite{SD2006,BTGM2004} and many others. Given a matrix ${\bf A} \in \R^{M \times N}$, one is interested in the factors ${\bf U} \in \R^{M \times K}$ and ${\bf Z} \in \R^{K \times N}$ such that ${\bf A} \approx {\bf U}{\bf Z}$ holds. This is usually cast into the following non-convex optimization problem 
	\begin{equation}\label{eq:prob-2}
	\min_{{\bf U}\in \mathcal U, {\bf Z}\in \mathcal Z}\, \left\{ \Psi \equiv  \frac 12\norm{{\bf A} - {\bf U}{\bf Z}}^2_{F} +  {\mathcal R}_1({\bf U}) +  {\mathcal R}_2({\bf Z}) \right\}\,,
	\end{equation}
	where $\mathcal U, \mathcal Z$ are constraint sets and ${\mathcal R}_1, {\mathcal R}_2$ are regularization terms. The most frequently used techniques for solving matrix factorization problems involve alternating updates (Gauss--Seidel type methods \cite{GL2012})  like PALM \cite{BST2014}, iPALM \cite{PS2016}, BCD \cite{XY2013}, BC-VMFB \cite{CPR2016}, HALS \cite{CZA2007} and many others. A common disadvantage of these schemes is their bias towards one of the optimization variables. Such alternating schemes involve fixing a subset of variables to do the updates. In order to guarantee convergence to a stationary point, alternating schemes require the first term in \eqref{eq:prob-2} to have a Lipschitz continuous gradient only with respect to each subset of variables. However, in general Lipschitz continuity of the gradient fails to hold for all variables. The same problem appears in various practical applications such as Quadratic Inverse Problems, Poisson Linear Inverse Problems, Cubic Regularized Non-convex  Quadratic Problems and Robust Denoising Problems with Non-convex Total Variation Regularization \cite{MOPS2019,BSTV2018,BBT2016}. They belong to the following broad class of non-convex additive composite minimization problems 
	\begin{equation}\label{eq:prob-1}
			(\PPP) \qquad \inf \left\{ \Psi \equiv f\left(x\right) + g\left(x\right) : \; x \in \barc \right\},
	\end{equation}
	where $f$ is potentially a non-convex extended real valued function, $g$ is a smooth (possibly non-convex) function and $\barc$ is a nonempty, closed, convex set in $\real^{d}$. In order to use non-alternating schemes for \eqref{eq:prob-2}, the gradient Lipschitz continuity must be generalized. Such a generalization was initially proposed by \cite{BDX2011} and popularized by \cite{BBT2016} in convex setting and for non-convex problems in \cite{BSTV2018}. They are based on a generalized proximity measure known as Bregman distance and have recently led to new algorithms to solve \eqref{eq:prob-1}: Bregman Proximal Gradient (BPG) method \cite{BSTV2018} and its inertial variant Convex--Concave Inertial BPG (CoCaIn BPG) \cite{MOPS2019}. \ifpaper\else\\\fi

	BPG generalizes the proximal gradient method from Euclidean distances to Bregman distances as  proximity measures. Its convergence theory relies on the generalized Lipschitz assumption, discussed above, called $L$-smad property \cite{BSTV2018}. It involves an upper bound and a lower bound, where the upper bound involves a convex majorant to control the step-size of BPG. However, the significance of lower bounds for BPG was not clear. In non-convex optimization literature, the lower bounds which involve concave minorants were largely ignored. Recently, extending on  \cite{WCP2017,Ochs18}, CoCaIn BPG changed this trend by justifying the usage of lower bounds to incorporate inertia  for faster convergence \cite{MOPS2019}. Moreover, the generated inertia is adaptive, in the sense that it changes according to the function behavior, i.e.,  CoCaIn BPG does not use an inertial parameter depending on the iteration counter  unlike  Nesterov Accelerated Gradient (NAG) method \cite{N1983} (also FISTA \cite{BT2009}) in the convex setting. \ifpaper\else\\\fi

	In this paper we ask the question: \textit{"Can we apply BPG and CoCaIn BPG efficiently for Matrix Factorization problems?''}. This question is significant, since convergence of the Bregman minimization variants BPG and CoCaIn BPG relies on the $L$-smad property, which is non-trivial to verify and an open problem for Matrix Factorization. Another crucial issue is the efficient computability of the algorithm's update steps, which is particularly hard due to the coupling between two subsets of variables. We successfully solve these challenges.\ifpaper\else\\\fi

	\textbf{Contributions.} We make recently introduced powerful Bregman minimization based algorithms BPG \cite{BSTV2018} and CoCaIn BPG \cite{MOPS2019} and the corresponding convergence results applicable to the matrix factorization problems. Experiments show a significant advantage of BPG and CoCaIn BPG which are non-alternating by construction, compared to popular alternating minimization schemes in particular PALM \cite{BST2014} and iPALM \cite{PS2016}. The proposed algorithms require the following non-trivial contributions:
	\begin{itemize}
	\item  We propose a novel Bregman distance for Matrix Factorization with the following auxiliary function (called kernel generating distance) with certain $c_1,c_2>0$:
	\[
	h({\bf U}, {\bf Z}) = c_1 \left(\frac{\norm{\bf U}_F^2 +\norm{\bf Z}_F^2 }{2} \right)^2 + c_2\left( \frac{\norm{\bf U}_F^2 +\norm{\bf Z}_F^2 }{2}\right)\,.
	\]
	The generated Bregman distance embeds the crucial coupling between the variables ${\bf U}$ and ${\bf Z}$. We prove the L-smad property with such a kernel generating distance and infer  convergence of BPG and CoCaIn BPG to a stationary point.
	\item  We compute the analytic solution for subproblems of the proposed variants of BPG, for which the usual analytic solutions based on Euclidean distances cannot be used.\ifpaper\else\\\fi
	\end{itemize}


	\textbf{Simple Illustration of BPG for Matrix Factorization.} Consider the following simple matrix factorization optimization problem, where we set $\mathcal R_1:=0$ and $\mathcal R_2 :=0$ in \eqref{eq:prob-2}
	\begin{equation}\label{eq:mat-fac-ex-1}
	\min_{{\bf U}\in \R^{M \times K}, {\bf Z}\in \R^{K \times N}}\, \left\{ \Psi({\bf U},{\bf Z}) =  \frac 12\norm{{\bf A} - {\bf U}{\bf Z}}^2_{F}  \right\}\,.
	\end{equation}
	For this problem, the update steps of \textbf{Bregman Proximal Gradient for Matrix Factorization (BPG-MF)} given in Section~\ref{ssec:bpg-mf-main} (also see Section~\ref{ssec:closed-form}) with a chosen $\lambda \in (0,1)$   are the following:  \ifpaper\else\\\fi

	{\centering
		\fcolorbox{black}{Orange!10}{\parbox{0.98\textwidth}{ In each iteration, compute  $t_k = 3(\norm{{\bf U^{k}}}_F^2 + \norm{{\bf Z^{k}}}_F^2) + \norm{{\bf A}}_F$ and perform the intermediary gradient descent  steps (non-alternating) for ${\bf U}$ and ${\bf Z}$ independently with step-size $\frac{\lambda}{t_k}$:
				\begin{align*}
				{\bf P^k} &= {\bf U^k} - \frac{\lambda}{t_k}\left[({\bf U^{k}}{\bf Z^{k}}-{\bf A})({\bf Z^{k}})^T\right] \,,\quad {\bf Q^k} = {\bf Z}^{k} - \frac{\lambda}{t_k}\left[({\bf U^{k}})^T({\bf U^{k}}{\bf Z^{k}}-{\bf A})\right]\,.
				\end{align*}
			Then, the additional scaling steps ${\bf U^{k+1}} = r t_k{\bf P^k}$ and ${\bf Z^{k+1}} = r t_k{\bf Q^k}$ are required, where
			 the scaling factor $r\geq 0$ satisfies a cubic equation:
			 $3t_k^2\left(\norm{{\bf P^k}}_F^2 + \norm{{\bf Q^k}}_F^2 \right)r^3 + \norm{{\bf A}}_Fr-1 = 0$.\vspace{-0.2em}
		}}
		\label{alg:bpg-simple-1}
	}\ifpaper\else\\\fi


\subsection{Related Work}

\textbf{Alternating Minimization} is the go-to strategy for matrix factorization problems  due to  coupling between two subsets of variables \cite{G2014,AFWGF2019,YPC2018}. In the context of non-convex and non-smooth optimization, recently PALM \cite{BST2014} was proposed and convergence to stationary point was proved. An inertial variant, iPALM was proposed in \cite{PS2016}. However, such methods require a subset of variables to be fixed.  We remove such a restriction here and take the contrary view by proposing non-alternating schemes based on a powerful Bregman proximal minimization framework, which we review below. \ifpaper\else\\\fi

\textbf{Bregman Proximal Minimization}  extends upon the standard proximal minimization, where Bregman distances are used as proximity measures. Based on initial works in \cite{BDX2011,BBT2016,BSTV2018}, related inertial variants were proposed in \cite{MOPS2019,ZBMJC2019}. Related line-search methods were proposed in \cite{OFB2019} based on \cite{BLPP2016,BLPPR2017}. More related works in convex optimization include \cite{N2017,LFN2018,MPTD2019}. Recently, the symmetric non-negative matrix factorization  problem was solved with a non-alternating Bregman proximal minimization  scheme \cite{DBA2019} with the following kernel generating distance 
\[
	h({\bf U}) = \frac{\norm{\bf U}_F^4}{4} +  \frac{\norm{\bf U}_F^2}{2}\,.
\]
However for the following applications, such a $h$ is not suitable, unlike our Bregman distance. \ifpaper\else\\\fi

\textbf{Non-negative Matrix Factorization (NMF)} is a variant of the matrix factorization problem which requires the factors to have non-negative entries  \cite{GV2014,LS2001}. Some applications are hyperspectral unmixing, clustering and  others \cite{G2014,EGB2019}.  The non-negativity constraints pose new  challenges  \cite{LS2001} and only convergence to a stationary point \cite{G2014,HD2011} is guaranteed,  as NMF is NP-hard in general. Under certain restrictions, NMF can be solved exactly \cite{AGKM2012,M2016} but such methods are computationally infeasible. We give efficient algorithms for NMF and show the superior performance empirically. \ifpaper\else\\\fi

\textbf{Matrix Completion} is another variant of Matrix Factorization arising in recommender systems \cite{KRV2009} and bio-informatics \cite{LYLWLPLW2018,TYALS2018}, which is an active research topic due to the hard non-convex optimization problem \cite{CR2009,FZSH2017}. The state-of-the-art methods were proposed in \cite{JM2018,YK2018} and other recent methods include \cite{YMYCS2014}. Here, our algorithms are either faster or competitive. \ifpaper\else\\\fi

Our algorithms are also applicable to Graph Regularized NMF (GNMF) \cite{CHHH2011}, Sparse NMF \cite{BST2014}, Nuclear Norm Regularized problems \cite{CCS2010,HO2014},  Symmetric NMF via non-symmetric extension \cite{ZLLL2018}.

\section{Matrix Factorization Problem Setting and Algorithms}\label{sec:matrix-factor-problem}
		\textbf{Notation.} We refer to \cite{RW1998-B} for standard notation, unless specified otherwise.\ifpaper\else\\\fi

		 Formally, in a matrix factorization problem, given a matrix ${\bf A} \in \R^{M \times N}$, we want to obtain the factors ${\bf U} \in \R^{M \times K}$ and ${\bf Z}^{K \times N}$ such that ${\bf A} \approx {\bf U}{\bf Z}$, which is captured by the following non-convex problem 
		\begin{equation}\label{eq:mat-fac}
		\min_{{\bf U}\in \mathcal U, {\bf Z}\in \mathcal Z}\, \left\{ \Psi({\bf U},{\bf Z}) :=  \frac 12\norm{{\bf A} - {\bf U}{\bf Z}}^2_{F} +  {\mathcal R}_1({\bf U}) +  {\mathcal R}_2({\bf Z}) \right\}\,,
		\end{equation}
		where ${\mathcal R}_1({\bf U}) +  {\mathcal R}_2({\bf Z})$ is the separable regularization term, $\frac 12\norm{{\bf A} - {\bf U}{\bf Z}}^2_{F}$ is the data-fitting term, and $\mathcal U, \mathcal Z$ are the constraint sets for ${\bf U}$ and ${\bf Z}$ respectively. Here, ${\mathcal R}_1({\bf U}) $ and ${\mathcal R}_2({\bf Z}) $ can be potentially non-convex extended real valued functions and possibly non-smooth. In this paper, we propose to make use of BPG and its inertial variant CoCaIn BPG to solve \eqref{eq:mat-fac}. The introduction of these algorithms requires the following preliminary considerations.
	\ifpaper
		\begin{definition}(Kernel Generating Distance \cite{BSTV2018}) \label{D:KernelGen}
				Let $C$ be a nonempty, convex and open subset of $\real^{d}$. Associated with $C$, a function $h : \real^{d} \rightarrow \erl$ is called a \textit{kernel generating distance} if it satisfies: $\rm{(i)}$ $h$ is proper, lower semicontinuous and convex, with $\dom h \subset \barc$ and $\dom \partial h = C$, and $\rm{(ii)}$ $h$ is $C^{1}$ on $\idom h \equiv C$. We denote the class of kernel generating distances by $\mathcal{G}(C)$.
		 \end{definition}
	\else
		\begin{definition}(Kernel Generating Distance \cite{BSTV2018}) \label{D:KernelGen}
				Let $C$ be a nonempty, convex and open subset of $\real^{d}$. Associated with $C$, a function $h : \real^{d} \rightarrow \erl$ is called a \textit{kernel generating distance} if it satisfies: 
				\begin{itemize}
    			\item[$\rm{(i)}$] $h$ is proper, lower semicontinuous and convex, with $\dom h \subset \barc$ and $\dom \partial h = C$ .
     			\item[$\rm{(ii)}$] $h$ is $C^{1}$ on $\idom h \equiv C$.
 			\end{itemize}
 			We denote the class of kernel generating distances by $\mathcal{G}(C)$.
		 \end{definition}
	\fi
	 	For every $h \in \mathcal{G}(C)$, the associated Bregman distance is given by $D_{h} : \dom h \times \idom h \to \real_{+}$:
	    \begin{equation*}
	   		D_{h}\left(x , y\right) := h\left(x\right) - \left[h\left(y\right) + \act{\nabla h\left(y\right) , x - y}\right].
	   	\end{equation*}
	   	For examples, consider the following kernel generating distances:
	   	\[
	   	h_0(x) = \frac{1}{2}\norm{x}^2\,, \quad h_1(x) = \frac{1}{4}\norm{x}^4 + \frac{1}{2}\norm{x}^2\, \quad\text{and}\quad h_2(x) = \frac{1}{3}\norm{x}^3 + \frac{1}{2}\norm{x}^2\,.
	   	\]
	   	The Bregman distances associated with $h_0(x)$ is the  Euclidean distance. The Bregman distances associated with $h_1$ and $h_2$ appear in the context of non-convex quadratic inverse problems \cite{BSTV2018,MOPS2019} and non-convex cubic regularized problems \cite{MOPS2019} respectively. For a review on the recent literature, we refer the reader to \cite{T2018} and for early work on Bregman distances to \cite{Censor1981}. 

	   	These distance measures are key for development of algorithms for the following class of non-convex additive composite problems 
		\begin{equation}\label{eq:main-problem}
			(\PPP) \qquad \inf \left\{ \Psi \equiv f\left(x\right) + g\left(x\right) : \; x \in \barc \right\},
		\end{equation}
		which is assumed to satisfy the following standard assumption \cite{BSTV2018}.
	\ifpaper 
		\begin{assumption} \label{A:AssumptionA}
			$\rm{(i)}$ $h \in \mathcal{G}(C)$ with $\barc = \cdom h$. $\rm{(ii)}$ $f : \real^{d} \rightarrow \erl$ is a proper and lower semicontinuous function (potentially non-convex) with $\dom f \cap C \neq \emptyset$. $\rm{(iii)}$ $g : \real^{d} \rightarrow \erl$ is a proper and lower semicontinuous function (potentially non-convex) with $\dom{h} \subset \dom{g}$, which is continuously differentiable on $C$. $\rm{(iv)}$ $v(\PPP) := \inf \left\{ \Psi\left(x\right) : \; x \in \barc \right\} > -\infty$.
		\end{assumption}
	\else 
		\begin{assumption} \label{A:AssumptionA}
			\begin{itemize}
				\item[$\rm{(i)}$] $h \in \mathcal{G}(C)$ with $\barc = \cdom h$. 
				\item[$\rm{(ii)}$] $f : \real^{d} \rightarrow \erl$ is a proper and lower semicontinuous function (potentially non-convex) with $\dom f \cap C \neq \emptyset$.
				\item[$\rm{(iii)}$] $g : \real^{d} \rightarrow \erl$ is a proper and lower semicontinuous function (potentially non-convex) with $\dom{h} \subset \dom{g}$, which is continuously differentiable on $C$. 
				\item[$\rm{(iv)}$]  $v(\PPP) := \inf \left\{ \Psi\left(x\right) : \; x \in \barc \right\} > -\infty$.
			\end{itemize}
		\end{assumption}
	\fi
	\paragraph*{Matrix Factorization Example.} A special case of \eqref{eq:main-problem} is the following problem,
	\begin{equation}\label{eq:mat-fac-1}
	\inf\, \left\{ \Psi({\bf U},{\bf Z}) :=  f_1({\bf U}) + f_2({\bf Z}) +  g({\bf U}, {\bf Z}): ({\bf U},{\bf Z}) \in \barc\right\}\,.
	\end{equation}
	We denote $f({\bf U}, {\bf Z}) = f_1({\bf U}) + f_2({\bf Z})$.	Many practical matrix factorization problems can be cast into the form of \eqref{eq:mat-fac}. The choice of $f$ and $g$ is dependent on the problem, for which we provide some examples in Section~\ref{sec:exps}. Here $f_1, f_2$ satisfy the assumptions of $f$ with dimensions chosen accordingly. Moreover by definition, $f$ is separable in ${\bf U}$ and ${\bf Z}$, which we assume only for practical reasons. Also, the choice of $f,g$ may not be unique. For example, in \eqref{eq:mat-fac} when  $\mathcal R_1({\bf U}) = \frac{\lambda_0}{2}\norm{{\bf U}}_F^2 \text{ and } \mathcal R_2({\bf Z}) = \frac{\lambda_0}{2}\norm{{\bf Z}}_F^2$
	the choice of $f$  as in \eqref{eq:mat-fac-1} can be $\mathcal R_1 + \mathcal R_2$ and $g=\frac 12\norm{{\bf A} - {\bf U}{\bf Z}}^2_{F}$. However, the other choice is to set $g=\Psi$ and $f:=0$.
\subsection{BPG-MF: Bregman Proximal Gradient for Matrix Factorization}\label{ssec:bpg-mf-main}
	We require the notion of Bregman Proximal Gradient Mapping \cite[Section 3.1]{BSTV2018} given by
		\begin{align}
			T_{\lambda}\left(x\right) 
			& =  \argmin \left\{ f\left(u\right) + \act{\nabla g\left(x\right) , u - x} + \frac{1}{\lambda} D_{h}\left(u , x\right) : \, u \in \barc \right\}\,. \label{D:OperT}
		\end{align}
	Then, the update step of Bregman Proximal Gradient (BPG) \cite{BSTV2018} for solving \eqref{eq:main-problem} is $x^{k+1} \in T_\lambda (x^k)$, for some $\lambda>0$ and $h\in \mathcal{G}(C)$. Convergence of BPG relies on a generalized notion of Lipschitz continuity, the so-called $L$-smad property (Defintion~\ref{D:l-smad}).\ifpaper\else\\\fi

	\textbf{Beyond Lipschitz continuity.} BPG extends upon the popular proximal gradient methods, for which convergence relies on Lipschitz continuity of the smooth part of the objective in \eqref{eq:main-problem}. However, such a notion of Lipschitz continuity is restrictive for many practical applications such as Poisson linear inverse problems \cite{BBT2016}, quadratic inverse problems \cite{BSTV2018,MOPS2019}, cubic regularized problems \cite{MOPS2019} and robust denoising problems with non-convex total variation regularization \cite{MOPS2019}.  The extensions for generalized notions of Lipschitz continuity of gradients is an active area of research \cite{BDX2011, BBT2016,LFN2018,BSTV2018}. We consider the following from \cite{BSTV2018}.
	\begin{definition}[$L$-smad property]\label{D:l-smad}
	The function $g$ is said to be $L$-smooth adaptable ($L$-smad)  on $C$ with respect to $h$, if and only if  $Lh -g$ and $Lh +g$ are convex on $C$.
	\end{definition}

	When $h(x) =\frac12 \norm{x}^2$, $L$-smad property is implied by Lipschitz continuous gradient. Consider the function $f(x)=x^4$, it is $L$-smad with respect to $h(x)=x^4$ and $L\geq 1$, however $\nabla f$ is not Lipschitz continuous. \ifpaper\else\medskip\fi

	Now, we are ready to present the BPG algorithm for Matrix Factorization.\ifpaper\else\\\fi

	{\centering
		\fcolorbox{black}{Orange!10}{\parbox{0.98\textwidth}{\textbf{BPG-MF: BPG for Matrix Factorization.} \\
				{\textbf{Input.}} Choose $h \in \mathcal{G}(C)$ with $C \equiv \idom h$ such that $g$ satisfies $L$-smad with respect to $h$ on $C$.\\
				{\textbf{Initialization.}} $({\bf U}^{{\bf 1}}, {\bf Z}^{{\bf 1}}) \in  \idom h$ and let $\lambda >0$. \\
				{\textbf{General Step.}} For $k = 1 , 2 , \ldots$, compute	
				\[
					{\bf P^k} = \lambda\nabla_{\bf U} g\left({\bf U}^{{\bf k}},{\bf Z}^{{\bf k}} \right) - \nabla_{\bf U} h({\bf U}^{{\bf k}},{\bf Z}^{{\bf k}})\,,\quad {\bf Q^k} =  \lambda\nabla_{\bf Z} g\left({\bf U}^{{\bf k}},{\bf Z}^{{\bf k}} \right) - \nabla_{\bf Z} h({\bf U}^{{\bf k}},{\bf Z}^{{\bf k}})\,,\vspace{-1em}
				\]
				\begin{equation}
				({\bf U}^{{\bf k+1}}, {\bf Z}^{{\bf k+1}}) \in \underset{({\bf U},{\bf Z}) \in \barc}{\argmin} \left\{  \lambda f({\bf U},{\bf Z})+ \act{ {\bf P^k}, {\bf U}}  + \act{ {\bf Q^k},  {\bf Z}} + h({\bf U},{\bf Z})  \right\}\,.
				\label{alg:bpg-mf-2}
				\end{equation}\vspace{-0.8em}
		}}
	}\ifpaper\else\\\fi

	Under Assumption~\ref{A:AssumptionA} and the following one (mostly satisfied in practice), BPG is well-defined \cite{BSTV2018}.
	\begin{assumption} \label{A:AssumptionB} The range of $T_\lambda$ lies in $C$  and, for all $\lambda>0$, the function $h+\lambda f$ is supercoercive.
	\end{assumption}
	The update step for BPG-MF is easy to derive from BPG, however convergence of BPG also relies on the ``right'' choice of kernel generating distance $h$ and the $L$-smad condition. Finding $h$ such that $L$-smad holds (also see Section~\ref{ssec:new-bregman-distance}) and that the update step can be given in closed form (also see Section~\ref{ssec:closed-form}) is our main contribution and allows us to invoke the convergence results from \cite{BSTV2018}. The convergence result states that the whole sequence of iterates generated by BPG-MF converges to a stationary point, precisely given in Theorem~\ref{thm:thm-main}. The result depends on the non-smooth KL-property (see \cite{BDLS2007,AB2009,BST2014}) which is a mild requirement and is satisfied by most practical objectives.  We provide below the convergence result in \cite[Theorem 4.1]{BSTV2018} adapted to BPG-MF.
	\begin{theorem}[Global Convergence of BPG-MF]\label{thm:thm-main} Let  Assumptions~\ref{A:AssumptionA} and \ref{A:AssumptionB} hold and let $g$ be $L$-smad with respect to $h$, where $h$ is assumed to be $\sigma$-strongly convex with full domain. Assume $\nabla g, \nabla h$ to be Lipschitz continuous on any bounded subset.  Let  $\left\{({\bf U^{k+1}}, {\bf Z}^{{\bf k+1}})\right\}_{k \in \nn}$ be a bounded sequence generated by BPG-MF with $0<\lambda L<1$, and suppose $\Psi$ satisfies the KL property, then, such a sequence has finite length, and converges to a critical point. 
	\end{theorem}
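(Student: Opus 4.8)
The plan is to recognize that BPG-MF is nothing but the generic BPG method of \cite{BSTV2018} applied to the matrix factorization instance \eqref{eq:mat-fac-1}, with variable $x = ({\bf U},{\bf Z})$, separable nonsmooth part $f = f_1 + f_2$, smooth coupling part $g$, and the proposed kernel $h$. Since \cite[Theorem 4.1]{BSTV2018} already establishes global convergence to a critical point for the abstract problem \eqref{eq:main-problem}, the entire task reduces to checking that the hypotheses in the statement (strong convexity of $h$ with full domain, $L$-smad, Lipschitz continuity of $\nabla g, \nabla h$ on bounded sets, boundedness of the iterates, and the KL property of $\Psi$) are precisely those under which that master theorem is valid, and then invoking it. First I would confirm that Assumptions~\ref{A:AssumptionA} and \ref{A:AssumptionB} hold for this $(f,g,h)$ triple, so that $T_\lambda$ is well-defined and the iteration is meaningful.

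The heart of the argument is the Attouch--Bolte--Svaiter ``proof recipe'', whose three ingredients I would verify in this setting. \emph{Sufficient decrease}: the $L$-smad property gives the descent-lemma-type bound $g(u) \le g(x) + \act{\nabla g(x), u-x} + L D_h(u,x)$, which, together with the definition \eqref{D:OperT} of $T_\lambda$ and the step-size condition $0 < \lambda L < 1$, yields
\[
\Psi(x^{k+1}) + \frac{1-\lambda L}{\lambda}\, D_h(x^k, x^{k+1}) \le \Psi(x^k)\,.
\]
Invoking $\sigma$-strong convexity of $h$ to lower-bound $D_h(x^k,x^{k+1}) \ge \tfrac{\sigma}{2}\norm{x^k - x^{k+1}}^2$ converts this into genuine quadratic sufficient decrease. \emph{Relative error bound}: the first-order optimality condition of the subproblem defining $x^{k+1} \in T_\lambda(x^k)$ reads $0 \in \lambda\,\partial f(x^{k+1}) + \nabla h(x^{k+1}) - \nabla h(x^k) + \lambda\nabla g(x^k)$, so that
\[
\nabla g(x^{k+1}) - \nabla g(x^k) + \tfrac{1}{\lambda}\bigl(\nabla h(x^k) - \nabla h(x^{k+1})\bigr) \in \partial\Psi(x^{k+1})\,.
\]
Because the sequence is bounded and both $\nabla g$ and $\nabla h$ are Lipschitz on bounded sets, this produces a constant $b>0$ with $\dist(0, \partial\Psi(x^{k+1})) \le b\,\norm{x^{k+1}-x^k}$. \emph{Continuity on cluster points} follows from boundedness (existence of convergent subsequences) together with lower semicontinuity of $\Psi$. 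With these three properties in hand, the KL property of $\Psi$ at a cluster point drives the standard telescoping/finite-length argument $\sum_k \norm{x^{k+1}-x^k} < \infty$, whence the bounded sequence is Cauchy and converges to a single point, which the relative-error bound identifies as a critical point of $\Psi$.

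The step I expect to be the main obstacle is the relative-error bound, and more precisely the role of the chosen quartic-type kernel $h$. Unlike the Euclidean case, $\nabla h$ grows superlinearly and is therefore only \emph{locally} Lipschitz; this is exactly why boundedness of the iterates and the ``Lipschitz on bounded subsets'' hypothesis are indispensable, and some care is needed to confine the whole trajectory to a compact set on which a uniform Lipschitz constant is available. The genuinely paper-specific work---showing that the proposed $h$ renders $g$ $L$-smad and that the $T_\lambda$ update admits a closed form---is deferred to Sections~\ref{ssec:new-bregman-distance} and \ref{ssec:closed-form}; granting those, the present theorem is a direct instantiation of \cite[Theorem 4.1]{BSTV2018}.
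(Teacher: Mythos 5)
Your proposal matches the paper's approach exactly: the paper offers no independent proof, stating the theorem as \cite[Theorem 4.1]{BSTV2018} adapted to BPG-MF, with the only paper-specific work (the $L$-smad property for the new kernel $h_a$) carried out in Proposition~\ref{prop:l-smad-1}. Your additional sketch of the sufficient-decrease, relative-error, and KL steps correctly reproduces the internals of the cited theorem rather than constituting a different route, and your identification of where ``Lipschitz on bounded subsets'' and boundedness of iterates enter is accurate.
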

	


\subsection{New Bregman Distance for Matrix Factorization}\label{ssec:new-bregman-distance}
	We prove the $L$-smad property for the term $g({\bf U,\bf Z}) = \frac 12\norm{{\bf A} - {\bf U}{\bf Z}}^2_{F}$ of the matrix factorization problem in \eqref{eq:mat-fac}. The kernel generating distance is a linear combination of 
	\begin{equation}\label{eq:breg-funcs}
	h_1({\bf U},{\bf Z}) := \left(\frac{\norm{\bf U}_F^2 +\norm{\bf Z}_F^2 }{2} \right)^2\quad\text{and}\quad
	h_2({\bf U},{\bf Z}) :=  \frac{\norm{\bf U}_F^2 +\norm{\bf Z}_F^2 }{2} \,,
	\end{equation}
	and it is designed to also allow for closed form updates (see Section~\ref{ssec:closed-form}). 
	\begin{proposition}\label{prop:l-smad-1}
	Let $g,h_1,h_2$ be as defined above. Then, for $L\geq 1$, the function $g$ satisfies the $L$-smad property with respect to the following kernel generating distance
	\begin{equation}\label{eq:main-lsmad-h}
	 h_a({\bf U},{\bf Z}) =  3h_1({\bf U},{\bf Z}) + \norm{{\bf A}}_F h_2({\bf U},{\bf Z}) \,.
	\end{equation}
	\end{proposition}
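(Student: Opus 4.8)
The plan is to argue directly from the definition of the $L$-smad property, i.e.\ to show that both $L h_a - g$ and $L h_a + g$ are convex on $C = \R^{M\times K}\times\R^{K\times N}$. First I would reduce to the case $L=1$: writing $L h_a \mp g = (h_a \mp g) + (L-1)h_a$ and noting that $h_a$ is itself convex (it is a nonnegative combination of $h_2$, which is convex, and $h_1 = s^2$ with $s = \tfrac12(\norm{\bf U}_F^2+\norm{\bf Z}_F^2)$ convex and nonnegative, so $t\mapsto t^2$ composed with $s$ is convex), the term $(L-1)h_a$ is convex whenever $L\ge 1$. Hence it suffices to prove that $h_a - g$ and $h_a + g$ are convex. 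Since $g$ and $h_a$ are polynomials in the matrix entries they are $C^\infty$, so convexity is equivalent to nonnegativity of the second directional derivative $\tfrac{d^2}{dt^2}\big|_{t=0}\,\phi({\bf U}+t{\bf V},{\bf Z}+t{\bf W})$ in every direction $({\bf V},{\bf W})$.

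Second, I would compute these second directional derivatives explicitly. For $g$, expanding $({\bf U}+t{\bf V})({\bf Z}+t{\bf W})-{\bf A}$ and collecting the $t^2$-coefficient gives
\[
\tfrac{d^2}{dt^2}\big|_{0}\,g = \norm{{\bf V}{\bf Z}+{\bf U}{\bf W}}_F^2 + 2\act{{\bf U}{\bf Z}-{\bf A},\,{\bf V}{\bf W}}\,.
\]
For $h_a$, setting $\ell = \act{{\bf U},{\bf V}}+\act{{\bf Z},{\bf W}}$ and using the chain rule on $s$,
\[
\tfrac{d^2}{dt^2}\big|_{0}\,h_a = 6\ell^2 + 3\big(\norm{\bf U}_F^2+\norm{\bf Z}_F^2\big)\big(\norm{\bf V}_F^2+\norm{\bf W}_F^2\big) + \norm{\bf A}_F\big(\norm{\bf V}_F^2+\norm{\bf W}_F^2\big)\,.
\]
The two convexity statements together are exactly the bound $\big|\tfrac{d^2}{dt^2}\big|_0 g\big| \le \tfrac{d^2}{dt^2}\big|_0 h_a$ for all $({\bf V},{\bf W})$, so this single absolute-value estimate is the whole target.

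Third, for the core estimate I would drop the nonnegative $6\ell^2$ from the $h_a$-expression and bound the two pieces of the $g$-expression separately. The first piece is controlled by submultiplicativity of the Frobenius norm together with $(a+b)^2\le 2(a^2+b^2)$, giving $\norm{{\bf V}{\bf Z}+{\bf U}{\bf W}}_F^2 \le 2\big(\norm{\bf V}_F^2\norm{\bf Z}_F^2+\norm{\bf U}_F^2\norm{\bf W}_F^2\big)$. The second piece is bounded via Cauchy--Schwarz and submultiplicativity by $2\big(\norm{\bf U}_F\norm{\bf Z}_F+\norm{\bf A}_F\big)\norm{\bf V}_F\norm{\bf W}_F$; AM--GM splits this into a part $\le \norm{\bf U}_F^2\norm{\bf W}_F^2+\norm{\bf Z}_F^2\norm{\bf V}_F^2$ and a part $2\norm{\bf A}_F\norm{\bf V}_F\norm{\bf W}_F \le \norm{\bf A}_F(\norm{\bf V}_F^2+\norm{\bf W}_F^2)$. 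Summing shows the $g$-quantity is at most $3\norm{\bf V}_F^2\norm{\bf Z}_F^2 + 3\norm{\bf U}_F^2\norm{\bf W}_F^2 + \norm{\bf A}_F(\norm{\bf V}_F^2+\norm{\bf W}_F^2)$, which is dominated by the cross terms $3\norm{\bf U}_F^2\norm{\bf W}_F^2 + 3\norm{\bf Z}_F^2\norm{\bf V}_F^2$ sitting inside $3(\norm{\bf U}_F^2+\norm{\bf Z}_F^2)(\norm{\bf V}_F^2+\norm{\bf W}_F^2)$ plus the matching $\norm{\bf A}_F$ term of $h_a$.

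The main obstacle is not any individual inequality but making all constants close simultaneously: the coefficient $3$ in front of $h_1$ and the coefficient $\norm{\bf A}_F$ in front of $h_2$ appear to be exactly the amounts needed for both the $+$ and $-$ directions to hold in every direction $({\bf V},{\bf W})$. The delicate bookkeeping is the cross term $2\norm{\bf U}_F\norm{\bf Z}_F\norm{\bf V}_F\norm{\bf W}_F$ from the $\act{{\bf U}{\bf Z},{\bf V}{\bf W}}$ contribution: AM--GM can absorb it in more than one way, and one must check that the chosen grouping still leaves enough slack in $3(\norm{\bf U}_F^2+\norm{\bf Z}_F^2)(\norm{\bf V}_F^2+\norm{\bf W}_F^2)$ to also swallow the first-piece bound. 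Once $\big|\tfrac{d^2}{dt^2}g\big|\le \tfrac{d^2}{dt^2}h_a$ is established, both $h_a \mp g$ have nonnegative second directional derivatives in all directions and are therefore convex; the reduction of the first step then upgrades this to all $L\ge 1$, completing the proof.
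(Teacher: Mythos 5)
Your proposal is correct and follows essentially the same route as the paper: the same second-order computations for $g$, $h_1$, $h_2$ (the paper derives them via explicit Taylor expansions in its technical lemmas), the same chain of estimates (submultiplicativity of the Frobenius norm, $(a+b)^2\le 2(a^2+b^2)$, Cauchy--Schwarz, AM--GM) with the nonnegative $6\ell^2$ term discarded, and the same final domination by $3\left(\norm{{\bf U}}_F^2+\norm{{\bf Z}}_F^2\right)\left(\norm{{\bf V}}_F^2+\norm{{\bf W}}_F^2\right)+\norm{{\bf A}}_F\left(\norm{{\bf V}}_F^2+\norm{{\bf W}}_F^2\right)$. The only cosmetic difference is that you handle $Lh_a-g$ and $Lh_a+g$ simultaneously through a single absolute-value bound on the second directional derivative, whereas the paper proves the upper bound in detail and notes that the lower-bound case is analogous.
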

	The proof is given in Section~\ref{ssec:main-lsmad-proof-sec} in the \ifpaper supplementary material\else appendix\fi.  The Bregman distances considered in previous works \cite{MOPS2019,BSTV2018} are separable and not applicable for matrix factorization problems. The inherent coupling between two subsets of variables ${\bf U},{\bf Z}$ is the main source of non-convexity in the objective $g$. The kernel generating distance (in particular $h_1$ in \eqref{eq:main-lsmad-h}) contains the interaction/coupling terms between ${\bf U}$ and ${\bf Z}$ which makes it amenable for matrix factorization problems.  

\subsection{CoCaIn BPG-MF: An Adaptive Inertial Bregman Proximal Gradient Method}\label{ssec:cocain-bpg-mf}
	The goal of this section is to introduce an inertial variant of BPG-MF, called CoCaIn BPG-MF. The effective step-size choice for BPG-MF can be restrictive due to  large constant like $\norm{{\bf A}}_F$ (see \eqref{eq:main-lsmad-h}), for which we present a practical example in the numerical experiments. In order to allow for larger step-sizes, one needs to adapt it locally, which is often done via a backtracking procedure. CoCaIn BPG-MF combines inertial steps with a novel backtracking procedure proposed in \cite{MOPS2019}.\ifpaper\else\medskip\fi

	Inertial algorithms often lead to better convergence \cite{OCBP2014,PS2016,MOPS2019}. The classical Nesterov Accelerated Gradient (NAG) method \cite{N1983} and the popular Fast Iterative Shrinkage-Thresholding Algorithm  (FISTA) \cite{BT2009} employ an extrapolation based inertial strategy. However, the extrapolation is governed by a parameter which is typically scheduled to follow certain iteration-dependent scheme \cite{N1983,HRX2018}and is restricted to the convex setting. Recently with Convex--Concave Inertial Bregman Proximal Gradient (CoCaIn BPG) \cite{MOPS2019}, it was shown that one could leverage the upper bound (convexity of $Lh -g$) and lower bound (convexity of $Lh +g$) to incorporate inertia in an adaptive manner. \ifpaper\else\medskip\fi 


	We recall now the update steps of CoCaIn BPG \cite{MOPS2019} to solve \eqref{eq:main-problem}.  Let $h\in {\mathcal G}(C)$, $\lambda>0$, and $x^0=x^1\in\R^d$ be an initalization, then in each iteration the extrapolated point $y^k = x^k + \gamma_k (x^k - x^{k-1})$ is computed followed by a BPG like update (at $y^k$) given by $x^{k+1} \in T_{\tau_k}(y^k)$, where $\gamma_k$ is the inertial parameter and $\tau_k$ is the step-size parameter. Similar conditions to BPG are required for the convergence to a stationary point. We use CoCaIn BPG for Matrix Factorization (CoCaIn BPG-MF) and our proposed novel kernel generating distance $h$ from \eqref{eq:main-lsmad-h} makes the convergence results of  \cite{MOPS2019} applicable. Along with Assumption~\ref{A:AssumptionB}, we require the following assumption.
	\begin{assumption} \label{A:Assumption0}
	\ifpaper$\rm{(i)}$\else\begin{itemize}\item[$\rm{(i)}$]\fi\,   There exists $\alpha \in \R$ such that $f({\bf U}, {\bf Z}) - \frac{\alpha}{2}\left(\norm{{\bf U}}_F^2 + \norm{{\bf Z}}_F^2 \right)$ is convex.\ifpaper\\\else\fi
	 \ifpaper$\rm{(ii)}$\else\item[$\rm{(ii)}$]\fi \,  The kernel generating distance $h$ is $\sigma$-strongly convex on $\R^{M \times K}\times \R^{K \times N}$.  
	 \ifpaper\else\end{itemize}\fi
	\end{assumption}

	The Assumption~\ref{A:Assumption0}$\rm{(i)}$ refers to notion of semi-convexity of the function $f$, (see \cite{Ochs18,MOPS2019}) and seems to be closely connected to the inertial feature of an algorithm. For notational brevity, we use $D_{g}\left(x , y\right) := g\left(x\right) - \left[g\left(y\right) + \act{\nabla g\left(y\right) , x - y}\right]$ which may also be negative if g is not a kernel generating distance. Moreover, we use $D_h(({\bf X}_1,{\bf Y}_1),({\bf X}_2,{\bf Y}_2))$ as $D_h({\bf X}_1,{\bf Y}_1,{\bf X}_2,{\bf Y}_2)$. We provide CoCaIn BPG-MF below.\ifpaper\else\\\fi

	{\centering
		\fcolorbox{black}{Orange!10}{\parbox{0.98\textwidth}{\textbf{CoCaIn BPG-MF: Convex--Concave Inertial BPG for Matrix Factorization.} \\
				{\textbf{Input.}} Choose $\delta,\varepsilon>0$ with $1>\delta>\epsilon$, $h \in \mathcal{G}(C)$ with $C \equiv \idom h$, g is $L$-smad on $C$ w.r.t $h$.\\
				{\textbf{Initialization.}} $ ({\bf U}^{{\bf 1}}, {\bf Z}^{{\bf 1}})=({\bf U}^{{\bf 0}}, {\bf Z}^{{\bf 0}}) \in  \idom h \cap \dom f$, ${\bar L}_0 > \frac{-\alpha}{(1-\delta)\sigma}$ and $\tau_{0} \leq {\bar L}_{0}^{-1}$. \\
				{\textbf{General Step.}} For $k = 1 , 2 , \ldots$, compute extrapolated points
				\begin{align}\label{eq:cocain-0}
				Y_{{\bf U}}^{\bf k} &={\bf U}^{k} + \gamma_k \left({\bf U^{k}} - {\bf U^{k-1}}\right)\quad\text{and}\quad Y_{{\bf Z}}^{\bf k} ={\bf Z}^{k} + \gamma_k \left({\bf Z^{k}} - {\bf Z^{k-1}}\right)\,,
				\end{align}
				where $\gamma_k \geq 0$ such that
				\begin{align}\label{eq:cocain-1}
				(\delta -\varepsilon)D_{h}\left({\bf U}^{{\bf k-1}},{\bf Z}^{{\bf k-1}},{\bf U}^{{\bf k}},{\bf Z}^{{\bf k}}\right) \geq (1 + {\underline L}_{k}\tau_{k-1})D_{h}\left({\bf U}^{{\bf k}},{\bf Z}^{{\bf k}},Y_{\bf U}^{{\bf k}},Y_{\bf Z}^{{\bf k}}\right)\,,
				\end{align}
				where ${\underline L}_k$ satisfies 
				\begin{equation}\label{eq:cocain-2}
					D_{g}\left({\bf U}^{{\bf k}},{\bf Z}^{{\bf k}},Y_{\bf U}^{{\bf k}},Y_{\bf Z}^{{\bf k}}\right) \geq -{\underline L}_k D_h\left({\bf U}^{{\bf k}},{\bf Z}^{{\bf k}},Y_{\bf U}^{{\bf k}},Y_{\bf Z}^{{\bf k}}\right)\,.
				\end{equation}
				Choose ${\bar L}_k \geq {\bar L}_{k-1}$, and set $\tau_k \leq \min\{ \tau_{k-1}, {\bar L}_{k}^{-1} \}$. Now, compute 
				\[
				{\bf P^k} = \tau_k\nabla_{\bf U} g\left(Y_{\bf U}^{{\bf k}},Y_{\bf Z}^{{\bf k}} \right) - \nabla_{\bf U} h(Y_{\bf U}^{{\bf k}},Y_{\bf Z}^{{\bf k}})\,, \quad {\bf Q^k} =  \tau_k\nabla_{\bf Z} g\left(Y_{\bf U}^{{\bf k}},Y_{\bf Z}^{{\bf k}} \right) - \nabla_{\bf Z} h(Y_{\bf U}^{{\bf k}},Y_{\bf Z}^{{\bf k}})\,,\vspace{-1em}
				\]
				\begin{equation}
				({\bf U}^{{\bf k+1}}, {\bf Z}^{{\bf k+1}}) \in \underset{({\bf U},{\bf Z}) \in \barc}{\argmin} \left\{  \tau_k f({\bf U},{\bf Z})+ \act{ {\bf P^k}, {\bf U}}  + \act{ {\bf Q^k}, {\bf Z}} + h({\bf U},{\bf Z})  \right\}\,,\label{eq:cocain-3}
				\end{equation}
				such that ${\bar L}_k$ satisfies
				\begin{equation}\label{eq:cocain-4}
					D_{g}\left({\bf U}^{{\bf k+1}},{\bf Z}^{{\bf k+1}},Y_{\bf U}^{{\bf k}},Y_{\bf Z}^{{\bf k}}\right) \leq {\bar L}_k D_{h}\left({\bf U}^{{\bf k+1}},{\bf Z}^{{\bf k+1}},Y_{\bf U}^{{\bf k}},Y_{\bf Z}^{{\bf k}}\right)\,.
				\end{equation}\vspace{-1.5em}
		}}
	}\ifpaper\else\\\fi

	The extrapolation step is performed in \eqref{eq:cocain-0}, which is similar to NAG/FISTA. However, the inertia cannot be arbitrary and the analysis from \cite{MOPS2019} requires step \eqref{eq:cocain-1} which is governed by the convexity of lower bound, ${\underline L}_kh+g$, however only locally as in \eqref{eq:cocain-2}.  The update step \eqref{eq:cocain-3} is similar to BPG-MF, however the step-size is controlled via the convexity of upper bound ${\bar L}_kh-g$, but only locally as in \eqref{eq:cocain-4}. The local adaptation of the steps  \eqref{eq:cocain-2} and \eqref{eq:cocain-4} is performed via backtracking.  Since, ${\bar L}_k$ can be potentially very small compared to $L$, hence potentially large steps can be taken. There is no restriction on ${\underline L}_k$ in each iteration, and smaller  ${\underline L}_k$ can result in high value for the inertial parameter $\gamma_k$. Thus the algorithm in essence aims to detect "local convexity" of the objective.  The update steps of CoCaIn BPG-MF can be executed sequentially without any nested loops for the backtracking. One can always find the inertial parameter $\gamma_k$ in \eqref{eq:cocain-1} due to \cite[Lemma 4.1]{MOPS2019}. For certain cases, \eqref{eq:cocain-1} yields an explicit condition on $\gamma_k$. For example, for  $h({\bf U},{\bf Z}) = \frac{1}{2}(\norm{\bf U}_F^2 + \norm{\bf Z}_F^2)$, we have $0\leq\gamma_k \leq \sqrt{\frac{\delta-\varepsilon}{1+\tau_{k-1}{\underline L}_k}}$. We now provide below the convergence result from \cite[Theorem 5.2]{MOPS2019} adapted to CoCaIn BPG-MF.\ifpaper\else\\\fi
	\begin{theorem}[Global Convergence of CoCaIn BPG-MF]\label{thm:thm-main}  Let  Assumptions~\ref{A:AssumptionA}, \ref{A:AssumptionB} and \ref{A:Assumption0} hold, let $g$ be $L$-smad with respect to $h$ with full domain. Assume $\nabla g, \nabla h$ to be Lipschitz continuous on any bounded subset. Let  $\left\{({\bf U^{k+1}}, {\bf Z}^{{\bf k+1}})\right\}_{k \in \nn}$ be a bounded sequence generated by  CoCaIn BPG-MF, and suppose $f,g$ satisfy the KL property, then, such a sequence has finite length, and converges to a critical point.
	\end{theorem}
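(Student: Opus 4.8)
The plan is to reduce the statement to the abstract global convergence result for CoCaIn BPG, namely \cite[Theorem 5.2]{MOPS2019}, by verifying that the matrix factorization instance \eqref{eq:mat-fac-1} together with the kernel $h_a$ from \eqref{eq:main-lsmad-h} meets every hypothesis required there. Concretely, I would identify the stacked variable $x=({\bf U},{\bf Z})$, set $f=f_1+f_2$ and $g=\tfrac12\norm{{\bf A}-{\bf U}{\bf Z}}_F^2$, and then discharge Assumptions~\ref{A:AssumptionA}, \ref{A:AssumptionB} and \ref{A:Assumption0} in this setting. Once all hypotheses hold, the conclusion (finite length of the iterate sequence and convergence to a critical point) is inherited verbatim from the abstract theorem, so the work is essentially a structured assumption check rather than a new descent/KL argument.

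First I would record the structural facts about the kernel. Since $h_a=3h_1+\norm{{\bf A}}_F h_2$ is a quartic polynomial in the entries of $({\bf U},{\bf Z})$, it is $C^\infty$ with full domain, whence $h_a\in\mathcal{G}(C)$ with $C=\R^{M\times K}\times\R^{K\times N}$ and $\barc=\cdom{h_a}$ trivially, giving Assumption~\ref{A:AssumptionA}(i). Because $h_2=\tfrac12(\norm{{\bf U}}_F^2+\norm{{\bf Z}}_F^2)$ is exactly one half the squared Euclidean norm of the stacked variable while $h_1\ge 0$ is convex, $h_a$ is $\sigma$-strongly convex with $\sigma=\norm{{\bf A}}_F>0$ (assuming ${\bf A}\neq 0$), which is Assumption~\ref{A:Assumption0}(ii). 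The $L$-smad property of $g$ with respect to $h_a$ for $L\ge 1$ is precisely Proposition~\ref{prop:l-smad-1}, the core analytic input, which I would simply cite. Finally, as both $g$ and $h_a$ are polynomials, $\nabla g$ and $\nabla h_a$ are Lipschitz on every bounded subset, covering that regularity hypothesis, and both have full domain as required.

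Next I would discharge the remaining assumptions. Assumption~\ref{A:AssumptionA}(ii)--(iv) hold because the separable regularizer $f_1+f_2$ is proper and lower semicontinuous by hypothesis, $g$ is $C^1$ everywhere, and $\Psi$ is bounded below (the data term is nonnegative and the regularizers are bounded below in the cases of interest). Assumption~\ref{A:AssumptionB} follows since $h_a+\lambda f$ grows like $\norm{({\bf U},{\bf Z})}^4$ and is therefore supercoercive whenever $f$ has at most sub-quartic growth, and since $C$ is the whole space the range condition on $T_\lambda$ is automatic. Assumption~\ref{A:Assumption0}(i), the semi-convexity of $f$, holds for the standard regularizers (with $\alpha=0$ when $f$ is convex). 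For the KL requirement I would invoke semi-algebraicity: $g$ is a polynomial and the regularizers arising in practice (indicators of semi-algebraic constraint sets, $\ell_1$ and nuclear-norm penalties, squared Frobenius norms) are semi-algebraic or definable, so $\Psi$ is a KL function. With all hypotheses verified, \cite[Theorem 5.2]{MOPS2019} applies directly.

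The step requiring the most care is not this checklist but the feasibility of the backtracking that underlies the abstract theorem: one must ensure that an inertial parameter $\gamma_k$ satisfying \eqref{eq:cocain-1} exists and that the local constants ${\underline L}_k,{\bar L}_k$ from \eqref{eq:cocain-2} and \eqref{eq:cocain-4} are produced in finitely many trials. This is exactly where the strong convexity $\sigma=\norm{{\bf A}}_F>0$ enters (making \eqref{eq:cocain-1} solvable via \cite[Lemma 4.1]{MOPS2019}) together with the global $L$-smad bound, which caps ${\bar L}_k$ by $L$ and keeps the loops finite. Thus the only genuinely matrix-factorization-specific difficulty, the coupling between ${\bf U}$ and ${\bf Z}$, has already been absorbed into the design of $h_a$ and resolved by Proposition~\ref{prop:l-smad-1}, leaving the convergence theorem as a direct application.
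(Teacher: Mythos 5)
Your proposal matches the paper's own treatment: the theorem is stated there as \cite[Theorem 5.2]{MOPS2019} adapted to the matrix factorization setting, with no separate proof beyond supplying the problem-specific $L$-smad property via Proposition~\ref{prop:l-smad-1} and checking Assumptions~\ref{A:AssumptionA}, \ref{A:AssumptionB} and \ref{A:Assumption0} for $h_a$, which is exactly the structured assumption check you carry out. Your verification is sound, including the correct identification of the strong-convexity modulus $\sigma = \norm{{\bf A}}_F$ (with the caveat ${\bf A}\neq {\bf 0}$) and the role of \cite[Lemma 4.1]{MOPS2019} in guaranteeing a feasible inertial parameter $\gamma_k$, so nothing further is needed.
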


\subsection{Closed Form Solutions for Update Steps of BPG-MF and CoCaIn BPG-MF}\label{ssec:closed-form}
	Our second significant contribution is to make BPG-MF and CoCaIn BPG-MF an efficient choice for solving Matrix Factorization, namely closed form expressions for the main update steps \eqref{alg:bpg-mf-2}, \eqref{eq:cocain-3}. For the derivation, we refer to the \ifpaper supplementary material\else appendix\fi, here we just state our results.\ifpaper\else\\\fi

	 For the L2-regularized problem $$ g({\bf U},{\bf Z}) = \frac 12\norm{{\bf A} - {\bf U}{\bf Z}}^2_{F},\quad f({\bf U},{\bf Z})  =    \frac{\lambda_0}{2}\left(\norm{{\bf U} }_F^2 + \norm{{\bf Z} }_F^2 \right),\quad h = h_a$$  with $c_1 = 3, c_2=\norm{{\bf A}}_F$ and $0<\lambda <1$ the BPG-MF  updates are:\ifpaper\else\\\fi

	{\centering
		\fcolorbox{black}{Orange!10}{\parbox{0.98\textwidth}{ 
		${\bf U}^{{\bf k+1}} = -r {\bf P^k}$\,, ${\bf Z}^{{\bf k+1}} = -r{\bf Q^k}$ with $r\geq 0$\,, $c_1\big(\norm{-{\bf P^k}}_F^2 +  \norm{-{\bf Q^k}}_F^2  \big)r^3 + (c_2+\lambda_0)r-1 = 0\,.$
		}}
	}\ifpaper\else\\\fi

	For NMF with additional non-negativity constraints, we replace $-{\bf P^k}$ and $-{\bf Q^k}$ by $\Pi_{+}(-{\bf P^k})$ and $\Pi_{+}(-{\bf Q^k})$ respectively where $\Pi_{+}(.) = \max\{0,.\}$ and $\max$ is applied element wise. 

	 Now consider the following L1-Regularized problem
	\begin{equation}
	g({\bf U},{\bf Z}) = \frac 12\norm{{\bf A} - {\bf U}{\bf Z}}^2_{F},\quad f({\bf U},{\bf Z})=   \lambda_1\left(\norm{{\bf U} }_1 + \norm{{\bf Z} }_1 \right),\quad h = h_a\,.
	\end{equation}
	The {soft-thresholding operator} is defined for any $y \in \R^d$ by $\SSS_{\theta}\left(y\right) = \max\left\{ \left|y\right| - \theta , 0 \right\}\sgn\left(y\right)$ where $\theta>0$. Set $c_1 = 3, c_2=\norm{{\bf A}}_F$ and $0<\lambda <1$ the BPG-MF updates with the above given $g,f, h$ are:\ifpaper\else\\\fi

	{\centering
		\fcolorbox{black}{Orange!10}{\parbox{0.98\textwidth}{ 
		${\bf U}^{{\bf k+1}} = r \SSS_{\lambda_1\lambda}(- {\bf P^k})$, ${\bf Z}^{{\bf k+1}} = r\SSS_{\lambda_1\lambda}(-{\bf Q^k})$ with $r\geq 0$ and  $$c_1\left(\norm{\SSS_{\lambda_1\lambda}(-{\bf P^k})}_F^2 +  \norm{\SSS_{\lambda_1\lambda}(-{\bf Q^k})}_F^2  \right)r^3 + c_2 r-1 = 0\,.$$\vspace{-1.4em}
		}}
	}\ifpaper\else\\\fi

	We denote a vector of ones as ${\bf e}_D \in \R^D$.
	For additional non-negativity constraints  we need to replace  $\SSS_{\lambda_1\lambda}(-{\bf P^k})$ with $\Pi_{+}(-\left({\bf P^k}+\lambda_1\lambda{\bf e}_{M}{\bf e}_{K}^T\right))$ and $\SSS_{\lambda_1\lambda}\left(-{\bf Q^k}\right)$ to $\Pi_{+}(-\left({{\bf Q^k} +\lambda_1\lambda{\bf e}_{K}{\bf e}_{N}^T}\right))$. Excluding the gradient computation, the computational complexity of our updates is $O(MK + NK)$ only, thanks to linear operations. PALM and iPALM additionally involve calculating Lipschitz constants with at most $O(K^2\max\{M,N\}^2)$ computations. Examples like Graph Regularized NMF (GNMF) \cite{CHHH2011}, Sparse NMF \cite{BST2014}, Matrix Completion \cite{KRV2009}, Nuclear Norm Regularization \cite{CCS2010,HO2014}, Symmetric NMF \cite{ZLLL2018} and proofs are given in the \ifpaper supplementary material\else appendix\fi. 
	
\section{Experiments}\label{sec:exps}
	In this section, we show experiments for \eqref{eq:mat-fac}. Denote the regularization settings, \textbf{R1:} with  ${\mathcal R}_1\equiv {\mathcal R}_2\equiv 0$, \textbf{R2:}  with L2 regularization ${\mathcal R}_1({\bf U}) = \frac{\lambda_0}{2}\norm{{\bf U}}_F^2$ and  ${\mathcal R}_2({\bf Z})=\frac{\lambda_0}{2}\norm{{\bf Z}}_F^2$ for some $\lambda_0>0$, \textbf{R3:} with L1 Regularization ${\mathcal R}_1({\bf U}) = \lambda_0\norm{{\bf U}}_1$ and  ${\mathcal R}_2({\bf Z})=\lambda_0\norm{{\bf Z}}_1$ for some $\lambda_0>0$. \ifpaper\else\\\fi

	\textbf{Algorithms.} We compare our first order optimization algorithms, BPG-MF and CoCaIn BPG-MF,  and recent state-of-the-art optimization methods iPALM \cite{PS2016} and PALM \cite{BST2014}. We focus on  algorithms that guarantee convergence to a stationary point. We also use BPG-MF-WB, where WB stands for "with backtracking", which is equivalent to CoCaIn BPG-MF with $\gamma_k \equiv 0$.  We use two settings for iPALM, where  all the extrapolation parameters  are set to a single value $\beta$ set to $0.2$ and $0.4$. PALM is equivalent to iPALM if $\beta=0$. We use the same initialization for all methods.\ifpaper\else\\\fi

	\textbf{Simple Matrix Factorization.} We set $\mathcal U = \R^{M \times K} \text{ and } \mathcal Z = \R^{K \times N}$. We use a randomly generated synthetic data matrix with $A \in \R^{200 \times 200}$ and report performance in terms of function value for three regularization settings, \textbf{R1}, \textbf{R2} and \textbf{R3} with $K=5$. Note that this enforces a factorization into at most rank 5 matrizes ${\bf U}$ and ${\bf Z}$, which yields an additional implicit regularization. For \textbf{R2} and \textbf{R3} we use $\lambda_0 = 0.1$. CoCaIn BPG-MF is superior\footnote{Note that in the $y$-axis label $v(\PPP)$ is the least objective value attained by any of the methods.} as shown in  Figure~\ref{fig:synthetic} .\ifpaper\else\\\fi

	\textbf{Statistical Evaluation.} We also provide the statistical evaluation of all the algorithms in  Figure~\ref{fig:statistical-evaluation}, for the above problem.  The optimization variables  are sampled from [0,0.1] and 50 random seeds are considered. CoCaIn BPG  outperforms other methods, however PALM methods are also very competitive. In L1 regularization setting, the performance of CoCaIn BPG is the best. In all settings, BPG-MF performance is worst due to a constant step size, which might change in settings where local adapation with backtracking line search is computationally not feasible. \ifpaper\else\\\fi

	\textbf{Matrix Completion.} In recommender systems \cite{KRV2009} given a matrix $A$ with entries at few index pairs in set $\Omega$, the goal is to obtain factors ${\bf U}$ and ${\bf Z}$ that generalize via following optimization problem
	\begin{equation}\label{eq:mat-com-exp-2}
	\min_{{\bf U}\in \R^{M \times K}, {\bf Z}\in \R^{K \times N}}\, \left\{ \Psi({\bf U},{\bf Z}) :=  \frac 12\norm{P_{\Omega}\left({\bf A} - {\bf U}{\bf Z}\right)}^2_{F} + \frac{\lambda_0}{2}\left(\norm{{\bf U} }_F^2 + \norm{{\bf Z} }_F^2 \right)  \right\}\,,
	\end{equation}
	where $P_{\Omega}$ preserves the given matrix entries and sets others to zero. We use 80\% data of MovieLens-100K, MovieLens-1M and MovieLens-10M \cite{HK2016} datasets and use other 20\% to test (details in the \ifpaper supplementary material\else appendix\fi). CoCaIn BPG-MF is faster than all methods as given in Figure~\ref{fig:movielens}.\ifpaper\else\\\fi
	\begin{figure}[hbt!]
		\centering
		\begin{subfigure}{0.32\textwidth}
			\centering
			\includegraphics[width=1\textwidth]{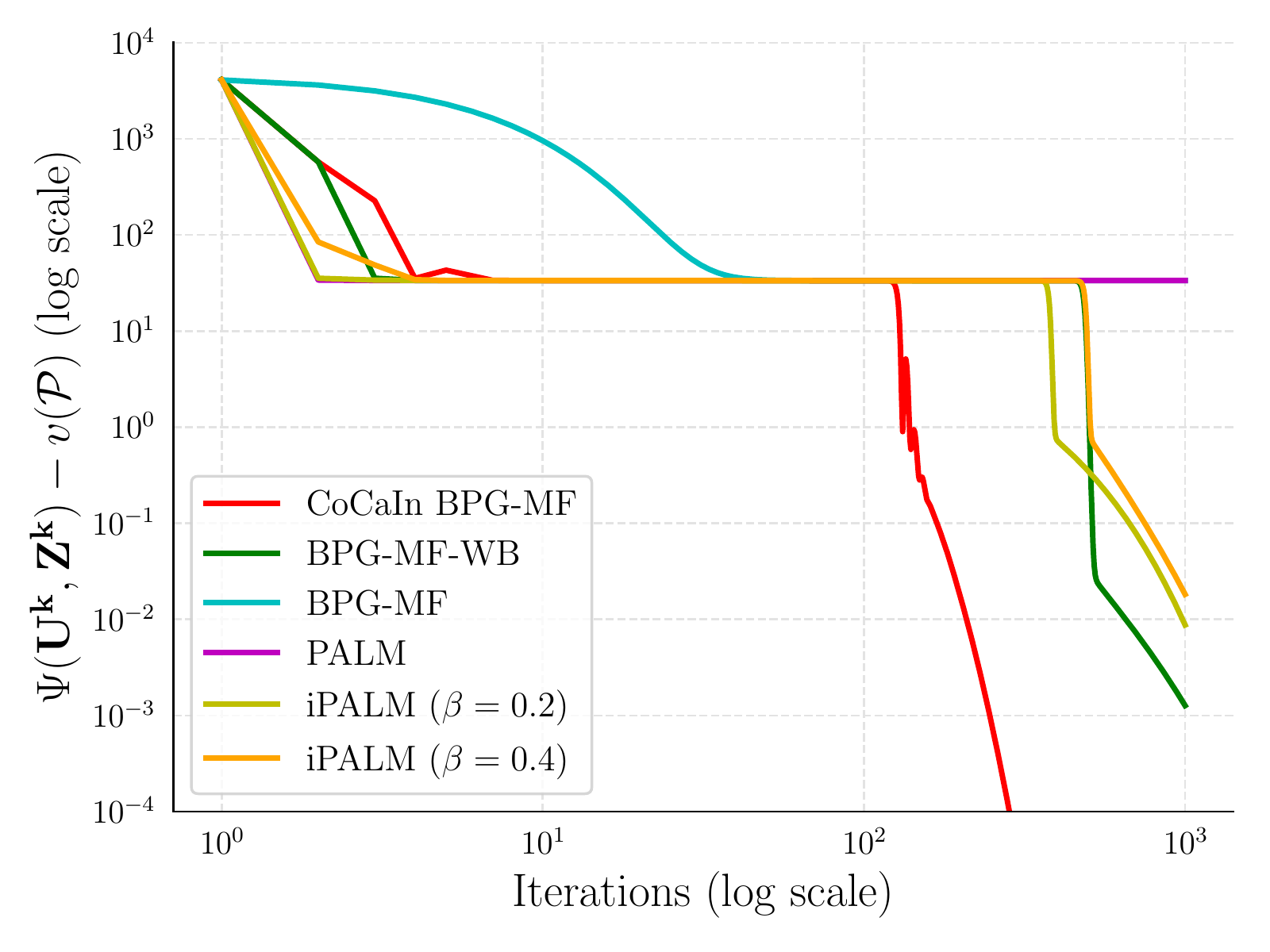}
			\caption{No Regularization}
		\end{subfigure}
		\begin{subfigure}{0.32\textwidth}
			\centering
			\includegraphics[width=1\textwidth]{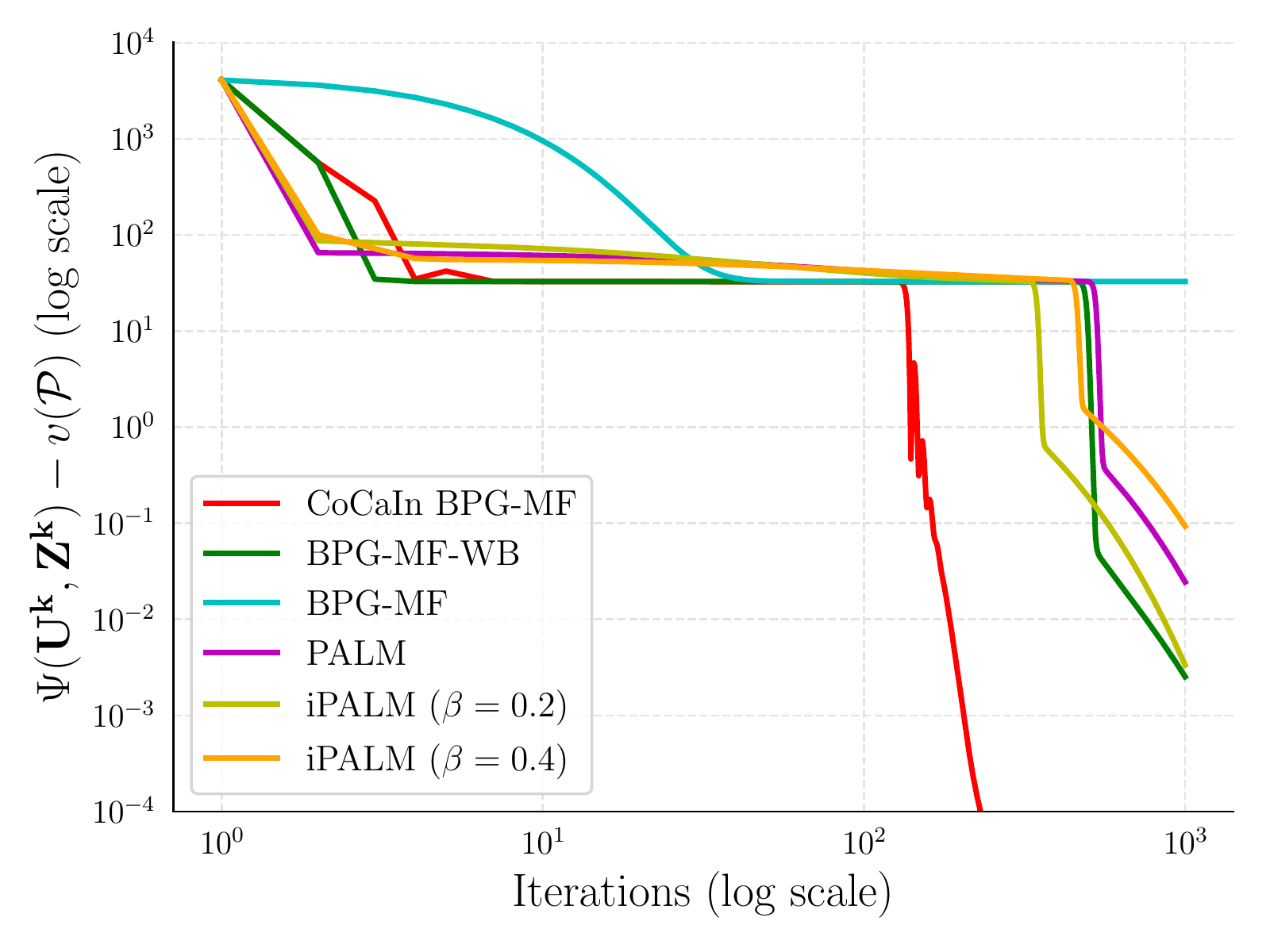}
			\caption{L2-Regularization}
		\end{subfigure}
		\begin{subfigure}{0.32\textwidth}
			\centering
			\includegraphics[width=1\textwidth]{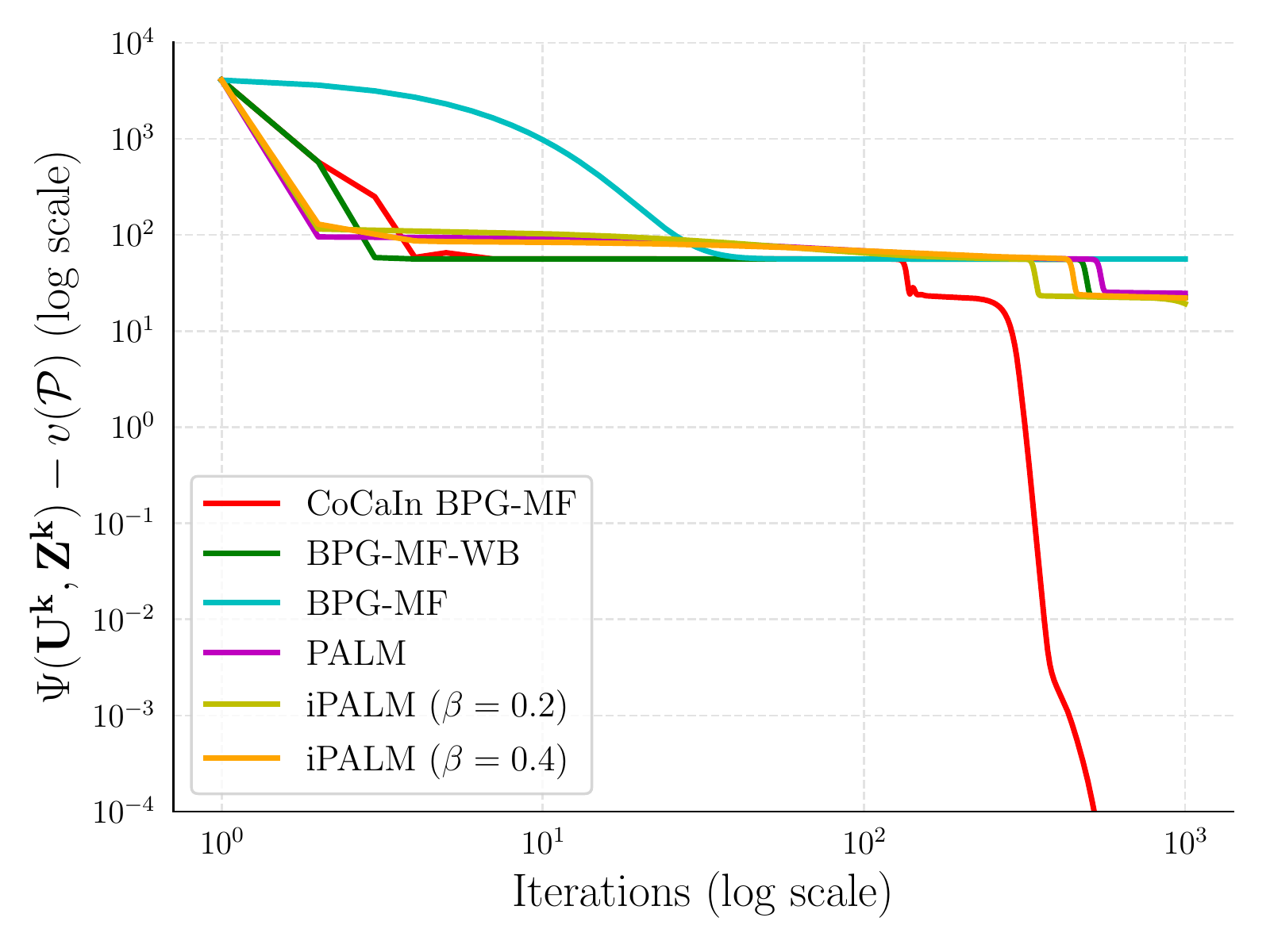}
			\caption{L1-Regularization}
		\end{subfigure}
		\caption{\textbf{Simple Matrix Factorization on Synthetic Dataset.}}
		\label{fig:synthetic}
	\end{figure}

	\begin{figure}[hbt!]
		\centering
		\begin{subfigure}{0.32\textwidth}
			\centering
			\includegraphics[width=1\textwidth]{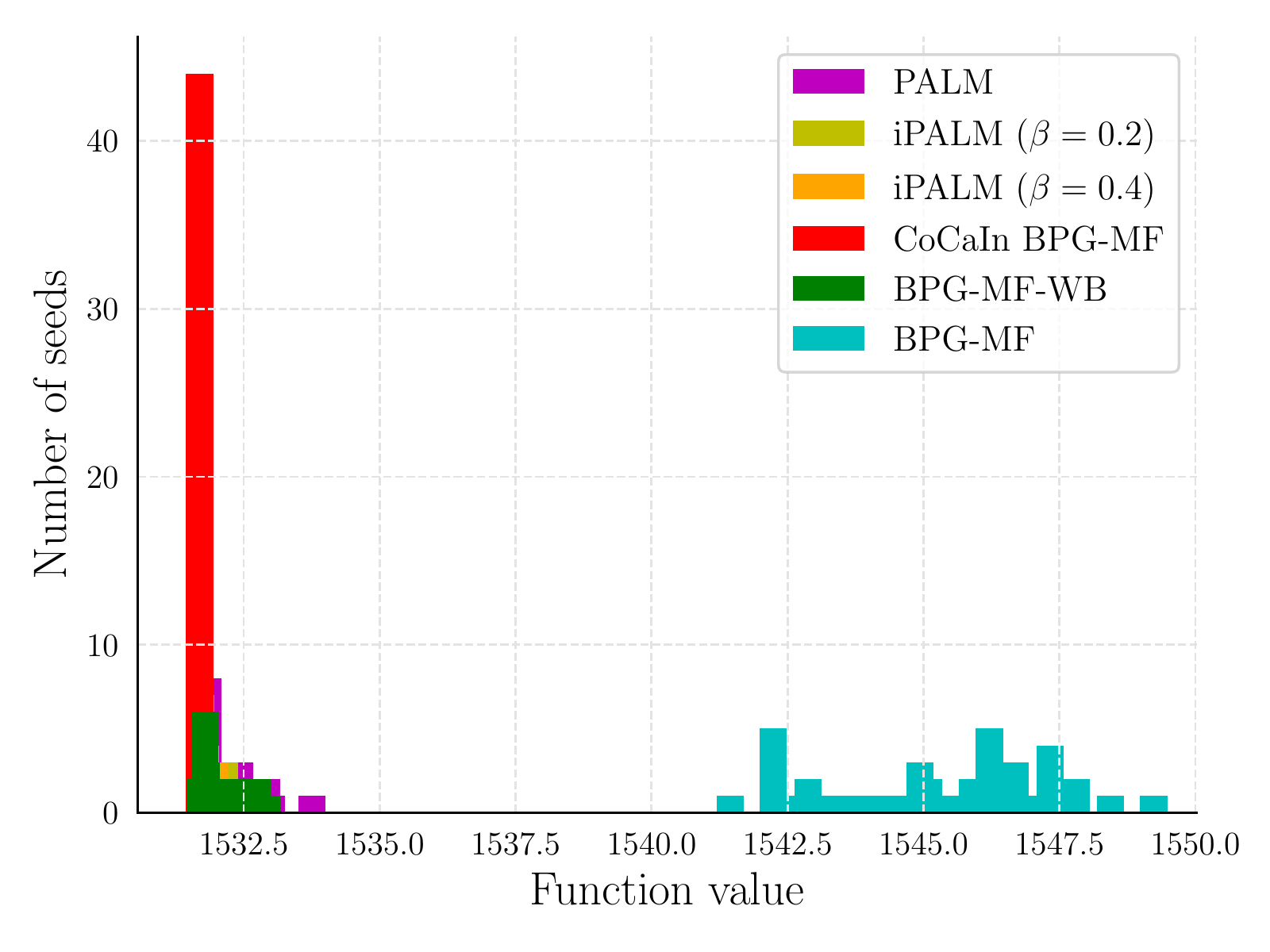}
			\caption{No Regularization}
		\end{subfigure}
		\begin{subfigure}{0.32\textwidth}
			\centering
			\includegraphics[width=1\textwidth]{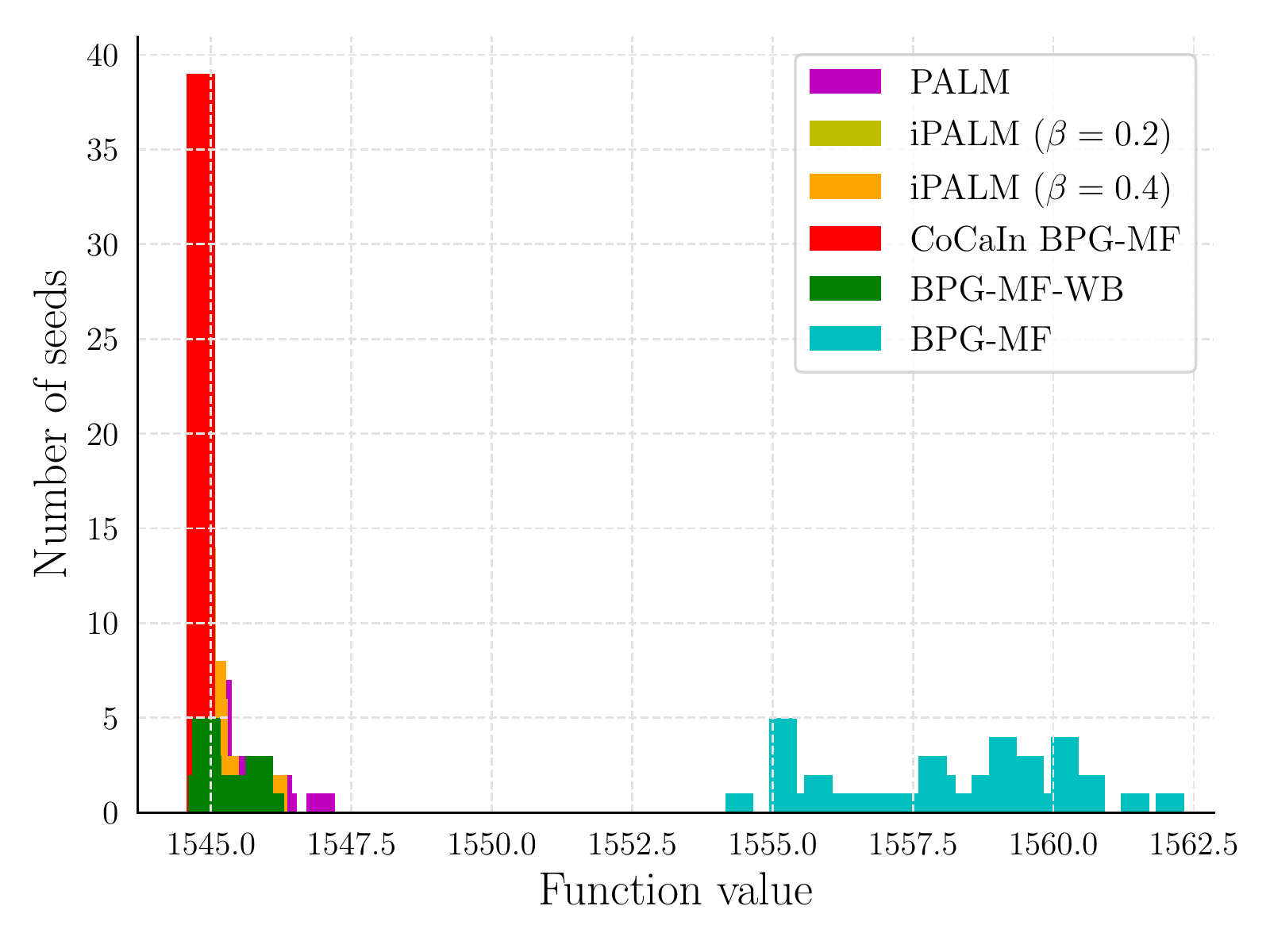}
			\caption{L2-Regularization}
		\end{subfigure}
		\begin{subfigure}{0.32\textwidth}
			\centering
			\includegraphics[width=1\textwidth]{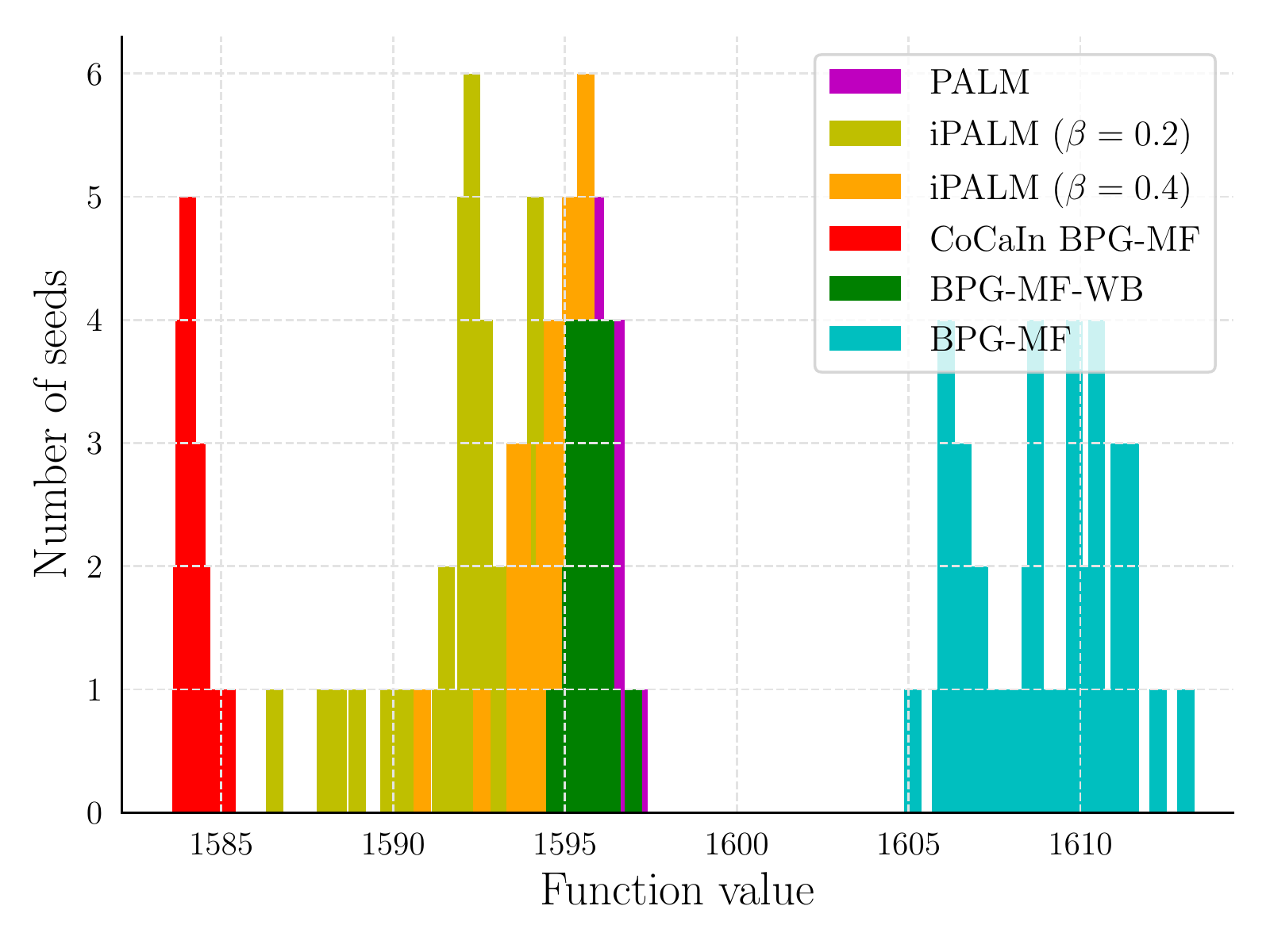}
			\caption{L1-Regularization}
		\end{subfigure}
		\caption{\textbf{Statistical Evaluation on Simple Matrix Factorization.}}
		\label{fig:statistical-evaluation}
	\end{figure}

	\begin{figure}[hbt!]
		\begin{subfigure}{0.325\textwidth}
			\centering
			\includegraphics[width=0.9\textwidth]{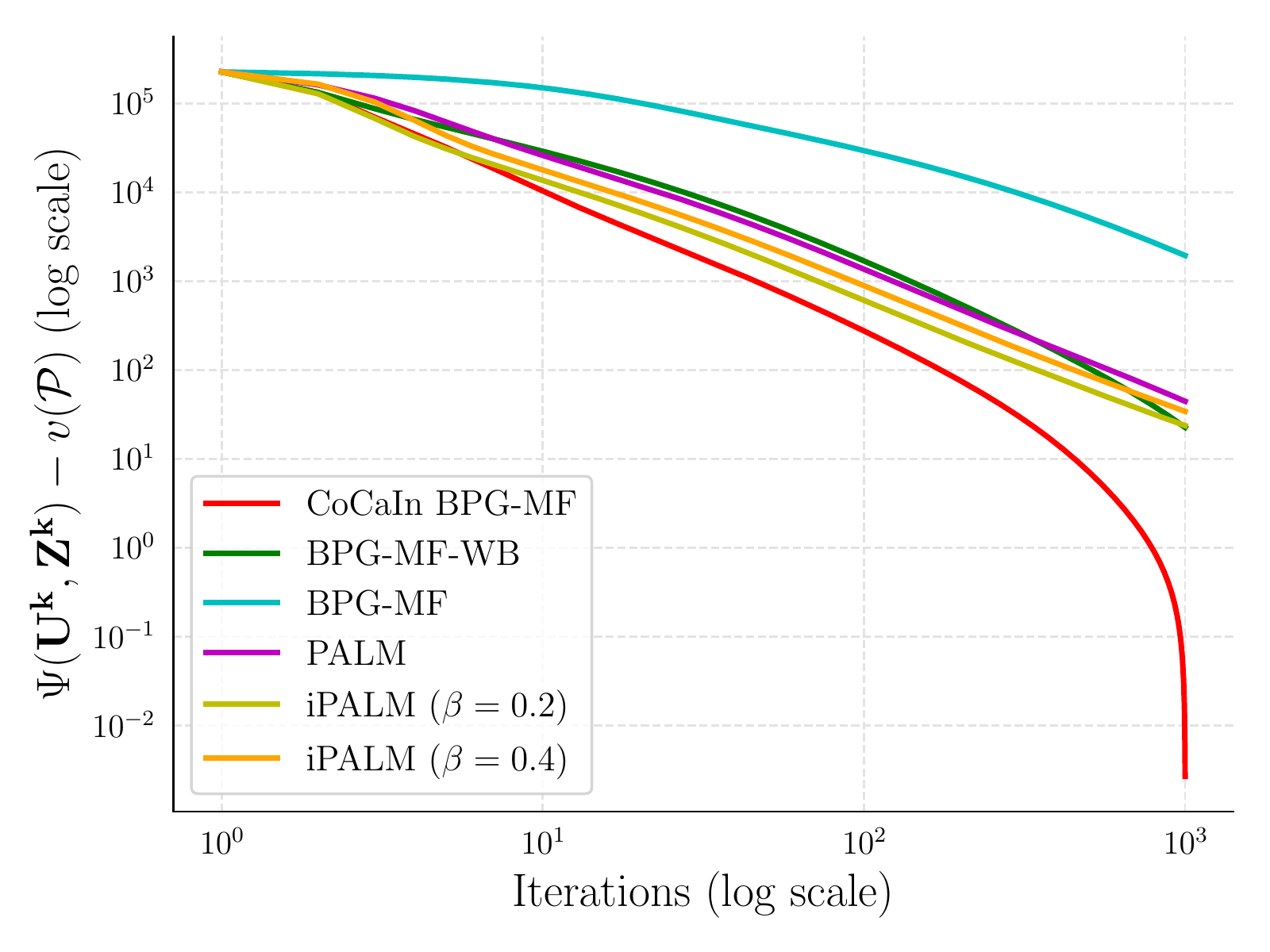}
			\caption{MovieLens-100K}
		\end{subfigure}
		\begin{subfigure}{0.325\textwidth}
			\centering
			\includegraphics[width=0.9\textwidth]{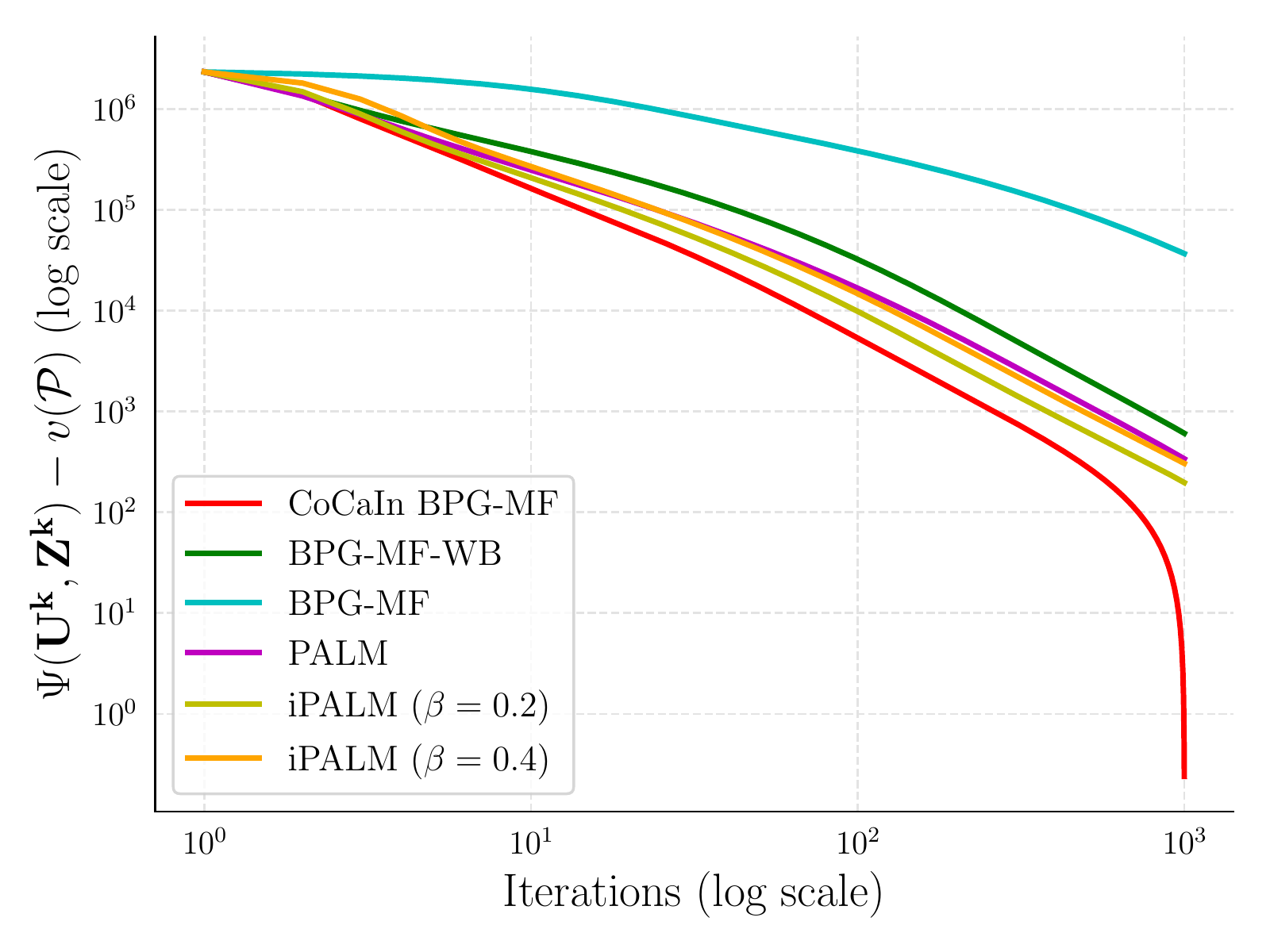}
			\caption{MovieLens-1M}
		\end{subfigure}
		\begin{subfigure}{0.325\textwidth}
			\centering
			\includegraphics[width=0.9\textwidth]{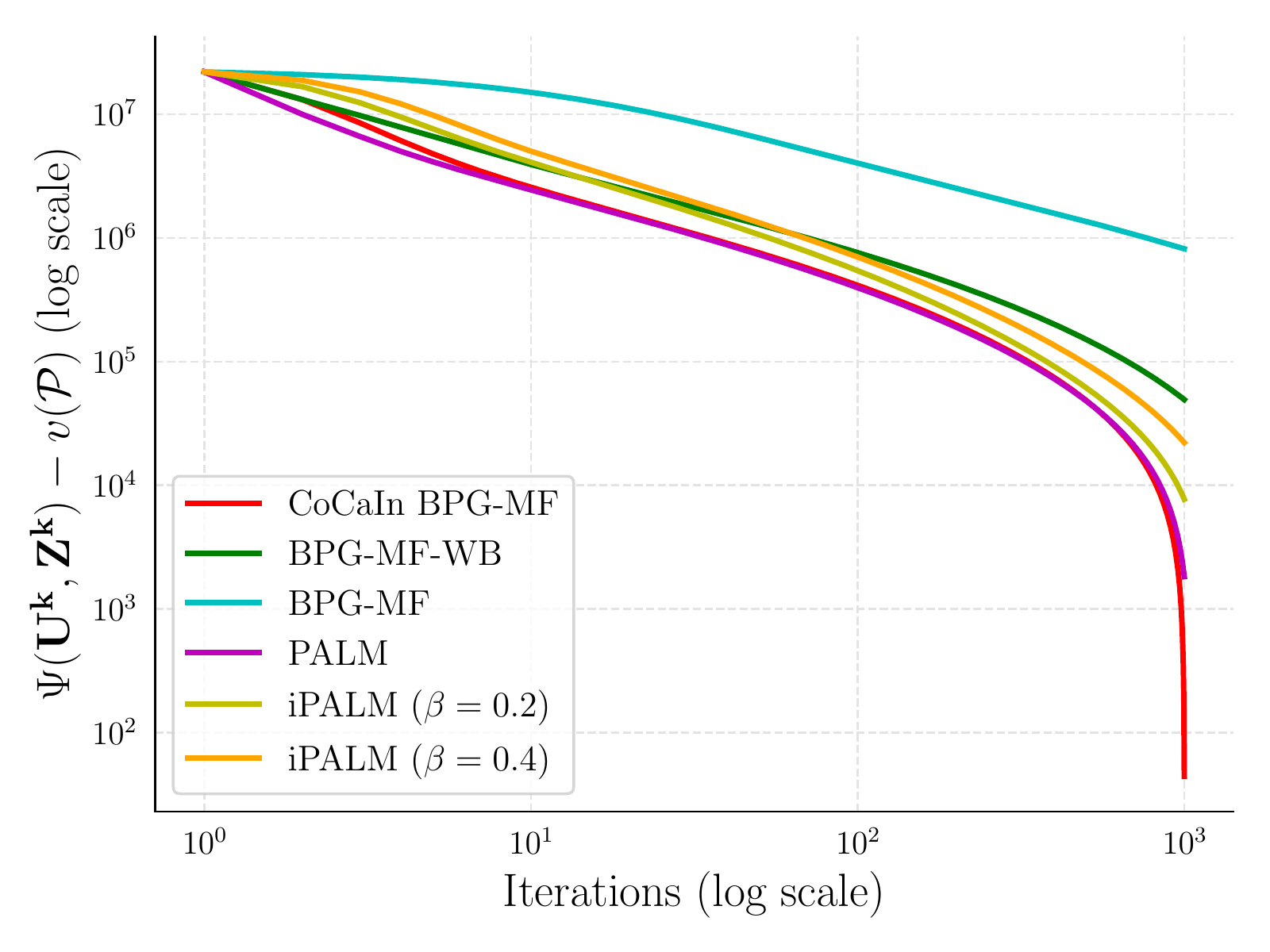}
			\caption{MovieLens-10M}
		\end{subfigure}
		\caption{\textbf{Matrix Completion on MovieLens Datasets \cite{HK2016}.} }
		\label{fig:movielens}
	\end{figure}
	
	As evident from  Figures~\ref{fig:synthetic}, \ref{fig:nmf-data}, \ref{fig:movielens}, CoCaIn BPG-MF, BPG-MF-WB can result in better performance than well known alternating methods. BPG-MF  is not better than PALM and iPALM because of prohibitively small step-sizes (due to $\norm{{\bf A}}_F$ in \eqref{eq:main-lsmad-h}), which is resolved by CoCaIn BPG-MF and BPG-MF-WB using backtracking. Time comparisons are provided in the \ifpaper supplementary material\else appendix\fi, where we show that our methods are competitive.
	\section*{Conclusion and Extensions} 
	We proposed non-alternating algorithms to solve matrix factorization problems, contrary to the typical alternating strategies. We use the Bregman proximal algorithms, BPG \cite{BSTV2018} and an inertial variant CoCaIn BPG \cite{MOPS2019} for matrix factorization problems. We developed a novel Bregman distance, crucial for proving convergence to a stationary point. Moreover, we also provide non-trivial efficient closed form update steps for many matrix factorization problems. This line of thinking raises new open questions, such as extensions to Tensor Factorization \cite{KB2009}, to Robust Matrix Factorization \cite{YK2018}, stochastic variants \cite{DDM2018,GLQSSR2019,MH2017,nguyen2018sgd} and state-of-the-art matrix factorization model  \cite{JM2018}.
\ifpaper
\else
	\section*{Acknowledgments}
	Mahesh Chandra Mukkamala and Peter Ochs were supported by the German Research Foundation (DFG Grant OC 150/1-1). We thank all the reviewers for providing their valuable comments. Mahesh Chandra Mukkamala thanks Antoine Gautier for his insightful comments.
\fi
\ifpaper
{\small 
\bibliographystyle{plain}
\bibliography{notes}
}
\newpage
\else
\fi
\appendix
\section{Discussion}
We briefly remark some properties of the update steps of BPG-methods. Note that the updates are independent for ${\bf U}$ and ${\bf Z}$ in \eqref{eq:mat-fac-ex-1}, where updates can be done in parallel blockwise (communication is only required to solve the 1D cubic equation). This can be potentially used to increase the speedup in practice, in particular for large matrices. Some terms in gradients overlap, so using temporary variables in implementation can possibly increase the speedup. These speedups are not restricted to  \eqref{eq:mat-fac-ex-1}, however to all the update steps we mentioned in this paper. \ifpaper\else\\\fi

We now provide insights on why BPG-methods are a better choice over other methods, with focus on alternating methods.
\begin{itemize}
\item PALM-methods estimate a Lipschitz constant with respect to a block of coordinates in each iteration, which is expensive for large block matrices. BPG-methods use a global L-smad constant, which is computed only once. 
\item PALM-methods cannot be parallelized block wise, for example, in the two block case, the computation of the Lipschitz constant of the second block must wait for the first block to be updated, hence it is inherently serial.
\item Alternating minimization methods do not converge for  non-smooth regularization terms and can be inefficient (for, e.g., ALS) for some matrix factorization problems (see, for example, \cite{KB2009,P1973}). BPG-methods and PALM-methods converge (due to linearization).
\item PALM is not applicable to the 2D function $g(x,y) = (x^3 + y^3)^2$, because the block-wise Lipschitz continuity of the gradients fails to hold even after fixing one variable. BPG-methods are applicable here.
\item PALM is not applicable to, for example, symmetric matrix Factorization as also pointed in \cite{DBA2019} or the following penalty method based (relaxed) orthogonal NMF problem (see \eqref{eq:prob-2})
\begin{equation*}
	\min_{{\bf U}\in \mathcal U, {\bf Z}\in \mathcal Z}\, \left\{ \Psi \equiv  \frac{1}{2}\norm{{\bf A} - {\bf U}{\bf Z}}^2_{F} + \frac{\rho}{2}\norm{{\bf U}^T{\bf U}-{\bf I}}_F^2 + {\bf I}_{{\bf U} \geq {\bf 0}} + {\bf I}_{{\bf Z} \geq {\bf 0}}+ {\mathcal R}_1({\bf U}) +  {\mathcal R}_2({\bf Z}) \right\}\,,
	\end{equation*}
where second term does not have a block-wise Lipschitz continuous gradient for any $\rho>0$. Here BPG-methods are applicable (similarly also for Projective NMF) with minor changes to the Bregman distance. For symmetric matrix factorization, we recover the kernel generating distances proposed in \cite{DBA2019}.
\item BPG-methods are very general so the choice of applications will increase substantially and  this will potentially open doors to design new losses and regularizers, without restricting to Lipschitz continuous gradients.
\end{itemize}

\textbf{State of the art models.}  The state-of-the-art matrix factorization models in  \cite{JM2018} go beyond two factors and new factorization models are introduced. BPG algorithms are not valid in their setting, and requires potentially developing new Bregman distances. Also, BPG based methods are not applicable for big data setting, where stochasticity plays a major role. The stochastic version of BPG was recently proposed in \cite{DDM2018}. The empirical comparisons to \cite{JM2018} is still open. Moreover, designing the appropriate kernels in the context of new factorization models can possibly require substantially technical proofs. 
\medskip


\textbf{Extensions.} Our algorithms can potentially extended to several applications, for example, multi-task learning, general matrix sensing, weighted PCA with various applications including cluster analysis, phase retrieval, power system state estimation. Even though CoCaIn BPG-MF appears to perform best, the performance of BPG-MF which forms the basis for CoCaIn BPG-MF, is worst as illustrated in \ref{sec:exps}. This possibly implies that the kernel choice or the coefficients involved in the kernels are not optimal. Such optimal choice of kernel generating distances were partially explored in the context of symmetric matrix factorization setting in \cite{DBA2019}, where new Bregman distances based on Gram kernels were introduced with state of the art performance in applicable settings.

\section{Overview of the Results}
Below, we provide a table with the problem or content description  and corresponding section where the results are presented.
\begin{center}
 \begin{tabular}{c c c } 
 \hline
 \textbf{Matrix Factorization problem} & \textbf{Section} \\ [0.5ex] 
 \hline
 Standard Matrix Factorization  &  Section~\ref{sec:closed-forms} \\ 
 \hline
 L2-Regularized Matrix Factorization &  Section~\ref{ssec:l2-reg-mf}  \\
 \hline
 Graph Regularized Matrix Factorization & Section~\ref{ssec:graph-reg-mf}  \\
 \hline
 L1-Regularized Matrix Factorization & Section~\ref{ssec:l1-reg-mf}  \\
 \hline
 Nuclear Norm Regularized Matrix Factorization  & Section~\ref{ssec:nuclear-reg-mf}  \\ 
  \hline
 Non-negative Matrix Factorization (NMF)  & Section~\ref{sec:nmf-closed-form} \\
 \hline
L2-regularized NMF  & Section~\ref{ssec:l2-reg-nmf}\\
 \hline
L1-regularized NMF  & Section~\ref{ssec:l1-reg-nmf} \\
 \hline
Graph Regularized NMF  & Section~\ref{ssec:exten-gnmf} \\
 \hline
Symmetric NMF via Non-Symmetric Relaxation  & Section~\ref{ssec:symnf-nmf} \\
 \hline
Sparse NMF & Section~\ref{ssec:sparse-nmf} \\
 \hline
Matrix Completion  & Section~\ref{sec:matrix-completion} \\
\hline
Closed Form Solution with 5th-order Polynomials  & Section~\ref{sec:closed-5-order} \\
\hline
Conversion to Cubic Equation & Section~\ref{ssec:conv-cubic}\\
\hline
Extensions to Mixed Regularization Terms  & Section~\ref{ssec:mixed-reg}\\
\hline
Technical Proofs  & Section~\ref{sec:technical-proofs}\\
 \hline
 Additional Experiments  & Section~\ref{sec:additional-experiments}\\
 \hline
\end{tabular}
\end{center}

\section{Closed Form Solutions Part I for Matrix Factorization }\label{sec:closed-forms}

	Since, the update steps of BPG-MF and CoCaIn BPG-MF have same structure, we provide the closed form expressions to just BPG-MF. We start with the following technical lemma.
	\begin{lemma}\label{lem:helper-1} Let ${\bf Q} \in \R^{A \times B}$ for some positive integers $A$ and $B$. Let $t \geq 0$ and $\norm{{{\bf Q}}}_F \neq 0$ then 
	\[
	\min_{{\bf X} \in \R^{A \times B}}\left\{ \act{{{\bf Q}}, {\bf X}} : \norm{\bf X}_F^2 = t^2 \right\} \equiv \min_{{\bf X} \in \R^{A \times B}}\left\{ \act{{{\bf Q}}, {\bf X}} : \norm{\bf X}_F^2 \leq t^2 \right\} = -{t}\norm{{{\bf Q}}}_F\,,
	 \]
	  with the minimizer at  ${\bf X}^* = -t{{\bf Q}}/\norm{{{\bf Q}}}_F$\,.
	\end{lemma}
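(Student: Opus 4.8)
The plan is to observe that $\R^{A\times B}$ equipped with the Frobenius inner product $\act{{\bf Q},{\bf X}} = \trace({\bf Q}^T{\bf X})$ is a finite-dimensional inner product space whose induced norm is precisely $\norm{\cdot}_F$. The whole statement is then a direct instance of the Cauchy--Schwarz inequality together with its equality case, so no genuine optimization machinery is needed.

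First I would prove the common lower bound. For any ${\bf X}$ feasible for either problem, Cauchy--Schwarz gives $\act{{\bf Q},{\bf X}} \geq -\norm{{\bf Q}}_F\,\norm{{\bf X}}_F$. Under the equality constraint $\norm{{\bf X}}_F^2 = t^2$ (so $\norm{{\bf X}}_F = t$ since $t\geq 0$) this immediately yields $\act{{\bf Q},{\bf X}} \geq -t\norm{{\bf Q}}_F$; under the inequality constraint $\norm{{\bf X}}_F \leq t$ the same bound holds because $\norm{{\bf Q}}_F \geq 0$ makes $-\norm{{\bf Q}}_F\,\norm{{\bf X}}_F$ smallest exactly when $\norm{{\bf X}}_F = t$.

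Next I would exhibit the minimizer. Since $\norm{{\bf Q}}_F\neq 0$, the point ${\bf X}^* = -t{\bf Q}/\norm{{\bf Q}}_F$ is well defined, and a one-line computation gives $\act{{\bf Q},{\bf X}^*} = -t\,\norm{{\bf Q}}_F^2/\norm{{\bf Q}}_F = -t\norm{{\bf Q}}_F$ together with $\norm{{\bf X}^*}_F = t$. Hence ${\bf X}^*$ is feasible for the equality-constrained problem---and therefore also for the inequality-constrained one---and attains the lower bound in both. This simultaneously shows that both optimal values equal $-t\norm{{\bf Q}}_F$, which establishes the claimed equivalence $\equiv$: the equality feasible set is contained in the inequality feasible set (so the inequality minimum is no larger than the equality minimum), while the shared achievable value $-t\norm{{\bf Q}}_F$ forces the two to coincide.

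There is essentially no hard step in this lemma; the only point deserving a word of care is the equality case of Cauchy--Schwarz, which forces any minimizer to be a scalar multiple of ${\bf Q}$ with nonpositive coefficient, so that the norm constraint pins down ${\bf X}^*$ exactly.
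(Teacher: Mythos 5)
Your proof is correct, and it follows essentially the same route as the paper: both arguments reduce the problem to the observation that a linear functional over a Frobenius-norm ball is minimized at the negatively aligned scalar multiple of ${\bf Q}$, and both deduce the equality-constrained case from the fact that the minimizer lands on the boundary. The one difference worth noting is that the paper simply \emph{asserts} that the maximizer of $\act{-{\bf Q},{\bf X}}$ has the form ${\bf X}^{*}=c(-{\bf Q})$ (citing inspiration from Lam\'ee--Teboulle-type results) and then solves for the scalar $c$ from the constraint, whereas your Cauchy--Schwarz lower bound $\act{{\bf Q},{\bf X}}\geq-\norm{{\bf Q}}_F\norm{{\bf X}}_F$ together with its equality case actually \emph{proves} that assertion; so your write-up is marginally more self-contained than the paper's, while buying nothing structurally different.
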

	\begin{proof} The proof is inspired from \cite[Lemma 9]{LT2013}. On rewriting we have the following equivalence
	\[
	\min_{{\bf X} \in \R^{A \times B}}\left\{ \act{{{\bf Q}}, {\bf X}} : \norm{\bf X}_F^2 \leq  t^2 \right\} \equiv -\max_{{\bf X} \in \R^{A \times B}}\left\{ \act{-{\bf Q}, {\bf X}} : \norm{\bf X}_F^2 \leq  t^2 \right\}\,.
	\]
	The expression $\act{-{\bf Q}, {\bf X}}$ is maximized at ${\bf X^*} = c (-{\bf Q}) $ for certain constant $c$. On substituting we have $$\act{-{\bf Q}, {\bf X^*}} = c\norm{{\bf Q}}_F^2\,.$$
	Since, the dependence on $c$ is linear and we additionally require $\norm{\bf X}_F^2 \leq  t^2 $, we can set $c=\frac{t}{\norm{{\bf Q}}_F}$ if $\norm{{\bf Q}}_F \neq 0$ else $c=0$. Hence, the minimizer to
	\[ 
	\min_{{\bf X} \in \R^{A \times B}}\left\{ \act{{{\bf Q}}, {\bf X}} : \norm{\bf X}_F^2 \leq  t^2 \right\}
	\]
	is attained at ${\bf X^*} = -t\frac{{\bf Q}}{\norm{{\bf Q}}_F}$ for $\norm{{\bf Q}}_F \neq 0$ else ${\bf X^*} = 0$. The equivalence in the statement follows as $\norm{{\bf X^*}}_F^2 = t^2$.
	\end{proof}
	
	Consider the following non-convex matrix factorization problem
	\begin{equation}\label{eq:mat-fac-ex-1a}
	\min_{{\bf U}\in \R^{M \times K}, {\bf Z}\in \R^{K \times N}}\, \left\{ \Psi({\bf U},{\bf Z}) :=  \frac 12\norm{{\bf A} - {\bf U}{\bf Z}}^2_{F}  \right\}\,.
	\end{equation}
	Denote $g = \Psi$, $f :=0$, $h = h_a$.
	\begin{proposition}\label{prop:closed-form-1}
	In BPG-MF, with above defined $g,f,h$ the update steps in each iteration are given by ${\bf U}^{{\bf k+1}} = -r\,{\bf P^k}$, ${\bf Z}^{{\bf k+1}} = -r\,{\bf Q^k}$ where $r$ is the non-negative real root of
	\begin{align}
		&c_1\left(\norm{{\bf Q^k}}_F^2 +  \norm{{\bf P^k}}_F^2  \right)r^3 + c_2r-1 = 0\label{alg:simple-0}\,,
	\end{align}
	with $c_1 = 3$ and $c_2 = \norm{{\bf A}}_F$.
	\end{proposition}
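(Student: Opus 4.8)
The plan is to exploit that, with $f\equiv 0$ and $\barc = \R^{M\times K}\times\R^{K\times N}$, the BPG-MF update \eqref{alg:bpg-mf-2} reduces to the \emph{unconstrained} minimization of
\[
G({\bf U},{\bf Z}) = \act{{\bf P^k},{\bf U}} + \act{{\bf Q^k},{\bf Z}} + h_a({\bf U},{\bf Z})\,,
\]
which is convex (since $h_a\in\mathcal G(C)$ is convex and the remaining terms are linear), $C^1$, and supercoercive (by Assumption~\ref{A:AssumptionB} with $f=0$, as $h_a$ grows like the fourth power of the norm). Hence a minimizer exists, and for a convex differentiable function the minimizers are exactly the critical points, so it suffices to solve $\nabla G = 0$ and show the solution is unique and of the claimed form.

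First I would observe that $h_a$ depends on $({\bf U},{\bf Z})$ only through $s := \tfrac12(\norm{\bf U}_F^2+\norm{\bf Z}_F^2)$, with $h_a = 3s^2 + \norm{\bf A}_F\, s$ by \eqref{eq:main-lsmad-h}. Since $\nabla_{\bf U}s = {\bf U}$ and $\nabla_{\bf Z}s = {\bf Z}$, the stationarity conditions read
\[
{\bf P^k} + (6s+\norm{\bf A}_F){\bf U} = 0\,, \qquad {\bf Q^k} + (6s+\norm{\bf A}_F){\bf Z} = 0\,.
\]
Setting $r := (6s+\norm{\bf A}_F)^{-1}\ge 0$ gives immediately ${\bf U}^{{\bf k+1}} = -r\,{\bf P^k}$ and ${\bf Z}^{{\bf k+1}} = -r\,{\bf Q^k}$ with a \emph{common} scaling factor $r$; the coupling of the two blocks through the single scalar $s$ is the crucial structural point and the reason one factor serves both variables. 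Equivalently, one may first fix the radii $\norm{\bf U}_F,\norm{\bf Z}_F$ and use Lemma~\ref{lem:helper-1} to align ${\bf U}$ with $-{\bf P^k}$ and ${\bf Z}$ with $-{\bf Q^k}$, reducing to a two-variable problem in the radii.

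To close the system I would substitute these back into $s$: since $s = \tfrac12 r^2(\norm{{\bf P^k}}_F^2+\norm{{\bf Q^k}}_F^2)$ while $6s+\norm{\bf A}_F = 1/r$, multiplying the relation $6s = 1/r - \norm{\bf A}_F$ by $r$ and rearranging yields
\[
3\left(\norm{{\bf P^k}}_F^2+\norm{{\bf Q^k}}_F^2\right)r^3 + \norm{\bf A}_F\, r - 1 = 0\,,
\]
which is exactly \eqref{alg:simple-0} with $c_1 = 3$, $c_2 = \norm{\bf A}_F$.

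Finally I would verify that this cubic has a unique admissible root: its left-hand side equals $-1<0$ at $r=0$, tends to $+\infty$ as $r\to+\infty$, and has derivative $9(\norm{{\bf P^k}}_F^2+\norm{{\bf Q^k}}_F^2)r^2 + \norm{\bf A}_F \ge 0$, so it is strictly increasing on $[0,\infty)$ and has exactly one nonnegative root. This determines $r$, hence the unique critical point, which by convexity of $G$ is the global minimizer, giving the claimed update. I expect the main obstacle to be the clean justification that the two blocks share the single scaling factor $r$ and that the coupled stationarity system collapses to the scalar cubic; the monotonicity/uniqueness argument and the edge cases (${\bf P^k}=0$ or ${\bf Q^k}=0$, where ${\bf U}^{{\bf k+1}}=-r{\bf P^k}$ still holds) are then routine.
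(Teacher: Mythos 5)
Your proposal is correct, but it reaches the cubic by a genuinely different route than the paper. The paper never writes down the joint first-order optimality system: it reduces the subproblem to an outer minimization over the radii $t_1=\norm{\bf U}_F$, $t_2=\norm{\bf Z}_F$, invokes Lemma~\ref{lem:helper-1} to solve the inner linear-over-Frobenius-ball problems (this is where ${\bf U}$ gets aligned with $-{\bf P^k}$ and ${\bf Z}$ with $-{\bf Q^k}$, after an equality-versus-inequality constraint equivalence), then differentiates the resulting two-dimensional problem in $(t_1,t_2)$ and substitutes $t_1=r\norm{{\bf P^k}}_F$, $t_2=r\norm{{\bf Q^k}}_F$ to arrive at \eqref{alg:simple-0}. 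You instead use that $G$ is convex, differentiable and coercive, so the update is the unique zero of $\nabla G$, and the radial structure $h_a=3s^2+\norm{\bf A}_F\,s$ with $s=\tfrac12\left(\norm{\bf U}_F^2+\norm{\bf Z}_F^2\right)$ forces both blocks to carry the \emph{same} scalar multiplier $6s+\norm{\bf A}_F$, collapsing the coupled system to the scalar cubic in one step; your strict-monotonicity argument also makes existence and uniqueness of the nonnegative root explicit, which the paper leaves implicit. For this unconstrained $f\equiv 0$ instance your argument is shorter and cleaner. What the paper's layered reduction buys is reusability: the same skeleton, with Lemma~\ref{lem:helper-1} swapped for its thresholded or projected variants, carries over essentially verbatim to the nonsmooth and constrained extensions (L1 in Proposition~\ref{prop:closed-form-3}, nonnegativity in Proposition~\ref{prop:main-prop-nmf}, nuclear norm, hard sparsity), where your direct stationarity argument would have to be replaced by prox or normal-cone reasoning. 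The only shared blind spot is the fully degenerate case $\norm{\bf A}_F=0$ with ${\bf P^k}={\bf Q^k}={\bf 0}$, where the cubic has no root and the minimizer is the origin; neither your proof nor the paper's addresses it, so this is not a gap relative to the paper.
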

	\begin{proof}
	Consider the following subproblem
	\begin{align*}
	  ({\bf U}^{{\bf k+1}}, {\bf Z}^{{\bf k+1}}) \in \underset{({\bf U},{\bf Z}) \in \R^{M \times K}\times \R^{K\times N}}{\argmin} &\left\{\act{ {\bf P^k}, {\bf U}} + \act{ {\bf Q^k} , {\bf Z}} \right.\\
	& \left. + c_1 \left(\frac{\norm{\bf U}_F^2 +\norm{\bf Z}_F^2 }{2} \right)^2 + c_2\left( \frac{\norm{\bf U}_F^2 +\norm{\bf Z}_F^2 }{2}\right)  \right\}\,.
	\end{align*}
	Denote the objective in the above minimization problem as $\mathcal{O({\bf U}^{\bf k}, {\bf Z}^{\bf k})}$. Now, the following holds
	\begin{align}
		&\min_{({\bf U},{\bf Z}) \in \R^{M \times K}\times \R^{K\times N}} \left(\mathcal{O({\bf U}^{\bf k}, {\bf Z}^{\bf k})}\right)\nonumber \\
		&\equiv \min_{t_1 \geq 0, t_2 \geq 0} \left\{\min_{({\bf U},{\bf Z}) \in \R^{M \times K}\times \R^{K\times N},\norm{\bf U}_F =t_1,\norm{\bf Z}_F =t_2} \left(\mathcal{O({\bf U}^{\bf k}, {\bf Z}^{\bf k})} \right)\right\}\,,\label{eq:subprob-equiv-1}\\
		&\equiv \min_{t_1 \geq 0, t_2 \geq 0} \left\{\min_{({\bf U},{\bf Z}) \in \R^{M \times K}\times \R^{K\times N},\norm{\bf U}_F \leq t_1,\norm{\bf Z}_F \leq t_2} \left(\mathcal{O({\bf U}^{\bf k}, {\bf Z}^{\bf k})} \right)\right\}\label{eq:subprob-equiv-2}\,,
	\end{align}
	where the first step is a simple rewriting of the objective.  The second step is non-trivial. In order to prove \eqref{eq:subprob-equiv-2} we rewrite \eqref{eq:subprob-equiv-1} as 
	\begin{align*}
		\min_{t_1 \geq 0, t_2 \geq 0} &\left\{  \min_{{\bf U_1} \in \R^{M \times K}} \left\{ \act{ {\bf P^k}, {\bf U_1}  } : \norm{{\bf U_1}}_F^2 = t_1\right\} \right. \\
	& \left. + \min_{{\bf Z_1} \in \R^{K\times N} } \left\{ \act{ {\bf Q^k} , {\bf Z_1}}: \norm{{\bf Z_1}}_F^2 = t_2\right\}\right.\\
	&\left. + c_1\left(\frac{t_1^2 + t_2^2}{2}\right)^2 + c_2\left(\frac{t_1^2 + t_2^2}{2}\right) \right\}\,.
	\end{align*}
	Now, note the following equivalence due to Lemma~\ref{lem:helper-1}  
	\begin{align*}
		\min_{{\bf U_1} \in \R^{M \times K}} \left\{ \act{ {\bf P^k}, {\bf U_1}  } : \norm{{\bf U_1}}_F^2 = t_1\right\} &\equiv \min_{{\bf U_1} \in \R^{M \times K}} \left\{ \act{ {\bf P^k}, {\bf U_1}  } : \norm{{\bf U_1}}_F^2 \leq t_1\right\}\,,\\
		\min_{{\bf Z_1} \in \R^{K\times N} } \left\{ \act{ {\bf Q^k} , {\bf Z_1}}: \norm{{\bf Z_1}}_F^2 = t_2\right\} &\equiv \min_{{\bf Z_1} \in \R^{K\times N} } \left\{ \act{ {\bf Q^k} , {\bf Z_1}}: \norm{{\bf Z_1}}_F^2 \leq t_2\right\}\,.
	\end{align*}
	This proves \eqref{eq:subprob-equiv-2}. Now, we solve for  $({\bf U}^{{\bf k+1}}, {\bf Z}^{{\bf k+1}})$ via the following strategy. Denote
	\[
	{{\bf U}^{{\bf *}}_1}(t_1) \in  \argmin \left\{ \act{ {\bf P^k}, {\bf U_1}  } : {\bf U_1} \in \R^{M \times K}\,, \norm{{\bf U_1}}_F^2 \leq t_1\right\}\,,
	\]
	\[
		{{\bf Z}^{*}_1 }(t_2) \in \argmin \left\{ \act{ {\bf Q^k} , {\bf Z_1}}: {\bf Z_1} \in \R^{K\times N}\,, \norm{{\bf Z_1}}_F^2 \leq t_2 \right\}\,.
	\]
	Then we obtain  $({\bf U}^{{\bf k+1}}, {\bf Z}^{{\bf k+1}}) =  ({{\bf U}^{{\bf *}}_1}(t_1^*),{{\bf Z}^{*}_1 }(t_2^*))$, where $t_1^*$ and $t_2^*$ are obtained by solving the following two dimensional subproblem
	\begin{align*}
	  (t_1^*,t_2^*) \in  \underset{t_1\geq 0, t_2\geq 0}{\mathrm{argmin}}   &\left\{  \min_{{\bf U_1} \in \R^{M \times K}} \left\{ \act{ {\bf P^k}, {\bf U_1}  } : \norm{{\bf U_1}}_F^2 \leq t_1\right\} \right. \\
	& \left. + \min_{{\bf Z_1} \in \R^{K\times N} } \left\{ \act{ {\bf Q^k} , {\bf Z_1}}: \norm{{\bf Z_1}}_F^2 \leq t_2\right\}\right.\\
	&\left. + c_1\left(\frac{t_1^2 + t_2^2}{2}\right)^2 + c_2\left(\frac{t_1^2 + t_2^2}{2}\right) \right\}\,.
	\end{align*}

	Note that inner minimization subproblems can be trivially solved once we obtain ${{\bf U}^{{\bf *}}_1}(t_1)$ and ${{\bf Z}^{*}_1 }(t_2)$ via Lemma~\ref{lem:helper-1}. Then the solution to the subproblem in each iteration is as follows:
	\begin{align*}
	{\bf U}^{{\bf k+1}} 
	&= \left.
	\begin{cases}
	    t_1^*\frac{-{\bf P^k}}{\norm{{\bf P^k}}_F}, & \text{for } \norm{{\bf P^k}}_F \neq 0\,, \\
	    {\bf 0} &  otherwise\,.
	\end{cases}\right.\\
	{\bf Z}^{{\bf k+1}} 
	&= \left.
	\begin{cases}
	    t_2^*\frac{-{\bf Q^k}}{\norm{{\bf Q^k}}_F}, & \text{for } \norm{{\bf Q^k}}_F \neq 0\,, \\
	    {\bf 0} &  otherwise\,.
	\end{cases}
	 \right.
	\end{align*}
	We solve for $t_1^*$ and $t_2^*$ with the following two dimensional minimization problem
	\begin{align*}
	\underset{t_1\geq 0, t_2\geq 0}{\mathrm{argmin}} & \left\{  -t_1\norm{{\bf P^k}}_F  - t_2{\norm{{\bf Q^k}}_F}  + c_1\left(\frac{t_1^2 + t_2^2}{2}\right)^2 + c_2\left(\frac{t_1^2 + t_2^2}{2}\right)\right\}\,.
	\end{align*}
	Thus, the solutions $t_1^*$ and $t_2^*$ are the non-negative real roots of the following equations
	\begin{align*}
	&-\norm{{\bf P^k}}_F  + c_1(t_1^2 + t_2^2)t_1 + c_2t_1 = 0\\
	&-\norm{{\bf Q^k}}_F + c_1(t_1^2 + t_2^2)t_2 + c_2t_2 = 0
	\end{align*}
	Further simplifications lead to $t_1 = r \norm{{\bf P^k}}_F$ and $t_2 = r \norm{{\bf Q^k}}_F$ for some $r \geq 0$ such that $r$ satisfies the following cubic equation
	\[
		c_1\left(\norm{{\bf Q^k}}_F^2 +  \norm{{\bf P^k}}_F^2  \right)r^3 + c_2r-1 = 0\,.
	\]
	\end{proof}

	\subsection{Extensions to L2-Regularized Matrix Factorization}\label{ssec:l2-reg-mf} We consider the following L2-Regularized Matrix Factorization problem \cite{LY2009}.
	\begin{equation}\label{eq:mat-fac-ex-2}
	\min_{{\bf U}\in \R^{M \times K}, {\bf Z}\in \R^{K \times N}}\, \left\{ \Psi({\bf U},{\bf Z}) :=  \frac 12\norm{{\bf A} - {\bf U}{\bf Z}}^2_{F} + \frac{\lambda_0}{2}\left(\norm{{\bf U} }_F^2 + \norm{{\bf Z} }_F^2 \right)  \right\}\,.
	\end{equation}
	Denote $g := \frac 12\norm{{\bf A} - {\bf U}{\bf Z}}^2_{F}  $, $f:=    \frac{\lambda_0}{2}\left(\norm{{\bf U} }_F^2 + \norm{{\bf Z} }_F^2 \right)$ and $h = h_a$.
	\begin{proposition}\label{prop:closed-form-l2a}
	In BPG-MF, with the above defined $g,f,h$ the update steps in each iteration are given by ${\bf U}^{{\bf k+1}} = -r\,{\bf P^k}$, ${\bf Z}^{{\bf k+1}} = -r\,{\bf Q^k}$ where $r$ is the non-negative real root of
	\begin{align}
		&c_1\left(\norm{{\bf Q^k}}_F^2 +  \norm{{\bf P^k}}_F^2  \right)r^3 + (c_2+\lambda_0)r-1 = 0\label{alg:simple-0}\,,
	\end{align}
	with $c_1 = 3$ and $c_2 = \norm{{\bf A}}_F$.
	\end{proposition}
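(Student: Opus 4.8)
The plan is to follow the proof of Proposition~\ref{prop:closed-form-1} almost verbatim, since the only new ingredient is the additive $\ell_2$ penalty $f({\bf U},{\bf Z}) = \frac{\lambda_0}{2}\big(\norm{{\bf U}}_F^2 + \norm{{\bf Z}}_F^2\big)$, which is radially symmetric in precisely the same way as the term $h_2$ of the kernel $h_a = c_1 h_1 + c_2 h_2$. First I would write out the BPG-MF subproblem objective $\mathcal{O}({\bf U},{\bf Z}) = \act{{\bf P^k},{\bf U}} + \act{{\bf Q^k},{\bf Z}} + \lambda f({\bf U},{\bf Z}) + h_a({\bf U},{\bf Z})$ and observe that the penalty $\lambda f$ and the $c_2 h_2$ part of $h_a$ both depend on $({\bf U},{\bf Z})$ only through $\norm{{\bf U}}_F^2 + \norm{{\bf Z}}_F^2$. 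Hence the regularizer merely shifts the coefficient of the degree-two part of the kernel, turning $c_2$ into $c_2 + \lambda_0$, while the quartic term $c_1 h_1$ and the two linear coupling terms remain exactly as in the unregularized case.

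Next I would reparametrize by the norms, setting $t_1 = \norm{{\bf U}}_F$ and $t_2 = \norm{{\bf Z}}_F$, and split the minimization into an outer minimization over $(t_1,t_2)$ and an inner minimization over the directions of ${\bf U}$ and ${\bf Z}$ on the spheres of radii $t_1,t_2$. Because the regularizer and all kernel terms are constant on these spheres, the inner problem sees only the linear terms $\act{{\bf P^k},{\bf U}}$ and $\act{{\bf Q^k},{\bf Z}}$, so Lemma~\ref{lem:helper-1} applies unchanged: it both establishes the equivalence between the sphere constraints $\norm{\cdot}_F^2 = t_i$ and the ball constraints $\norm{\cdot}_F^2 \le t_i$ (this is the step \eqref{eq:subprob-equiv-1}--\eqref{eq:subprob-equiv-2}) and identifies the optimal directions ${\bf U}^* = -t_1 {\bf P^k}/\norm{{\bf P^k}}_F$ and ${\bf Z}^* = -t_2 {\bf Q^k}/\norm{{\bf Q^k}}_F$, with the zero matrix in the degenerate case. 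Substituting then reduces the whole problem to the two-dimensional minimization
\[
\min_{t_1,t_2\ge 0}\left\{ -t_1\norm{{\bf P^k}}_F - t_2\norm{{\bf Q^k}}_F + c_1\left(\frac{t_1^2+t_2^2}{2}\right)^2 + (c_2+\lambda_0)\left(\frac{t_1^2+t_2^2}{2}\right) \right\}\,.
\]

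Finally I would solve this smooth two-dimensional problem through its first-order optimality conditions, obtaining $-\norm{{\bf P^k}}_F + c_1(t_1^2+t_2^2)t_1 + (c_2+\lambda_0)t_1 = 0$ together with the analogous equation in $t_2$. The ansatz $t_1 = r\norm{{\bf P^k}}_F$, $t_2 = r\norm{{\bf Q^k}}_F$ decouples both equations into the single cubic $c_1\big(\norm{{\bf P^k}}_F^2 + \norm{{\bf Q^k}}_F^2\big)r^3 + (c_2+\lambda_0)r - 1 = 0$; since its left-hand side equals $-1$ at $r=0$, is strictly increasing on $[0,\infty)$ (its derivative $3c_1(\norm{{\bf P^k}}_F^2+\norm{{\bf Q^k}}_F^2)r^2 + (c_2+\lambda_0)$ is positive) and tends to $+\infty$, it has a unique admissible non-negative root $r$. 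Back-substitution gives ${\bf U}^{{\bf k+1}} = t_1^*(-{\bf P^k})/\norm{{\bf P^k}}_F = -r{\bf P^k}$ and likewise ${\bf Z}^{{\bf k+1}} = -r{\bf Q^k}$, as claimed. The step needing the most care is simply confirming that the added radial penalty, being monotone increasing in $t_1^2+t_2^2$, does not disturb the constraint relaxation of Proposition~\ref{prop:closed-form-1}; once this is noted, the entire reduction transfers and the remaining work is the routine recomputation with $c_2$ replaced by $c_2+\lambda_0$.
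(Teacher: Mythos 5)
Your proposal is correct and takes essentially the same approach the paper intends: the paper in fact skips this proof entirely, remarking only that it is ``very similar to Proposition~\ref{prop:closed-form-1} and only change is in $c_2$'', and your write-up is exactly that recomputation---radial reparametrization, Lemma~\ref{lem:helper-1} for the sphere/ball equivalence and optimal directions, the two-dimensional outer problem, and the ansatz $t_1=r\norm{{\bf P^k}}_F$, $t_2=r\norm{{\bf Q^k}}_F$---with the added (and welcome) observation that the cubic is strictly increasing on $[0,\infty)$ and so has a unique non-negative root. One pedantic caveat you inherit from the paper rather than introduce: since the subproblem \eqref{alg:bpg-mf-2} contains $\lambda f$, the shifted coefficient is strictly $c_2+\lambda\lambda_0$ rather than $c_2+\lambda_0$ (compare the $\lambda\lambda_1$ factor carried through the L1 case in Proposition~\ref{prop:closed-form-3}), so your derivation is faithful to the stated result but both it and the statement elide this factor of $\lambda$.
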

	We skip the proof as it is very similar to Proposition~\ref{prop:closed-form-1} and only change is in $c_2$.

	\subsection{Extensions to Graph Regularized Matrix Factorization}\label{ssec:graph-reg-mf} Graph Regularized Matrix Factorization was proposed in \cite{CHHH2011}. However, they used non-negativity constraints. We simplify the problem here by not considering the non-negativity constraints. We later show in Section~\ref{ssec:exten-gnmf}, how the  non-negativity constraints are handled. Here, given ${\bf \mathcal L} \in \R^{M \times M}$ we are interested to solve
	\begin{equation*}
	\min_{{\bf U}\in \R^{M \times K}, {\bf Z}\in \R^{K \times N}}\, \left\{ \Psi({\bf U},{\bf Z}) :=  \frac 12\norm{{\bf A} - {\bf U}{\bf Z}}^2_{F} +\frac{\mu_0}{2}{\bf tr}({\bf U}^T{\bf \mathcal L}{\bf U}) + \frac{\lambda_0}{2}\left(\norm{{\bf U} }_F^2 + \norm{{\bf Z} }_F^2 \right)  \right\}\,.
	\end{equation*}
	In such a case, it is easy to extend the following ideas to Graph Regularized Non-negative Matrix Factorization.	We show here $L$-smad property. We first need the following technical lemma.
	\begin{lemma}\label{lem:hessain-helper-1}
	Let $g_1({\bf U})  = {\bf tr}({\bf U}^T{\bf \mathcal L}{\bf U})$, then for any ${\bf H} \in \R^{M \times K}$ we have $\nabla g_1({\bf U}) = {\bf \mathcal L}{\bf U} + {\bf \mathcal L}^T{\bf U}$, 
	\begin{align*}
	\act{{\bf H}, \nabla^2 g_1({\bf U}){\bf H}} &= 2\act{{\bf \mathcal L}{\bf H}, {\bf H}}\,.
	\end{align*}
	\end{lemma}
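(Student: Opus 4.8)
The plan is to treat this as a direct matrix-calculus computation, exploiting that $g_1$ is a quadratic form in ${\bf U}$, so that its gradient is affine-linear and its Hessian is constant in ${\bf U}$. First I would expand $g_1({\bf U} + t{\bf H})$ along an arbitrary direction ${\bf H} \in \R^{M\times K}$:
\[
g_1({\bf U} + t{\bf H}) = {\bf tr}({\bf U}^T {\bf \mathcal L}{\bf U}) + t\left[{\bf tr}({\bf H}^T {\bf \mathcal L}{\bf U}) + {\bf tr}({\bf U}^T {\bf \mathcal L}{\bf H})\right] + t^2\,{\bf tr}({\bf H}^T {\bf \mathcal L}{\bf H})\,,
\]
which is exact, with no higher-order terms, precisely because $g_1$ is quadratic.

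For the gradient, I would read off the coefficient of $t$ and rewrite both trace terms as Frobenius inner products. Using $\act{{\bf A},{\bf B}} = {\bf tr}({\bf A}^T {\bf B})$ together with the transpose-invariance of the trace, ${\bf tr}({\bf X}) = {\bf tr}({\bf X}^T)$, the second term becomes ${\bf tr}({\bf U}^T {\bf \mathcal L}{\bf H}) = {\bf tr}({\bf H}^T {\bf \mathcal L}^T {\bf U}) = \act{{\bf H}, {\bf \mathcal L}^T{\bf U}}$, while the first is directly $\act{{\bf H}, {\bf \mathcal L}{\bf U}}$. Hence the directional derivative equals $\act{{\bf H}, {\bf \mathcal L}{\bf U} + {\bf \mathcal L}^T{\bf U}}$ for every ${\bf H}$, and since the gradient is the unique element representing this linear functional with respect to the Frobenius inner product, $\nabla g_1({\bf U}) = {\bf \mathcal L}{\bf U} + {\bf \mathcal L}^T{\bf U}$.

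For the Hessian quadratic form, I would compare the $t^2$ coefficient above with the second-order Taylor term $\tfrac12 \act{{\bf H}, \nabla^2 g_1({\bf U}){\bf H}}$, which gives $\act{{\bf H}, \nabla^2 g_1({\bf U}){\bf H}} = 2\,{\bf tr}({\bf H}^T {\bf \mathcal L}{\bf H})$. The remaining step is to identify ${\bf tr}({\bf H}^T {\bf \mathcal L}{\bf H})$ with $\act{{\bf \mathcal L}{\bf H}, {\bf H}}$, and this is the one point I would treat with care, since ${\bf \mathcal L}$ need not be symmetric. The identity still holds because ${\bf tr}({\bf H}^T {\bf \mathcal L}{\bf H}) = {\bf tr}\big(({\bf H}^T {\bf \mathcal L}{\bf H})^T\big) = {\bf tr}({\bf H}^T {\bf \mathcal L}^T{\bf H})$, so the symmetric and transposed contractions of ${\bf \mathcal L}$ against ${\bf H}$ coincide, and either equals $\act{{\bf \mathcal L}{\bf H}, {\bf H}}$. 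This yields the claimed $\act{{\bf H}, \nabla^2 g_1({\bf U}){\bf H}} = 2\act{{\bf \mathcal L}{\bf H}, {\bf H}}$.

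There is no genuine obstacle here; the computation is elementary. The only point demanding attention is the bookkeeping of transposes for a possibly non-symmetric ${\bf \mathcal L}$, which surfaces in the gradient as the explicit symmetrization ${\bf \mathcal L} + {\bf \mathcal L}^T$ but silently disappears from the Hessian form through the trace-transpose identity used above. I would also remark that $\nabla^2 g_1$ is constant in ${\bf U}$, reflecting that $g_1$ is a pure quadratic; this will be convenient when the lemma is subsequently used to verify the $L$-smad property for the graph-regularized problem.
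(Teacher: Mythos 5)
Your proof is correct and follows essentially the same route as the paper: the paper likewise expands the quadratic form $\act{{\bf \mathcal L}({\bf U}+{\bf H}),{\bf U}+{\bf H}}$, collects first- and second-order terms in ${\bf H}$, and uses the same transpose identity $\act{{\bf \mathcal L}{\bf H},{\bf U}}=\act{{\bf \mathcal L}^T{\bf U},{\bf H}}$ to obtain the symmetrized gradient. Your $t$-parametrized expansion and the explicit trace-transpose check for the Hessian term are only cosmetic refinements of the same computation.
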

	\begin{proof}
	Note that ${\bf tr}({\bf U}^T{\bf \mathcal L}{\bf U}) = \act{{\bf \mathcal L}{\bf U}, {\bf U}}$, now we obtain for ${\bf H} \in \R^{M \times K}$ the following
	\begin{align*}
	\act{{\bf \mathcal L}({\bf U + H}), {\bf U + H}} &= \act{{\bf \mathcal L}({\bf U + H}), {\bf U + H}}\\
	&= \act{{\bf \mathcal L}{\bf U}, {\bf U}} + \act{{\bf \mathcal L}{\bf U}, {\bf H}} + \act{{\bf \mathcal L}{\bf H}, {\bf U}} + \act{{\bf \mathcal L}{\bf H}, {\bf H}}\,,\\
	&= \act{{\bf \mathcal L}{\bf U}, {\bf U}} + \act{{\bf \mathcal L}{\bf U}, {\bf H}} + \act{{\bf \mathcal L}^T{\bf U}, {\bf H}} + \act{{\bf \mathcal L}{\bf H}, {\bf H}}\,.
	\end{align*}
	Thus the statement holds, by collecting the first and second order terms.
	\end{proof}
	Now, we prove the $L$-smad property.
	\begin{proposition}\label{prop:l-smad-2}
	Let $g({\bf U}, {\bf Z}) = \frac 12\norm{{\bf A} - {\bf U}{\bf Z}}^2_{F} +\frac{\mu_0}{2}{\bf tr}({\bf U}^T{\bf \mathcal L}{\bf U})$. Then, for a certain constant $L\geq 1$, the function $g$ satisfies $L$-smad property with respect to the following kernel generating distance,
	 \[
	 h_c({\bf U},{\bf Z}) =  3h_1({\bf U},{\bf Z}) + \left(\norm{{\bf A}}_F + \mu_0\norm{\bf \mathcal L}_F\right) h_2({\bf U},{\bf Z}) \,.
	 \]
	\end{proposition}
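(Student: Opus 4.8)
The plan is to reduce the claim to Proposition~\ref{prop:l-smad-1} by isolating the graph-regularization term. Write $g = g_0 + \frac{\mu_0}{2}g_1$, where $g_0({\bf U},{\bf Z}) = \frac 12\norm{{\bf A} - {\bf U}{\bf Z}}^2_F$ and $g_1({\bf U}) = {\bf tr}({\bf U}^T{\bf \mathcal L}{\bf U})$, and observe that the proposed kernel splits correspondingly as $h_c = h_a + \mu_0\norm{\bf \mathcal L}_F h_2$, with $h_a = 3h_1 + \norm{\bf A}_F h_2$ the kernel of Proposition~\ref{prop:l-smad-1}. Since the $L$-smad property is exactly the convexity of $Lh_c - g$ and $Lh_c + g$, I would write
\[
Lh_c \mp g = (Lh_a \mp g_0) + \mu_0\left(L\norm{\bf \mathcal L}_F h_2 \mp \tfrac{1}{2}g_1\right)\,,
\]
and treat the two summands separately; as a sum of convex functions is convex, it suffices to show each bracket is convex (the outer factor $\mu_0 \geq 0$ preserving convexity).

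The first bracket, $Lh_a \mp g_0$, is convex for every $L \geq 1$ directly by Proposition~\ref{prop:l-smad-1}, so nothing more is required there. For the second bracket I would argue via the Hessian, since $L\norm{\bf \mathcal L}_F h_2 \mp \frac{1}{2}g_1$ is quadratic in $({\bf U},{\bf Z})$. Using that $h_2 = \frac12(\norm{\bf U}_F^2 + \norm{\bf Z}_F^2)$ has Hessian equal to the identity, and invoking Lemma~\ref{lem:hessain-helper-1} for the second derivative of $g_1$ (which depends on ${\bf U}$ only, so it contributes nothing in the ${\bf Z}$-direction), the Hessian quadratic form in a direction $({\bf H},{\bf G})$ evaluates to
\[
L\norm{\bf \mathcal L}_F\left(\norm{\bf H}_F^2 + \norm{\bf G}_F^2\right) \mp \act{{\bf \mathcal L}{\bf H}, {\bf H}}\,.
\]

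The crux is then the bound $|\act{{\bf \mathcal L}{\bf H}, {\bf H}}| \leq \norm{\bf \mathcal L}_F\norm{\bf H}_F^2$, which follows from Cauchy--Schwarz in the Frobenius inner product together with submultiplicativity of the Frobenius norm, $\norm{{\bf \mathcal L}{\bf H}}_F \leq \norm{\bf \mathcal L}_F\norm{\bf H}_F$. Substituting this into the display gives the lower bound $(L-1)\norm{\bf \mathcal L}_F\norm{\bf H}_F^2 + L\norm{\bf \mathcal L}_F\norm{\bf G}_F^2 \geq 0$ for $L \geq 1$, valid for both signs since $|\mp\act{{\bf \mathcal L}{\bf H},{\bf H}}| = |\act{{\bf \mathcal L}{\bf H},{\bf H}}|$; hence each bracket is convex and so is $Lh_c \mp g$. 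I expect the only genuinely delicate point to be pinning down the coefficient $\mu_0\norm{\bf \mathcal L}_F$: it is precisely this Frobenius-norm estimate that forces the extra curvature of $h_2$ to dominate the indefinite graph term, and that dictates the constant. Confirming that $h_c$ is itself a valid kernel generating distance (convexity, $C^1$ smoothness, full domain) is routine, since $h_c$ merely adds the convex smooth term $\mu_0\norm{\bf \mathcal L}_F h_2$ to $h_a$.
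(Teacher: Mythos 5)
Your proposal is correct and follows essentially the same route as the paper, which proves the claim by the Hessian comparison of Proposition~\ref{prop:l-smad-1} augmented with Lemma~\ref{lem:hessain-helper-1}: the extra curvature term $\mu_0\act{{\bf \mathcal L}{\bf H},{\bf H}}$ is absorbed via exactly your estimate $|\act{{\bf \mathcal L}{\bf H},{\bf H}}| \leq \norm{\bf \mathcal L}_F\norm{{\bf H}}_F^2$ into the added $\mu_0\norm{\bf \mathcal L}_F h_2$ term. Your splitting of $Lh_c \mp g$ into the two convex brackets is only a cleaner modular packaging of the same single Hessian inequality the paper carries out.
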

	\begin{proof}
		The proof is similar to Proposition~\ref{prop:l-smad-1} and Lemma~\ref{lem:hessain-helper-1} must be applied for the result.
	\end{proof}
	Denote $g := \frac 12\norm{{\bf A} - {\bf U}{\bf Z}}^2_{F} +\frac{\mu_0}{2}{\bf tr}({\bf U}^T{\bf \mathcal L}{\bf U})$, $f:=  \frac{\lambda_0}{2}\left(\norm{{\bf U} }_F^2 + \norm{{\bf Z} }_F^2 \right) $ and $h = h_c$.
	\begin{proposition}\label{prop:closed-form-2a}
	In BPG-MF, with the above defined $f,g,h$ the update steps in each iteration are given by ${\bf U}^{{\bf k+1}} = -r\,{\bf P^k}$, ${\bf Z}^{{\bf k+1}} = -r\,{\bf Q^k}$ where $r\geq 0$ and satisfies
	\begin{align}
		&c_1\left(\norm{{\bf Q^k}}_F^2 +  \norm{{\bf P^k}}_F^2  \right)r^3 + (c_2+\mu_0\norm{\bf \mathcal L}_F + \lambda_0)r-1 = 0\,,\label{alg:simple-0}
	\end{align}
	with $c_1 = 3$ and $c_2 = \norm{{\bf A}}_F$.
	\end{proposition}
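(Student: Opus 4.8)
The plan is to reduce the subproblem directly to the computation already carried out in Proposition~\ref{prop:closed-form-1}. First I would write out the BPG-MF update \eqref{alg:bpg-mf-2} for the present $f,g,h_c$. The key observation is that the graph-regularization term $\frac{\mu_0}{2}{\bf tr}({\bf U}^T{\bf \mathcal L}{\bf U})$ belongs to $g$, not to the kernel. Hence, by Lemma~\ref{lem:hessain-helper-1}, its gradient ${\bf \mathcal L}{\bf U} + {\bf \mathcal L}^T{\bf U}$ is already absorbed into ${\bf P^k} = \lambda\nabla_{\bf U} g - \nabla_{\bf U} h_c$ when the step is formed; it therefore plays no further role inside the $\argmin$, which only ever sees ${\bf P^k},{\bf Q^k}$ through the two linear terms $\act{{\bf P^k},{\bf U}}$ and $\act{{\bf Q^k},{\bf Z}}$. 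The validity of using $h_c$ as a kernel is guaranteed by the $L$-smad Proposition~\ref{prop:l-smad-2}, so nothing needs to be re-checked there.

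Next I would collect the remaining, non-linear part of the objective. Since $h_c = c_1 h_1 + (c_2 + \mu_0\norm{\bf \mathcal L}_F)\,h_2$ and the L2 regularizer $f = \frac{\lambda_0}{2}(\norm{\bf U}_F^2 + \norm{\bf Z}_F^2)$ is itself a multiple of $h_2$, the full $({\bf U},{\bf Z})$-dependence of the objective that is not linear is governed entirely by $h_1$ and $h_2$, i.e.\ only through $\norm{\bf U}_F$ and $\norm{\bf Z}_F$. Writing the effective coefficient of $h_2$ as $\tilde c_2 := c_2 + \mu_0\norm{\bf \mathcal L}_F + \lambda_0$, the subproblem takes \emph{exactly} the structure solved in Proposition~\ref{prop:closed-form-1}, with $c_2$ replaced by $\tilde c_2$. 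I would then invoke the identical two-step reduction: rewrite the minimization over $({\bf U},{\bf Z})$ as a nested minimization over the radii $t_1 = \norm{\bf U}_F$, $t_2 = \norm{\bf Z}_F$ and then over directions, where Lemma~\ref{lem:helper-1} supplies both the equivalence of the equality- and inequality-constrained inner problems and the closed-form inner minimizers ${\bf U} = -t_1{\bf P^k}/\norm{{\bf P^k}}_F$ and ${\bf Z} = -t_2{\bf Q^k}/\norm{{\bf Q^k}}_F$ (and $\bf 0$ in the degenerate cases $\norm{{\bf P^k}}_F=0$ or $\norm{{\bf Q^k}}_F=0$).

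Finally, I would substitute these minimizers to obtain the reduced two-dimensional problem in $(t_1,t_2)$, take its first-order optimality conditions, and simplify. As in Proposition~\ref{prop:closed-form-1}, the two stationarity equations force $t_1 = r\norm{{\bf P^k}}_F$ and $t_2 = r\norm{{\bf Q^k}}_F$ for a common scalar $r\geq 0$, which then satisfies the cubic $c_1(\norm{{\bf Q^k}}_F^2 + \norm{{\bf P^k}}_F^2)r^3 + \tilde c_2\, r - 1 = 0$, yielding the stated equation and the updates ${\bf U}^{{\bf k+1}} = -r{\bf P^k}$, ${\bf Z}^{{\bf k+1}} = -r{\bf Q^k}$. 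Because every routine step is literally the one from Proposition~\ref{prop:closed-form-1}, I expect no genuine obstacle; the only point requiring care is the bookkeeping of where each regularizer lands — confirming that the graph term enters solely through ${\bf P^k}$ via Lemma~\ref{lem:hessain-helper-1}, while both the $h_2$-part of $h_c$ and the radial regularizer $f$ merely augment the scalar coefficient $c_2$ to $\tilde c_2$.
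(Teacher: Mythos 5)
Your proposal is correct and follows essentially the same route as the paper, whose entire proof of this proposition is the remark that it is ``similar to Proposition~\ref{prop:closed-form-1} and only $c_2$ changes''; you simply make explicit the two bookkeeping facts this relies on, namely that the graph term enters only through ${\bf P^k}$ (and through the $h_2$-coefficient of $h_c$) and that $f$ augments that coefficient, after which the radial reduction via Lemma~\ref{lem:helper-1} and the cubic in $r$ go through verbatim. One small wrinkle you inherit from the paper rather than introduce: since the subproblem \eqref{alg:bpg-mf-2} contains $\lambda f$, the effective $h_2$-coefficient is strictly $c_2+\mu_0\norm{\bf \mathcal L}_F+\lambda\lambda_0$, so the $\lambda_0$ in the stated cubic matches the paper's own convention (as in Proposition~\ref{prop:closed-form-l2a}) but glosses over the step-size factor that the L1 case does track via $\SSS_{\lambda_1\lambda}$.
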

	The proof is similar to Proposition~\ref{prop:closed-form-1} and only $c_2$ changes.
	\subsection{Extensions to L1-Regularized Matrix Factorization}\label{ssec:l1-reg-mf}
	Now consider the following matrix factorization problem with L1-Regularization
	\begin{equation}\label{eq:mat-fac-ex-3}
	\min_{{\bf U}\in \R^{M \times K}, {\bf Z}\in \R^{K \times N}}\, \left\{ \Psi({\bf U},{\bf Z}) :=  \frac 12\norm{{\bf A} - {\bf U}{\bf Z}}^2_{F} + \lambda_1\left(\norm{{\bf U} }_1 + \norm{{\bf Z} }_1 \right)  \right\}\,.
	\end{equation}
	Recall that {soft-thresholding operator} is defined for any $y \in \R^d$ by
	\begin{equation} \label{eq:soft}
		\SSS_{\theta}\left(y\right) = \argmin_{x \in \real^{d}} \left\{ \theta\norm{x}_{1} + \frac{1}{2}\norm{x - y}^{2} \right\} = \max\left\{ \left|y\right| - \theta , 0 \right\}\sgn\left(y\right)\,,
	\end{equation}
	where $\theta>0$ and the operations are applied element-wise. We require the following technical result.
	\begin{lemma}\label{lem:helper-2} Let ${\bf Q} \in \R^{A \times B}$ for some positive integers $A$ and $B$. Let $t_0 >0$ and let $t \geq 0$ then 
	\[
	 \min_{{\bf X} \in \R^{A \times B}}\left\{ \act{{{\bf Q}}, {\bf X}} + t_0\norm{{\bf X}}_1 : \norm{{\bf X}}_F^2 \leq t^2 \right\} = -t\norm{S_{t_0}(-{\bf Q})}_F\,.
	 \]
	 with the minimizer at ${\bf X^*} = t\frac{S_{t_0}(-{\bf Q})}{\norm{S_{t_0}(-{\bf Q})}_F}$ for $\norm{S_{t_0}(-{\bf Q})}_F \neq 0$ and otherwise all ${\bf X}$ such that $\norm{{\bf X}}_F^2 \leq t^2$ are minimizers. Moreover  we have the following equivalence,
	 \begin{equation}\label{eq:equivalence}
	 \min_{{\bf X} \in \R^{A \times B}}\left\{ \act{{{\bf Q}}, {\bf X}} + t_0\norm{{\bf X}}_1 : \norm{{\bf X}}_F^2 \leq t^2 \right\} \equiv \min_{{\bf X} \in \R^{A \times B}}\left\{ \act{{{\bf Q}}, {\bf X}} + t_0\norm{{\bf X}}_1 : \norm{{\bf X}}_F^2 = t^2 \right\} \,.
	 \end{equation}
	\end{lemma}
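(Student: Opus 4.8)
The plan is to prove the stated value $-t\norm{S_{t_0}(-{\bf Q})}_F$ by combining a matching lower bound with an explicit minimizer, in direct parallel to Lemma~\ref{lem:helper-1} but now carrying the extra $\ell_1$ term through the entries of ${\bf X}$. Since both $\act{{\bf Q},{\bf X}}$ (the Frobenius inner product) and $\norm{{\bf X}}_1$ are separable over the entries, I would first write the objective as
\[
\act{{\bf Q},{\bf X}} + t_0\norm{{\bf X}}_1 = \sum_{i,j}\left(Q_{ij}X_{ij} + t_0|X_{ij}|\right)\,,
\]
analyze each summand independently, and couple the entries only at the end through the Frobenius-ball constraint $\norm{{\bf X}}_F\leq t$.

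For the lower bound I would use the trivial entrywise estimate $Q_{ij}X_{ij}\geq -|Q_{ij}||X_{ij}|$, which in both regimes $|Q_{ij}|\leq t_0$ and $|Q_{ij}|>t_0$ yields the unified inequality
\[
Q_{ij}X_{ij} + t_0|X_{ij}| \;\geq\; -\max\{|Q_{ij}|-t_0,\,0\}\,|X_{ij}| \;=\; -\left|\left(S_{t_0}(-{\bf Q})\right)_{ij}\right|\,|X_{ij}|\,,
\]
using that $\left|\left(S_{t_0}(-{\bf Q})\right)_{ij}\right| = \max\{|Q_{ij}|-t_0,0\}$ from the definition \eqref{eq:soft} of the soft-thresholding operator. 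Summing over all entries and applying the Cauchy--Schwarz inequality together with feasibility $\norm{{\bf X}}_F\leq t$ gives
\[
\act{{\bf Q},{\bf X}} + t_0\norm{{\bf X}}_1 \;\geq\; -\sum_{i,j}\left|\left(S_{t_0}(-{\bf Q})\right)_{ij}\right|\,|X_{ij}| \;\geq\; -\norm{S_{t_0}(-{\bf Q})}_F\,\norm{{\bf X}}_F \;\geq\; -t\,\norm{S_{t_0}(-{\bf Q})}_F\,,
\]
which is the desired lower bound for every feasible ${\bf X}$.

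It then remains to exhibit a feasible point attaining this value. Assuming $\norm{S_{t_0}(-{\bf Q})}_F\neq 0$, I would substitute ${\bf X}^* = t\,S_{t_0}(-{\bf Q})/\norm{S_{t_0}(-{\bf Q})}_F$, which satisfies $\norm{{\bf X}^*}_F = t$, and verify that every inequality above is an equality: the Cauchy--Schwarz step is tight because the entrywise magnitudes of ${\bf X}^*$ are proportional to those of $S_{t_0}(-{\bf Q})$; and the entrywise step is tight because on the support of $S_{t_0}(-{\bf Q})$ (where $|Q_{ij}|>t_0$) the sign of $X^*_{ij}$ equals $\sgn(-Q_{ij})$, so $Q_{ij}X^*_{ij} = -|Q_{ij}||X^*_{ij}|$, while off the support $X^*_{ij}=0$ contributes zero on both sides. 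Hence the minimum equals $-t\norm{S_{t_0}(-{\bf Q})}_F$, with minimizer ${\bf X}^*$; in the degenerate case $\norm{S_{t_0}(-{\bf Q})}_F = 0$ the lower bound is $0$, attained at ${\bf X}={\bf 0}$. Finally, the equivalence \eqref{eq:equivalence} follows once the minimizer is identified: since ${\bf X}^*$ lies on the sphere $\norm{{\bf X}}_F^2 = t^2$ and already attains the infimum over the whole ball, the (a priori larger) minimum over the sphere must coincide with it.

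The main obstacle is the bookkeeping in the attainment step, namely verifying that the entrywise bound and the Cauchy--Schwarz bound are \emph{simultaneously} tight at the candidate ${\bf X}^*$, which requires correctly isolating the support $\{(i,j): |Q_{ij}|>t_0\}$ from its complement and tracking the signs there. The lower bound itself is routine; its only subtlety is checking that the two-case entrywise estimate collapses to the single expression $-\max\{|Q_{ij}|-t_0,0\}\,|X_{ij}|$ in both regimes.
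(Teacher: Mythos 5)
Your proof is correct, but it takes a genuinely different route from the paper's. The paper disposes of this lemma in two lines: it rewrites the problem as $-\max_{{\bf X}}\left\{ \act{-{\bf Q},{\bf X}} - t_0\norm{{\bf X}}_1 : \norm{{\bf X}}_F^2 \leq t^2 \right\}$ and then cites \cite[Proposition 14]{LT2013} for both the optimal value and the minimizer, so the entire entrywise analysis you carry out is outsourced to that external result. You instead prove everything from scratch: the unified entrywise bound $Q_{ij}X_{ij} + t_0|X_{ij}| \geq -\max\{|Q_{ij}|-t_0,0\}\,|X_{ij}| = -\left|\left(\SSS_{t_0}(-{\bf Q})\right)_{ij}\right|\,|X_{ij}|$, Cauchy--Schwarz against the Frobenius ball, and an explicit tightness verification at ${\bf X}^*$ on the support $\{(i,j):|Q_{ij}|>t_0\}$ (where $\sgn(X^*_{ij})=\sgn(-Q_{ij})$ makes the entrywise step an equality and proportionality of magnitudes makes Cauchy--Schwarz an equality). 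What your route buys is self-containedness and a transparent account of exactly where the bound is tight; what the paper's route buys is brevity and uniformity with Lemma~\ref{lem:helper-1}, whose proof is likewise patterned on \cite{LT2013}. One remark on the degenerate case $\norm{\SSS_{t_0}(-{\bf Q})}_F = 0$: your statement that the minimum is $0$, attained at ${\bf X}={\bf 0}$, is actually more careful than the lemma's claim that all feasible ${\bf X}$ are then minimizers (if $|Q_{ij}|<t_0$ strictly at some entry, any minimizer must vanish there); on the other hand your sphere-equals-ball argument for \eqref{eq:equivalence} relies on $\norm{{\bf X}^*}_F = t$ and hence covers only the nondegenerate case --- when $t>0$ and $|Q_{ij}|<t_0$ for all entries, the sphere-constrained minimum is strictly positive, so the unqualified equivalence fails there. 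The paper's own proof glosses this point in exactly the same way, so you are no worse off, but it is worth flagging.
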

	\begin{proof}
	We have the following equivalence
	\begin{align*}
		\min_{{\bf X} \in \R^{A \times B}}\left\{ \act{{{\bf Q}}, {\bf X}} + t_0\norm{{\bf X}}_1 : \norm{{\bf X}}_F^2 \leq t^2 \right\} \equiv -\max_{{\bf X} \in \R^{A \times B}}\left\{ \act{{-{\bf Q}}, {\bf X}} - t_0\norm{{\bf X}}_1 : \norm{{\bf X}}_F^2 \leq t^2 \right\}\,.
	\end{align*}
	Then the result follows due to \cite[Proposition 14]{LT2013} with the minimizer at ${\bf X^*} = t\frac{S_{t_0}(-{\bf Q})}{\norm{S_{t_0}(-{\bf Q})}_F}$ for $\norm{S_{t_0}(-{\bf Q})}_F \neq 0$ and ${\bf 0}$ otherwise. The equivalence statement in \eqref{eq:equivalence} follows as $\norm{{\bf X^*}}_F^2 = t^2$ for $\norm{S_{t_0}(-{\bf Q})}_F \neq 0$ and otherwise all the points satisfying $\norm{{\bf X}}_F^2 = t^2$ are minimizers.
	\end{proof}
	Denote $g := \frac 12\norm{{\bf A} - {\bf U}{\bf Z}}^2_{F}$, $f:=   \lambda_1\left(\norm{{\bf U} }_1 + \norm{{\bf Z} }_1 \right)$ and $h = h_a$. 
	\begin{proposition}\label{prop:closed-form-3}
		In BPG-MF, with the above defined $g,f,h$ the update steps in each iteration are given by ${\bf U}^{{\bf k+1}} = r\SSS_{\lambda_1\lambda}(-{\bf P^k})$, ${\bf Z}^{{\bf k+1}} = r\SSS_{\lambda_1\lambda}(-{\bf Q^k})$ where $r\geq 0$ and satisfies
	\begin{align}
		&c_1\left(\norm{\SSS_{\lambda_1\lambda}\left(-{\bf Q^k}\right)}_F^2 +  \norm{\SSS_{\lambda_1\lambda}\left(-{\bf P^k}\right)}_F^2  \right)r^3 + c_2r-1 = 0\label{alg:simple-1}\,,
	\end{align}
	with $c_1 = 3$ and $c_2 = \norm{{\bf A}}_F$.
	\end{proposition}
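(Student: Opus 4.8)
The plan is to follow the radial decomposition strategy used in the proof of Proposition~\ref{prop:closed-form-1}, with Lemma~\ref{lem:helper-2} playing the role that Lemma~\ref{lem:helper-1} played there. Substituting $f$ and $h = h_a$ into the BPG-MF update~\eqref{alg:bpg-mf-2} (so that the term $\lambda f$ contributes soft-threshold parameter $\lambda_1\lambda$), the subproblem to be solved is
\[
\min_{({\bf U},{\bf Z})} \left\{ \act{{\bf P^k}, {\bf U}} + \lambda_1\lambda\norm{{\bf U}}_1 + \act{{\bf Q^k}, {\bf Z}} + \lambda_1\lambda\norm{{\bf Z}}_1 + c_1\left(\frac{\norm{{\bf U}}_F^2 + \norm{{\bf Z}}_F^2}{2}\right)^2 + c_2\left(\frac{\norm{{\bf U}}_F^2 + \norm{{\bf Z}}_F^2}{2}\right)\right\},
\]
with $c_1 = 3$, $c_2 = \norm{{\bf A}}_F$. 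The key observation is that $h_a$ couples ${\bf U}$ and ${\bf Z}$ only through the radii $t_1 = \norm{{\bf U}}_F$ and $t_2 = \norm{{\bf Z}}_F$, so I would first fix $(t_1,t_2)$ and minimize over directions, then optimize over $(t_1,t_2)$.

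Concretely, I would rewrite the minimization as a nested one, $\min_{t_1,t_2\geq 0}$ of $\min_{\norm{{\bf U}}_F^2 = t_1^2,\,\norm{{\bf Z}}_F^2 = t_2^2}$ of the objective; for fixed radii the two coupling terms are constant and the remainder separates into an $\ell_1$-regularized linear minimization in ${\bf U}$ and one in ${\bf Z}$. Lemma~\ref{lem:helper-2} evaluates each: via the equivalence~\eqref{eq:equivalence} it shows the sphere-constrained and ball-constrained problems coincide and gives value $-t_1\norm{\SSS_{\lambda_1\lambda}(-{\bf P^k})}_F$ attained at ${\bf U}^* = t_1\SSS_{\lambda_1\lambda}(-{\bf P^k})/\norm{\SSS_{\lambda_1\lambda}(-{\bf P^k})}_F$, and symmetrically for ${\bf Z}$. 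Substituting reduces everything to the two-dimensional problem
\[
\min_{t_1,t_2\geq 0}\left\{ -t_1\norm{\SSS_{\lambda_1\lambda}(-{\bf P^k})}_F - t_2\norm{\SSS_{\lambda_1\lambda}(-{\bf Q^k})}_F + c_1\left(\frac{t_1^2 + t_2^2}{2}\right)^2 + c_2\left(\frac{t_1^2 + t_2^2}{2}\right)\right\}.
\]

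To finish, I would write the first-order optimality conditions for this smooth two-dimensional problem, namely $-\norm{\SSS_{\lambda_1\lambda}(-{\bf P^k})}_F + c_1(t_1^2+t_2^2)t_1 + c_2 t_1 = 0$ and its analogue in $t_2$. The ansatz $t_1 = r\norm{\SSS_{\lambda_1\lambda}(-{\bf P^k})}_F$ and $t_2 = r\norm{\SSS_{\lambda_1\lambda}(-{\bf Q^k})}_F$ with $r\geq 0$ makes both equations collapse to the single cubic $c_1(\norm{\SSS_{\lambda_1\lambda}(-{\bf Q^k})}_F^2 + \norm{\SSS_{\lambda_1\lambda}(-{\bf P^k})}_F^2)r^3 + c_2 r - 1 = 0$; back-substituting the optimal radii into the direction formula yields ${\bf U}^{{\bf k+1}} = r\SSS_{\lambda_1\lambda}(-{\bf P^k})$ and ${\bf Z}^{{\bf k+1}} = r\SSS_{\lambda_1\lambda}(-{\bf Q^k})$, as claimed.

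I expect the main obstacle to be the same subtle point as in Proposition~\ref{prop:closed-form-1}: justifying that the radius-constrained inner minimizations may be taken with the \emph{inequality} constraint $\norm{\cdot}_F^2 \leq t_i^2$ rather than the equality $\norm{\cdot}_F^2 = t_i^2$, so that the nested minimization is legitimate and the inner problems genuinely decouple once the radii are fixed. This is precisely the content of the equivalence~\eqref{eq:equivalence} in Lemma~\ref{lem:helper-2}, which must be invoked with care. A secondary point to check is the degenerate case $\norm{\SSS_{\lambda_1\lambda}(-{\bf P^k})}_F = 0$ (and its analogue for ${\bf Q^k}$): there Lemma~\ref{lem:helper-2} returns minimizer ${\bf 0}$, but since $\SSS_{\lambda_1\lambda}(-{\bf P^k}) = {\bf 0}$ the stated formula ${\bf U}^{{\bf k+1}} = r\SSS_{\lambda_1\lambda}(-{\bf P^k})$ already yields ${\bf 0}$, so no separate case is needed in the final expression.
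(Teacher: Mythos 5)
Your proposal is correct and follows essentially the same route as the paper's proof: the same nested radial decomposition over $(t_1,t_2)$, the same invocation of Lemma~\ref{lem:helper-2} (including the sphere/ball equivalence~\eqref{eq:equivalence}) to solve the direction subproblems, the same two-dimensional reduction with identical first-order conditions, and the same ansatz $t_1 = r\norm{\SSS_{\lambda_1\lambda}(-{\bf P^k})}_F$, $t_2 = r\norm{\SSS_{\lambda_1\lambda}(-{\bf Q^k})}_F$ collapsing both equations to the stated cubic. Your closing remark on the degenerate case $\norm{\SSS_{\lambda_1\lambda}(-{\bf P^k})}_F = 0$ is consistent with how Lemma~\ref{lem:helper-2} treats it and correctly observes that the final formula needs no separate case.
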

	\begin{proof}The proof is similar to that of Proposition~\ref{prop:closed-form-1}, but with certain changes due to the  L1 norm in the objective. Consider the following subproblem
	\begin{align*}
	  ({\bf U}^{{\bf k+1}}, {\bf Z}^{{\bf k+1}}) \in \underset{({\bf U},{\bf Z}) \in \R^{M \times K}\times \R^{K\times N}}{\argmin} &\left\{\lambda\lambda_1\left(\norm{{\bf U} }_1 + \norm{{\bf Z} }_1 \right)   + \act{ {\bf P^k}, {\bf U}} + \act{ {\bf Q^k} , {\bf Z}} \right.\\
	  &\left.+ c_1 \left(\frac{\norm{\bf U}_F^2 +\norm{\bf Z}_F^2 }{2} \right)^2 + c_2\left( \frac{\norm{\bf U}_F^2 +\norm{\bf Z}_F^2 }{2}\right) \right\}\,,
	\end{align*}
	Denote the objective in the above minimization problem as $\mathcal{O({\bf U}^{\bf k}, {\bf Z}^{\bf k})}$. Now, we show that the following holds
	\begin{align}
		&\min_{({\bf U},{\bf Z}) \in \R^{M \times K}\times \R^{K\times N}} \left(\mathcal{O({\bf U}^{\bf k}, {\bf Z}^{\bf k})}\right) \nonumber\\
		&\equiv \min_{t_1 \geq 0, t_2 \geq 0} \left\{\min_{({\bf U},{\bf Z}) \in \R^{M \times K}\times \R^{K\times N},\norm{\bf U}_F =t_1,\norm{\bf Z}_F =t_2} \left(\mathcal{O({\bf U}^{\bf k}, {\bf Z}^{\bf k})} \right)\right\}\,,\label{eq:subprob-equiv-1a}\\
		&\equiv \min_{t_1 \geq 0, t_2 \geq 0} \left\{\min_{({\bf U},{\bf Z}) \in \R^{M \times K}\times \R^{K\times N},\norm{\bf U}_F \leq t_1,\norm{\bf Z}_F \leq t_2} \left(\mathcal{O({\bf U}^{\bf k}, {\bf Z}^{\bf k})} \right)\right\}\label{eq:subprob-equiv-2a}\,.
	\end{align}
	where the first step is a simple rewriting of the objective.  The second step is non-trivial. In order to prove \eqref{eq:subprob-equiv-2a} we rewrite \eqref{eq:subprob-equiv-1a} as 
	\begin{align*}
		\min_{t_1 \geq 0, t_2 \geq 0} &\left\{  \min_{{\bf U_1} \in \R^{M \times K}} \left\{ \act{ {\bf P^k}, {\bf U_1}  } + \lambda\lambda_1\norm{{\bf U} }_1 : \norm{{\bf U_1}}_F^2 = t_1\right\} \right. \\
	& \left. + \min_{{\bf Z_1} \in \R^{K\times N} } \left\{ \act{ {\bf Q^k} , {\bf Z_1}} + \lambda\lambda_1\norm{{\bf Z} }_1: \norm{{\bf Z_1}}_F^2 = t_2\right\}\right.\\
	&\left. + c_1\left(\frac{t_1^2 + t_2^2}{2}\right)^2 + c_2\left(\frac{t_1^2 + t_2^2}{2}\right) \right\}\,.
	\end{align*}

	where the second step \eqref{eq:subprob-equiv-2a} uses Lemma~\ref{lem:helper-2} and strong convexity of $h$. Now,  note the following equivalence due to Lemma~\ref{lem:helper-2} 
	\begin{align}
		&\min_{{\bf U_1} \in \R^{M \times K}} \left\{ \act{ {\bf P^k}, {\bf U_1}  } + \lambda\lambda_1\norm{{\bf U} }_1 : \norm{{\bf U_1}}_F^2 = t_1\right\} \nonumber \\
		&\equiv \min_{{\bf U_1} \in \R^{M \times K}} \left\{ \act{ {\bf P^k}, {\bf U_1}  }  + \lambda\lambda_1\norm{{\bf U} }_1: \norm{{\bf U_1}}_F^2 \leq t_1\right\} \label{eq:rewrite-1}\,,
	\end{align} 
	and
	\begin{align}
		&\min_{{\bf Z_1} \in \R^{K\times N} } \left\{ \act{ {\bf Q^k} , {\bf Z_1}} + \lambda\lambda_1\norm{{\bf Z} }_1: \norm{{\bf Z_1}}_F^2 = t_2\right\}\nonumber\\
		 &\equiv \min_{{\bf Z_1} \in \R^{K\times N} } \left\{ \act{ {\bf Q^k} , {\bf Z_1}} + \lambda\lambda_1\norm{{\bf Z} }_1: \norm{{\bf Z_1}}_F^2 \leq t_2\right\}\label{eq:rewrite-2}\,.
	\end{align}
	We solve the subproblems via the following strategy. Denote
	\[
	{{\bf U}^{{\bf *}}_1}(t_1) \in  \argmin \left\{ \act{ {\bf P^k}, {\bf U_1}  } + \lambda\lambda_1\norm{{\bf U} }_1  : {\bf U_1} \in \R^{M \times K}\,, \norm{{\bf U_1}}_F^2 \leq t_1\right\}
	\]
	\[
		{{\bf Z}^{*}_1 }(t_2)\in \argmin \left\{ \act{ {\bf Q^k} , {\bf Z_1} } + \lambda\lambda_1\norm{{\bf Z} }_1: {\bf Z_1} \in \R^{K\times N}\,, \norm{{\bf Z_1}}_F^2 \leq t_2 \right\}
	\]
	Then we obtain  $({\bf U}^{{\bf k+1}}, {\bf Z}^{{\bf k+1}}) =  ({{\bf U}^{{\bf *}}_1}(t_1^*),{{\bf Z}^{*}_1 }(t_2^*))$, where $t_1^*$ and $t_2^*$ are obtained by solving the following two dimensional subproblem
	\begin{align*}
	  (t_1^*,t_2^*) \in  \underset{t_1\geq 0, t_2\geq 0}{\mathrm{argmin}}   &\left\{  \min_{{\bf U_1} \in \R^{M \times K}} \left\{ \act{ {\bf P^k}, {\bf U_1}  } + \lambda\lambda_1\norm{{\bf U} }_1 : \norm{{\bf U_1}}_F^2 \leq t_1\right\} \right. \\
	& \left. + \min_{{\bf Z_1} \in \R^{K\times N} } \left\{ \act{ {\bf Q^k} , {\bf Z_1}} + \lambda\lambda_1\norm{{\bf Z} }_1: \norm{{\bf Z_1}}_F^2 \leq t_2\right\} \right. \\
	&\left. + c_1\left(\frac{t_1 + t_2}{2}\right)^2 + c_2\left(\frac{t_1 + t_2}{2}\right) \right\}\,.
	\end{align*}

	Note that inner minimization subproblems can be trivially solved once we obtain ${{\bf U}^{{\bf *}}_1}(t_1)$ and ${{\bf Z}^{*}_1 }(t_2)$. Due to Lemma~\ref{lem:helper-2} we obtain the solution to the subproblem in each iteration as follows
	\begin{align*}
	{\bf U}^{{\bf k+1}} 
	&= \left.
	\begin{cases}
	    t_1^*\frac{S_{\lambda\lambda_1}(-{\bf P^k})}{\norm{S_{\lambda\lambda_1}(-{\bf P^k})}_F}, & \text{for } \norm{S_{\lambda\lambda_1}(-{\bf P^k})}_F \neq 0\,, \\
	    {\bf 0} &  otherwise\,.
	\end{cases}\right.\\
	{\bf Z}^{{\bf k+1}} 
	&= \left.
	\begin{cases}
	    t_2^*\frac{S_{\lambda\lambda_1}(-{\bf Q^k})}{\norm{S_{\lambda\lambda_1}(-{\bf Q^k})}_F}, & \text{for } \norm{S_{\lambda\lambda_1}(-{\bf Q^k})}_F \neq 0\,, \\
	    {\bf 0} &  otherwise\,.
	\end{cases}
	 \right.
	\end{align*}
	We solve for $t_1^*$ and $t_2^*$ with the following two dimensional minimization problem
	\begin{align*}
	\underset{t_1\geq 0, t_2\geq 0}{\mathrm{argmin}} & \left\{  -t_1\norm{S_{\lambda\lambda_1}(-{\bf P^k})}_F  - t_2{\norm{S_{\lambda\lambda_1}(-{\bf Q^k})}_F}   + c_1\left(\frac{t_1^2 + t_2^2}{2}\right)^2 + c_2\left(\frac{t_1^2 + t_2^2}{2}\right)\right\}\,.
	\end{align*}
	Thus, the solutions $t_1^*$ and $t_2^*$ are the non-negative real roots of the following equations
	\begin{align*}
	&-\norm{S_{\lambda\lambda_1}(-{\bf P^k})}_F  + c_1(t_1^2 + t_2^2)t_1 + c_2t_1 = 0\,\\
	&-\norm{S_{\lambda\lambda_1}(-{\bf Q^k})}_F + c_1(t_1^2 + t_2^2)t_2 + c_2t_2 = 0\,.
	\end{align*}
	Set $t_1 = r \norm{S_{\lambda\lambda_1}(-{\bf P^k})}_F$ and $t_2 = r \norm{S_{\lambda\lambda_1}(-{\bf Q^k})}_F$ for some $r \geq 0$. This results in the following cubic equation,
	\[
		c_1\left(\norm{S_{\lambda\lambda_1}(-{\bf Q^k})}_F^2 +  \norm{S_{\lambda\lambda_1}(-{\bf P^k})}_F^2  \right)r^3 + c_2r-1 = 0\,,
	\]
	where the solution is the non-negative real root. 
	\end{proof}
	\subsection{Extensions with Nuclear Norm Regularization}\label{ssec:nuclear-reg-mf}
	We start with the notion of Singular Value Shrinkage Operator \cite{CCS2010}, where given a matrix ${\bf Q} \in \R^{A \times B}$ of rank $K$ with Singular Value Decomposition given by ${\bf U\Sigma V}^T$ with ${\bf U} \in \R^{A \times K}$, ${\bf \Sigma} \in \R^{K \times K}$ and ${\bf V} \in \R^{K \times N}$ for $t\geq 0$ the output is
	\begin{equation}
	{\mathcal D}_{t}({\bf Q}) = {\bf U} \SSS_{t}({\bf \Sigma}){\bf V}^T\,,
	\end{equation} 
	where the soft-thresholding operator is applied only to the singular values. Before we proceed, we require the following technical lemma.
	\begin{lemma}\label{lem:helper-nuclear}
	Let ${\bf Q} \in \R^{A \times B}$ of rank $K$ with Singular Value Decomposition given by ${\bf U\Sigma V}^T$ with ${\bf U} \in \R^{A \times K}$, ${\bf \Sigma} \in \R^{K \times K}$ and ${\bf Z} \in \R^{K \times N}$. Let $t \geq 0$ and $\norm{{{\bf Q}}}_F \neq 0$ then  
	\[
	 \min_{{\bf X} \in \R^{A \times B}}\left\{ \act{{{\bf Q}}, {\bf X}}  + t_0 \norm{{\bf X}}_{\ast}: \norm{{\bf X}}_F^2 \leq t^2 \right\} = -t\norm{\SSS_{t_0}(-{\bf \Sigma})}\,.
	 \]
	 with ${\bf X^*} = t\frac{{\mathcal D}_{t_0}(-{\bf Q})}{\norm{{\mathcal D}_{t}(-{\bf Q})}_F}$ if $\norm{{\mathcal D}_{t_0}(-{\bf Q})} \neq 0$ else any ${\bf X}$ such that $\norm{{\bf X}}_F^2\leq t^2$ is a minimizer. Moreover we have the following equivalence
	 \begin{equation}\label{eq:nuc-eq}
	 \min_{{\bf X} \in \R^{A \times B}}\left\{ \act{{{\bf Q}}, {\bf X}} + t_0 \norm{{\bf X}}_{\ast}: \norm{{\bf X}}_F^2 \leq t^2 \right\} = \min_{{\bf X} \in \R^{A \times B}}\left\{ \act{{{\bf Q}}, {\bf X}} + t_0 \norm{{\bf X}}_{\ast}: \norm{{\bf X}}_F^2 = t^2 \right\}\,.
	 \end{equation}
	\end{lemma}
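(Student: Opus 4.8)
The plan is to mirror the proof of Lemma~\ref{lem:helper-2}, replacing the separable $\ell_1$/Euclidean structure by the spectral structure induced by the nuclear norm. First I would rewrite the minimization as a maximization by negation,
\[
\min_{\bf X}\left\{ \act{{\bf Q},{\bf X}} + t_0\norm{{\bf X}}_{\ast} : \norm{{\bf X}}_F^2 \le t^2 \right\} = -\max_{\bf X}\left\{ \act{-{\bf Q},{\bf X}} - t_0\norm{{\bf X}}_{\ast} : \norm{{\bf X}}_F^2 \le t^2 \right\},
\]
so that it remains only to solve the maximization problem and exhibit its maximizer.

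The key tool is von Neumann's trace inequality. Writing $\sigma_1 \ge \dots \ge \sigma_K > 0$ for the singular values of ${\bf Q}$ (equivalently of $-{\bf Q}$) and $s_1 \ge \dots \ge 0$ for those of ${\bf X}$, one has $\act{-{\bf Q},{\bf X}} \le \sum_i \sigma_i s_i$, with equality precisely when ${\bf X}$ admits a singular value decomposition sharing the (ordered) singular vectors of $-{\bf Q}$. Since $\norm{{\bf X}}_{\ast} = \sum_i s_i$, this reduces the matrix problem to the vector problem $\max\{\sum_i(\sigma_i - t_0)s_i : s_i \ge 0,\ \sum_i s_i^2 \le t^2\}$. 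Dropping the indices with $\sigma_i \le t_0$ (whose coefficient is nonpositive, forcing the optimal $s_i=0$) and applying Cauchy--Schwarz gives the bound $t\big(\sum_i (\sigma_i - t_0)_+^2\big)^{1/2} = t\norm{\SSS_{t_0}(-{\bf \Sigma})}$, attained by $s_i \propto (\sigma_i-t_0)_+$ scaled to saturate $\sum_i s_i^2 = t^2$. Reassembling these optimal singular values on the singular vectors of $-{\bf Q}$ produces exactly ${\bf X}^{*} = t\,{\mathcal D}_{t_0}(-{\bf Q})/\norm{{\mathcal D}_{t_0}(-{\bf Q})}_F$, which attains equality in von Neumann's inequality and hence solves the maximization; negating recovers the claimed value $-t\norm{\SSS_{t_0}(-{\bf \Sigma})}$.

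Finally, the equivalence \eqref{eq:nuc-eq} between the constraints $\norm{{\bf X}}_F^2 \le t^2$ and $\norm{{\bf X}}_F^2 = t^2$ follows exactly as in Lemma~\ref{lem:helper-2}: whenever ${\mathcal D}_{t_0}(-{\bf Q}) \neq 0$ the optimizer ${\bf X}^{*}$ saturates the ball, $\norm{{\bf X}^{*}}_F^2 = t^2$, so the two feasible sets share a minimizer, and in the degenerate case $\SSS_{t_0}(-{\bf \Sigma}) = 0$ the optimal value is $0$ and is attained on the boundary, handled precisely as in the $\ell_1$ case. I expect the main obstacle to be the careful use of the equality condition in von Neumann's trace inequality: one must verify that the soft-thresholded singular values genuinely reconstruct ${\mathcal D}_{t_0}(-{\bf Q})$ and that this matrix shares the ordered singular subspaces of $-{\bf Q}$, so that the upper bound is actually achieved rather than merely bounding the optimum. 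The reduction to the vector problem and the Cauchy--Schwarz step are routine once this alignment is established; alternatively the whole maximization can be quoted from a nuclear-norm analogue of \cite[Proposition 14]{LT2013}.
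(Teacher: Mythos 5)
Your proof is correct in the regime the lemma actually addresses, but it takes a genuinely different route from the paper. The paper verifies first-order optimality directly: it quotes the subdifferential formula for the nuclear norm, computes the normal cone of the Frobenius ball, splits $-{\bf Q} = {\bf U_0\Sigma_0 V_0}^T + {\bf U_1\Sigma_1 V_1}^T$ according to singular values above and below $t_0$ (following the proof of the singular value thresholding result \cite[Theorem 2.1]{CCS2010}), exhibits a candidate ${\bf X} = {\bf U_0\Sigma V_0}^T$ together with a witness ${\bf W} = t_0^{-1}{\bf U_1\Sigma_1 V_1}^T$ satisfying the optimality inclusion ${\bf 0} \in {\bf Q} + t_0\partial\norm{{\bf X}}_{\ast} + \mathcal{N}_{C_1}({\bf X})$, and then uses monotonicity in the normal-cone multiplier $\theta$ to push the solution to the boundary. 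You instead argue primally: von Neumann's trace inequality reduces the matrix problem to the vector problem $\max\{\sum_i(\sigma_i - t_0)s_i : s_i \ge 0,\ \sum_i s_i^2 \le t^2\}$, which Cauchy--Schwarz solves with optimum $t\norm{\SSS_{t_0}(-{\bf \Sigma})}$ attained at $s_i \propto (\sigma_i - t_0)_+$, and you correctly flag the one delicate point, namely that the reconstructed $t\,{\mathcal D}_{t_0}(-{\bf Q})/\norm{{\mathcal D}_{t_0}(-{\bf Q})}_F$ shares the ordered singular vectors of $-{\bf Q}$ (which holds since $(\sigma_i - t_0)_+$ is nonincreasing), so equality in von Neumann is genuinely achieved; your observation that extra singular values of ${\bf X}$ beyond the rank of ${\bf Q}$ carry nonpositive coefficients and are thus zeroed out is also a needed step the paper never has to confront. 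What each approach buys: the paper's argument reuses the established SVT machinery and the known subdifferential formula, whereas yours is self-contained, delivers the optimal value and the maximizer simultaneously with a global (rather than stationarity-based) certificate, and structurally parallels the proofs of Lemmas~\ref{lem:helper-1} and \ref{lem:helper-2}, which is aesthetically consistent with the rest of the appendix. One caveat you share with the paper: in the degenerate case $\SSS_{t_0}(-{\bf \Sigma}) = {\bf 0}$, the claim that every ${\bf X}$ in the ball is a minimizer (and hence the equivalence \eqref{eq:nuc-eq}) is not accurate when all singular values are strictly below $t_0$ and $t_0 > 0$, since then $\act{{\bf Q},{\bf X}} + t_0\norm{{\bf X}}_{\ast} \ge \sum_i (t_0 - \sigma_i)s_i > 0$ for ${\bf X} \neq {\bf 0}$, making ${\bf X} = {\bf 0}$ the unique minimizer; you inherit this gloss from the lemma statement rather than introduce it, but it would be worth noting rather than asserting attainment on the boundary.
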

	\begin{proof}
		The sub-differential  of the nuclear norm \cite{CCS2010} is given by 
		\begin{equation}\label{eq:nuc-subdiff}
		\partial \norm{{\bf X}}_{\ast} = \left\{{\bf UV^{T} + W}: {\bf W} \in \R^{A\times B }, {\bf U^TW} = 0,{\bf WV} = 0, \norm{{\bf W}}_2 \leq 1\right\}\,.
		\end{equation}
		The normal cone for the set $C_1 = \left\{{\bf X}: \norm{{\bf X}}_F^2 \leq t^2 \right\}$ is given by
		\begin{align*}
		{\mathcal N}_{C_1}({\bf {\bar X}}) &= \left\{{\bf V} \in \R^{A \times B} : \act{{\bf V}, {\bf X - {\bar X} }}\leq 0 \text{ for all }  {\bf X} \in C_1 \right\} \equiv \left\{\theta {\bf {\bar X}} : \theta\geq 0\right\}\,.
		\end{align*}
		We consider the following problem
		\[
	 	\min_{{\bf X} \in \R^{A \times B}}\left\{ \act{{{\bf Q}}, {\bf X}}  + t_0 \norm{{\bf X}}_{\ast}: \norm{{\bf X}}_F^2 \leq t^2 \right\}\,.
	 	\]
		and the optimality condition \cite[Theorem 10.1, p. 422]{RW1998-B} results in
		\[
		{\bf 0} \in {\bf Q} + t_0\partial \norm{{\bf X}}_{\ast} + {\mathcal N}_{C_1}({\bf X})\,.
		\]
		We follow the strategy from \cite[Theorem 2.1]{CCS2010}. One can decompose $-{\bf Q}$ as 
		\[
		-{\bf Q} = {\bf U_0 \Sigma_0 V_0^T} + {\bf U_1 \Sigma_1 V_1^T}\,.
		\]
		where ${\bf U_0},{\bf V_0}$ contain the singular vectors for singular values greater than $t_0$ and ${\bf U_1},{\bf V_1}$ for less than equal to $t_0$.	 Then with ${\bf X} = \bf U_0 \Sigma V_0^T$,  the optimality condition becomes
  		\begin{align}
		{\bf 0} = {\bf Q} + t_0 ({\bf U_0V^{T}_0 + W}) + \theta{\bf U_0 \Sigma V^{T}_0}\,,
		\end{align}
		and thus we obtain
		\[
		{\bf U_0 \Sigma_0 V_0^T}  +{\bf U_1 \Sigma_1 V_1^T} =  t_0 \left( {\bf U_0 V_0^T + W}\right) + \theta{\bf U_0 \Sigma V_0^T}\,.
		\]
		With ${\bf W} = t_0^{-1}{\bf U_1 \Sigma_1 V_1^T}$ all the conditions in \eqref{eq:nuc-subdiff} are satisfied. For some unknown $\theta \geq 0$ we have
		\[
		\theta {\bf \Sigma} = {\bf \Sigma_0} - t_0 {\bf I}\,.
		\]
		  The objective  $\act{{{\bf Q}}, {\bf X}}  + t_0 \norm{{\bf X}}_{\ast}$ is now monotonically decreasing with $\theta$ after substituting. Thus, we obtain the solution ${\bf X} = \frac{t}{\norm{{\bf \Sigma_0} - t_0 {\bf I}}} {\bf U_0}\left({\bf \Sigma_0} - t_0 {\bf I}\right){\bf V}_0^T $ for ${\norm{{\bf \Sigma_0} - t_0 {\bf I}}} \neq 0$ else the solution is ${\bf 0}$. The equivalence statement in \eqref{eq:nuc-eq} follows trivially because if ${\norm{{\bf \Sigma_0} - t_0 {\bf I}}} \neq 0$ we have $\norm{{\bf X}}_F^2 = t^2$ otherwise all the points satisfying $\norm{{\bf X}}_F^2 \leq t^2$ are minimizers.
	\end{proof}
	
	Here, we want to solve matrix factorization problem with nuclear norm regularization, where for certain constant $\lambda_2>0$ we want to solve
	\begin{equation}\label{eq:mat-fac-ex-20}
	\min_{{\bf U}\in \R^{M \times K}, {\bf Z}\in \R^{K \times N}}\, \left\{ \Psi({\bf U},{\bf Z}) :=  \frac 12\norm{{\bf A} - {\bf U}{\bf Z}}^2_{F} + \lambda_2\left(\norm{{\bf U}}_{\ast} + \norm{{\bf Z}}_{\ast} \right)   \right\}\,.
	\end{equation}

	Denote $g := \frac 12\norm{{\bf A} - {\bf U}{\bf Z}}^2_{F}$, $f:=   \lambda_2\left(\norm{{\bf U} }_{\ast} + \norm{{\bf Z} }_{\ast} \right)$ and $h = h_a$. 
	\begin{proposition}\label{prop:closed-form-3a}
		In BPG-MF, with the above defined $g,f,h$ the update steps in each iteration are given by ${\bf U}^{{\bf k+1}} = r{\mathcal D}_{\lambda_1\lambda}(-{\bf P^k})$, ${\bf Z}^{{\bf k+1}} = r{\mathcal D}_{\lambda_1\lambda}(-{\bf Q^k})$ where $r\geq 0$ and satisfies
	\begin{align}
		&c_1\left(\norm{{\mathcal D}_{\lambda_1\lambda}\left(-{\bf Q^k}\right)}_F^2 +  \norm{{\mathcal D}_{\lambda_1\lambda}\left(-{\bf P^k}\right)}_F^2  \right)r^3 + c_2r-1 = 0\label{alg:simple-1}\,,
	\end{align}
	with $c_1 = 3$ and $c_2 = \norm{{\bf A}}_F$.
	\end{proposition}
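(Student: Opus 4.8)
The plan is to follow the same template as Proposition~\ref{prop:closed-form-3}, with the singular-value shrinkage Lemma~\ref{lem:helper-nuclear} playing the role that the soft-thresholding Lemma~\ref{lem:helper-2} played there. Spelling out the BPG-MF update \eqref{alg:bpg-mf-2} with $f = \lambda_2(\norm{\bf U}_{\ast} + \norm{\bf Z}_{\ast})$ and $h = h_a$ (so $c_1=3$, $c_2 = \norm{\bf A}_F$), the iterate $({\bf U}^{{\bf k+1}}, {\bf Z}^{{\bf k+1}})$ is a minimizer of
\begin{align*}
\argmin_{({\bf U},{\bf Z})} \left\{ \lambda\lambda_2\left(\norm{\bf U}_{\ast} + \norm{\bf Z}_{\ast}\right) + \act{{\bf P^k}, {\bf U}} + \act{{\bf Q^k}, {\bf Z}} + c_1\left(\tfrac{\norm{\bf U}_F^2 + \norm{\bf Z}_F^2}{2}\right)^2 + c_2\left(\tfrac{\norm{\bf U}_F^2 + \norm{\bf Z}_F^2}{2}\right) \right\}.
\end{align*}
The structural feature I would exploit is that the kernel $h_a$ couples ${\bf U}$ and ${\bf Z}$ only through the scalars $t_1 := \norm{\bf U}_F$ and $t_2 := \norm{\bf Z}_F$, so the minimization can be split into an outer problem over $(t_1,t_2)\in\R_+^2$ and an inner problem over the level sets $\{\norm{\bf U}_F = t_1\}$, $\{\norm{\bf Z}_F = t_2\}$.

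On those level sets the kernel terms are constant, so the inner problem decouples into the two independent minimizations
\begin{align*}
\min_{\norm{\bf U}_F = t_1}\left\{ \act{{\bf P^k}, {\bf U}} + \lambda\lambda_2\norm{\bf U}_{\ast} \right\} \quad\text{and}\quad \min_{\norm{\bf Z}_F = t_2}\left\{ \act{{\bf Q^k}, {\bf Z}} + \lambda\lambda_2\norm{\bf Z}_{\ast} \right\}.
\end{align*}
Here I would invoke Lemma~\ref{lem:helper-nuclear}: the equivalence \eqref{eq:nuc-eq} lets me replace each equality constraint by the ball constraint $\norm{\cdot}_F^2 \le t_i^2$, and the lemma then evaluates each inner minimum as $-t_1\norm{\SSS_{\lambda\lambda_2}(-{\bf \Sigma}_{\bf P})}$, respectively $-t_2\norm{\SSS_{\lambda\lambda_2}(-{\bf \Sigma}_{\bf Q})}$, with minimizers aligned with ${\mathcal D}_{\lambda\lambda_2}(-{\bf P^k})$ and ${\mathcal D}_{\lambda\lambda_2}(-{\bf Q^k})$. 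Using unitary invariance of the Frobenius norm I would rewrite $\norm{\SSS_{\lambda\lambda_2}(-{\bf \Sigma}_{\bf P})} = \norm{{\mathcal D}_{\lambda\lambda_2}(-{\bf P^k})}_F$, and similarly for ${\bf Q^k}$, so that the optimal inner values are expressed directly through the shrinkage operator.

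Substituting these values reduces everything to the two-dimensional problem
\begin{align*}
\min_{t_1, t_2 \ge 0}\left\{ -t_1\norm{{\mathcal D}_{\lambda\lambda_2}(-{\bf P^k})}_F - t_2\norm{{\mathcal D}_{\lambda\lambda_2}(-{\bf Q^k})}_F + c_1\left(\tfrac{t_1^2 + t_2^2}{2}\right)^2 + c_2\left(\tfrac{t_1^2+t_2^2}{2}\right) \right\}.
\end{align*}
Setting the partial derivatives in $t_1$ and $t_2$ to zero gives the coupled stationarity equations $-\norm{{\mathcal D}_{\lambda\lambda_2}(-{\bf P^k})}_F + (c_1(t_1^2+t_2^2)+c_2)t_1 = 0$ and its ${\bf Q^k}$-analogue. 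The ansatz $t_1 = r\norm{{\mathcal D}_{\lambda\lambda_2}(-{\bf P^k})}_F$ and $t_2 = r\norm{{\mathcal D}_{\lambda\lambda_2}(-{\bf Q^k})}_F$ with $r\ge 0$ then collapses both equations into the single cubic claimed in \eqref{alg:simple-1}, and back-substituting the optimal norms into the inner minimizers yields ${\bf U}^{{\bf k+1}} = r{\mathcal D}_{\lambda\lambda_2}(-{\bf P^k})$, ${\bf Z}^{{\bf k+1}} = r{\mathcal D}_{\lambda\lambda_2}(-{\bf Q^k})$.

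The only genuinely non-routine step is the passage from the equality constraints $\norm{\cdot}_F = t_i$ to the inequality constraints, which is precisely what Lemma~\ref{lem:helper-nuclear} is designed to supply; once that equivalence is in hand, the remaining computations are the same bookkeeping as in Proposition~\ref{prop:closed-form-1}. To finish rigorously I would still address two minor points: the degenerate case $\norm{{\mathcal D}_{\lambda\lambda_2}(-{\bf P^k})}_F = 0$ (or the analogue for ${\bf Q^k}$), where Lemma~\ref{lem:helper-nuclear} returns the zero matrix and the corresponding block of the update is ${\bf 0}$; and the existence and uniqueness of the non-negative root $r$, which follows because $r\mapsto c_1(\norm{{\mathcal D}_{\lambda\lambda_2}(-{\bf Q^k})}_F^2+\norm{{\mathcal D}_{\lambda\lambda_2}(-{\bf P^k})}_F^2)r^3 + c_2 r - 1$ is continuous and strictly increasing on $[0,\infty)$ with value $-1$ at $r=0$. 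Strong convexity of $h_a$ guarantees the overall subproblem has a unique minimizer, so the candidate constructed this way is indeed the BPG-MF iterate.
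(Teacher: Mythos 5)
Your proposal is correct and is exactly the paper's intended argument: the paper's entire proof is the one-line remark that the argument of Proposition~\ref{prop:closed-form-3} carries over with Lemma~\ref{lem:helper-nuclear} used in place of Lemma~\ref{lem:helper-2}, which is precisely the substitution you spell out in full (level-set decomposition, equality-to-ball equivalence, reduction to the two-dimensional problem, and the ansatz collapsing the stationarity conditions to the cubic). As a minor bonus, your write-up implicitly corrects the statement's notational slip $\lambda_1\lambda$ to $\lambda\lambda_2$, consistent with $f=\lambda_2\left(\norm{{\bf U}}_{\ast}+\norm{{\bf Z}}_{\ast}\right)$, and handles the degenerate case and root uniqueness that the paper leaves tacit.
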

	The proof is similar to Proposition~\ref{prop:closed-form-3} but Lemma~\ref{lem:helper-nuclear} must be used instead of Lemma~\ref{lem:helper-2}.
	\subsection{Extensions with Non-Convex Sparsity Constraints}  We want to solve the matrix factorization problem with non-convex sparsity constraints \cite{BST2014}
	\begin{equation}\label{eq:mat-fac-ex-20}
	\min_{{\bf U}\in \R^{M \times K}, {\bf Z}\in \R^{K \times N}}\, \left\{ \Psi({\bf U},{\bf Z}) :=  \frac 12\norm{{\bf A} - {\bf U}{\bf Z}}^2_{F} : \norm{\bf U}_0 \leq s_1, \norm{\bf Z}_0 \leq s_2,   \right\}\,.
	\end{equation}
	The problem with additional non-negativity constraints, the so called Sparse NMF  is considered in Section~\ref{ssec:sparse-nmf}. Now, denote $g := \frac 12\norm{{\bf A} - {\bf U}{\bf Z}}^2_{F}$, $f:=    {\bf I}_{\norm{\bf U}_0 \leq s_1} +  {\bf I}_{\norm{\bf Z}_0 \leq s_2} $ and $h = h_a$. Note that  the Assumption \ref{A:Assumption0} is not valid here, hence CoCaIn BPG-MF theory does not hold and hints at possible extensions of CoCaIn BPG-MF, which is an interesting open question. Before, we proceed, we require the following concept. Let $y \in \R^d$ and without loss of generality we can assume that $|y_1|\geq |y_2|\geq \ldots\geq |y_d|$, then the {hard-thresholding operator} \cite{LT2013} is given   by
	\begin{equation} \label{eq:soft}
		\HHH_{s}\left(y\right) = \argmin_{x \in \real^{d}} \left\{ \norm{x - y}^{2} :  \norm{x}_0 \leq s \right\} = \begin{cases}
		y_i,  &i\leq s,\\
		0, &\text{otherwise},
		\end{cases}
	\end{equation}
	where $s>0$ and the operations are applied element-wise. We require the following technical lemma.
	\begin{lemma}\label{lem:non-convex-1} Let ${\bf Q} \in \R^{A \times B}$ for some positive integers $A$ and $B$. Let $t \geq 0$ and $\norm{{{\bf Q}}}_F \neq 0$ then  
	\[
	 \min_{{\bf X} \in \R^{A \times B}}\left\{ \act{{{\bf Q}}, {\bf X}} : \norm{{\bf X}}_F^2 \leq t^2,\norm{\bf X}_0 \leq s \right\} = -t\norm{\HHH_{s}(-{\bf Q})}\,.
	 \]
	 with the minimizer ${\bf X^*} = \frac{t\HHH_{s}(-{\bf Q})}{\norm{\HHH_{s}(-{\bf Q})}}$ if $\norm{\HHH_{s}(-{\bf Q})} \neq 0$ else  ${\bf X^*} = {\bf 0}$\,. Moreover  we have the following equivalence
	 \[
	 \min_{{\bf X} \in \R^{A \times B}}\left\{ \act{{{\bf Q}}, {\bf X}} : \norm{{\bf X}}_F^2 \leq t^2,\norm{\bf X}_0 \leq s \right\} = \min_{{\bf X} \in \R^{A \times B}}\left\{ \act{{{\bf Q}}, {\bf X}} : \norm{{\bf X}}_F^2 = t^2,\norm{\bf X}_0 \leq s \right\}\,.
	 \]
	\end{lemma}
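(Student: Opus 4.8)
The plan is to follow the same template as Lemma~\ref{lem:helper-1}, converting the constrained minimization into a maximization, but to handle the non-convex $\ell_0$ constraint by an explicit decomposition over supports rather than a normal-cone argument. As a first step I would write
\[
\min_{{\bf X} \in \R^{A \times B}}\left\{ \act{{\bf Q}, {\bf X}} : \norm{\bf X}_F^2 \leq t^2,\, \norm{\bf X}_0 \leq s \right\} = -\max_{{\bf X} \in \R^{A \times B}}\left\{ \act{-{\bf Q}, {\bf X}} : \norm{\bf X}_F^2 \leq t^2,\, \norm{\bf X}_0 \leq s \right\}\,,
\]
so that it suffices to locate the maximizer of $\act{-{\bf Q}, {\bf X}}$ over the intersection of the Frobenius ball with the $\ell_0$-ball, and then negate.

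Next I would split the feasible set according to the support $S$ (the set of indices of nonzero entries of ${\bf X}$), which ranges over all index sets with $|S| \leq s$. For a fixed support $S$, the sparsity constraint becomes inactive and the inner problem reduces to maximizing $\act{-{\bf Q}, {\bf X}}$ over the linear subspace of matrices supported on $S$ subject to $\norm{\bf X}_F \leq t$; by Cauchy--Schwarz (equivalently, by Lemma~\ref{lem:helper-1} applied on that subspace) its value is $t\,\norm{(-{\bf Q})_S}_F$, attained when ${\bf X}$ restricted to $S$ is aligned with $(-{\bf Q})_S$. The outer maximization over supports then collapses to the purely combinatorial selection $\max_{|S|\leq s}\norm{(-{\bf Q})_S}_F$, i.e.\ choosing the $s$ entries of $-{\bf Q}$ of largest magnitude. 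By the definition of the hard-thresholding operator $\HHH_s$, an optimal support $S^*$ is exactly the support of $\HHH_s(-{\bf Q})$, and $\norm{(-{\bf Q})_{S^*}}_F = \norm{\HHH_s(-{\bf Q})}$.

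Combining the two steps gives the maximal value $t\,\norm{\HHH_s(-{\bf Q})}$, attained at ${\bf X}^* = t\,\HHH_s(-{\bf Q})/\norm{\HHH_s(-{\bf Q})}$ whenever $\norm{\HHH_s(-{\bf Q})} \neq 0$, hence the claimed minimum $-t\,\norm{\HHH_s(-{\bf Q})}$ after negating (the argmin of the original coincides with the argmax, so ${\bf X}^*$ is also the stated minimizer). Since $\norm{{\bf X}^*}_F^2 = t^2$ in this nondegenerate case, the ball constraint is active at the optimum, which yields the equivalence with the equality-constrained problem. For completeness I would note that under the hypothesis $\norm{\bf Q}_F \neq 0$ (and $s \geq 1$) the degenerate case $\norm{\HHH_s(-{\bf Q})} = 0$ cannot occur, but if it did every feasible ${\bf X}$ would give objective value $0$, so ${\bf 0}$ is a minimizer and the equivalence still holds.

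The main obstacle, relative to the convex Lemmas~\ref{lem:helper-1}--\ref{lem:helper-nuclear}, is precisely that the $\ell_0$ constraint is non-convex, so a single KKT/normal-cone argument is unavailable; the resolution is the support-by-support reduction above, after which each inner problem is convex and the outer one is a finite top-$s$ selection matching $\HHH_s$. The only points requiring care are ties among the magnitudes of the entries of ${\bf Q}$ (which make the optimal support, and hence ${\bf X}^*$, non-unique) and the degenerate zero case, neither of which affects the optimal value.
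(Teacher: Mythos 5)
Your proof is correct and follows essentially the same route as the paper's: negate to a maximization, identify the optimal entries as the top-$s$ in magnitude (i.e.\ the support of $\HHH_s(-{\bf Q})$), and finish with the ball argument of Lemma~\ref{lem:helper-1} plus activity of the constraint $\norm{{\bf X}^*}_F^2 = t^2$ for the equality-version equivalence. Your fixed-support decomposition with the inner Cauchy--Schwarz step is just a more explicit rigorization of the paper's informal index-set ($\Omega_0$) argument, and your observation that $\norm{\bf Q}_F \neq 0$ with $s \geq 1$ rules out the degenerate case $\HHH_s(-{\bf Q}) = {\bf 0}$ is a correct refinement of the same reasoning.
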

	\begin{proof} The proof is similar to \cite[Proposition 11]{LT2013}. We have
	\begin{align*}
	\min_{{\bf X} \in \R^{A \times B}}\left\{ \act{{{\bf Q}}, {\bf X}} : \norm{{\bf X}}_F^2 \leq t^2,\norm{\bf X}_0 \leq s  \right\}	&= -\max_{{\bf X} \in \R^{A \times B}}\left\{ \act{-{{\bf Q}}, {\bf X}} : \norm{{\bf X}}_F^2 \leq t^2,\norm{\bf X}_0 \leq s \right\}\,,\\
	&= -\max_{{\bf X} \in \R^{A \times B}}\left\{ \act{{\mathcal H}_s(-{{\bf Q}}), {\bf X}} : \norm{{\bf X}}_F^2 \leq t^2\right\}\,.
	\end{align*}
	The first equality is a simple rewriting of the objective. Then, the corresponding objective $\act{-{\bf Q},{\bf X}}$ can be maximized with $\sum_{i=1}^A\sum_{j=1}^B {\bf I}_{(i,j) \in \Omega_0} (-{\bf Q}_{ij}{\bf X}_{ij})$ where $\Omega_0$ is set of index pairs and ${\bf I}_{(i,j) \in \Omega_0}$ is $1$ if the index pair if $(i,j)\in \Omega_0$ and zero otherwise.  Note that the objective $\act{-{\bf Q},{\bf X}}$ is maximized if $\Omega_0$ contains all the index pairs corresponding to the elements of $-{\bf Q}$ with highest absolute value which is captured by Hard-thresholding operator. Thus, the second equality follows and the solution follows due to Lemma~\ref{lem:helper-1}. The equivalence statement follows as $\norm{{\bf X}^*}_F^2 = t^2$ for  $\norm{\HHH_{s}(-{\bf Q})} \neq 0$ else the function value is zero and is attained by all the points in the set $\left\{ {\bf X} : \norm{{\bf X}}_F^2 \leq t^2 \right\}$ are minimizers, hence the equivalence.
	\end{proof}
	\begin{proposition}
		In BPG-MF, with the above defined $g,f,h$ the update steps in each iteration are given by ${\bf U}^{{\bf k+1}} = r\HHH_{s_1}(-{\bf P^k})$, ${\bf Z}^{{\bf k+1}} = r\HHH_{s_2}(-{\bf Q^k})$ where $r\geq0$ and satisfies
	\begin{align}
		c_1\left(\norm{\HHH_{s_1}\left(-{\bf Q^k}\right)}_F^2 +  \norm{\HHH_{s_2}\left(-{\bf P^k}\right)}_F^2  \right)r^3 + c_2r-1 = 0\label{alg:simple-1}\,,
	\end{align}
	with $c_1 = 3$ and $c_2 = \norm{{\bf A}}_F$.
	\end{proposition}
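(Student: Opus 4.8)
The plan is to follow the template of Proposition~\ref{prop:closed-form-1} and Proposition~\ref{prop:closed-form-3} almost verbatim, replacing the inner optimization lemma by the hard-thresholding Lemma~\ref{lem:non-convex-1}. First I would observe that since $f = {\bf I}_{\norm{\bf U}_0 \leq s_1} + {\bf I}_{\norm{\bf Z}_0 \leq s_2}$ is an indicator function, the factor $\lambda>0$ multiplying it in \eqref{alg:bpg-mf-2} is irrelevant (scaling an indicator by a positive constant leaves it unchanged), so the BPG-MF subproblem reduces to minimizing $\act{{\bf P^k}, {\bf U}} + \act{{\bf Q^k}, {\bf Z}} + c_1\big((\norm{\bf U}_F^2 + \norm{\bf Z}_F^2)/2\big)^2 + c_2\big((\norm{\bf U}_F^2 + \norm{\bf Z}_F^2)/2\big)$ over the sparse set $\{\norm{\bf U}_0 \leq s_1,\ \norm{\bf Z}_0 \leq s_2\}$, with $c_1 = 3$ and $c_2 = \norm{\bf A}_F$.

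Next, exactly as in the earlier propositions, I would reparametrize by introducing the Frobenius radii $t_1 = \norm{\bf U}_F$ and $t_2 = \norm{\bf Z}_F$ and rewrite the problem as an outer minimization over $(t_1,t_2)\geq 0$ of an inner minimization over directions at fixed radii. Because the kernel $h_a$ depends on $({\bf U},{\bf Z})$ only through $t_1^2 + t_2^2$, the inner problem decouples into two independent linear-over-sparse-ball problems: one for ${\bf U}$ with bound $s_1$ and one for ${\bf Z}$ with bound $s_2$. Lemma~\ref{lem:non-convex-1} solves each of these in closed form and, crucially, also supplies the equivalence between the constraint $\norm{\cdot}_F^2 = t_i^2$ and the constraint $\norm{\cdot}_F^2 \leq t_i^2$; this equivalence is what lets the radii range freely while guaranteeing that the optimal direction points along $\HHH_{s_1}(-{\bf P^k})$ and $\HHH_{s_2}(-{\bf Q^k})$ and attains inner values $-t_1\norm{\HHH_{s_1}(-{\bf P^k})}_F$ and $-t_2\norm{\HHH_{s_2}(-{\bf Q^k})}_F$ respectively.

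Substituting these inner values reduces everything to the two-dimensional minimization of $-t_1\norm{\HHH_{s_1}(-{\bf P^k})}_F - t_2\norm{\HHH_{s_2}(-{\bf Q^k})}_F + c_1\big((t_1^2 + t_2^2)/2\big)^2 + c_2\big((t_1^2 + t_2^2)/2\big)$ over $t_1,t_2 \geq 0$. Setting the partial derivatives to zero gives the coupled pair $-\norm{\HHH_{s_1}(-{\bf P^k})}_F + c_1(t_1^2 + t_2^2)t_1 + c_2 t_1 = 0$ together with the analogous equation for $t_2$; as in the earlier proofs, the ansatz $t_1 = r\norm{\HHH_{s_1}(-{\bf P^k})}_F$ and $t_2 = r\norm{\HHH_{s_2}(-{\bf Q^k})}_F$ collapses both equations into the single cubic in $r$ stated in the proposition, and back-substitution recovers ${\bf U^{k+1}} = r\,\HHH_{s_1}(-{\bf P^k})$ and ${\bf Z^{k+1}} = r\,\HHH_{s_2}(-{\bf Q^k})$.

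The main obstacle is conceptual rather than algebraic: the sparsity constraint is non-convex, so the convex-duality style argument used in Lemma~\ref{lem:helper-1} and Lemma~\ref{lem:helper-2} cannot be invoked directly, and the equality/inequality equivalence — a triviality in the convex cases — must instead be extracted from the dedicated hard-thresholding Lemma~\ref{lem:non-convex-1}. Once that lemma is in hand, the separability across ${\bf U}$ and ${\bf Z}$ at fixed radii and the reduction to the cubic are routine and identical to Proposition~\ref{prop:closed-form-1}. I would also flag, as the surrounding text already does, that Assumption~\ref{A:Assumption0} fails for this $f$, so only the BPG-MF update step is being justified here and not the CoCaIn BPG-MF convergence theory.
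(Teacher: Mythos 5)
Your proof is correct and follows essentially the same route as the paper's, whose own proof is just the one-line remark that one repeats the argument of Proposition~\ref{prop:closed-form-3} with Lemma~\ref{lem:non-convex-1} in place of Lemma~\ref{lem:helper-2} --- precisely the radial reparametrization, decoupled sparse-ball inner problems, equality/inequality equivalence, and cubic reduction via the ansatz $t_1 = r\norm{\HHH_{s_1}(-{\bf P^k})}_F$, $t_2 = r\norm{\HHH_{s_2}(-{\bf Q^k})}_F$ that you spell out, including the correct observation that $\lambda f = f$ for an indicator $f$. One small bonus of your write-up: your pairing of $s_1$ with ${\bf P^k}$ and $s_2$ with ${\bf Q^k}$ is the consistent one (matching the stated updates ${\bf U^{k+1}} = r\HHH_{s_1}(-{\bf P^k})$), whereas the subscripts in the proposition's displayed cubic are inadvertently swapped.
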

	The proof is similar to Proposition~\ref{prop:closed-form-3} but Lemma~\ref{lem:non-convex-1} must be used instead of Lemma~\ref{lem:helper-2}.
\section{Closed Form Solutions Part II for NMF variants}\label{sec:nmf-closed-form}
	For simplicity we consider the following problem \cite{LS1999,LS2001}
	\begin{equation}\label{eq:mat-fac-ex-1a}
	\min_{{\bf U}\in \R^{M \times K}, {\bf Z}\in \R^{K \times N}}\, \left\{ \Psi({\bf U},{\bf Z}) :=  \frac 12\norm{{\bf A} - {\bf U}{\bf Z}}^2_{F}  + {\bf I}_{{\bf U} \geq {\bf 0}} + {\bf I}_{{\bf Z} \geq {\bf 0}}\right\}\,.
	\end{equation}
	We set ${\mathcal R}_1({\bf U}) = 0$, ${\mathcal R}_2({\bf Z}) = 0$, $g = \Psi$ and $f={\bf I}_{{\bf U} \geq {\bf 0}} + {\bf I}_{{\bf Z} \geq {\bf 0}}$ where ${\bf I}$ is the indicator operator. 
	We start with the following technical lemma.
	\begin{lemma}\label{lem:helper-3}Let ${\bf Q} \in \R^{A \times B}$ for some positive integers $A$ and $B$. Let $t \geq 0$ and $\norm{{{\bf Q}}}_F \neq 0$ then
	\[
	 \min_{{\bf X} \in \R^{A \times B}}\left\{ \act{{{\bf Q}}, {\bf X}} : \norm{{\bf X}}_F^2 \leq t^2,{\bf X} \geq {\bf 0} \right\} = -t\norm{\Pi_{+}(-{{\bf Q}})}_F \,,
	 \]
	 with the minimizer ${\bf X}^* = t\frac{ \Pi_{+}(-{{\bf Q}})}{\norm{\Pi_{+}(-{{\bf Q}})}_F}$ if $\norm{\Pi_{+}(-{{\bf Q}})}_F \neq 0$ else ${\bf X}^* = {\bf 0}$. For $\norm{\Pi_{+}(-{{\bf Q}})}_F \neq 0$, we have the following equivalence
	 \begin{equation}\label{eq:equivalence-1}
	 \min_{{\bf X} \in \R^{A \times B}}\left\{ \act{{{\bf Q}}, {\bf X}} : \norm{{\bf X}}_F^2 \leq t^2,{\bf X} \geq {\bf 0} \right\} \equiv \min_{{\bf X} \in \R^{A \times B}}\left\{ \act{{{\bf Q}}, {\bf X}} : \norm{{\bf X}}_F^2 = t^2,{\bf X} \geq {\bf 0} \right\}\,.
	 \end{equation}
	\end{lemma}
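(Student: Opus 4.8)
The plan is to mirror the argument of Lemma~\ref{lem:helper-1}, reducing the nonnegativity-constrained problem to a single Cauchy--Schwarz estimate after identifying which entries of the minimizer must vanish. First I would pass to the equivalent maximization problem
\[
\min_{{\bf X} \in \R^{A \times B}}\left\{ \act{{\bf Q}, {\bf X}} : \norm{{\bf X}}_F^2 \leq t^2,{\bf X} \geq {\bf 0} \right\} \equiv -\max_{{\bf X} \in \R^{A \times B}}\left\{ \act{-{\bf Q}, {\bf X}} : \norm{{\bf X}}_F^2 \leq t^2,{\bf X} \geq {\bf 0} \right\}\,.
\]
The key observation is that the objective decouples entrywise as $\act{-{\bf Q},{\bf X}} = \sum_{ij}(-{\bf Q}_{ij}){\bf X}_{ij}$, and since every ${\bf X}_{ij}\geq 0$, any index with $-{\bf Q}_{ij}\leq 0$ contributes a nonpositive term. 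This gives the pointwise bound $\act{-{\bf Q},{\bf X}}\leq \act{\Pi_{+}(-{\bf Q}),{\bf X}}$ for all feasible ${\bf X}$, with equality exactly when ${\bf X}$ is supported on the index set $\{(i,j): -{\bf Q}_{ij}>0\}$.

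Next I would maximize the relaxed objective $\act{\Pi_{+}(-{\bf Q}),{\bf X}}$ over $\norm{{\bf X}}_F\leq t$. Because $\Pi_{+}(-{\bf Q})\geq {\bf 0}$ entrywise, the Cauchy--Schwarz maximizer ${\bf X}^*=t\,\Pi_{+}(-{\bf Q})/\norm{\Pi_{+}(-{\bf Q})}_F$ is itself nonnegative, so the nonnegativity constraint is inactive at the optimum and can be dropped; the problem is then precisely the one solved in Lemma~\ref{lem:helper-1} applied to $\Pi_{+}(-{\bf Q})$. This yields the optimal value $t\norm{\Pi_{+}(-{\bf Q})}_F$ for the maximization, hence $-t\norm{\Pi_{+}(-{\bf Q})}_F$ for the original minimization, and identifies the minimizer ${\bf X}^*$ (with the degenerate case ${\bf X}^*={\bf 0}$ when $\Pi_{+}(-{\bf Q})={\bf 0}$). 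Finally, the equivalence \eqref{eq:equivalence-1} between the $\leq t^2$ and $=t^2$ constraints follows immediately, since whenever $\Pi_{+}(-{\bf Q})\neq{\bf 0}$ the minimizer satisfies $\norm{{\bf X}^*}_F^2 = t^2$ and therefore already lies on the sphere.

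I expect the only genuinely nonroutine step to be the entrywise decoupling argument, that is, justifying the replacement of $-{\bf Q}$ by its positive part $\Pi_{+}(-{\bf Q})$: one must verify that zeroing out the entries where $-{\bf Q}_{ij}\leq 0$ respects the nonnegativity constraint and does not decrease the objective, and simultaneously that the resulting Cauchy--Schwarz maximizer stays in the feasible set. Everything after this reduction is a direct appeal to Lemma~\ref{lem:helper-1}, so no further computation is needed.
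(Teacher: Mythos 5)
Your proof is correct and follows essentially the same route as the paper's: pass to the equivalent maximization, identify the optimizer as a nonnegative multiple of $\Pi_{+}(-{\bf Q})$, scale it to the sphere using the norm constraint, and read off the equivalence \eqref{eq:equivalence-1} from the fact that the minimizer satisfies $\norm{{\bf X}^*}_F^2 = t^2$. If anything, your entrywise bound $\act{-{\bf Q},{\bf X}} \leq \act{\Pi_{+}(-{\bf Q}),{\bf X}}$ for feasible ${\bf X}\geq {\bf 0}$, followed by Cauchy--Schwarz and an explicit attainment check, rigorously justifies the one step the paper merely asserts (that the maximizer has the form $c\,\Pi_{+}(-{\bf Q})$ for some constant $c$), so your write-up is a slightly more careful rendition of the same argument.
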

	\begin{proof} On rewriting we have the following equivalence
	\[
	\min_{{\bf X} \in \R^{A \times B}}\left\{ \act{{{\bf Q}}, {\bf X}} : \norm{\bf X}_F^2 \leq  t^2,{\bf X} \geq {\bf 0}  \right\} \equiv -\max_{{\bf X} \in \R^{A \times B}}\left\{ \act{-{\bf Q}, {\bf X}} : \norm{\bf X}_F^2 \leq  t^2,{\bf X} \geq {\bf 0}  \right\}\,.
	\]
	The expression $\act{-{\bf Q}, {\bf X}}$ is maximized at ${\bf X^*} = c \Pi_{+}(-{\bf Q}) $ for certain constant $c$. On substituting we have $$\act{-{\bf Q}, {\bf X^*}} = c\norm{ \Pi_{+}(-{\bf Q})}_F^2\,.$$
	Since, the dependence on $c$ is linear and we additionally require $\norm{\bf X}_F^2 \leq  t^2 $, we can set $c=\frac{t}{\norm{\Pi_{+}(-{\bf Q}) }_F}$ if $\norm{\Pi_{+}(-{\bf Q}) }_F \neq 0$ else $c=0$. Hence, the minimizer to
	\[ 
	\min_{{\bf X} \in \R^{A \times B}}\left\{ \act{{{\bf Q}}, {\bf X}} : \norm{\bf X}_F^2 \leq  t^2 \right\}
	\]
	is attained at ${\bf X^*} = -t\frac{\Pi_{+}(-{\bf Q}) }{\norm{\Pi_{+}(-{\bf Q}) }_F}$ for $\norm{\Pi_{+}(-{\bf Q}) }_F \neq 0$ else ${\bf X^*} = 0$. The equivalence in the statement follows as $\norm{{\bf X^*}}_F^2 = t^2$.
	\end{proof}
	
	Denote $g = \Psi$, $f = {\bf I}_{{\bf U} \geq {\bf 0}} + {\bf I}_{{\bf Z} \geq {\bf 0}}$ and $h = h_a$.
	\begin{proposition}\label{prop:main-prop-nmf}
		In BPG-MF, when $g = \Psi$ in \eqref{eq:mat-fac-ex-1a} the update step in each iteration are given by ${\bf U}^{{\bf k+1}} = \Pi_{+}(-{\bf P^k})$, ${\bf Z}^{{\bf k+1}} = \Pi_{+}(-{\bf Q^k})$ where $r\geq 0$ and satisfies
	\begin{align}
		&c_1\left(\norm{\Pi_{+}(-{\bf Q^k})}_F^2 +  \norm{\Pi_{+}(-{\bf P^k})}_F^2  \right)r^3 + c_2r-1 = 0\,.\label{alg:simple-3}\,,
	\end{align}
	with $c_1 = 3$ and $c_2 = \norm{{\bf A}}_F$.
	\end{proposition}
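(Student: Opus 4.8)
The plan is to reproduce the radial reduction of Proposition~\ref{prop:closed-form-1}, but with the two inner direction problems now carrying a non-negativity constraint that is resolved by Lemma~\ref{lem:helper-3} rather than Lemma~\ref{lem:helper-1}. Writing out the BPG-MF update \eqref{alg:bpg-mf-2} with $h = h_a$ (so $c_1 = 3$, $c_2 = \norm{{\bf A}}_F$) and with the indicator part of $f$ folded into the feasible set, the subproblem reads
\begin{align*}
({\bf U}^{{\bf k+1}}, {\bf Z}^{{\bf k+1}}) \in \underset{{\bf U}\geq {\bf 0},\, {\bf Z}\geq {\bf 0}}{\argmin}\Big\{ \act{{\bf P^k}, {\bf U}} + \act{{\bf Q^k}, {\bf Z}} + c_1\Big(\tfrac{\norm{\bf U}_F^2 + \norm{\bf Z}_F^2}{2}\Big)^2 + c_2\Big(\tfrac{\norm{\bf U}_F^2 + \norm{\bf Z}_F^2}{2}\Big) \Big\}.
\end{align*}

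First I would decouple the radial and angular degrees of freedom exactly as in \eqref{eq:subprob-equiv-1}--\eqref{eq:subprob-equiv-2}: introduce $t_1 = \norm{\bf U}_F$ and $t_2 = \norm{\bf Z}_F$, so that the kernel terms depend only on $(t_1, t_2)$ and the inner problem splits into one minimization in ${\bf U}$ (with ${\bf U}\geq {\bf 0}$, $\norm{\bf U}_F = t_1$) and one in ${\bf Z}$. The nontrivial passage from the norm-equality to the norm-inequality constraint for each block is exactly the equivalence \eqref{eq:equivalence-1} supplied by Lemma~\ref{lem:helper-3}. Applying that lemma to each block with ${\bf Q}\leftarrow {\bf P^k}$, $t\leftarrow t_1$ and ${\bf Q}\leftarrow {\bf Q^k}$, $t\leftarrow t_2$ gives the optimal directions $t_1 \Pi_{+}(-{\bf P^k})/\norm{\Pi_{+}(-{\bf P^k})}_F$ and $t_2\Pi_{+}(-{\bf Q^k})/\norm{\Pi_{+}(-{\bf Q^k})}_F$ (and ${\bf 0}$ in the degenerate cases), together with the inner optimal values $-t_1\norm{\Pi_{+}(-{\bf P^k})}_F$ and $-t_2\norm{\Pi_{+}(-{\bf Q^k})}_F$.

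Substituting these back leaves the smooth two-dimensional outer problem
\begin{align*}
\underset{t_1\geq 0,\, t_2\geq 0}{\argmin}\Big\{ -t_1\norm{\Pi_{+}(-{\bf P^k})}_F - t_2\norm{\Pi_{+}(-{\bf Q^k})}_F + c_1\Big(\tfrac{t_1^2 + t_2^2}{2}\Big)^2 + c_2\Big(\tfrac{t_1^2 + t_2^2}{2}\Big)\Big\},
\end{align*}
whose first-order conditions are $-\norm{\Pi_{+}(-{\bf P^k})}_F + c_1(t_1^2 + t_2^2)t_1 + c_2 t_1 = 0$ and the analogue for $t_2$. I would then solve these by the ansatz $t_1 = r\norm{\Pi_{+}(-{\bf P^k})}_F$, $t_2 = r\norm{\Pi_{+}(-{\bf Q^k})}_F$ with a common scalar $r\geq 0$, which collapses both equations into the single cubic \eqref{alg:simple-3}; back-substitution then gives ${\bf U}^{{\bf k+1}} = r\,\Pi_{+}(-{\bf P^k})$ and ${\bf Z}^{{\bf k+1}} = r\,\Pi_{+}(-{\bf Q^k})$. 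The main obstacle is, as before, the equality-to-inequality reduction that makes the reparametrization by norms legitimate; here it must be carried out in the presence of the non-negativity constraint, which is precisely the content of Lemma~\ref{lem:helper-3}, and once that is in hand the remaining radial computation is identical to Proposition~\ref{prop:closed-form-1}.
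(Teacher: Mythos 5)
Your proposal follows the paper's proof of Proposition~\ref{prop:main-prop-nmf} essentially verbatim: the same radial--angular splitting into $(t_1,t_2)$, the same substitution of Lemma~\ref{lem:helper-3} for Lemma~\ref{lem:helper-1} to handle the non-negatively constrained direction subproblems, and the same ansatz $t_1 = r\norm{\Pi_{+}(-{\bf P^k})}_F$, $t_2 = r\norm{\Pi_{+}(-{\bf Q^k})}_F$ collapsing the two stationarity conditions into the single cubic. The one point you pass over faster than the paper is the degenerate case $\norm{\Pi_{+}(-{\bf P^k})}_F = 0$ with ${\bf P^k} \neq {\bf 0}$: the equality-to-inequality equivalence of Lemma~\ref{lem:helper-3} is stated (and holds) only when $\norm{\Pi_{+}(-\cdot)}_F \neq 0$, since otherwise the equality-constrained inner value is strictly positive while the inequality-constrained one is zero, and the paper closes this gap by observing that the outer objective is then monotonically increasing in $t_1$, forcing $t_1^* = 0$ --- consistent with your ``${\bf 0}$ in the degenerate cases,'' but that extra monotonicity argument is needed for your equality-based reparametrization to remain legitimate there.
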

	\begin{proof} The proof is similar to that of Proposition~\ref{prop:closed-form-1}, but with certain changes due to the involved non-negativity constraints for the objective.
	Consider the following subproblem
	\begin{align*}
	  ({\bf U}^{{\bf k+1}}, {\bf Z}^{{\bf k+1}}) \in \underset{({\bf U},{\bf Z}) \in \R^{M \times K}_{+}\times \R^{K\times N}_{+}}{\argmin} &\left\{\act{ {\bf P^k}, {\bf U}} + \act{ {\bf Q^k} , {\bf Z}} \right.\\
	& \left. + c_1 \left(\frac{\norm{\bf U}_F^2 +\norm{\bf Z}_F^2 }{2} \right)^2 + c_2\left( \frac{\norm{\bf U}_F^2 +\norm{\bf Z}_F^2 }{2}\right) \right\}\,.
	\end{align*}
	Denote the objective in the above minimization problem as $\mathcal{O({\bf U}^{\bf k}, {\bf Z}^{\bf k})}$. Now, we show that the following holds
	\begin{align}
		&\min_{({\bf U},{\bf Z}) \in \R^{M \times K}\times \R^{K\times N}} \left(\mathcal{O({\bf U}^{\bf k}, {\bf Z}^{\bf k})}\right) \nonumber\\
		&\equiv \min_{t_1 \geq 0, t_2 \geq 0} \left\{\min_{({\bf U},{\bf Z}) \in \R^{M \times K}\times \R^{K\times N},\norm{\bf U}_F =t_1,\norm{\bf Z}_F =t_2} \left(\mathcal{O({\bf U}^{\bf k}, {\bf Z}^{\bf k})} \right)\right\}\,,\label{eq:equiv-nmf-0}\\
		&\equiv \min_{t_1 \geq 0, t_2 \geq 0} \left\{\min_{({\bf U},{\bf Z}) \in \R^{M \times K}\times \R^{K\times N},\norm{\bf U}_F \leq t_1,\norm{\bf Z}_F \leq t_2} \left(\mathcal{O({\bf U}^{\bf k}, {\bf Z}^{\bf k})} \right)\right\}\label{eq:equiv-nmf-1}\,,
	\end{align}
	where the first step is a simple rewriting of the objective and involved variables and the second equivalence proof is similar to that equivalence of  \eqref{eq:subprob-equiv-2a} and \eqref{eq:subprob-equiv-1a} in Proposition~\ref{prop:closed-form-3}, which we describe now. The second step is non-trivial. In order to prove \eqref{eq:equiv-nmf-1} we rewrite \eqref{eq:equiv-nmf-0} as 
	\begin{align*}
		\min_{t_1 \geq 0, t_2 \geq 0} &\left\{  \min_{{\bf U_1} \in \R^{M \times K}} \left\{ \act{ {\bf P^k}, {\bf U_1}  } : \norm{{\bf U_1}}_F^2 = t_1, {\bf U_1} \geq 0\right\} \right. \\
	& \left. + \min_{{\bf Z_1} \in \R^{K\times N} } \left\{ \act{ {\bf Q^k} , {\bf Z_1}} : \norm{{\bf Z_1}}_F^2 = t_2, {\bf Z_1} \geq 0\right\}\right.\\
	&\left. + c_1\left(\frac{t_1^2 + t_2^2}{2}\right)^2 + c_2\left(\frac{t_1^2 + t_2^2}{2}\right) \right\}\,.
	\end{align*}

	where the second step uses Lemma~\ref{lem:helper-3} and strong convexity of $h$. Now, due to Lemma~\ref{lem:helper-2}, if $\norm{\Pi_{+}(-{{\bf P^k}})}_F \neq 0$ we have
	\begin{align}
		&\min_{{\bf U_1} \in \R^{M \times K}} \left\{ \act{ {\bf P^k}, {\bf U_1}  }  : \norm{{\bf U_1}}_F^2 = t_1\,, {\bf U_1} \geq 0\right\} \equiv \min_{{\bf U_1} \in \R^{M \times K}} \left\{ \act{ {\bf P^k}, {\bf U_1}  } : \norm{{\bf U_1}}_F^2 \leq t_1\,, {\bf U_1} \geq 0\right\} \label{eq:rewrite-1}\,,
	\end{align} 
	and similarly if $\norm{\Pi_{+}(-{{\bf Q^k}})}_F \neq 0$ we have
	\begin{align}
		&\min_{{\bf Z_1} \in \R^{K\times N} } \left\{ \act{ {\bf Q^k} , {\bf Z_1}} : \norm{{\bf Z_1}}_F^2 = t_2\,, {\bf Z_1} \geq 0\right\}\equiv \min_{{\bf Z_1} \in \R^{K\times N} } \left\{ \act{ {\bf Q^k} , {\bf Z_1}} : \norm{{\bf Z_1}}_F^2 \leq t_2\,, {\bf Z_1} \geq 0\right\}\label{eq:rewrite-2}\,.
	\end{align}
	Note that if $\norm{\Pi_{+}(-{{\bf P^k}})}_F = 0$ and $\norm{{\bf P^k}}_F \neq 0$ then the objective 
	\[
	\min_{{\bf U_1} \in \R^{M \times K}} \left\{ \act{ {\bf P^k}, {\bf U_1}  }  : \norm{{\bf U_1}}_F^2 = t_1\,, {\bf U_1} \geq 0\right\}
	\]
	 with minimum function value of a positive value $t_1 \underset{{i \in [M],\, j \in [K]}}{\min} \{({\bf P^k})_{i,j}\}$ where we have $[A] = \{1,2,\ldots,A\}$ for a positive integer $A$. Similarly if $\norm{\Pi_{+}(-{{\bf Q^k}})}_F = 0$ and $\norm{{\bf Q^k}}_F \neq 0$ the minimum function value  for 
	 \[
	 \min_{{\bf Z_1} \in \R^{K\times N} } \left\{ \act{ {\bf Q^k} , {\bf Z_1}} : \norm{{\bf Z_1}}_F^2 = t_2\,, {\bf Z_1} \geq 0\right\}
	 \]
	 is a positive value $t_2\underset{{i \in [K],\, j \in [N]}}{\min}\{ ({\bf Q^k})_{i,j}\}$. Thus for  $\norm{{\bf P^k}}_F \neq 0$ with $\norm{\Pi_{+}(-{{\bf P^k}})}_F = 0$ (or $\norm{{\bf Q^k}}_F \neq 0$ with $\norm{\Pi_{+}(-{{\bf Q^k}})}_F = 0$) the final objective \eqref{eq:equiv-nmf-0} is monotonically increasing in $t_1$ (or $t_2$)  which will drive $t_1$ (or $t_2$) to $0$ due to the constraint $t_1\geq 0$ (or $t_2\geq 0$). So, without loss of generality we can consider $\norm{\Pi_{+}(-{{\bf Q^k}})}_F \neq  0$ and $\norm{\Pi_{+}(-{{\bf Q^k}})}_F = 0$. Now, we obtain the solutions via the following strategy. Denote
	\[
	{{\bf U}^{{\bf *}}_1}(t_1) \in  \argmin \left\{ \act{ {\bf P^k}, {\bf U_1}  } : {\bf U_1} \in \R^{M \times K}_{+}\,, \norm{{\bf U_1}}_F^2 \leq t_1\right\}\,,
	\]
	\[
		{{\bf Z}^{*}_1 }(t_2) \in \argmin \left\{ \act{ {\bf Q^k} , {\bf Z_1}}: {\bf Z_1} \in \R^{K\times N}_{+}\,, \norm{{\bf Z_1}}_F^2 \leq t_2 \right\}\,.
	\]
	Then we obtain  $({\bf U}^{{\bf k+1}}, {\bf Z}^{{\bf k+1}}) =  ({{\bf U}^{{\bf *}}_1}(t_1^*),{{\bf Z}^{*}_1 }(t_2^*))$, where $t_1^*$ and $t_2^*$ are obtained by solving the following two dimensional subproblem
	\begin{align*}
	  (t_1^*,t_2^*) \in  \underset{t_1\geq 0, t_2\geq 0}{\mathrm{argmin}}   &\left\{  \min_{{\bf U_1} \in \R^{M \times K}_{+}} \left\{ \act{ {\bf P^k}, {\bf U_1}  } : \norm{{\bf U_1}}_F^2 \leq t_1\right\} \right. \\
	& \left. + \min_{{\bf Z_1} \in \R^{K\times N}_{+} } \left\{ \act{ {\bf Q^k} , {\bf Z_1}}: \norm{{\bf Z_1}}_F^2 \leq t_2\right\} \right.\\
	&\left. + c_1\left(\frac{t_1 + t_2}{2}\right)^2 + c_2\left(\frac{t_1 + t_2}{2}\right) \right\}\,.
	\end{align*}
	Note that inner minimization subproblems can be trivially solved once we obtain ${{\bf U}^{{\bf *}}_1}(t_1)$ and ${{\bf Z}^{*}_1 }(t_2)$. Due to Lemma~\ref{lem:helper-3} we obtain the solution to the subproblem in each iteration as follows
	\begin{align*}
	{\bf U}^{{\bf k+1}} 
	&= \left.
	\begin{cases}
	    t_1^*\frac{\Pi_{+}(-{\bf P^k})}{\norm{\Pi_{+}(-{\bf P^k})}_F}, & \text{for } \norm{\Pi_{+}(-{\bf P^k})}_F \neq 0\,, \\
	    {\bf 0}, &  otherwise\,.
	\end{cases}\right.\\
	{\bf Z}^{{\bf k+1}} 
	&= \left.
	\begin{cases}
	    t_2^*\frac{\Pi_{+}(-{\bf Q^k})}{\norm{\Pi_{+}(-{\bf Q^k})}_F}, & \text{for } \norm{\Pi_{+}(-{\bf Q^k})}_F \neq 0\,, \\
	    {\bf 0}, &  otherwise\,.
	\end{cases}
	 \right.
	\end{align*}
	We solve for $t_1^*$ and $t_2^*$ with the following two dimensional minimization problem
	\begin{align*}
	\underset{t_1\geq 0, t_2\geq 0}{\mathrm{argmin}} & \left\{  -t_1\norm{\Pi_{+}(-{\bf P^k})}_F  - t_2{\norm{\Pi_{+}(-{\bf Q^k})}_F}   + c_1\left(\frac{t_1^2 + t_2^2}{2}\right)^2 + c_2\left(\frac{t_1^2 + t_2^2}{2}\right)\right\}\,.
	\end{align*}
	Thus, the solutions $t_1^*$ and $t_2^*$ are the non-negative real roots of the following equations
	\begin{align*}
	&-\norm{\Pi_{+}(-{\bf P^k})}_F  + c_1(t_1^2 + t_2^2)t_1 + c_2t_1 = 0\,,\\
	&-\norm{\Pi_{+}(-{\bf Q^k})}_F + c_1(t_1^2 + t_2^2)t_2 + c_2t_2 = 0\,.
	\end{align*}
	Further simplifications lead to $t_1 = r \norm{\Pi_{+}(-{\bf P^k})}_F$ and $t_2 = r \norm{\Pi_{+}(-{\bf Q^k})}_F$ for some $r \geq 0$. This results in the following cubic equation,
	\[
		c_1\left(\norm{\Pi_{+}(-{\bf Q^k})}_F^2 +  \norm{\Pi_{+}(-{\bf P^k})}_F^2  \right)r^3 + c_2r-1 = 0\,,
	\]
	where the solution is the non-negative real root. 
	\end{proof}
	\subsection{Extensions to L2-regularized NMF}\label{ssec:l2-reg-nmf} Here, the goal is solve the following minimization problem
	\begin{equation*}
	\min_{{\bf U}\in \R^{M \times K}, {\bf Z}\in \R^{K \times N}}\, \left\{ \Psi({\bf U},{\bf Z}) :=  \frac 12\norm{{\bf A} - {\bf U}{\bf Z}}^2_{F} + \frac{\lambda_0}{2}\left(\norm{{\bf U} }_F^2 + \norm{{\bf Z} }_F^2 \right) + {\bf I}_{{\bf U} \geq {\bf 0}} + {\bf I}_{{\bf Z} \geq {\bf 0}} \right\}\,.
	\end{equation*}
	Denote $g := \frac 12\norm{{\bf A} - {\bf U}{\bf Z}}^2_{F} + \frac{\lambda_0}{2}\left(\norm{{\bf U} }_F^2 + \norm{{\bf Z} }_F^2 \right)$, $f:=  {\bf I}_{{\bf U} \geq {\bf 0}} + {\bf I}_{{\bf Z} \geq {\bf 0}}$ and $h = h_b$.
	\begin{proposition}
		In BPG-MF, with above defined $g,f,h$ the update step in each iteration are given by ${\bf U}^{{\bf k+1}} = \Pi_{+}(-{\bf P^k})$, ${\bf Z}^{{\bf k+1}} = \Pi_{+}(-{\bf Q^k})$ where $r\geq 0$ and satisfies
	\begin{align*}
		&c_1\left(\norm{\Pi_{+}(-{\bf Q^k})}_F^2 +  \norm{\Pi_{+}(-{\bf P^k})}_F^2  \right)r^3 + (c_2 + \lambda_0)r-1 = 0\,,\label{alg:simple-3}
	\end{align*}
	with $c_1 = 3$ and $c_2 = \norm{{\bf A}}_F$.
	\end{proposition}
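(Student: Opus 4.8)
The plan is to mirror the proof of Proposition~\ref{prop:main-prop-nmf}, tracking the single change caused by moving the L2 penalty into $g$. First I would observe that because $f = {\bf I}_{{\bf U}\geq {\bf 0}} + {\bf I}_{{\bf Z}\geq {\bf 0}}$ is an indicator, $\lambda f = f$, so the quadratic $\frac{\lambda_0}{2}(\norm{{\bf U}}_F^2 + \norm{{\bf Z}}_F^2)$ cannot be absorbed as a proximal term and is instead folded into $g$. Consequently the kernel must dominate the extra convex quadratic, which is exactly $\lambda_0 h_2$; this forces $h_b = 3h_1 + (\norm{{\bf A}}_F + \lambda_0)h_2$, whose $h_2$-coefficient equals $c_2 + \lambda_0$. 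Before deriving the update I would record the $L$-smad property of the new $g$ with respect to $h_b$: writing $g = \frac12\norm{{\bf A} - {\bf U}{\bf Z}}_F^2 + \lambda_0 h_2$, the decomposition $L h_b \mp g = (L h_a \mp \frac12\norm{{\bf A}-{\bf U}{\bf Z}}_F^2) + (L\mp 1)\lambda_0 h_2$ reduces convexity to Proposition~\ref{prop:l-smad-1} plus the trivial convexity of the nonnegative multiple $(L\mp 1)\lambda_0 h_2$ for $L\geq 1$.

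Next I would write out the BPG-MF subproblem. With $f$ the nonnegative-orthant indicator it reads $\argmin_{{\bf U},{\bf Z}\geq {\bf 0}}\{\act{{\bf P^k},{\bf U}} + \act{{\bf Q^k},{\bf Z}} + h_b({\bf U},{\bf Z})\}$, which is identical to the subproblem in Proposition~\ref{prop:main-prop-nmf} except that the kernel's $h_2$-coefficient is $c_2 + \lambda_0$ rather than $c_2$. I would then run the same reduction: parametrize by the Frobenius radii $\norm{{\bf U}}_F = t_1$, $\norm{{\bf Z}}_F = t_2$, pass from the equality to the inequality constraints, and invoke Lemma~\ref{lem:helper-3} to evaluate each inner minimum as $-t_i\norm{\Pi_{+}(\cdot)}_F$ with minimizers proportional to $\Pi_{+}(-{\bf P^k})$ and $\Pi_{+}(-{\bf Q^k})$. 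The resulting two-dimensional problem in $(t_1,t_2)$ has stationarity equations $-\norm{\Pi_{+}(-{\bf P^k})}_F + c_1(t_1^2+t_2^2)t_1 + (c_2+\lambda_0)t_1 = 0$ and its $t_2$-analogue; setting $t_1 = r\norm{\Pi_{+}(-{\bf P^k})}_F$ and $t_2 = r\norm{\Pi_{+}(-{\bf Q^k})}_F$ collapses both to the claimed cubic.

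The main obstacle is not the algebra but the degenerate cases already handled in Proposition~\ref{prop:main-prop-nmf}: when $\norm{\Pi_{+}(-{\bf P^k})}_F = 0$ while $\norm{{\bf P^k}}_F \neq 0$, the inner minimum is a strictly positive multiple of $t_1$, so the outer objective is monotone increasing in $t_1$ and the constraint $t_1 \geq 0$ forces $t_1 = 0$; the same argument applies to $t_2$. I would reproduce this case analysis, after which the equivalence between the equality- and inequality-constrained problems follows because $\norm{{\bf X}^*}_F^2 = t^2$ at the nonnegative minimizers. The only substantive difference from Proposition~\ref{prop:main-prop-nmf} is the shift $c_2 \mapsto c_2 + \lambda_0$ in the linear term, exactly as dictated by the kernel $h_b$, so I expect the write-up to be short and to defer its bulk to the earlier proposition.
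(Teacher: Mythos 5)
Your proposal is correct and takes essentially the same route as the paper: the paper's entire proof is the remark that the argument of Proposition~\ref{prop:main-prop-nmf} goes through verbatim with the only change being the shift $c_2 \mapsto c_2+\lambda_0$ in the linear coefficient, which is exactly the reduction you spell out (radii parametrization, Lemma~\ref{lem:helper-3}, degenerate-case analysis, and the substitution $t_1 = r\norm{\Pi_{+}(-{\bf P^k})}_F$, $t_2 = r\norm{\Pi_{+}(-{\bf Q^k})}_F$). Your explicit identification of $h_b = 3h_1 + (\norm{{\bf A}}_F+\lambda_0)h_2$ together with the $L$-smad check via the decomposition $Lh_b \mp g = (Lh_a \mp \frac12\norm{{\bf A}-{\bf U}{\bf Z}}_F^2) + (L\mp 1)\lambda_0 h_2$ is a useful supplement, since the paper uses $h_b$ without ever defining it.
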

	The proof is similar to Proposition~\ref{prop:main-prop-nmf} with only change in $c_2$.
	\subsection{Extensions to L1-regularized NMF}\label{ssec:l1-reg-nmf} Here, the goal is solve the following minimization problem
	\begin{equation*}
		\min_{{\bf U}\in \R^{M \times K}, {\bf Z}\in \R^{K \times N}}\, \left\{ \Psi({\bf U},{\bf Z}) :=  \frac 12\norm{{\bf A} - {\bf U}{\bf Z}}^2_{F} + \lambda_1\left(\norm{{\bf U} }_1 + \norm{{\bf Z} }_1  \right)   + {\bf I}_{{\bf U} \geq {\bf 0}} + {\bf I}_{{\bf Z} \geq {\bf 0}} \right\}\,.
	\end{equation*}
	We denote ${\bf e_D}$ to be a vector of dimension ${\bf D}$ with all its elements set to 1. 
	\begin{lemma}\label{lem:helper-4a}Let ${\bf Q} \in \R^{A \times B}$ for some positive integers $A$ and $B$. Let $t \geq 0$ and $\norm{{{\bf Q}}}_F \neq 0$ then
	\[
	 \min_{{\bf X} \in \R^{A \times B}}\left\{ \act{{{\bf Q}}, {\bf X}} + t_0\norm{{\bf X}}_1 : \norm{{\bf X}}_F^2 \leq t^2,{\bf X} \geq 0 \right\} = -t\norm{\Pi_{+}(-\left({{\bf Q }+t_0{\bf e_{A}e_{B}}^T}\right))}_F 
	 \]
	with  the minimizer ${\bf X}^* = t\frac{\Pi_{+}(-\left({{\bf Q }+t_0{\bf e_{A}e_{B}}^T}\right))}{\norm{\Pi_{+}(-\left({{\bf Q }+t_0{\bf e_{A}e_{B}}^T}\right))}_F}$ if the condition $\norm{\Pi_{+}(-\left({{\bf Q }+t_0{\bf e_{A}e_{B}}^T}\right))}_F \neq 0$ holds\,.
	\end{lemma}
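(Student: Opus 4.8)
The plan is to exploit the non-negativity constraint in order to linearize the $\ell_1$ term, thereby reducing the problem to one already solved in Lemma~\ref{lem:helper-3}. The key observation is that on the non-negative orthant every entry satisfies $\left|{\bf X}_{ij}\right| = {\bf X}_{ij}$, so that $\norm{{\bf X}}_1 = \sum_{i,j}{\bf X}_{ij} = \act{{\bf e}_A{\bf e}_B^T, {\bf X}}$ whenever ${\bf X}\geq 0$, where ${\bf e}_A{\bf e}_B^T$ is the $A\times B$ all-ones matrix. Substituting this identity into the objective collapses the non-smooth penalty into a linear term:
\[
\act{{\bf Q}, {\bf X}} + t_0\norm{{\bf X}}_1 = \act{{\bf Q} + t_0{\bf e}_A{\bf e}_B^T, {\bf X}}\quad\text{for all } {\bf X}\geq 0\,.
\]

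With this rewriting the minimization problem becomes
\[
\min_{{\bf X}\in\R^{A\times B}}\left\{\act{{\bf Q} + t_0{\bf e}_A{\bf e}_B^T, {\bf X}} : \norm{{\bf X}}_F^2 \leq t^2,\, {\bf X}\geq 0\right\}\,,
\]
which is exactly the form handled by Lemma~\ref{lem:helper-3} after replacing ${\bf Q}$ with $\tilde{\bf Q} := {\bf Q} + t_0{\bf e}_A{\bf e}_B^T$. Applying that lemma directly would yield the optimal value $-t\norm{\Pi_{+}(-\tilde{\bf Q})}_F$ and, in the case $\norm{\Pi_{+}(-\tilde{\bf Q})}_F \neq 0$, the minimizer ${\bf X}^* = t\,\Pi_{+}(-\tilde{\bf Q})/\norm{\Pi_{+}(-\tilde{\bf Q})}_F$, which is precisely the claimed expression.

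I do not expect a genuine obstacle here: the entire content is the linearization identity $\norm{{\bf X}}_1 = \act{{\bf e}_A{\bf e}_B^T, {\bf X}}$, which holds only because of the non-negativity constraint and is what lets the $\ell_1$ penalty be absorbed into the linear coefficient. The only point requiring a word of care is the degenerate case $\norm{\Pi_{+}(-\tilde{\bf Q})}_F = 0$, in which (as in Lemma~\ref{lem:helper-3}) the minimizer is no longer unique and the displayed formula does not apply; the statement sidesteps this by conditioning on the norm being nonzero. If the $\leq$ versus $=$ equivalence from the earlier lemmas were also needed, it would follow identically, since the optimal ${\bf X}^*$ saturates $\norm{{\bf X}}_F^2 = t^2$.
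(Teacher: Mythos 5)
Your proposal is correct and is essentially identical to the paper's proof: both arguments use ${\bf X}\geq 0$ to rewrite $\norm{{\bf X}}_1$ as the linear term $\act{{\bf e_{A}e_{B}}^T,{\bf X}}$, absorb it into the coefficient $\tilde{\bf Q} = {\bf Q}+t_0{\bf e_{A}e_{B}}^T$, and then invoke Lemma~\ref{lem:helper-3}. Your extra remarks on the degenerate case $\norm{\Pi_{+}(-\tilde{\bf Q})}_F = 0$ and on saturation of the norm constraint are consistent with how the earlier lemmas handle those points.
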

	\begin{proof}
	By using ${\bf X }\geq 0$ and the basic trace properties we have the following equivalence
	\[
		\norm{{\bf X}}_1 = \sum_{i,j}{\bf X}_{ij} =  {\bf e_{A}}^T{\bf X e_{B}} = {\bf tr}\left({\bf e_{A}}^T{\bf X e_{B}}\right) = {\bf tr}\left({\bf e_{B}e_{A}}^T{\bf X }\right) = \act{{\bf e_{A}e_{B}}^T,{\bf X}}\,,
	\]
	hence we have the following equivalence
	\begin{align*}
	&\min_{{\bf X} \in \R^{A \times B}}\left\{ \act{{{\bf Q}}, {\bf X}} + t_0\norm{{\bf X}}_1 : \norm{{\bf X}}_F^2\leq t^2,{\bf X} \geq 0 \right\}\\
	& \equiv 	\min_{{\bf X} \in \R^{A \times B}}\left\{ \act{{{\bf Q }+t_0{\bf e_{A}e_{B}}^T}, {\bf X}}  : \norm{{\bf X}}_F^2 \leq t^2,{\bf X} \geq 0 \right\}\,
	\end{align*}
	Now, the solution follows due to Lemma~\ref{lem:helper-3}.
	\end{proof}
	Denote $g := \frac 12\norm{{\bf A} - {\bf U}{\bf Z}}^2_{F}$, $f:= \lambda_1\left(\norm{{\bf U} }_1 + \norm{{\bf Z} }_1  \right)  + {\bf I}_{{\bf U} \geq {\bf 0}} + {\bf I}_{{\bf Z} \geq {\bf 0}}$ and $h=h_a$.
	\begin{proposition}
		In BPG-MF, with the above defined $g,f,h$ the update steps in each iteration are given by ${\bf U}^{{\bf k+1}} = r\Pi_{+}(-\left({{\bf P^k}+t_0{\bf e}_{M}{\bf e}_{K}^T}\right))$, 
		${\bf Z}^{{\bf k+1}} = r\Pi_{+}(-\left({{\bf Q^k}+t_0{\bf e}_{K}{\bf e}_{N}^T}\right))$ where $r\geq0$ and satisfies
	\begin{align*}
		\begin{split}
		&c_1\left(\norm{\Pi_{+}(-\left({{\bf P^k}+t_0{\bf e}_{M}{\bf e}_{K}^T}\right))}_F^2  +  \norm{\Pi_{+}(-\left({{\bf Q^k}+t_0{\bf e}_{K}{\bf e}_{N}^T}\right))}_F^2  \right)r^3+ c_2r-1 = 0\,,
		\end{split}
	\end{align*}
	with $c_1 = 3$, $c_2 = \norm{{\bf A}}_F$ and $t_0 = \lambda\lambda_1$.
	\end{proposition}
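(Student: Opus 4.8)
The plan is to follow the template of Proposition~\ref{prop:closed-form-3} (pure L1 regularization) and Proposition~\ref{prop:main-prop-nmf} (pure non-negativity), since the present statement merely combines both features through the device of Lemma~\ref{lem:helper-4a}. Writing out the BPG-MF subproblem~\eqref{alg:bpg-mf-2} with $h=h_a$, $c_1=3$, $c_2=\norm{{\bf A}}_F$, and the stated $f$, the iterate $({\bf U}^{{\bf k+1}},{\bf Z}^{{\bf k+1}})$ minimizes, over ${\bf U}\geq{\bf 0}$ and ${\bf Z}\geq{\bf 0}$,
\begin{align*}
&\lambda\lambda_1\big(\norm{{\bf U}}_1+\norm{{\bf Z}}_1\big)+\act{{\bf P^k},{\bf U}}+\act{{\bf Q^k},{\bf Z}}\\
&\qquad+c_1\Big(\tfrac{\norm{{\bf U}}_F^2+\norm{{\bf Z}}_F^2}{2}\Big)^2+c_2\Big(\tfrac{\norm{{\bf U}}_F^2+\norm{{\bf Z}}_F^2}{2}\Big)\,.
\end{align*}
First I would decouple radial and angular directions exactly as in \eqref{eq:subprob-equiv-1a}--\eqref{eq:subprob-equiv-2a}: parametrize by the Frobenius radii $t_1=\norm{{\bf U}}_F$ and $t_2=\norm{{\bf Z}}_F$, so that the radial term $c_1((t_1^2+t_2^2)/2)^2+c_2((t_1^2+t_2^2)/2)$ factors out and the two blocks split into independent inner minimizations of $\act{{\bf P^k},{\bf U}}+\lambda\lambda_1\norm{{\bf U}}_1$ and $\act{{\bf Q^k},{\bf Z}}+\lambda\lambda_1\norm{{\bf Z}}_1$ over the respective nonnegative Frobenius spheres.

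Next I would apply Lemma~\ref{lem:helper-4a} with $t_0=\lambda\lambda_1$, which is tailored to this exact combination: on the nonnegative orthant it rewrites $\norm{{\bf X}}_1=\act{{\bf e}_A{\bf e}_B^T,{\bf X}}$, absorbing the L1 penalty into a linear shift of the gradient and thereby reducing matters to Lemma~\ref{lem:helper-3}. This evaluates the two inner problems to $-t_1\norm{\Pi_{+}(-({\bf P^k}+t_0{\bf e}_{M}{\bf e}_{K}^T))}_F$ and $-t_2\norm{\Pi_{+}(-({\bf Q^k}+t_0{\bf e}_{K}{\bf e}_{N}^T))}_F$, with minimizers along $\Pi_{+}(-({\bf P^k}+t_0{\bf e}_{M}{\bf e}_{K}^T))$ and $\Pi_{+}(-({\bf Q^k}+t_0{\bf e}_{K}{\bf e}_{N}^T))$. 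The equality/inequality constraint equivalence (cf.\ \eqref{eq:equivalence} and \eqref{eq:equiv-nmf-1}) is what makes the spherical inner problems tractable, and the degenerate case in which a projection vanishes must be treated as in Proposition~\ref{prop:main-prop-nmf}, where the objective becomes monotone increasing in the corresponding radius and drives it to zero.

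Finally I would solve the reduced two-dimensional problem in $(t_1,t_2)$. Its stationarity conditions are $-\norm{\Pi_{+}(-({\bf P^k}+t_0{\bf e}_{M}{\bf e}_{K}^T))}_F+c_1(t_1^2+t_2^2)t_1+c_2t_1=0$ together with the analogous equation in $t_2$; substituting $t_1=r\,\norm{\Pi_{+}(-({\bf P^k}+t_0{\bf e}_{M}{\bf e}_{K}^T))}_F$ and $t_2=r\,\norm{\Pi_{+}(-({\bf Q^k}+t_0{\bf e}_{K}{\bf e}_{N}^T))}_F$ collapses both into the single cubic in $r$ stated in the proposition, and back-substitution of the directions yields the claimed closed forms. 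I expect the main obstacle to be the bookkeeping of the vanishing-projection cases needed to justify the $=$/$\leq$ equivalence rigorously; once Lemma~\ref{lem:helper-4a} is invoked, the remaining reduction to the cubic is routine algebra.
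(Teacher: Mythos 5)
Your proposal is correct and follows exactly the route the paper intends: the paper skips this proof, stating it is similar to Proposition~\ref{prop:main-prop-nmf}, and Lemma~\ref{lem:helper-4a} is stated there precisely to absorb the L1 term into the linear part over the nonnegative orthant before reducing to the two-dimensional radial problem and the cubic in $r$. Your handling of the vanishing-projection degenerate cases via the monotonicity argument of Proposition~\ref{prop:main-prop-nmf} is also the right way to justify the sphere/ball equivalence, so nothing is missing.
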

	We skip the proof as it is similar to Proposition~\ref{prop:main-prop-nmf}.
	
	\subsection{Extensions to Graph Regularized Non-negative Matrix Factorization}\label{ssec:exten-gnmf}
	Graph Regularized Non-negative Matrix Factorization was proposed in \cite{CHHH2011}. 
	Here, given  ${\bf \mathcal L} \in \R^{M \times M}$ we are interested to solve
	\begin{align*}
	\min_{{\bf U}\in \R^{M \times K}, {\bf Z}\in \R^{K \times N}}\, \left\{ \Psi({\bf U},{\bf Z}) \right. &=  \frac 12\norm{{\bf A} - {\bf U}{\bf Z}}^2_{F} +\frac{\mu_0}{2}{\bf tr}({\bf U}^T{\bf \mathcal L}{\bf U})  \\
	&\left. + \frac{\lambda_0}{2}\left(\norm{{\bf U} }_F^2 + \norm{{\bf Z} }_F^2 \right) + {\bf I}_{{\bf U} \geq {\bf 0}} + {\bf I}_{{\bf Z} \geq {\bf 0}} \right\}.
	\end{align*}
	Recall that
	\[
	h_c({\bf U},{\bf Z}) =  3h_1({\bf U},{\bf Z}) + \left(\norm{{\bf A}}_F + \mu_0\norm{\bf \mathcal L}_F\right) h_2({\bf U},{\bf Z}) \,.
	\]
	Denote $g := \frac 12\norm{{\bf A} - {\bf U}{\bf Z}}^2_{F} +\frac{\mu_0}{2}{\bf tr}({\bf U}^T{\bf \mathcal L}{\bf U})$, $f:=  \frac{\lambda_0}{2}\left(\norm{{\bf U} }_F^2 + \norm{{\bf Z} }_F^2 \right) + {\bf I}_{{\bf U} \geq {\bf 0}} + {\bf I}_{{\bf Z} \geq {\bf 0}}$ and $h = h_c$.
	\begin{proposition}\label{prop:closed-form-2a}
	In BPG-MF, with the above defined $f,g,h$ the update steps in each iteration are given by ${\bf U}^{{\bf k+1}} = r \Pi_{+}(-\,{\bf P^k})$, ${\bf Z}^{{\bf k+1}} = r\Pi_{+}(-\,{\bf Q^k})$ where $r\geq 0$ and satisfies
	\begin{align}
		&c_1\left(\norm{\Pi_{+}(-{\bf Q^k})}_F^2 +  \norm{\Pi_{+}(-{\bf P^k})}_F^2  \right)r^3 + (c_2+\mu_0\norm{\bf \mathcal L}_F + \lambda_0)r-1 = 0\,,\label{alg:simple-0}
	\end{align}
	with $c_1 = 3$ and $c_2 = \norm{{\bf A}}_F$.
	\end{proposition}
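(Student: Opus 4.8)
The plan is to reduce the BPG-MF update for this choice of $f,g,h$ to the scalar cubic exactly along the lines of Proposition~\ref{prop:main-prop-nmf}, tracking two extra contributions. First I would record that the graph term $\frac{\mu_0}{2}{\bf tr}({\bf U}^T{\bf \mathcal L}{\bf U})$ belongs to $g$ and therefore plays only a twofold role: via $\nabla_{\bf U} g$ (using $\nabla_{\bf U}{\bf tr}({\bf U}^T{\bf \mathcal L}{\bf U}) = ({\bf \mathcal L}+{\bf \mathcal L}^T){\bf U}$ from Lemma~\ref{lem:hessain-helper-1}) it is already folded into ${\bf P^k}$, and it forces the enlarged kernel coefficient on $h_2$ so that $L$-smad is retained (Proposition~\ref{prop:l-smad-2}). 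With $h=h_c = c_1 h_1 + (c_2+\mu_0\norm{\bf \mathcal L}_F)h_2$, $c_1=3$, $c_2=\norm{{\bf A}}_F$, and the indicator part of $f$ absorbed into the domain, the update reads
\begin{equation*}
({\bf U}^{{\bf k+1}}, {\bf Z}^{{\bf k+1}}) \in \underset{{\bf U}\geq {\bf 0},\,{\bf Z}\geq{\bf 0}}{\argmin}\left\{ \lambda\frac{\lambda_0}{2}\big(\norm{\bf U}_F^2+\norm{\bf Z}_F^2\big) + \act{{\bf P^k},{\bf U}} + \act{{\bf Q^k},{\bf Z}} + c_1\Big(\frac{\norm{\bf U}_F^2+\norm{\bf Z}_F^2}{2}\Big)^2 + (c_2+\mu_0\norm{\bf \mathcal L}_F)\frac{\norm{\bf U}_F^2+\norm{\bf Z}_F^2}{2}\right\}.
\end{equation*}

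Next I would introduce the radial variables $t_1=\norm{\bf U}_F$, $t_2=\norm{\bf Z}_F$ and carry out the two-level minimization of Proposition~\ref{prop:main-prop-nmf}: for fixed $t_1,t_2$ the inner problems $\min\{\act{{\bf P^k},{\bf U}} : \norm{\bf U}_F^2\le t_1,\,{\bf U}\geq{\bf 0}\}$ and its ${\bf Z}$-analogue are resolved by Lemma~\ref{lem:helper-3}, yielding values $-t_1\norm{\Pi_{+}(-{\bf P^k})}_F$ and $-t_2\norm{\Pi_{+}(-{\bf Q^k})}_F$ with minimizers aligned along $\Pi_{+}(-{\bf P^k})$ and $\Pi_{+}(-{\bf Q^k})$. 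The delicate point here---the step I expect to be the main obstacle---is establishing the equivalence between the equality- and inequality-constrained inner problems and handling the degenerate case $\norm{\Pi_{+}(-{\bf P^k})}_F=0$ while $\norm{{\bf P^k}}_F\neq 0$, where the inner minimum is strictly positive, so the outer objective is monotonically increasing in the corresponding radius and is driven to $t_1=0$; this is precisely the monotonicity/strong-convexity argument of Proposition~\ref{prop:main-prop-nmf} and must be reproduced verbatim, as the non-negativity constraint breaks the symmetry used in the unconstrained case.

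Finally I would collapse everything to the two-dimensional problem
\begin{equation*}
\min_{t_1\geq 0,\,t_2\geq 0}\left\{ -t_1\norm{\Pi_{+}(-{\bf P^k})}_F - t_2\norm{\Pi_{+}(-{\bf Q^k})}_F + c_1\Big(\frac{t_1^2+t_2^2}{2}\Big)^2 + \big(c_2+\mu_0\norm{\bf \mathcal L}_F+\lambda_0\big)\frac{t_1^2+t_2^2}{2}\right\},
\end{equation*}
where the L2 term in $f$ supplies the extra $\lambda_0$ in the quadratic coefficient, just as in Proposition~\ref{prop:closed-form-l2a}. Writing the first-order conditions in $t_1$ and $t_2$, dividing them shows $t_1/\norm{\Pi_{+}(-{\bf P^k})}_F = t_2/\norm{\Pi_{+}(-{\bf Q^k})}_F =: r$, so the substitution $t_1=r\norm{\Pi_{+}(-{\bf P^k})}_F$, $t_2=r\norm{\Pi_{+}(-{\bf Q^k})}_F$ reduces both conditions to the single cubic
\begin{equation*}
c_1\big(\norm{\Pi_{+}(-{\bf Q^k})}_F^2 + \norm{\Pi_{+}(-{\bf P^k})}_F^2\big)r^3 + \big(c_2+\mu_0\norm{\bf \mathcal L}_F+\lambda_0\big)r - 1 = 0,
\end{equation*}
which has a unique non-negative real root since its coefficients are non-negative and the constant term is $-1$. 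Reading off the minimizers gives ${\bf U}^{{\bf k+1}}=r\,\Pi_{+}(-{\bf P^k})$ and ${\bf Z}^{{\bf k+1}}=r\,\Pi_{+}(-{\bf Q^k})$, completing the proof. Apart from the degenerate-case bookkeeping, the remaining steps are routine adaptations of Propositions~\ref{prop:main-prop-nmf} and \ref{prop:closed-form-l2a}, so I would state them briefly and refer back to those arguments.
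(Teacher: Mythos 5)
Your proposal is correct and follows essentially the same route as the paper, whose own proof is the one-line remark that Proposition~\ref{prop:main-prop-nmf} carries over with only the constant changed: your fleshed-out version---Lemma~\ref{lem:helper-3} for the inner radial subproblems, the degenerate-case argument when $\norm{\Pi_{+}(-{\bf P^k})}_F = 0$ while $\norm{{\bf P^k}}_F \neq 0$, and the substitution $t_1 = r\norm{\Pi_{+}(-{\bf P^k})}_F$, $t_2 = r\norm{\Pi_{+}(-{\bf Q^k})}_F$ collapsing the first-order conditions to the stated cubic with linear coefficient $c_2+\mu_0\norm{\bf \mathcal L}_F+\lambda_0$---is precisely the argument being referenced. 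One cosmetic remark: having written the subproblem with the scaled term $\lambda\frac{\lambda_0}{2}\bigl(\norm{\bf U}_F^2+\norm{\bf Z}_F^2\bigr)$, your cubic should strictly carry $\lambda\lambda_0$ rather than $\lambda_0$, but this discrepancy is inherited from the paper's own statements (cf.\ Proposition~\ref{prop:closed-form-l2a}) and does not affect the structure of the proof.
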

	The proof is similar to Proposition~\ref{prop:main-prop-nmf} and only $c_2$ changes.

	\subsection{Extensions to Symmetric NMF via Non-Symmetric Relaxation.}\label{ssec:symnf-nmf} In \cite{ZLLL2018}, the following optimization problem was proposed in the context of Symmetric NMF where the factors ${\bf U}$ and ${\bf Z}^T$ are equal. The symmetricity of the factors was lifted via a quadratic penalty terms resulting in the following problem
	\begin{equation*}
	\min_{{\bf U}\in \R^{M \times K}, {\bf Z}\in \R^{K \times N}}\, \left\{ \Psi({\bf U},{\bf Z}) :=  \frac 12\norm{{\bf A} - {\bf U}{\bf Z}}^2_{F} + \frac{\lambda_0}{2}\norm{{\bf U- Z}^T }_F^2 + {\bf I}_{{\bf U} \geq {\bf 0}} + {\bf I}_{{\bf Z} \geq {\bf 0}} \right\}\,.
	\end{equation*}

	Now, we prove the $L$-smad property. We need the following technical lemma.
	\begin{lemma}\label{lem:technical-10}
	Let $g({\bf U}, {\bf Z}) = \frac 12\norm{{\bf A} - {\bf U}{\bf Z}}^2_{F} +\frac{\lambda_0}{2}\norm{{\bf U-Z}^T }_F^2 $ be as defined above, we have the following
	\begin{align*}
	\nabla_{\bf U} g({\bf A},{\bf U}{\bf Z}) &= \lambda_0\left({\bf U-Z}^T\right)-({\bf A} - {\bf U}{\bf Z}){\bf Z}^T\\
	\nabla_{\bf Z} g({\bf A},{\bf U}{\bf Z}) &= \lambda_0\left({\bf U-Z}^T\right)+ {\bf U}^T({\bf A} - {\bf U}{\bf Z})
	\end{align*}
	and 
	\begin{align*}
	&\act{({\bf H}_{{\bf 1}},{\bf H}_{{\bf 2}}), \nabla^2 g({\bf A},{\bf U}{\bf Z}) ({\bf H}_{{\bf 1}}, {\bf H}_{{\bf 2}})} \\
	&= -2\act{{\bf A} - {\bf U}{\bf Z}, {\bf H}_{{\bf 1}}{\bf H}_{{\bf 2}}} + \norm{{\bf U}{\bf H}_{{\bf 2}} +  {\bf H}_{{\bf 1}}{\bf Z}}_F^2  + \lambda_0 \norm{{\bf H_1}-{\bf H_2}^T}_F^2 \,.
	\end{align*}
	\end{lemma}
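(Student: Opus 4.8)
The plan is to write $g = g_0 + g_1$ with $g_0({\bf U},{\bf Z}) = \frac12\norm{{\bf A} - {\bf U}{\bf Z}}_F^2$ the data-fitting term and $g_1({\bf U},{\bf Z}) = \frac{\lambda_0}{2}\norm{{\bf U} - {\bf Z}^T}_F^2$ the symmetry penalty. Since both the gradient and the Hessian are additive, I would treat the two summands separately and add the results at the end.

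For the gradients I would differentiate each term using the Frobenius inner product. Writing the first-order change of $g_0$ as $\act{{\bf A} - {\bf U}{\bf Z},\, -(d{\bf U}){\bf Z} - {\bf U}(d{\bf Z})}$ and moving factors across the inner product via the adjoint identities $\act{{\bf E}, {\bf X}{\bf Z}} = \act{{\bf E}{\bf Z}^T, {\bf X}}$ and $\act{{\bf E}, {\bf U}{\bf Y}} = \act{{\bf U}^T{\bf E}, {\bf Y}}$ isolates the $d{\bf U}$ and $d{\bf Z}$ parts and yields the two $g_0$-contributions. For $g_1$ it is cleanest to expand $\norm{{\bf U} - {\bf Z}^T}_F^2 = \norm{{\bf U}}_F^2 - 2\act{{\bf U}, {\bf Z}^T} + \norm{{\bf Z}}_F^2$ and use $\act{{\bf U}, {\bf Z}^T} = {\bf tr}({\bf Z}{\bf U})$, so that the $\bf Z$-derivative of the coupling term is obtained without any ambiguity about the transpose. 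Summing the two contributions gives the stated gradient formulas.

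For the Hessian quadratic form, rather than assembling the full fourth-order tensor I would restrict $g$ to a line: set $\varphi(t) := g({\bf U} + t{\bf H_1}, {\bf Z} + t{\bf H_2})$, so that $\varphi''(0) = \act{({\bf H_1},{\bf H_2}),\, \nabla^2 g\,({\bf H_1},{\bf H_2})}$. Expanding $({\bf U}+t{\bf H_1})({\bf Z}+t{\bf H_2}) = {\bf U}{\bf Z} + t({\bf U}{\bf H_2} + {\bf H_1}{\bf Z}) + t^2 {\bf H_1}{\bf H_2}$ and abbreviating ${\bf E} = {\bf A} - {\bf U}{\bf Z}$ and ${\bf B} = {\bf U}{\bf H_2} + {\bf H_1}{\bf Z}$, the data term reads $g_0(t) = \frac12\norm{{\bf E} - t{\bf B} - t^2{\bf H_1}{\bf H_2}}_F^2$, a quartic in $t$; reading off the coefficient of $t^2$ and doubling gives $-2\act{{\bf E}, {\bf H_1}{\bf H_2}} + \norm{{\bf B}}_F^2$. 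The penalty term $g_1(t) = \frac{\lambda_0}{2}\norm{({\bf U}-{\bf Z}^T) + t({\bf H_1} - {\bf H_2}^T)}_F^2$ is exactly quadratic in $t$, so its contribution is simply $\lambda_0\norm{{\bf H_1} - {\bf H_2}^T}_F^2$. Adding the two gives the claimed identity.

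The only real subtlety — the step to flag — is the $t^2$ coefficient of $g_0(t)$. Because the product ${\bf U}{\bf Z}$ makes $g_0$ jointly degree four, this quadratic coefficient receives contributions from two distinct sources: squaring the linear-in-$t$ perturbation ${\bf B}$, and the cross product of the constant residual ${\bf E}$ with the genuinely second-order perturbation ${\bf H_1}{\bf H_2}$. It is this second contribution, $-2\act{{\bf A}-{\bf U}{\bf Z}, {\bf H_1}{\bf H_2}}$, that can be negative and encodes the nonconvexity of the factorization; the care needed is precisely to retain it and not to discard the $t^2$-part of the inner perturbation while treating the $t^3,t^4$ terms as higher order.
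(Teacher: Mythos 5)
Your proof is correct, and for the penalty term it coincides with the paper's: both of you observe that $\frac{\lambda_0}{2}\norm{({\bf U}-{\bf Z}^T)+t({\bf H_1}-{\bf H_2}^T)}_F^2$ is exactly quadratic in the perturbation, so it contributes $\lambda_0(\act{{\bf U}-{\bf Z}^T},{{\bf H_1}-{\bf H_2}^T})$-type first-order terms and $\lambda_0\norm{{\bf H_1}-{\bf H_2}^T}_F^2$ at second order. Where you genuinely differ is the data-fitting term: the paper simply invokes its Lemma~\ref{lem:technical-1}, which is proved by brute-force expansion of $\act{{\bf A}-({\bf U}+{\bf H_1})({\bf Z}+{\bf H_2}),\,{\bf A}-({\bf U}+{\bf H_1})({\bf Z}+{\bf H_2})}$ into all twenty-five inner products followed by collecting first- and second-order terms, whereas you restrict to the line $\varphi(t)=g({\bf U}+t{\bf H_1},{\bf Z}+t{\bf H_2})$ and read off $\varphi''(0)$ as twice the $t^2$-coefficient of a quartic. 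Your route is more economical and makes structurally transparent exactly the point you flag: the $t^2$-coefficient receives $\norm{{\bf U}{\bf H_2}+{\bf H_1}{\bf Z}}_F^2$ from squaring the linear term and the cross term $-2\act{{\bf A}-{\bf U}{\bf Z}, {\bf H_1}{\bf H_2}}$ from pairing the residual with the second-order perturbation ${\bf H_1}{\bf H_2}$ — the source of nonconvexity. What the paper's heavier expansion buys is reusability: Lemma~\ref{lem:technical-1} is the same workhorse behind Proposition~\ref{prop:l-smad-1} and its relatives, so the present lemma reduces to a two-line addendum.

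One caveat on the gradient part. Executed carefully, your own method gives $\nabla_{\bf Z} g = \lambda_0({\bf Z}-{\bf U}^T) - {\bf U}^T({\bf A}-{\bf U}{\bf Z})$, which is \emph{not} the displayed second line of the lemma: that line carries a sign error in the data term (it contradicts the paper's own Lemma~\ref{lem:technical-1}, where $\nabla_{\bf Z} g = -{\bf U}^T({\bf A}-{\bf U}{\bf Z})$) and a transpose slip in the penalty term ($\lambda_0({\bf U}-{\bf Z}^T)$ has the shape of ${\bf U}$, not of ${\bf Z}$, unless $M=K$). So your closing assertion that summing the contributions ``gives the stated gradient formulas'' is true only up to typos in the statement itself; your derivation is the correct one, and the Hessian identity — the only part consumed downstream in Proposition~\ref{prop:l-smad-2a0} — you prove exactly as claimed.
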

	\begin{proof}
	The first part of proof for function $\frac 12\norm{{\bf A} - {\bf U}{\bf Z}}^2_{F}$ follows from Proposition~\ref{prop:l-smad-1}. For the other term, with the Forbenius dot product, we obtain
	\begin{align*}
	&\frac{\lambda_0}{2}\norm{{\bf U+ H_1- Z}^T-{\bf H_2}^T }_F^2\\
	&= \frac{\lambda_0}{2}  \left(\norm{{\bf U-Z}^T}_F^2 + 2\act{{\bf U-Z}^T,{\bf H_1 - H_2}^T} + \norm{{\bf H_1}-{\bf H_2}^T}_F^2 \right)\,.
	\end{align*}
	Combining with  Lemma~\ref{lem:technical-1}, the statement follows from the collecting the first order and second order terms. 
	\end{proof}
	
	\begin{proposition}\label{prop:l-smad-2a0}
	Let $g({\bf U}, {\bf Z}) = \frac 12\norm{{\bf A} - {\bf U}{\bf Z}}^2_{F} +\frac{\lambda_0}{2}\norm{{\bf U-\bf Z} }_F^2 $. Then, for a certain constant $L\geq 1$, the function $g$ satisfies $L$-smad property with respect to the following kernel generating distance,
	 \[
	 h_d({\bf U},{\bf Z}) =  3h_1({\bf U},{\bf Z}) + \left(\norm{{\bf A}}_F + 2\lambda_0\right) h_2({\bf U},{\bf Z}) \,.
	 \]
	\end{proposition}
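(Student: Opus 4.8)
The plan is to reduce the $L$-smad property to a two-sided bound on Hessian quadratic forms and then to piggyback on Proposition~\ref{prop:l-smad-1}, treating the penalty $\frac{\lambda_0}{2}\norm{{\bf U-Z}}_F^2$ as a convex perturbation. Since $h_1,h_2$ are convex $C^1$ kernels with full domain and the coefficients of $h_d$ are positive, $h_d \in \mathcal{G}(C)$ is immediate, exactly as for $h_a$. Because $h_d$ and $g$ are $C^2$ with full domain, convexity of $L h_d - g$ and $L h_d + g$ on $C$ is equivalent to $|Q_g| \le L\, Q_{h_d}$ for every direction $({\bf H_1},{\bf H_2})$, where $Q_\phi := \act{({\bf H_1},{\bf H_2}),\, \nabla^2 \phi\,({\bf H_1},{\bf H_2})}$ abbreviates the Hessian quadratic form of $\phi$ (note $Q_{h_d}\ge 0$ by convexity of $h_d$). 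I write $g = g_0 + q$ with $g_0 = \frac12\norm{{\bf A}-{\bf U Z}}_F^2$ and $q = \frac{\lambda_0}{2}\norm{{\bf U-Z}}_F^2$, and record that $h_d = h_a + 2\lambda_0 h_2$.

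First I would compute $Q_g$ along an arbitrary direction, following Lemma~\ref{lem:technical-10} but with the non-transposed penalty: a direct expansion of $q$ produces the second-order term $\lambda_0\norm{{\bf H_1}-{\bf H_2}}_F^2$, so together with Lemma~\ref{lem:technical-1} for $g_0$,
\[
Q_g = -2\act{{\bf A}-{\bf U Z},\, {\bf H_1}{\bf H_2}} + \norm{{\bf U}{\bf H_2} + {\bf H_1}{\bf Z}}_F^2 + \lambda_0\norm{{\bf H_1}-{\bf H_2}}_F^2 = Q_{g_0} + \lambda_0\norm{{\bf H_1}-{\bf H_2}}_F^2.
\]
Since $h_2 = \frac12(\norm{{\bf U}}_F^2 + \norm{{\bf Z}}_F^2)$ has identity Hessian, $Q_{h_2} = \norm{{\bf H_1}}_F^2 + \norm{{\bf H_2}}_F^2$, whence $Q_{h_d} = Q_{h_a} + 2\lambda_0\big(\norm{{\bf H_1}}_F^2 + \norm{{\bf H_2}}_F^2\big)$.

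Next I would invoke Proposition~\ref{prop:l-smad-1} in its Hessian form, namely $|Q_{g_0}| \le L\, Q_{h_a}$ for $L \ge 1$, together with the elementary inequality $0 \le \norm{{\bf H_1}-{\bf H_2}}_F^2 \le 2\big(\norm{{\bf H_1}}_F^2 + \norm{{\bf H_2}}_F^2\big)$. For the upper bound ($L h_d - g$ convex) this gives $Q_g \le Q_{g_0} + 2\lambda_0(\norm{{\bf H_1}}_F^2 + \norm{{\bf H_2}}_F^2) \le L\, Q_{h_a} + 2L\lambda_0(\norm{{\bf H_1}}_F^2 + \norm{{\bf H_2}}_F^2) = L\, Q_{h_d}$, using $L \ge 1$. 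For the lower bound ($L h_d + g$ convex) the penalty term is nonnegative, so $-Q_g \le -Q_{g_0} \le |Q_{g_0}| \le L\, Q_{h_a} \le L\, Q_{h_d}$. The two inequalities combine to $|Q_g| \le L\, Q_{h_d}$, which is the asserted $L$-smad property.

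The computations are routine; the only point needing care is that the single coefficient $2\lambda_0$ attached to $h_2$ in $h_d$ must simultaneously absorb the penalty's Hessian contribution in the upper bound — precisely where the factor-of-two slack in $\norm{{\bf H_1}-{\bf H_2}}_F^2 \le 2(\norm{{\bf H_1}}_F^2 + \norm{{\bf H_2}}_F^2)$ is consumed — while remaining harmless in the lower bound, where convexity of $q$ only helps. I would also note that the penalty $\norm{{\bf U-Z}}_F^2$ is well defined only when ${\bf U}$ and ${\bf Z}$ have matching dimensions; in any case the entire nonconvex coupling resides in $g_0$ and is already controlled by Proposition~\ref{prop:l-smad-1}, so the perturbation $q$ introduces no genuinely new obstacle.
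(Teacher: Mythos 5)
Your proof is correct and follows essentially the same route as the paper: the paper's own proof simply says to repeat the argument of Proposition~\ref{prop:l-smad-1} using Lemma~\ref{lem:technical-10}, whose Hessian computation yields exactly the extra term $\lambda_0\norm{{\bf H_1}-{\bf H_2}^T}_F^2$ that you bound by $2\lambda_0\left(\norm{{\bf H_1}}_F^2+\norm{{\bf H_2}}_F^2\right)$ and absorb into the enlarged $h_2$ coefficient. Your modular packaging (invoking $|Q_{g_0}|\le L\,Q_{h_a}$ from Proposition~\ref{prop:l-smad-1} rather than re-deriving it) is a harmless reorganization of the identical estimate, and your side remark about the transpose in the penalty correctly flags a notational slip in the paper's statement (the problem in Section~\ref{ssec:symnf-nmf} uses ${\bf U}-{\bf Z}^T$), which does not affect the bound since the Frobenius norm is transpose-invariant.
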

	\begin{proof}
		The proof is similar to Proposition~\ref{prop:l-smad-1} and Lemma~\ref{lem:technical-10} must be applied for the result.
	\end{proof}
	Denote $g :=\frac 12\norm{{\bf A} - {\bf U}{\bf Z}}^2_{F} +\frac{\lambda_0}{2}\norm{{\bf U-\bf Z} }_F^2 $, $f:= {\bf I}_{{\bf U} \geq {\bf 0}} + {\bf I}_{{\bf Z} \geq {\bf 0}} $ and $h = h_d$.
	\begin{proposition}\label{prop:closed-form-2a0}
	In BPG-MF, with the above defined update steps in each iteration are given by ${\bf U}^{{\bf k+1}} = r \Pi_{+}\left(-\,{\bf P^k}\right)$, ${\bf Z}^{{\bf k+1}} = r\Pi_{+}\left(-\,{\bf Q^k}\right)$ where $r\geq 0$ and satisfies
	\begin{align}
		&c_1\left(\norm{\Pi_{+}\left(-\,{\bf P^k}\right)}_F^2 +  \norm{\Pi_{+}\left(-\,{\bf Q^k}\right)}_F^2  \right)r^3 + (c_2+2\lambda_0)r-1 = 0\label{alg:simple-0}\,,
	\end{align}
	with $c_1 = 3$ and $c_2 = \norm{{\bf A}}_F$.
	\end{proposition}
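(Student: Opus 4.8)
The plan is to mimic the argument of Proposition~\ref{prop:main-prop-nmf} almost verbatim, exploiting two observations. First, the symmetric penalty belongs to the smooth part $g$, so by the gradient formulas in Lemma~\ref{lem:technical-10} it is absorbed entirely into ${\bf P^k}$ and ${\bf Q^k}$ and never appears in the BPG-MF subproblem; the nonsmooth part is still $f = {\bf I}_{{\bf U}\geq{\bf 0}} + {\bf I}_{{\bf Z}\geq{\bf 0}}$. Second, the kernel is $h_d$ rather than $h_a$, and by the $L$-smad Proposition~\ref{prop:l-smad-2a0} its only effect relative to the plain NMF case is that the coefficient of $h_2$ is $\norm{{\bf A}}_F + 2\lambda_0 = c_2 + 2\lambda_0$ instead of $c_2$. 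Thus the subproblem is structurally identical to that of Proposition~\ref{prop:main-prop-nmf}, up to this single scalar change.

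Concretely, I would first write out the subproblem \eqref{alg:bpg-mf-2} with the above $f$ and $h = h_d$, i.e.\ minimize over $({\bf U},{\bf Z})\in\R^{M\times K}\times\R^{K\times N}$ with ${\bf U}\geq{\bf 0},{\bf Z}\geq{\bf 0}$ the objective
\[
\act{{\bf P^k},{\bf U}} + \act{{\bf Q^k},{\bf Z}} + c_1\left(\frac{\norm{\bf U}_F^2+\norm{\bf Z}_F^2}{2}\right)^2 + (c_2+2\lambda_0)\left(\frac{\norm{\bf U}_F^2+\norm{\bf Z}_F^2}{2}\right).
\]
Next I would apply the radial decomposition of Proposition~\ref{prop:main-prop-nmf}: split into an outer minimization over the Frobenius norms $t_1=\norm{\bf U}_F$, $t_2=\norm{\bf Z}_F$ and inner minimizations over directions at fixed norm, and invoke Lemma~\ref{lem:helper-3} both to replace the equality norm constraints by inequality constraints and to evaluate the inner problems as $-t_1\norm{\Pi_{+}(-{\bf P^k})}_F$ and $-t_2\norm{\Pi_{+}(-{\bf Q^k})}_F$, attained along the directions $\Pi_{+}(-{\bf P^k})$ and $\Pi_{+}(-{\bf Q^k})$.

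The resulting two-dimensional problem $\min_{t_1,t_2\geq 0}\{-t_1\norm{\Pi_{+}(-{\bf P^k})}_F - t_2\norm{\Pi_{+}(-{\bf Q^k})}_F + c_1(\tfrac{t_1^2+t_2^2}{2})^2 + (c_2+2\lambda_0)\tfrac{t_1^2+t_2^2}{2}\}$ is then solved by first-order optimality, giving
\[
c_1(t_1^2+t_2^2)t_1 + (c_2+2\lambda_0)t_1 = \norm{\Pi_{+}(-{\bf P^k})}_F,\qquad
c_1(t_1^2+t_2^2)t_2 + (c_2+2\lambda_0)t_2 = \norm{\Pi_{+}(-{\bf Q^k})}_F.
\]
Substituting the ansatz $t_1 = r\norm{\Pi_{+}(-{\bf P^k})}_F$, $t_2 = r\norm{\Pi_{+}(-{\bf Q^k})}_F$, which is consistent across both equations, collapses them into the single cubic
\[
c_1\left(\norm{\Pi_{+}(-{\bf P^k})}_F^2 + \norm{\Pi_{+}(-{\bf Q^k})}_F^2\right)r^3 + (c_2+2\lambda_0)r - 1 = 0,
\]
whose nonnegative root $r$ yields ${\bf U}^{{\bf k+1}} = r\,\Pi_{+}(-{\bf P^k})$ and ${\bf Z}^{{\bf k+1}} = r\,\Pi_{+}(-{\bf Q^k})$, as claimed.

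I do not expect a genuine obstacle, since the calculation is a routine adaptation. The only delicate point, inherited directly from Proposition~\ref{prop:main-prop-nmf}, is the degenerate case $\norm{\Pi_{+}(-{\bf P^k})}_F = 0$ with $\norm{{\bf P^k}}_F \neq 0$ (and its analogue for ${\bf Q^k}$): there the corresponding inner objective is nonnegative and monotone in the associated norm, which forces that variable to zero and keeps the cubic well-posed. The main care is therefore bookkeeping---confirming that the penalty is fully carried by $g$ and hence by $\nabla g$, and tracking the coefficient $c_2+2\lambda_0$ through the stationarity equations.
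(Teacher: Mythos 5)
Your proposal is correct and takes essentially the same approach as the paper: the paper's proof of Proposition~\ref{prop:closed-form-2a0} is simply the remark that the argument of Proposition~\ref{prop:main-prop-nmf} carries over with only the $h_2$-coefficient changing from $c_2$ to $c_2+2\lambda_0$ (since $h=h_d$ and the symmetric penalty sits in $g$, entering only through ${\bf P^k}$ and ${\bf Q^k}$), which is exactly the radial decomposition via Lemma~\ref{lem:helper-3} and the cubic-equation bookkeeping you spell out.
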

	The proof is similar to Proposition~\ref{prop:main-prop-nmf} and only $c_2$ changes.
	\subsection{Extensions to NMF with Non-Convex Sparsity Constraints (Sparse NMF)}\label{ssec:sparse-nmf}
	 Consider the following problem from \cite{BST2014}
	\begin{equation*}
	\min_{{\bf U}\in \R^{M \times K}, {\bf Z}\in \R^{K \times N}}\, \left\{ \Psi({\bf U},{\bf Z}) :=  \frac 12\norm{{\bf A} - {\bf U}{\bf Z}}^2_{F} : {\bf U}\geq {\bf 0}, \norm{\bf U}_0 \leq s_1, {\bf Z}\geq {\bf 0}, \norm{\bf Z}_0 \leq s_2,   \right\}\,,
	\end{equation*}
	where $s_1$ and $s_2$ are two known positive integers. Denote $g := \frac 12\norm{{\bf A} - {\bf U}{\bf Z}}^2_{F}$, $f:=  {\bf I}_{{\bf U} \geq {\bf 0}} +  {\bf I}_{\norm{\bf U}_0 \leq s_1} + {\bf I}_{{\bf Z} \geq {\bf 0}} +  {\bf I}_{\norm{\bf Z}_0 \leq s_2} $  and $h = h_a$.  Note that  the Assumption \ref{A:Assumption0} is not valid here, hence CoCaIn BPG-MF theory does not hold and hints at possible extensions of CoCaIn BPG-MF, which is an interesting open question. We start with the following technical lemma.
	\begin{proposition} Let ${\bf Q} \in \R^{A \times B}$ for some positive integers $A$ and $B$. Let $t \geq 0$ and $\norm{{{\bf Q}}}_F \neq 0$ then  
	\[
	 \min_{{\bf X} \in \R^{A \times B}}\left\{ \act{{{\bf Q}}, {\bf X}} : \norm{{\bf X}}_F^2 \leq t^2,\norm{\bf X}_0 \leq s ,{\bf X} \geq {\bf 0} \right\} = -t \norm{{\mathcal H}_s(\Pi_{+}(-{{\bf Q}}))}_F\,.
	 \]
	 with the minimizer ${\bf X^*} = t\frac{{\mathcal H}_s(\Pi_{+}(-{{\bf Q}}))}{\norm{{\mathcal H}_s(\Pi_{+}(-{{\bf Q}}))}_F}$ if $\norm{{\mathcal H}_s(\Pi_{+}(-{{\bf Q}}))}_F \neq 0$ else ${\bf X^*} ={\bf 0}$. If $\norm{{\mathcal H}_s(\Pi_{+}(-{{\bf Q}}))}_F \neq 0$ we have the following equivalence
	 \begin{align}
	  &\min_{{\bf X} \in \R^{A \times B}}\left\{ \act{{{\bf Q}}, {\bf X}} : \norm{{\bf X}}_F^2 \leq t^2,\norm{\bf X}_0 \leq s ,{\bf X} \geq {\bf 0} \right\}\\
	  &\equiv  \min_{{\bf X} \in \R^{A \times B}}\left\{ \act{{{\bf Q}}, {\bf X}} : \norm{{\bf X}}_F^2 = t^2,\norm{\bf X}_0 \leq s ,{\bf X} \geq {\bf 0} \right\}
	 \end{align}
	\end{proposition}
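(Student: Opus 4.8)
The plan is to follow the same template as Lemma~\ref{lem:non-convex-1} and Lemma~\ref{lem:helper-3}, since the present statement simply combines the sparsity constraint $\norm{\bf X}_0 \leq s$ with the nonnegativity constraint ${\bf X} \geq {\bf 0}$. First I would rewrite the minimization as the negative of a maximization,
\[
\min_{{\bf X}}\left\{ \act{{\bf Q}, {\bf X}} : \norm{{\bf X}}_F^2 \leq t^2, \norm{\bf X}_0 \leq s, {\bf X} \geq {\bf 0} \right\} = -\max_{{\bf X}}\left\{ \act{-{\bf Q}, {\bf X}} : \norm{{\bf X}}_F^2 \leq t^2, \norm{\bf X}_0 \leq s, {\bf X} \geq {\bf 0} \right\}\,,
\]
so that every entry of a competing ${\bf X}$ enters against the coefficients of $-{\bf Q}$.

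Next I would reduce the feasible linear functional. Because ${\bf X} \geq {\bf 0}$, every index $(i,j)$ with $(-{\bf Q})_{ij} \leq 0$ contributes a nonpositive term $(-{\bf Q})_{ij}{\bf X}_{ij}$, so setting ${\bf X}_{ij} = 0$ on those indices can only increase the objective; hence only the positive part $\Pi_{+}(-{\bf Q})$ is relevant, exactly as in Lemma~\ref{lem:helper-3}. Among the remaining strictly positive coefficients, the sparsity budget forces us to keep at most $s$ of them, and to maximize a linear functional it is optimal to retain the $s$ largest; as in Lemma~\ref{lem:non-convex-1} this selection is precisely the hard-thresholding operator applied to $\Pi_{+}(-{\bf Q})$. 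Therefore the maximization is equivalent to
\[
\max_{{\bf X}}\left\{ \act{\HHH_s(\Pi_{+}(-{\bf Q})), {\bf X}} : \norm{{\bf X}}_F^2 \leq t^2 \right\}\,,
\]
where both combinatorial constraints have been absorbed into the coefficient matrix $\HHH_s(\Pi_{+}(-{\bf Q}))$.

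Finally I would apply Lemma~\ref{lem:helper-1} to this Frobenius-ball problem: its optimal value is $t\norm{\HHH_s(\Pi_{+}(-{\bf Q}))}_F$, attained at ${\bf X}^{*} = t\,\HHH_s(\Pi_{+}(-{\bf Q}))/\norm{\HHH_s(\Pi_{+}(-{\bf Q}))}_F$ whenever $\norm{\HHH_s(\Pi_{+}(-{\bf Q}))}_F \neq 0$, and ${\bf X}^{*} = {\bf 0}$ otherwise. Undoing the sign gives the stated value $-t\norm{\HHH_s(\Pi_{+}(-{\bf Q}))}_F$ and the claimed minimizer. One checks that this ${\bf X}^{*}$ is genuinely feasible for the original problem — it is nonnegative and has at most $s$ nonzero entries by construction — which is what certifies that the reduction lost nothing. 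The equivalence between the inequality constraint $\norm{\bf X}_F^2 \leq t^2$ and the equality constraint $\norm{\bf X}_F^2 = t^2$ then follows because $\norm{{\bf X}^{*}}_F^2 = t^2$ when $\norm{\HHH_s(\Pi_{+}(-{\bf Q}))}_F \neq 0$.

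The step I expect to require the most care is the simultaneous treatment of the two nonconvex constraints: I must argue that taking the positive part and then hard-thresholding (rather than hard-thresholding $-{\bf Q}$ directly) yields the correct active set. The key point is that nonnegativity already annihilates the negative coefficients, so the ``top-$s$ in absolute value'' selection used in Lemma~\ref{lem:non-convex-1} must be replaced by ``top-$s$ among the positive coefficients'', i.e. $\HHH_s$ applied after $\Pi_{+}$; verifying that no feasible ${\bf X}$ can beat this choice is the substantive part, whereas the residual Frobenius-ball optimization is routine via Lemma~\ref{lem:helper-1}.
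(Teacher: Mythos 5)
Your proposal is correct and takes essentially the same route as the paper's own proof: rewrite the minimum as a negated maximum, use nonnegativity to replace $-{\bf Q}$ by $\Pi_{+}(-{\bf Q})$, absorb the sparsity budget by hard-thresholding to get $\HHH_s(\Pi_{+}(-{\bf Q}))$ (in that order, exactly as the paper emphasizes), and finish with Lemma~\ref{lem:helper-1} plus the observation $\norm{{\bf X}^*}_F^2 = t^2$ for the equality/inequality equivalence. Your explicit feasibility check of ${\bf X}^*$ is a small but welcome tightening of the paper's more informal index-set argument.
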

	\begin{proof}
	We have 
	\begin{align*}
	&\min_{{\bf X}}\left\{ \act{{{\bf Q}}, {\bf X}} : \norm{{\bf X}}_F^2 \leq t^2,\norm{\bf X}_0 \leq s ,{\bf X} \geq {\bf 0} \right\} \\
	&= -\max_{{\bf X}}\left\{ \act{-{{\bf Q}}, {\bf X}} : \norm{{\bf X}}_F^2 \leq t^2,\norm{\bf X}_0 \leq s ,{\bf X} \geq {\bf 0} \right\}\,,\\
	&= -\max_{{\bf X}}\left\{ \act{\Pi_{+}(-{{\bf Q}}), {\bf X}} : \norm{{\bf X}}_F^2 \leq t^2, \norm{\bf X}_0 \leq s  \right\}\,,\\
	&= -\max_{{\bf X}}\left\{\act{{\mathcal H}_s (\Pi_{+}(-{{\bf Q}})), {\bf X}} : \norm{{\bf X}}_F^2 \leq t^2 \right\}\,.
	\end{align*}
	The first equality is a simple rewriting of the objective. Then, the corresponding objective $\act{-{\bf Q},{\bf X}}$ can be maximized with $\sum_{i=1}^A\sum_{j=1}^B {\bf I}_{(i,j) \in \Omega_0} (-{\bf Q}_{ij}{\bf X}_{ij})$ where $\Omega_0$ is set of index pairs and ${\bf I}_{(i,j) \in \Omega_0}$ is $1$ if the index pair if $(i,j)\in \Omega_0$ and zero otherwise.  It is easy to see that the objective $\act{-{\bf Q},{\bf X}}$ is maximized if $\Omega_0$ contains all the index pairs corresponding to the elements of $-{\bf Q}$ with highest absolute value which is captured by Hard-thresholding operator. However due to the non-negativity constraint if there is any $-{\bf Q}_{ij}$ such that it is negative, then since ${\bf X}_{ij}$ will be driven to zero. So, before we use the Hard-thresholding operator, we need to use $\Pi_{+}(.) = \max\{0,.\}$  in second equality.   The third equality follows as a consequence of hard sparsity constraint similar to Lemma~\ref{lem:non-convex-1} and the solution follows due to Lemma~\ref{lem:helper-1}. The equivalence statement follows as $\norm{{\bf X}^*}_F^2 = t^2$.
	\end{proof}
	\begin{proposition}
		In BPG-MF, with the above defined $g,f,h$ the update steps in each iteration are ${\bf U}^{{\bf k+1}} = r\HHH_{s_1}(\Pi_{+}(-{\bf P^k}))$, ${\bf Z}^{{\bf k+1}} = r\HHH_{s_2}(\Pi_{+}(-{\bf Q^k}))$ where $r\geq0$ and satisfies
	\begin{align*}
		c_1\left(\norm{\HHH_{s_1}\left(\Pi_{+}(-{\bf Q^k})\right)}_F^2 +  \norm{\HHH_{s_2}\left(\Pi_{+}(-{\bf P^k})\right)}_F^2  \right)r^3 + c_2r-1 = 0\,,
	\end{align*}
	with $c_1 = 3$ and $c_2 = \norm{{\bf A}}_F$.
	\end{proposition}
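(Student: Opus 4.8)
The plan is to follow the template of Proposition~\ref{prop:main-prop-nmf}, the only new ingredient being the hard sparsity constraints now encoded in $f$. Since $f$ is a sum of indicator functions we have $\lambda f = f$, so the BPG-MF subproblem \eqref{alg:bpg-mf-2} becomes
\[
\min_{\substack{{\bf U} \geq {\bf 0},\ \norm{\bf U}_0 \leq s_1 \\ {\bf Z} \geq {\bf 0},\ \norm{\bf Z}_0 \leq s_2}} \left\{ \act{{\bf P^k}, {\bf U}} + \act{{\bf Q^k}, {\bf Z}} + c_1\left(\frac{\norm{\bf U}_F^2 + \norm{\bf Z}_F^2}{2}\right)^2 + c_2\left(\frac{\norm{\bf U}_F^2 + \norm{\bf Z}_F^2}{2}\right) \right\}\,.
\]
First I would rewrite this as a nested minimization, exactly as in \eqref{eq:equiv-nmf-0}: on the inside fix the Frobenius norms $\norm{\bf U}_F = t_1$ and $\norm{\bf Z}_F = t_2$, and on the outside minimize over $t_1,t_2 \geq 0$.

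The key step is then to replace the equality norm constraints by the inequality (norm-ball) constraints $\norm{\bf U}_F \leq t_1$, $\norm{\bf Z}_F \leq t_2$. This is precisely the equivalence furnished by the preceding technical result, which also hands us the closed-form minimizer ${\bf X}^* = t\,\HHH_s(\Pi_{+}(-{\bf Q}))/\norm{\HHH_s(\Pi_{+}(-{\bf Q}))}_F$ for the combined non-negativity and sparsity constraints on a Frobenius ball. Because the inner objective decouples across the two blocks once the norms are fixed, I would apply that result separately to the ${\bf U}$-block (linear term ${\bf P^k}$, sparsity budget $s_1$) and the ${\bf Z}$-block (linear term ${\bf Q^k}$, sparsity budget $s_2$), obtaining the optimal inner values $-t_1\norm{\HHH_{s_1}(\Pi_{+}(-{\bf P^k}))}_F$ and $-t_2\norm{\HHH_{s_2}(\Pi_{+}(-{\bf Q^k}))}_F$, attained at the directions listed in the statement.

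What remains is the outer two-dimensional problem
\[
\min_{t_1 \geq 0,\ t_2 \geq 0} \left\{ -t_1\norm{\HHH_{s_1}(\Pi_{+}(-{\bf P^k}))}_F - t_2\norm{\HHH_{s_2}(\Pi_{+}(-{\bf Q^k}))}_F + c_1\left(\frac{t_1^2 + t_2^2}{2}\right)^2 + c_2\left(\frac{t_1^2 + t_2^2}{2}\right) \right\}\,,
\]
which has precisely the form treated in the proof of Proposition~\ref{prop:closed-form-1}. Setting the partial derivatives with respect to $t_1$ and $t_2$ to zero and substituting $t_1 = r\norm{\HHH_{s_1}(\Pi_{+}(-{\bf P^k}))}_F$, $t_2 = r\norm{\HHH_{s_2}(\Pi_{+}(-{\bf Q^k}))}_F$ for some $r \geq 0$ collapses both stationarity conditions to the single cubic in $r$. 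I expect the only genuine obstacle to be justifying the equality-to-inequality passage for the non-convex, sparsity-constrained inner blocks; but this is exactly what Lemma~\ref{lem:non-convex-1} together with the non-negativity handling of Lemma~\ref{lem:helper-3} (combined in the preceding result) provides, so no essentially new difficulty arises beyond keeping track of the two separate sparsity budgets.
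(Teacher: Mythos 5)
Your proposal is correct and is essentially the paper's own proof: the paper disposes of this proposition with exactly the reduction you describe, namely the argument of Proposition~\ref{prop:main-prop-nmf} repeated verbatim with the combined hard-thresholding/non-negativity result (the proposition immediately preceding the statement) replacing Lemma~\ref{lem:helper-3} in the inner blockwise minimizations, followed by the same two-dimensional outer problem and the substitution $t_1 = r\norm{\HHH_{s_1}(\Pi_{+}(-{\bf P^k}))}_F$, $t_2 = r\norm{\HHH_{s_2}(\Pi_{+}(-{\bf Q^k}))}_F$ collapsing the stationarity conditions to the cubic. Incidentally, your derivation yields the consistent pairing $c_1\left(\norm{\HHH_{s_1}\left(\Pi_{+}(-{\bf P^k})\right)}_F^2 + \norm{\HHH_{s_2}\left(\Pi_{+}(-{\bf Q^k})\right)}_F^2\right)$ in the cubic, which shows that the roles of ${\bf P^k}$ and ${\bf Q^k}$ in the printed equation of the proposition are transposed relative to the update steps.
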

	The proof is similar to Proposition~\ref{prop:main-prop-nmf}.

\section{Matrix Completion Problem}\label{sec:matrix-completion}
	Matrix Completion is an important non-convex optimization problem, which arises in practical real world applications, such as recommender systems \cite{KRV2009,CR2009,FZSH2017}. Give a matrix ${\bf A}$ where only the values at the index set given by $\Omega$ are given. The goal is obtain the rest of the values. One of the popular strategy is to obtain the factors ${\bf U} \in \R^{M \times K}$ and ${\bf Z} \in \R^{K \times N}$ for a small positive integer $K$. This is cast into the following problem,
	\begin{equation}\label{eq:mat-com-ex-2}
	\min_{{\bf U}\in \R^{M \times K}, {\bf Z}\in \R^{K \times N}}\, \left\{ \Psi({\bf U},{\bf Z}) :=  \frac 12\norm{P_{\Omega}\left({\bf A} - {\bf U}{\bf Z}\right)}^2_{F} + \frac{\lambda_0}{2}\left(\norm{{\bf U} }_F^2 + \norm{{\bf Z} }_F^2 \right)  \right\}\,,
	\end{equation}
	where $P_{\Omega}$ is an masking operator over index set $\Omega$ which preserves the given matrix entries and sets others to zero.. We require the following technical lemma.
	\begin{lemma}\label{lem:technical-1}
	Let $g := \frac 12\norm{P_{\Omega}\left({\bf A} - {\bf U}{\bf Z}\right)}^2_{F}$ be as defined above, we have the following
	\[
		\nabla_{\bf U} g({\bf A},{\bf U}{\bf Z}) = -P_{\Omega}({\bf A} - {\bf U}{\bf Z}){\bf Z}^T, \quad \nabla_{\bf Z} g({\bf A},{\bf U}{\bf Z}) = -{\bf U}^TP_{\Omega}({\bf A} - {\bf U}{\bf Z})
	\]
	\[
	\act{({\bf H}_{{\bf 1}},{\bf H}_{{\bf 2}}), \nabla^2 g({\bf A},{\bf U}{\bf Z}) ({\bf H}_{{\bf 1}}, {\bf H}_{{\bf 2}})} = \norm{P_{\Omega}({\bf U}{\bf H}_{{\bf 2}} +{\bf H}_{{\bf 1}}{\bf Z})}_F^2- 2\act{P_{\Omega}({\bf A} - {\bf U}{\bf Z}), {\bf H}_{{\bf 1}}{\bf H}_{{\bf 2}}}\,.
	\]
	\end{lemma}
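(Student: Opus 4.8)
The plan is to treat the masking operator $P_{\Omega}$ as a self-adjoint idempotent linear map and to obtain both the gradient and the Hessian form by a direct second-order perturbation of $g$ at $({\bf U},{\bf Z})$, exactly parallel to the computation for $\tfrac12\norm{{\bf A}-{\bf U}{\bf Z}}_F^2$ used in the proof of Proposition~\ref{prop:l-smad-1}, with $P_{\Omega}$ inserted. First I would record the structural facts that drive everything: $P_{\Omega}$ is linear, and with respect to the Frobenius inner product it satisfies $\act{P_{\Omega}{\bf X},{\bf Y}} = \act{{\bf X},P_{\Omega}{\bf Y}}$ and $P_{\Omega}^2 = P_{\Omega}$, so that $\act{P_{\Omega}{\bf X},P_{\Omega}{\bf Y}} = \act{P_{\Omega}{\bf X},{\bf Y}}$ and $\norm{P_{\Omega}{\bf X}}_F^2 = \act{P_{\Omega}{\bf X},{\bf X}}$. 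I would also use the adjoint identities $\act{{\bf X}{\bf Y},{\bf W}} = \act{{\bf Y},{\bf X}^T{\bf W}} = \act{{\bf X},{\bf W}{\bf Y}^T}$.

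For the gradient, perturb ${\bf U}\mapsto {\bf U}+{\bf H_1}$ so that ${\bf A}-({\bf U}+{\bf H_1}){\bf Z} = ({\bf A}-{\bf U}{\bf Z})-{\bf H_1}{\bf Z}$. Expanding $g$ and retaining the first-order term produces $-\act{P_{\Omega}({\bf A}-{\bf U}{\bf Z}),P_{\Omega}({\bf H_1}{\bf Z})}$, which by idempotency and self-adjointness equals $-\act{P_{\Omega}({\bf A}-{\bf U}{\bf Z}),{\bf H_1}{\bf Z}} = -\act{P_{\Omega}({\bf A}-{\bf U}{\bf Z}){\bf Z}^T,{\bf H_1}}$; reading off the coefficient of ${\bf H_1}$ gives $\nabla_{\bf U} g = -P_{\Omega}({\bf A}-{\bf U}{\bf Z}){\bf Z}^T$. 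The symmetric perturbation ${\bf Z}\mapsto{\bf Z}+{\bf H_2}$ yields $-\act{P_{\Omega}({\bf A}-{\bf U}{\bf Z}),{\bf U}{\bf H_2}} = -\act{{\bf U}^T P_{\Omega}({\bf A}-{\bf U}{\bf Z}),{\bf H_2}}$, hence $\nabla_{\bf Z} g = -{\bf U}^T P_{\Omega}({\bf A}-{\bf U}{\bf Z})$.

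For the Hessian quadratic form, perturb both variables at once and use ${\bf A}-({\bf U}+{\bf H_1})({\bf Z}+{\bf H_2}) = ({\bf A}-{\bf U}{\bf Z})-({\bf U}{\bf H_2}+{\bf H_1}{\bf Z})-{\bf H_1}{\bf H_2}$. Writing the linear part as ${\bf L}:={\bf U}{\bf H_2}+{\bf H_1}{\bf Z}$, I would expand $g = \tfrac12\norm{P_{\Omega}\big(({\bf A}-{\bf U}{\bf Z})-{\bf L}-{\bf H_1}{\bf H_2}\big)}_F^2$ and collect precisely the terms homogeneous of degree two in the joint perturbation. These are $\tfrac12\norm{P_{\Omega}{\bf L}}_F^2 - \act{P_{\Omega}({\bf A}-{\bf U}{\bf Z}),{\bf H_1}{\bf H_2}}$, where the inner $P_{\Omega}$ in the cross term is again dropped via self-adjointness and idempotency. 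Since the Hessian form is twice the second-order Taylor coefficient, multiplying by $2$ recovers $\norm{P_{\Omega}({\bf U}{\bf H_2}+{\bf H_1}{\bf Z})}_F^2 - 2\act{P_{\Omega}({\bf A}-{\bf U}{\bf Z}),{\bf H_1}{\bf H_2}}$, as claimed.

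The only point requiring care, and the main source of potential error, is the bookkeeping of orders together with the Hessian factor of $2$: the bilinear term ${\bf H_1}{\bf H_2}$ contributes to the second-order part only through its pairing with the residual $P_{\Omega}({\bf A}-{\bf U}{\bf Z})$, while the contributions $\tfrac12\norm{P_{\Omega}{\bf H_1}{\bf H_2}}_F^2$ and the ${\bf L}$--${\bf H_1}{\bf H_2}$ cross term are of degree four and three and must be discarded. Everything else is the elementary expansion of a squared Frobenius norm, with $P_{\Omega}$ carried along and moved freely across the inner products by its projection structure.
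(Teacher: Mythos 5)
Your proposal is correct and follows essentially the same route as the paper's proof: both expand $\norm{P_{\Omega}({\bf A}-({\bf U}+{\bf H_1})({\bf Z}+{\bf H_2}))}_F^2$ directly, exploit the self-adjointness and idempotency of $P_{\Omega}$ to drop the inner projection in the cross terms (so that $\act{P_{\Omega}X,P_{\Omega}Y}=\act{P_{\Omega}X,Y}$), and read off the gradient and the Hessian quadratic form from the first- and second-order terms of the expansion. Your explicit bookkeeping of the factor $\tfrac12$ in $g$ against the factor $2$ in the Hessian form is if anything slightly more careful than the paper's, which leaves that cancellation implicit in the closing appeal to the second-order Taylor expansion.
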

	\begin{proof}
	With the Forbenius dot product, we have
	\begin{align*}
	\norm{P_{\Omega}({\bf A} - {\bf U}{\bf Z})}^2_{F} &= \act{P_{\Omega}({\bf A} - {\bf U}{\bf Z}), P_{\Omega}({\bf A} - {\bf U}{\bf Z})}\,.
	\end{align*}
	In the above expression by substituting ${\bf U}$ with ${\bf U}+{\bf H}_{{\bf 1}}$ and ${\bf Z}$ with ${\bf Z}+{\bf H}_{{\bf 2}}$, we obtain 
	\begin{align*}
	 &\act{P_{\Omega}({\bf A} - ({\bf U}+{\bf H}_{{\bf 1}})({\bf Z} + {\bf H}_{{\bf 2}})), P_{\Omega}({\bf A} -  ({\bf U}+{\bf H}_{{\bf 1}})({\bf Z} + {\bf H}_{{\bf 2}}))}\,, \\
	 & = \norm{P_{\Omega}({\bf A} - {\bf U}{\bf Z})}_F^2 + \norm{P_{\Omega}({\bf U}{\bf H}_{{\bf 2}} +{\bf H}_{{\bf 1}}{\bf Z})}_F^2 \\
	 & - 2 \act{P_{\Omega}({\bf A} - {\bf U}{\bf Z}), P_{\Omega}({\bf U}{\bf H}_{{\bf 2}} +{\bf H}_{{\bf 1}}{\bf Z})}  - 2\act{P_{\Omega}({\bf A} - {\bf U}{\bf Z}), P_{\Omega}({\bf H}_{{\bf 1}}{\bf H}_{{\bf 2}})}
	\end{align*}
	where in the last term we ignored the terms higher than second order.
	Collecting all the first order terms we have
	\begin{align*}
		&- 2 \act{P_{\Omega}({\bf A} - {\bf U}{\bf Z}), P_{\Omega}({\bf U}{\bf H}_{{\bf 2}} +{\bf H}_{{\bf 1}}{\bf Z})} \\
		&= - 2 \act{P_{\Omega}({\bf A} - {\bf U}{\bf Z}), {\bf U}{\bf H}_{{\bf 2}} +{\bf H}_{{\bf 1}}{\bf Z}}\\
		&= -2\act{P_{\Omega}({\bf A} - {\bf U}{\bf Z}){\bf Z}^T, {\bf H_1}} -2\act{{\bf U}^TP_{\Omega}({\bf A} - {\bf U}{\bf Z}),{\bf H_2}}
	\end{align*}
	and similarly collecting all the second order terms we have
	\begin{align*}
	 &\norm{P_{\Omega}({\bf U}{\bf H}_{{\bf 2}} +{\bf H}_{{\bf 1}}{\bf Z})}_F^2- 2\act{P_{\Omega}({\bf A} - {\bf U}{\bf Z}), P_{\Omega}({\bf H}_{{\bf 1}}{\bf H}_{{\bf 2}})}\\
	 &=\norm{P_{\Omega}({\bf U}{\bf H}_{{\bf 2}} +{\bf H}_{{\bf 1}}{\bf Z})}_F^2- 2\act{P_{\Omega}({\bf A} - {\bf U}{\bf Z}), {\bf H}_{{\bf 1}}{\bf H}_{{\bf 2}}}
	\end{align*}
	Thus the statement follows using the second order Taylor expansion.
	\end{proof}
	\begin{proposition}
	Let $g:= \frac 12\norm{P_{\Omega}\left({\bf A} - {\bf U}{\bf Z}\right)}^2_{F}$ and $h_1,h_2$ be as defined as in \eqref{eq:breg-funcs}. Then, for a certain constant $L\geq 1$, the function $g$ satisfies $L$-smad property with respect to the following kernel generating distance,
	 \[
	 h_a({\bf U},{\bf Z}) =  3h_1({\bf U},{\bf Z}) + \norm{P_{\Omega}({\bf A})}_F h_2({\bf U},{\bf Z}) \,.
	 \]
	\end{proposition}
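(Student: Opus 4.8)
The plan is to verify the $L$-smad property directly from Definition~\ref{D:l-smad}, namely that both $Lh_a - g$ and $Lh_a + g$ are convex on $C = \R^{M\times K}\times\R^{K\times N}$. Since $h_a$ is convex and the difference between two admissible constants is a nonnegative multiple of the convex $h_a$, it suffices to treat $L = 1$; the claim for all $L \geq 1$ then follows. As every function involved is $C^2$, I would establish convexity by showing that the Hessian quadratic forms of $h_a \pm g$ are nonnegative in every direction $({\bf H}_1,{\bf H}_2)$.

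The ingredients are Lemma~\ref{lem:technical-1}, which supplies
\[
\act{({\bf H}_1,{\bf H}_2), \nabla^2 g({\bf H}_1,{\bf H}_2)} = \norm{P_{\Omega}({\bf U}{\bf H}_2 + {\bf H}_1{\bf Z})}_F^2 - 2\act{P_{\Omega}({\bf A}-{\bf U}{\bf Z}), {\bf H}_1{\bf H}_2}\,,
\]
together with the Hessian of the kernel. Writing $s = \tfrac12(\norm{\bf U}_F^2 + \norm{\bf Z}_F^2)$, one has $h_1 = s^2$ and $h_2 = s$, whose Hessian forms are $2\act{({\bf U},{\bf Z}),({\bf H}_1,{\bf H}_2)}^2 + 2s(\norm{{\bf H}_1}_F^2 + \norm{{\bf H}_2}_F^2)$ and $\norm{{\bf H}_1}_F^2 + \norm{{\bf H}_2}_F^2$ respectively. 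Dropping the nonnegative squared term, I obtain, with $P := \norm{\bf U}_F^2 + \norm{\bf Z}_F^2$ and $Q := \norm{{\bf H}_1}_F^2 + \norm{{\bf H}_2}_F^2$, the lower bound
\[
\act{({\bf H}_1,{\bf H}_2), \nabla^2 h_a({\bf H}_1,{\bf H}_2)} \geq \big(3P + \norm{P_{\Omega}({\bf A})}_F\big) Q\,.
\]

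The decisive step is to control the two terms of $\nabla^2 g$, and the only place the completion structure enters is the observation that $P_{\Omega}$ is an orthogonal projection onto a coordinate subspace, hence nonexpansive: $\norm{P_{\Omega}({\bf X})}_F \leq \norm{\bf X}_F$. With this, $\norm{P_{\Omega}({\bf U}{\bf H}_2 + {\bf H}_1{\bf Z})}_F^2 \leq \norm{{\bf U}{\bf H}_2 + {\bf H}_1{\bf Z}}_F^2 \leq 2PQ$, and, by Cauchy--Schwarz together with $\norm{P_{\Omega}({\bf A}-{\bf U}{\bf Z})}_F \leq \norm{P_{\Omega}({\bf A})}_F + \norm{\bf U}_F\norm{\bf Z}_F$ and $\norm{{\bf H}_1}_F\norm{{\bf H}_2}_F \leq \tfrac12 Q$,
\[
\big|2\act{P_{\Omega}({\bf A}-{\bf U}{\bf Z}), {\bf H}_1{\bf H}_2}\big| \leq \big(\norm{P_{\Omega}({\bf A})}_F + \tfrac12 P\big) Q\,.
\]
Combining these bounds, the quadratic form of $h_a + g$ is at least $(3P + \norm{P_{\Omega}({\bf A})}_F)Q - (\norm{P_{\Omega}({\bf A})}_F + \tfrac12 P)Q = \tfrac52 PQ \geq 0$, while that of $h_a - g$ is at least $(3P + \norm{P_{\Omega}({\bf A})}_F)Q - 2PQ - (\norm{P_{\Omega}({\bf A})}_F + \tfrac12 P)Q = \tfrac12 PQ \geq 0$. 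Both being nonnegative, $h_a \pm g$ are convex and the $L$-smad property holds for $L \geq 1$.

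I expect the main obstacle to be purely bookkeeping: choosing the Cauchy--Schwarz and arithmetic--geometric-mean splittings so that the coefficient $\norm{P_{\Omega}({\bf A})}_F$ built into $h_2$ exactly absorbs the constant part of the cross term, while the factor $3$ on $h_1$ dominates the quadratic-in-$P$ contributions coming from both $\norm{P_{\Omega}({\bf U}{\bf H}_2+{\bf H}_1{\bf Z})}_F^2$ and the $\norm{\bf U}_F\norm{\bf Z}_F$ piece. The nonexpansiveness of $P_{\Omega}$ is precisely what lets this argument run in lockstep with Proposition~\ref{prop:l-smad-1}, replacing $\norm{\bf A}_F$ by $\norm{P_{\Omega}({\bf A})}_F$ throughout.
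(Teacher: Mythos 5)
Your proof is correct and follows essentially the same route as the paper: a second-order argument that bounds the Hessian quadratic form of $g$ via the nonexpansiveness of $P_{\Omega}$, Cauchy--Schwarz, submultiplicativity and AM--GM, and then compares it against the Hessian of $h_a$, exactly as in the paper's proof (which reduces to Proposition~\ref{prop:l-smad-1} after replacing $\norm{{\bf A}}_F$ by $\norm{P_{\Omega}({\bf A})}_F$). The only cosmetic differences are that you aggregate everything into the symmetric quantities $P$ and $Q$ rather than keeping the paper's coordinate-wise bound $(3\norm{{\bf Z}}_F^2 + \norm{P_{\Omega}({\bf A})}_F)\norm{{\bf H}_1}_F^2 + (3\norm{{\bf U}}_F^2 + \norm{P_{\Omega}({\bf A})}_F)\norm{{\bf H}_2}_F^2$ (slightly coarser, but the factor $3$ absorbs the slack), and that you spell out the $h_a + g$ side, which the paper leaves as ``a similar proof holds.''
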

	\begin{proof}
	With  Lemma~\ref{lem:technical-1} we obtain  
	 \begin{align*}
	& \act{({\bf H}_1,{\bf H}_2), \nabla^2 g(A,{\bf U}{\bf Z}) ({\bf H}_1,{\bf H}_2)} \\
	&= \norm{P_{\Omega}({\bf U}{\bf H}_{{\bf 2}} +{\bf H}_{{\bf 1}}{\bf Z})}_F^2- 2\act{P_{\Omega}({\bf A} - {\bf U}{\bf Z}), {\bf H}_{{\bf 1}}{\bf H}_{{\bf 2}}}\\
	&\leq  \norm{{\bf H}_{{\bf 1}}{\bf Z} + {\bf U}{\bf H}_{{\bf 2}}}_F^2- 2\act{P_{\Omega}({\bf A} - {\bf U}{\bf Z}), {\bf H}_{{\bf 1}}{\bf H}_{{\bf 2}}}\\
	& \leq 2 \norm{{\bf H}_1{\bf Z}}_F^2 + 2\norm{{\bf U}{\bf H}_2}_F^2  + 2 \norm{P_{\Omega}({\bf A})}_F\norm{{\bf H}_1{\bf H}_2}_F + 2\norm{P_{\Omega}({\bf U}{\bf Z})}_F\norm{{\bf H}_1{\bf H}_2}_F\,,\\
	& \leq 2 \norm{{\bf H}_1{\bf Z}}_F^2 + 2\norm{{\bf U}{\bf H}_2}_F^2  + 2 \norm{P_{\Omega}({\bf A})}_F\norm{{\bf H}_1{\bf H}_2}_F + 2\norm{{\bf U}{\bf Z}}_F\norm{{\bf H}_1{\bf H}_2}_F\,.
	 \end{align*}
	 The rest of the proof is similar to Proposition~\ref{prop:l-smad-1}.
	\end{proof}
	\begin{proposition}
	Let $g:= \frac 12\norm{P_{\Omega}\left({\bf A} - {\bf U}{\bf Z}\right)}^2_{F} + \frac{\lambda_0}{2}\left(\norm{{\bf U} }_F^2 + \norm{{\bf Z} }_F^2 \right)$ and $h_1,h_2$ be as defined as in \eqref{eq:breg-funcs}. Then, for a certain constant $L\geq 1$, the function $g$ satisfies $L$-smad property with respect to the following kernel generating distance,
	 \[
	 h_a({\bf U},{\bf Z}) =  3h_1({\bf U},{\bf Z}) + \left( \norm{P_{\Omega}({\bf A})}_F + \lambda_0\right) h_2({\bf U},{\bf Z}) \,.
	 \]
	\end{proposition}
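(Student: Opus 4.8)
The plan is to reduce the statement to the immediately preceding proposition by isolating the Tikhonov regularizer inside the kernel $h_2$. First I would record the elementary identity
\[
\frac{\lambda_0}{2}\left(\norm{{\bf U}}_F^2 + \norm{{\bf Z}}_F^2\right) = \lambda_0\, h_2({\bf U},{\bf Z})\,,
\]
which holds because $h_2({\bf U},{\bf Z}) = \tfrac{1}{2}(\norm{{\bf U}}_F^2 + \norm{{\bf Z}}_F^2)$. Writing $g_0 := \tfrac12\norm{P_{\Omega}({\bf A}-{\bf U}{\bf Z})}_F^2$ for the smooth part and $h_0 := 3h_1 + \norm{P_{\Omega}({\bf A})}_F\, h_2$ for the kernel of the preceding proposition, the objects at hand split with the regularizer appearing under the \emph{same} coefficient $\lambda_0$ in both places:
\[
g = g_0 + \lambda_0 h_2\,, \qquad h_a = h_0 + \lambda_0 h_2\,.
\]

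Next I would unfold Definition~\ref{D:l-smad} and insert this decomposition. For the upper bound,
\[
L h_a - g = (L h_0 - g_0) + (L-1)\lambda_0 h_2\,,
\]
and for the lower bound,
\[
L h_a + g = (L h_0 + g_0) + (L+1)\lambda_0 h_2\,.
\]
By the preceding proposition, $L h_0 - g_0$ and $L h_0 + g_0$ are convex on $\R^{M\times K}\times\R^{K\times N}$ for every $L\geq 1$. Since $h_2$ is a convex quadratic and the scalars $(L-1)\lambda_0$ and $(L+1)\lambda_0$ are nonnegative (using $L\geq 1$ and $\lambda_0>0$), each right-hand side is a sum of convex functions, hence convex. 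This gives the $L$-smad property with the same constant $L\geq 1$. I would also note in passing that $h_a = 3h_1 + (\norm{P_{\Omega}({\bf A})}_F+\lambda_0)h_2$ is still of the admissible form with a strictly positive coefficient on $h_2$, so it remains a kernel generating distance.

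As a cross-check consistent with how the earlier propositions are argued, the same conclusion reads at the level of Hessian quadratic forms: adding $\lambda_0(\norm{{\bf H}_1}_F^2 + \norm{{\bf H}_2}_F^2)$ to both $\act{({\bf H}_1,{\bf H}_2),\nabla^2 g_0\,({\bf H}_1,{\bf H}_2)}$ and $\act{({\bf H}_1,{\bf H}_2),\nabla^2 h_0\,({\bf H}_1,{\bf H}_2)}$ — which is exactly the contribution of $\lambda_0 h_2$ computed via Lemma~\ref{lem:technical-1} — preserves the upper sandwich bound (since $L\lambda_0\geq\lambda_0$) and the lower one (since $-L\lambda_0\leq\lambda_0$) simultaneously. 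There is essentially no hard step: all the real work is carried by the preceding proposition, and the only thing to observe is that the strongly convex regularizer is absorbed into $h_2$ with a matching coefficient so that it cancels out of the convexity requirement. The one point I would state carefully is that it is \emph{convexity} of the added term, not merely its being $L$-smad, that lets both sandwich inequalities survive at once for all $L\geq 1$.
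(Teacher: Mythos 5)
Your proposal is correct and coincides with the argument the paper leaves implicit: this proposition is stated without its own proof, and absorbing the Tikhonov term into $h_2$ with a matching coefficient is precisely how the paper handles added quadratics everywhere else (compare the graph-regularized kernel $h_c$, where the coefficient of $h_2$ is bumped by $\mu_0\norm{\bf \mathcal L}_F$, and Section~\ref{ssec:conv-cubic}, where adding a quadratic to the kernel is noted to preserve the $L$-smad property trivially). One small imprecision: the preceding proposition guarantees convexity of $Lh_0 - g_0$ and $Lh_0 + g_0$ only for \emph{a certain} $L \geq 1$, not for every $L \geq 1$ as you assert, but this is harmless --- convexity of $h_0$ makes $L$-smad monotone in $L$, and running your decomposition at that particular $L$ (where $(L-1)\lambda_0 \geq 0$ and $(L+1)\lambda_0 \geq 0$ still hold) yields exactly the stated conclusion ``for a certain constant $L \geq 1$.''
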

	The update steps are very similar as what we described earlier in Section~\ref{sec:closed-forms} and \ref{sec:nmf-closed-form}.

\section{Closed Form Solution with 5th-order Polynomial}\label{sec:closed-5-order}
	The goal of this section is to show a case, where while obtaining the update step of BPG-MF we obtain a 5th order polynomial equation, for which Newton based method solvers can be used. We later show that we can obtain a cubic equation by slightly modifying the kernel generating distance. Let $\lambda_0 >0$ and we consider the following problem
	\begin{equation}\label{eq:mat-fac-ex-1c}
	\min_{{\bf U}\in \R^{M \times K}, {\bf Z}\in \R^{K \times N}}\, \left\{ \Psi({\bf U},{\bf Z}) :=  \frac 12\norm{{\bf A} - {\bf U}{\bf Z}}^2_{F}  + \frac{\lambda_0}{2} \norm{{\bf U}}_F^2  \right\}\,.
	\end{equation}
	We set ${\mathcal R}_1({\bf U}) = \frac{\lambda_0}{2} \norm{{\bf U}}_F^2$, ${\mathcal R}_2({\bf Z}) = 0$, $g = \frac 12\norm{{\bf A} - {\bf U}{\bf Z}}^2_{F}$, $f({\bf U},{\bf Z})= \frac{\lambda_0}{2} \norm{{\bf U}}_F^2$ and $h = h_a$.
	\begin{proposition}\label{prop:closed-form-1c}
	In BPG-MF, with above defined $g,f,h$ the update steps in each iteration are given by ${\bf U}^{{\bf k+1}} = -\frac{{\bf P^k}}{r_1+\lambda_0}$, ${\bf Z}^{{\bf k+1}} = -\frac{{\bf Q^k}}{r_1}$ where $r_1\geq 0$ and satisfies
	\begin{align}
		&c_1\left(\norm{{\bf Q^k}}_F^2(r_1+\lambda_0)^2 +  \norm{{\bf P^k}}_F^2r_1^2  \right) + c_2r_1^2(r_1+\lambda_0)^2-r_1^3(r_1+\lambda_0)^2 = 0\,,
	\end{align}
	with $c_1 = 3$ and $c_2 = \norm{{\bf A}}_F$.
	\end{proposition}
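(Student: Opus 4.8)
The plan is to follow the radial splitting argument of Proposition~\ref{prop:closed-form-1}, the essential new feature being that the regularizer $\tfrac{\lambda_0}{2}\norm{{\bf U}}_F^2$ acts on ${\bf U}$ only, so the reduced scalar problem ceases to be symmetric in the two factors. The BPG-MF subproblem to be solved is
\[
({\bf U}^{{\bf k+1}}, {\bf Z}^{{\bf k+1}}) \in \underset{({\bf U},{\bf Z})}{\argmin}\left\{ \tfrac{\lambda_0}{2}\norm{{\bf U}}_F^2 + \act{{\bf P^k},{\bf U}} + \act{{\bf Q^k},{\bf Z}} + c_1\left(\tfrac{\norm{{\bf U}}_F^2+\norm{{\bf Z}}_F^2}{2}\right)^2 + c_2\tfrac{\norm{{\bf U}}_F^2+\norm{{\bf Z}}_F^2}{2}\right\},
\]
with $c_1=3$ and $c_2=\norm{{\bf A}}_F$. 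First I would introduce the radial variables $t_1=\norm{{\bf U}}_F$ and $t_2=\norm{{\bf Z}}_F$ and split the minimization into an inner problem over the directions of ${\bf U},{\bf Z}$ at fixed radii, and an outer problem over $(t_1,t_2)$, exactly as in Proposition~\ref{prop:closed-form-1}.

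The inner direction problems are $\min\{\act{{\bf P^k},{\bf U}} : \norm{{\bf U}}_F^2=t_1^2\}$ and its counterpart for ${\bf Z}$; by Lemma~\ref{lem:helper-1} these equal $-t_1\norm{{\bf P^k}}_F$ and $-t_2\norm{{\bf Q^k}}_F$, and the equality- and ball-constrained versions coincide, so the equivalence chain used in Proposition~\ref{prop:closed-form-1} carries over verbatim. The one genuinely new observation is that the regularizer equals $\tfrac{\lambda_0}{2}\norm{{\bf U}}_F^2=\tfrac{\lambda_0}{2}t_1^2$, which is independent of the \emph{direction} of ${\bf U}$ and therefore simply passes through into the outer problem. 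This leaves the convex two-dimensional minimization
\[
\min_{t_1,t_2\ge 0}\left\{ -t_1\norm{{\bf P^k}}_F - t_2\norm{{\bf Q^k}}_F + \tfrac{\lambda_0}{2}t_1^2 + c_1\left(\tfrac{t_1^2+t_2^2}{2}\right)^2 + c_2\tfrac{t_1^2+t_2^2}{2}\right\}.
\]
Convexity (from strong convexity of $h$ and convexity of $f$) guarantees that a stationary point is the global minimizer.

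Setting the two partial derivatives to zero gives $-\norm{{\bf P^k}}_F+\bigl(\lambda_0+c_1(t_1^2+t_2^2)+c_2\bigr)t_1=0$ and $-\norm{{\bf Q^k}}_F+\bigl(c_1(t_1^2+t_2^2)+c_2\bigr)t_2=0$; the asymmetry is precisely the extra $\lambda_0$ in the first equation. Writing $r_1:=c_1(t_1^2+t_2^2)+c_2\ge 0$, these read $t_1=\norm{{\bf P^k}}_F/(r_1+\lambda_0)$ and $t_2=\norm{{\bf Q^k}}_F/r_1$, which, combined with the direction formulas of Lemma~\ref{lem:helper-1} (and the usual ``${\bf 0}$ otherwise'' boundary cases when $\norm{{\bf P^k}}_F=0$ or $\norm{{\bf Q^k}}_F=0$), yield exactly ${\bf U}^{{\bf k+1}}=-{\bf P^k}/(r_1+\lambda_0)$ and ${\bf Z}^{{\bf k+1}}=-{\bf Q^k}/r_1$. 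Substituting back into $r_1-c_2=c_1(t_1^2+t_2^2)$ and clearing the two distinct denominators $r_1^2$ and $(r_1+\lambda_0)^2$ produces the stated degree-five equation
\[
c_1\left(\norm{{\bf Q^k}}_F^2(r_1+\lambda_0)^2 + \norm{{\bf P^k}}_F^2 r_1^2\right) + c_2 r_1^2(r_1+\lambda_0)^2 - r_1^3(r_1+\lambda_0)^2 = 0.
\]

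The hard part is exactly this loss of symmetry: in the balanced case (Proposition~\ref{prop:closed-form-l2a}) the single substitution $t_1=r\norm{{\bf P^k}}_F$, $t_2=r\norm{{\bf Q^k}}_F$ collapses everything to a cubic, whereas here the two stationarity equations carry the \emph{different} effective coefficients $r_1+\lambda_0$ and $r_1$, so no common scaling exists and clearing both denominators unavoidably raises the degree to five. I would close by confirming existence of a nonnegative root: the map $r_1\mapsto (r_1-c_2)-c_1\bigl(\norm{{\bf P^k}}_F^2/(r_1+\lambda_0)^2+\norm{{\bf Q^k}}_F^2/r_1^2\bigr)$ is continuous and strictly increasing on $(0,\infty)$, tends to $-\infty$ as $r_1\to 0^+$ and to $+\infty$ as $r_1\to\infty$, hence has a unique positive root, which is the $r_1$ of the proposition.
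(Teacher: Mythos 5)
Your proposal is correct and follows essentially the same route as the paper's proof: the same radial splitting with $t_1=\norm{{\bf U}}_F$, $t_2=\norm{{\bf Z}}_F$, the same use of Lemma~\ref{lem:helper-1} for the inner direction problems (with the regularizer passing through as $\tfrac{\lambda_0}{2}t_1^2$, which the paper records as shifting $c_2$ to $c_2+\lambda_0$ in the $t_1$-equation only), the same asymmetric stationarity system, and the same substitution $r_1=c_1(t_1^2+t_2^2)+c_2$ leading to the identical quintic. Your closing monotonicity argument for existence and uniqueness of the positive root is a correct small addition not present in the paper's proof.
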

	\begin{proof} The proof is similar to that of Proposition~\ref{prop:closed-form-1}.
	Consider the following subproblem
	\begin{align*}
	  ({\bf U}^{{\bf k+1}}, {\bf Z}^{{\bf k+1}}) \in \underset{({\bf U},{\bf Z}) \in \R^{M \times K}\times \R^{K\times N}}{\argmin} &\left\{\frac{\lambda_0}{2} \norm{{\bf U}}_F^2  + \act{ {\bf P^k}, {\bf U}} + \act{ {\bf Q^k} , {\bf Z}} \right.\\
	& \left. + c_1 \left(\frac{\norm{\bf U}_F^2 +\norm{\bf Z}_F^2 }{2} \right)^2 + c_2\left( \frac{\norm{\bf U}_F^2 +\norm{\bf Z}_F^2 }{2}\right) \right\}\,,
	\end{align*}
	Denote the objective in the above minimization problem as $\mathcal{O({\bf U}^{\bf k}, {\bf Z}^{\bf k})}$. Now, we show that the following holds
	\begin{align*}
		&\min_{({\bf U},{\bf Z}) \in \R^{M \times K}\times \R^{K\times N}} \left(\mathcal{O({\bf U}^{\bf k}, {\bf Z}^{\bf k})}\right) \\
		&\equiv \min_{t_1 \geq 0, t_2 \geq 0} \left\{\min_{({\bf U},{\bf Z}) \in \R^{M \times K}\times \R^{K\times N},\norm{\bf U}_F =t_1,\norm{\bf Z}_F =t_2} \left(\mathcal{O({\bf U}^{\bf k}, {\bf Z}^{\bf k})} \right)\right\}\,,\\
		&\equiv \min_{t_1 \geq 0, t_2 \geq 0} \left\{\min_{({\bf U},{\bf Z}) \in \R^{M \times K}\times \R^{K\times N},\norm{\bf U}_F \leq t_1,\norm{\bf Z}_F \leq t_2} \left(\mathcal{O({\bf U}^{\bf k}, {\bf Z}^{\bf k})} \right)\right\}\,.
	\end{align*}
	where the first step is a simple rewriting of the objective and the second step follows as there is no change in the constraint set and due to Lemma~\ref{lem:helper-1}, which is given precisely in Proposition~\ref{prop:closed-form-1} where the equivalence argument used for \eqref{eq:subprob-equiv-2} and \eqref{eq:subprob-equiv-1} holds here. Note that in the first step, we used $\norm{\bf U}_F =t_1$ this results in deviation of value of $c_2$ to $c_2 +\lambda_0$, corresponding to ${\bf U}$ (see below). We solve for  $({\bf U}^{{\bf k+1}}, {\bf Z}^{{\bf k+1}})$ via the following strategy. Denote
	\[
	{{\bf U}^{{\bf *}}_1}(t_1) \in  \argmin \left\{ \act{ {\bf P^k}, {\bf U_1}  } : {\bf U_1} \in \R^{M \times K}\,, \norm{{\bf U_1}}_F^2 \leq t_1\right\}\,,
	\]
	\[
		{{\bf Z}^{*}_1 }(t_2) \in \argmin \left\{ \act{ {\bf Q^k} , {\bf Z_1}}: {\bf Z_1} \in \R^{K\times N}\,, \norm{{\bf Z_1}}_F^2 \leq t_2 \right\}\,.
	\]
	Then we obtain  $({\bf U}^{{\bf k+1}}, {\bf Z}^{{\bf k+1}}) =  ({{\bf U}^{{\bf *}}_1}(t_1^*),{{\bf Z}^{*}_1 }(t_2^*))$, where $t_1^*$ and $t_2^*$ are obtained by solving the following two dimensional subproblem
	\begin{align*}
	  (t_1^*,t_2^*) \in  \underset{t_1\geq 0, t_2\geq 0}{\mathrm{argmin}}   &\left\{  \min_{{\bf U_1} \in \R^{M \times K}} \left\{  \act{ {\bf P^k}, {\bf U_1}  } : \norm{{\bf U_1}}_F^2 \leq t_1\right\} \right. \\
	& \left. + \min_{{\bf Z_1} \in \R^{K\times N} } \left\{ \act{ {\bf Q^k} , {\bf Z_1}}: \norm{{\bf Z_1}}_F^2 \leq t_2\right\} \right. \\
	&\left. + c_1\left(\frac{t_1^2 + t_2^2}{2}\right)^2 + c_2\frac{t_2^2}{2}  + (c_2+\lambda_0)\frac{ t_1^2}{2}\right\}\,.
	\end{align*}

	Note that inner minimization subproblems can be trivially solved once we obtain ${{\bf U}^{{\bf *}}_1}(t_1)$ and ${{\bf Z}^{*}_1 }(t_2)$ via Lemma~\ref{lem:helper-1}. Then the solution to the subproblem in each iteration as follows:
	\begin{align*}
	{\bf U}^{{\bf k+1}} 
	&= \left.
	\begin{cases}
	    t_1^*\frac{-{\bf P^k}}{\norm{{\bf P^k}}_F}, & \text{for } \norm{{\bf P^k}}_F \neq 0\,, \\
	    {\bf 0} &  otherwise\,.
	\end{cases}\right.\\
	{\bf Z}^{{\bf k+1}} 
	&= \left.
	\begin{cases}
	    t_2^*\frac{-{\bf Q^k}}{\norm{{\bf Q^k}}_F}, & \text{for } \norm{{\bf Q^k}}_F \neq 0\,, \\
	    {\bf 0} &  otherwise\,.
	\end{cases}
	 \right.
	\end{align*}
	We solve for $t_1^*$ and $t_2^*$ with the following two dimensional minimization problem
	\begin{align*}
	\underset{t_1\geq 0, t_2\geq 0}{\mathrm{argmin}} & \left\{  -t_1\norm{{\bf P^k}}_F  - t_2{\norm{{\bf Q^k}}_F}  + c_1\left(\frac{t_1^2 + t_2^2}{2}\right)^2 + c_2\frac{t_2^2}{2}  + (c_2+\lambda_0)\frac{ t_1^2}{2}\right\}\,.
	\end{align*}
	Thus, the solutions $t_1^*$ and $t_2^*$ are the non-negative real roots of the following equations
	\begin{align}
	&-\norm{{\bf P^k}}_F  + c_1(t_1^2 + t_2^2)t_1 + (c_2+\lambda_0)t_1 = 0\,,\\
	&-\norm{{\bf Q^k}}_F + c_1(t_1^2 + t_2^2)t_2 + c_2t_2 = 0\,.
	\end{align}
	Further simplifications with $t_1 = \frac{\norm{{\bf P^k}}_F}{r_1 + \lambda_0}$ and $t_2 = \frac{\norm{{\bf Q^k}}_F}{r_1}$ denoting  $r_1 = c_1(t_1^2 + t_2^2) + c_2$, then we have
	\[
		r_1 = c_1\left( \left(\frac{\norm{{\bf P^k}}_F}{r_1 + \lambda_0}\right)^2 + \left(\frac{\norm{{\bf Q^k}}_F}{r_1}\right)^2\right) +c_2
	\]
	This will result in following $5^{th}$ order equation,
	\begin{align*}
	 &c_1\left( \norm{{\bf P^k}}_F^2 r_1^2 + \norm{{\bf Q^k}}_F^2 (r_1+\lambda_0)^2\right)+ c_2r_1^2(r_1+\lambda_0)^2-r_1^3(r_1+\lambda_0)^2 = 0 \,.
	\end{align*}
	\end{proof}
	\subsection{Conversion to Cubic Equation}\label{ssec:conv-cubic}
	We set ${\mathcal R}_1({\bf U}) = \frac{\lambda_0}{2} \norm{{\bf U}}_F^2$, ${\mathcal R}_2({\bf Z}) = 0$ and $g = \frac 12\norm{{\bf A} - {\bf U}{\bf Z}}^2_{F}$. Denote $f({\bf U},{\bf Z})= \frac{\lambda_0}{2} \norm{{\bf U}}_F^2$, $h({\bf U},{\bf Z}) = h_a({\bf U},{\bf Z}) + \frac{\lambda_0}{2}\norm{{\bf Z}}_F^2$. Note that such a $g$ satisfies $L$-smad property with respect to $h$ satisfies $L$-smad trivially since only a quadratic term is added to $h_a$.
	\begin{proposition}\label{prop:closed-form-5a}
	In BPG-MF, with the above defined $g,f,h$ the update steps in each iteration are given by ${\bf U}^{{\bf k+1}} = -r\,{\bf P^k}$, ${\bf Z}^{{\bf k+1}} = -r\,{\bf Q^k}$ where $r$ is the non-negative real root of
	\begin{align}
		&c_1\left(\norm{{\bf Q^k}}_F^2 +  \norm{{\bf P^k}}_F^2  \right)r^3 + (c_2+\lambda_0)r-1 = 0\label{alg:simple-0}\,,
	\end{align}
	with $c_1 = 3$ and $c_2 = \norm{{\bf A}}_F$.
	\end{proposition}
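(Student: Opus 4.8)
The plan is to follow the template of Proposition~\ref{prop:closed-form-1} almost verbatim, the only genuinely new ingredient being the observation that absorbing the term $\frac{\lambda_0}{2}\norm{{\bf Z}}_F^2$ into the kernel $h$ restores the block-symmetry that was lost in Proposition~\ref{prop:closed-form-1c}. First I would dispose of the $L$-smad claim: since $h=h_a+\frac{\lambda_0}{2}\norm{{\bf Z}}_F^2$ differs from $h_a$ only by a convex quadratic, both $Lh-g$ and $Lh+g$ remain convex whenever $Lh_a-g$ and $Lh_a+g$ are (the added term enters $Lh$ with a fixed positive multiple and hence both $Lh\pm g$), and $h$ stays a strongly convex $C^1$ kernel of full domain; thus $g$ is $L$-smad w.r.t.\ $h$ by Proposition~\ref{prop:l-smad-1}. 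The point to stress is that this modification changes only the Bregman geometry of the update, not the objective $\Psi$ of \eqref{eq:mat-fac-ex-1c}, so BPG-MF still solves the intended problem.

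Next I would write out the subproblem \eqref{alg:bpg-mf-2} for these $f,g,h$, i.e.\ the minimization over $({\bf U},{\bf Z})\in\R^{M\times K}\times\R^{K\times N}$ of
\begin{align*}
\mathcal{O}({\bf U},{\bf Z})={}&\tfrac{\lambda_0}{2}\norm{{\bf U}}_F^2+\act{{\bf P^k},{\bf U}}+\act{{\bf Q^k},{\bf Z}}+\tfrac{\lambda_0}{2}\norm{{\bf Z}}_F^2\\
&+c_1\Big(\tfrac{\norm{\bf U}_F^2+\norm{\bf Z}_F^2}{2}\Big)^2+c_2\,\tfrac{\norm{\bf U}_F^2+\norm{\bf Z}_F^2}{2}.
\end{align*}
As in Proposition~\ref{prop:closed-form-1}, I would introduce the radii $t_1=\norm{\bf U}_F$, $t_2=\norm{\bf Z}_F$ and split the minimization into an inner linear problem at fixed radii and an outer two-dimensional problem in $(t_1,t_2)$. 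Lemma~\ref{lem:helper-1} supplies the equivalence of the $\norm{\cdot}_F^2=t_i^2$ and $\norm{\cdot}_F^2\le t_i^2$ constraints and evaluates the inner problems to $-t_1\norm{{\bf P^k}}_F$ and $-t_2\norm{{\bf Q^k}}_F$ with optimal directions $-{\bf P^k}/\norm{{\bf P^k}}_F$ and $-{\bf Q^k}/\norm{{\bf Q^k}}_F$, reducing everything to
\begin{align*}
\min_{t_1,t_2\ge 0}\Big\{-t_1\norm{{\bf P^k}}_F-t_2\norm{{\bf Q^k}}_F+c_1\Big(\tfrac{t_1^2+t_2^2}{2}\Big)^2+(c_2+\lambda_0)\tfrac{t_1^2}{2}+(c_2+\lambda_0)\tfrac{t_2^2}{2}\Big\}.
\end{align*}

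The decisive step — and where this departs from Proposition~\ref{prop:closed-form-1c} — is that the quadratic $\frac{\lambda_0}{2}\norm{{\bf Z}}_F^2$ pushed into $h$ contributes the same coefficient $\lambda_0$ to the $t_2$-part as $f$ contributes to the $t_1$-part, so the two stationarity conditions read
\begin{align*}
-\norm{{\bf P^k}}_F+c_1(t_1^2+t_2^2)t_1+(c_2+\lambda_0)t_1=0,\qquad -\norm{{\bf Q^k}}_F+c_1(t_1^2+t_2^2)t_2+(c_2+\lambda_0)t_2=0,
\end{align*}
with a common coefficient $(c_2+\lambda_0)$. This symmetry is exactly what permits the single scaling $t_1=r\norm{{\bf P^k}}_F$ and $t_2=r\norm{{\bf Q^k}}_F$ for one scalar $r\ge 0$; in the 5th-order case the mismatched coefficients $c_2+\lambda_0$ versus $c_2$ forced two different scalings and hence a degree-five equation. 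Substituting and dividing each equation by the respective norm collapses both to the cubic
\begin{align*}
c_1\big(\norm{{\bf Q^k}}_F^2+\norm{{\bf P^k}}_F^2\big)r^3+(c_2+\lambda_0)r-1=0,
\end{align*}
and back-substituting the optimal directions gives ${\bf U}^{{\bf k+1}}=-r\,{\bf P^k}$, ${\bf Z}^{{\bf k+1}}=-r\,{\bf Q^k}$. I expect no real obstacle, as the heavy lifting (the $=$ versus $\le$ equivalence and the existence of a unique nonnegative root of $c_1 a\,r^3+(c_2+\lambda_0)r-1$ with $a\ge0$) is already furnished by Lemma~\ref{lem:helper-1} and the template; the single thing to state carefully is the coefficient-matching that legitimizes the common scaling $r$.
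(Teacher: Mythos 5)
Your proposal is correct and follows essentially the same route as the paper: the paper's own proof consists precisely of observing that $\lambda f+h$ merges the two quadratics into the single term $\left(c_2+\lambda_0\right)h_2$, so the subproblem becomes that of Proposition~\ref{prop:closed-form-1} with $c_2$ replaced by $c_2+\lambda_0$, and then deferring to that template. Your write-up only spells out what the paper leaves implicit (the trivial $L$-smad check for $h_a+\tfrac{\lambda_0}{2}\norm{{\bf Z}}_F^2$ and the coefficient-matching that permits the common scaling $t_1=r\norm{{\bf P^k}}_F$, $t_2=r\norm{{\bf Q^k}}_F$), which matches the paper's intent exactly.
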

	\begin{proof}
	The resulting subproblem is 
	\begin{align*}
	  ({\bf U}^{{\bf k+1}}, {\bf Z}^{{\bf k+1}}) \in \underset{({\bf U},{\bf Z}) \in \R^{M \times K}\times \R^{K\times N} }{\argmin} &\left\{ \act{ {\bf P^k}, {\bf U}} + \act{ {\bf Q^k} , {\bf Z}} \right.\\
	& \left. + c_1 \left(\frac{\norm{\bf U}_F^2 +\norm{\bf Z}_F^2 }{2} \right)^2 + \left(c_2 + \lambda_0\right)\left( \frac{\norm{\bf U}_F^2 +\norm{\bf Z}_F^2 }{2}\right) \right\}\,.
	\end{align*}
	The rest of the proof is similar to Proposition~\ref{prop:closed-form-1}.
	\end{proof}
	\subsection{Extensions to Mixed Regularization Terms}\label{ssec:mixed-reg}
	Let $\lambda_0 >0$ and we consider the following problem
	\begin{equation}\label{eq:mat-fac-ex-1cd}
	\min_{{\bf U}\in \R^{M \times K}, {\bf Z}\in \R^{K \times N}}\, \left\{ \Psi({\bf U},{\bf Z}) :=  \frac 12\norm{{\bf A} - {\bf U}{\bf Z}}^2_{F}  + \frac{\lambda_0}{2} \norm{{\bf U}}_F^2  +  \lambda_1  \norm{{\bf Z}}_1  \right\}\,.
	\end{equation}
	Note that the regularizer is a mixture of L1 and L2 regularization. The usual strategy with $h = h_a$ would result in a fifth order polynomial. In order to generate a cubic equation, we use the same strategy as given Section~\ref{ssec:conv-cubic}. We set $h({\bf U},{\bf Z}) = h_a({\bf U},{\bf Z}) + \frac{\lambda_0}{2}\norm{{\bf Z}}_F^2$, $g = \frac 12\norm{{\bf A} - {\bf U}{\bf Z}}^2_{F}$ and  $f({\bf U},{\bf Z})= \frac{\lambda_0}{2} \norm{{\bf U}}_F^2 + \lambda_1  \norm{{\bf Z}}_1$. 
	\begin{proposition}\label{prop:closed-form-5ab}
	In BPG-MF, with the above defined $g,f,h$ the update steps in each iteration are given by ${\bf U}^{{\bf k+1}} = -r\,{\bf P^k}$, ${\bf Z}^{{\bf k+1}} = r \SSS_{\lambda \lambda_1}\left(-\,{\bf Q^k}\right)$ where $r$ is the non-negative real root of
	\begin{align}
		&c_1\left( \norm{{\bf P^k}}_F^2 + \norm{\SSS_{\lambda \lambda_1}\left(-\,{\bf Q^k}\right)}_F^2   \right)r^3 + (c_2+\lambda_0)r-1 = 0\label{alg:simple-0}\,,
	\end{align}
	with $c_1 = 3$ and $c_2 = \norm{{\bf A}}_F$.
	\end{proposition}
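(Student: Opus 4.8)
The plan is to follow the template of Proposition~\ref{prop:closed-form-3} (the pure L1 case) while importing the reparametrization trick of Section~\ref{ssec:conv-cubic} that absorbs the L2 penalty on ${\bf Z}$ into the kernel. First I would write out the BPG-MF subproblem~\eqref{alg:bpg-mf-2} for the given $g,f,h$. Because $h = h_a + \frac{\lambda_0}{2}\norm{{\bf Z}}_F^2$ carries the quadratic penalty on ${\bf Z}$ while $f$ carries $\frac{\lambda_0}{2}\norm{{\bf U}}_F^2$ on ${\bf U}$, the quadratic $h_2$-part of $h_a$ together with these two penalties collapses into a single symmetric term $(c_2+\lambda_0)\tfrac{\norm{{\bf U}}_F^2+\norm{{\bf Z}}_F^2}{2}$, exactly as in Proposition~\ref{prop:closed-form-5a}, leaving the nonsmooth part $\lambda\lambda_1\norm{{\bf Z}}_1$ acting on ${\bf Z}$ alone. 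Thus the subproblem reads
\begin{align*}
\min_{({\bf U},{\bf Z})} \Big\{ &\lambda\lambda_1\norm{{\bf Z}}_1 + \act{{\bf P^k},{\bf U}} + \act{{\bf Q^k},{\bf Z}} \\
&+ c_1\Big(\tfrac{\norm{{\bf U}}_F^2+\norm{{\bf Z}}_F^2}{2}\Big)^2 + (c_2+\lambda_0)\tfrac{\norm{{\bf U}}_F^2+\norm{{\bf Z}}_F^2}{2}\Big\}.
\end{align*}

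Next I would introduce the radius variables $t_1=\norm{{\bf U}}_F$ and $t_2=\norm{{\bf Z}}_F$ and split the minimization into an inner problem over the spheres $\norm{{\bf U}}_F=t_1$, $\norm{{\bf Z}}_F=t_2$ and an outer problem over $(t_1,t_2)\ge 0$, just as in \eqref{eq:subprob-equiv-1a}--\eqref{eq:subprob-equiv-2a}. The equality constraints can be relaxed to inequality constraints: for the ${\bf U}$-block this is Lemma~\ref{lem:helper-1}, giving $\min_{\norm{{\bf U}}_F\le t_1}\act{{\bf P^k},{\bf U}} = -t_1\norm{{\bf P^k}}_F$ with minimizer $-t_1{\bf P^k}/\norm{{\bf P^k}}_F$; for the ${\bf Z}$-block, which additionally carries $\lambda\lambda_1\norm{{\bf Z}}_1$, this is Lemma~\ref{lem:helper-2}, giving value $-t_2\norm{\SSS_{\lambda\lambda_1}(-{\bf Q^k})}_F$ with minimizer $t_2\,\SSS_{\lambda\lambda_1}(-{\bf Q^k})/\norm{\SSS_{\lambda\lambda_1}(-{\bf Q^k})}_F$ (handling the degenerate cases $\norm{{\bf P^k}}_F=0$ or $\norm{\SSS_{\lambda\lambda_1}(-{\bf Q^k})}_F=0$ by the zero minimizer exactly as in the earlier propositions).

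Substituting these inner values reduces everything to the two-dimensional problem
\begin{align*}
\min_{t_1,t_2\ge 0}\Big\{ -t_1\norm{{\bf P^k}}_F - t_2\norm{\SSS_{\lambda\lambda_1}(-{\bf Q^k})}_F + c_1\Big(\tfrac{t_1^2+t_2^2}{2}\Big)^2 + (c_2+\lambda_0)\tfrac{t_1^2+t_2^2}{2}\Big\}.
\end{align*}
I would then write the first-order conditions $-\norm{{\bf P^k}}_F + (c_1(t_1^2+t_2^2)+c_2+\lambda_0)t_1=0$ and the analogous one for $t_2$. Dividing these shows $t_1/t_2 = \norm{{\bf P^k}}_F / \norm{\SSS_{\lambda\lambda_1}(-{\bf Q^k})}_F$, so I set $t_1=r\norm{{\bf P^k}}_F$ and $t_2=r\norm{\SSS_{\lambda\lambda_1}(-{\bf Q^k})}_F$ for a single scalar $r\ge 0$; back-substitution turns either stationarity equation into the claimed cubic $c_1(\norm{{\bf P^k}}_F^2+\norm{\SSS_{\lambda\lambda_1}(-{\bf Q^k})}_F^2)r^3+(c_2+\lambda_0)r-1=0$, and feeding $t_1^*,t_2^*$ back into the inner minimizers yields ${\bf U}^{{\bf k+1}}=-r{\bf P^k}$ and ${\bf Z}^{{\bf k+1}}=r\SSS_{\lambda\lambda_1}(-{\bf Q^k})$. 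The main obstacle is conceptual rather than computational: one must verify that the \emph{asymmetric} placement of the two $\lambda_0$-penalties (on ${\bf U}$ via $f$, on ${\bf Z}$ via $h$) really does recombine into a symmetric quadratic so that the mixed L1/L2 regularizer produces a single cubic instead of the fifth-order polynomial that the naive choice $h=h_a$ would give, as flagged in Section~\ref{ssec:mixed-reg}; once this symmetrization is in place the rest is a direct transcription of Propositions~\ref{prop:closed-form-3} and~\ref{prop:closed-form-5a}.
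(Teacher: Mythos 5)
Your proposal is correct and follows essentially the same route as the paper, which for this proposition merely remarks that the proof is similar to Propositions~\ref{prop:closed-form-1} and~\ref{prop:closed-form-3}: you form the BPG-MF subproblem with the L2 penalty on ${\bf Z}$ absorbed into the kernel as in Section~\ref{ssec:conv-cubic}, apply Lemma~\ref{lem:helper-1} to the ${\bf U}$-block and Lemma~\ref{lem:helper-2} to the ${\bf Z}$-block, and collapse the outer two-dimensional problem to the stated cubic in $r$. The only point worth noting is that your symmetric coefficient $(c_2+\lambda_0)$ tacitly adopts the same convention as Proposition~\ref{prop:closed-form-5a} --- the step-size factor $\lambda$ multiplying the quadratic part of $f$ is suppressed --- which is precisely the paper's own treatment, so your argument matches the intended proof.
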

	The proof is similar to Proposition~\ref{prop:closed-form-1} and Proposition~\ref{prop:closed-form-3}.

\section{Technical Lemmas and Proofs}\label{sec:technical-proofs}
	Before we proceed to the proof of Proposition~\ref{prop:l-smad-1} we require the following technical lemma.
	\begin{lemma}\label{lem:technical-1}
	Let $g := \frac 12\norm{{\bf A} - {\bf U}{\bf Z}}^2_{F}$, then we have the following
	\[
	\nabla g({\bf A},{\bf U}{\bf Z}) = \left(-({\bf A} - {\bf U}{\bf Z}){\bf Z}^T, -{\bf U}^T({\bf A} - {\bf U}{\bf Z})\right)
	\]
	\[
	\act{({\bf H}_{{\bf 1}},{\bf H}_{{\bf 2}}), \nabla^2 g({\bf A},{\bf U}{\bf Z}) ({\bf H}_{{\bf 1}}, {\bf H}_{{\bf 2}})} = -2\act{{\bf A} - {\bf U}{\bf Z}, {\bf H}_{{\bf 1}}{\bf H}_{{\bf 2}}} + \act{{\bf U}{\bf H}_{{\bf 2}} +  {\bf H}_{{\bf 1}}{\bf Z}, {\bf U}{\bf H}_{{\bf 2}} +  {\bf H}_{{\bf 1}}{\bf Z}}\,.
	\]
	\end{lemma}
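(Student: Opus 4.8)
The plan is to obtain both identities from a single second-order Taylor expansion of $g$ about $({\bf U},{\bf Z})$ in the direction $({\bf H_1},{\bf H_2})$. First I would substitute ${\bf U}\to{\bf U}+{\bf H_1}$ and ${\bf Z}\to{\bf Z}+{\bf H_2}$ and expand the product, so that, since $({\bf U}+{\bf H_1})({\bf Z}+{\bf H_2}) = {\bf U}{\bf Z} + ({\bf U}{\bf H_2}+{\bf H_1}{\bf Z}) + {\bf H_1}{\bf H_2}$, the residual becomes
\[
{\bf A} - ({\bf U}+{\bf H_1})({\bf Z}+{\bf H_2}) = ({\bf A}-{\bf U}{\bf Z}) - ({\bf U}{\bf H_2}+{\bf H_1}{\bf Z}) - {\bf H_1}{\bf H_2}\,,
\]
where the three summands are of order $0$, $1$, and $2$ in $({\bf H_1},{\bf H_2})$ respectively.

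Next I would expand $\tfrac12\norm{\cdot}_F^2$ of this expression using the Frobenius inner product and sort the terms by total degree in $({\bf H_1},{\bf H_2})$. The constant term is $g({\bf U},{\bf Z})$; the linear term is $-\act{{\bf A}-{\bf U}{\bf Z},\, {\bf U}{\bf H_2}+{\bf H_1}{\bf Z}}$; and the quadratic term is $\tfrac12\norm{{\bf U}{\bf H_2}+{\bf H_1}{\bf Z}}_F^2 - \act{{\bf A}-{\bf U}{\bf Z},\, {\bf H_1}{\bf H_2}}$. All remaining contributions, namely the cross term $\act{{\bf U}{\bf H_2}+{\bf H_1}{\bf Z},\, {\bf H_1}{\bf H_2}}$ (order three) and $\tfrac12\norm{{\bf H_1}{\bf H_2}}_F^2$ (order four), are discarded for the purpose of reading off $\nabla g$ and $\nabla^2 g$.

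To extract the gradient from the linear term I would use the adjoint identities for the Frobenius inner product, namely $\act{{\bf R},{\bf U}{\bf H_2}} = \act{{\bf U}^T{\bf R},{\bf H_2}}$ and $\act{{\bf R},{\bf H_1}{\bf Z}} = \act{{\bf R}{\bf Z}^T,{\bf H_1}}$ with ${\bf R}={\bf A}-{\bf U}{\bf Z}$. Matching $\act{\nabla_{\bf U} g,{\bf H_1}} + \act{\nabla_{\bf Z} g,{\bf H_2}}$ against the linear term then gives $\nabla_{\bf U} g = -({\bf A}-{\bf U}{\bf Z}){\bf Z}^T$ and $\nabla_{\bf Z} g = -{\bf U}^T({\bf A}-{\bf U}{\bf Z})$, which is the claimed $\nabla g$. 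For the Hessian, comparing the quadratic term with $\tfrac12\act{({\bf H_1},{\bf H_2}),\nabla^2 g\,({\bf H_1},{\bf H_2})}$ from Taylor's formula and multiplying by two yields the stated expression.

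The computation is essentially routine, so I expect the only genuine subtlety to be the bookkeeping: correctly attributing the term $-\act{{\bf A}-{\bf U}{\bf Z},\,{\bf H_1}{\bf H_2}}$ to second order (it is bilinear in $({\bf H_1},{\bf H_2})$, hence a Hessian contribution, even though it is first order in each factor separately) while discarding the genuinely higher-order pieces, and applying the transpose identities without misplacing a factor. These are precisely the manipulations reused in Lemma~\ref{lem:technical-10} and in the matrix-completion variant, so settling the bookkeeping here pays off repeatedly.
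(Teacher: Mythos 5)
Your proposal is correct and follows essentially the same route as the paper: substitute ${\bf U}+{\bf H_1}$, ${\bf Z}+{\bf H_2}$, expand the Frobenius inner product, and read off $\nabla g$ and $\nabla^2 g$ from the first- and second-order terms of the Taylor expansion, correctly counting $-\act{{\bf A}-{\bf U}{\bf Z},{\bf H_1}{\bf H_2}}$ as a Hessian contribution. The only difference is presentational: you group the residual into homogeneous pieces before expanding, whereas the paper expands all twenty-five inner products and then collects terms, so your bookkeeping is the tidier version of the identical argument.
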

	\begin{proof}
	With the Forbenius dot product, we have
	\begin{align*}
	\norm{{\bf A} - {\bf U}{\bf Z}}^2_{F} &= \act{{\bf A} - {\bf U}{\bf Z}, {\bf A} - {\bf U}{\bf Z}}\,.
	\end{align*}
	In the above expression by substituting ${\bf U}$ with ${\bf U}+{\bf H}_{{\bf 1}}$ and ${\bf Z}$ with ${\bf Z}+{\bf H}_{{\bf 2}}$, we obtain 
	\begin{align*}
	 &\act{{\bf A} - ({\bf U}+{\bf H}_{{\bf 1}})({\bf Z} + {\bf H}_{{\bf 2}}), {\bf A} -  ({\bf U}+{\bf H}_{{\bf 1}})({\bf Z} + {\bf H}_{{\bf 2}})}\,, \\
	 &= \act{{\bf A} - {\bf U}{\bf Z} - {\bf U}{\bf H}_{{\bf 2}} -{\bf H}_{{\bf 1}}{\bf Z} - {\bf H}_{{\bf 1}}{\bf H}_{{\bf 2}},{\bf A} - {\bf U}{\bf Z} - {\bf U}{\bf H}_{{\bf 2}} -{\bf H}_{{\bf 1}}{\bf Z} - {\bf H}_{{\bf 1}}{\bf H}_{{\bf 2}} }\,,\\
	 &= \act{{\bf A},{\bf A}} - \act{{\bf A}, {\bf U}{\bf Z}} - \act{{\bf A}, {\bf U}{\bf H}_{{\bf 2}}} - \act{{\bf A}, {\bf H}_{{\bf 1}}{\bf Z}} - \act{{\bf A}, {\bf H}_{{\bf 1}}{\bf H}_{{\bf 2}}} \,,\\
	& -\act{{\bf U}{\bf Z},{\bf A}} + \act{{\bf U}{\bf Z}, {\bf U}{\bf Z}} + \act{{\bf U}{\bf Z}, {\bf U}{\bf H}_{{\bf 2}}} + \act{{\bf U}{\bf Z}, {\bf H}_{{\bf 1}}{\bf Z}} + \act{{\bf U}{\bf Z}, {\bf H}_{{\bf 1}}{\bf H}_{{\bf 2}}} \\
	& -\act{{\bf U}{\bf H}_{{\bf 2}},{\bf A}} + \act{{\bf U}{\bf H}_{{\bf 2}}, {\bf U}{\bf Z}} + \act{{\bf U}{\bf H}_{{\bf 2}}, {\bf U}{\bf H}_{{\bf 2}}} + \act{{\bf U}{\bf H}_{{\bf 2}}, {\bf H}_{{\bf 1}}{\bf Z}} + \act{{\bf U}{\bf H}_{{\bf 2}}, {\bf H}_{{\bf 1}}{\bf H}_{{\bf 2}}} \\
	& -\act{{\bf H}_{{\bf 1}}{\bf Z},{\bf A}} + \act{{\bf H}_{{\bf 1}}{\bf Z}, {\bf U}{\bf Z}} + \act{{\bf H}_{{\bf 1}}{\bf Z}, {\bf U}{\bf H}_{{\bf 2}}} + \act{{\bf H}_{{\bf 1}}{\bf Z}, {\bf H}_{{\bf 1}}{\bf Z}} + \act{{\bf H}_{{\bf 1}}{\bf Z}, {\bf H}_{{\bf 1}}{\bf H}_{{\bf 2}}} \\
	& -\act{{\bf H}_{{\bf 1}}{\bf H}_{{\bf 2}},{\bf A}} + \act{{\bf H}_{{\bf 1}}{\bf H}_{{\bf 2}}, {\bf U}{\bf Z}} + \act{{\bf H}_{{\bf 1}}{\bf H}_{{\bf 2}}, {\bf U}{\bf H}_{{\bf 2}}} + \act{{\bf H}_{{\bf 1}}{\bf H}_{{\bf 2}}, {\bf H}_{{\bf 1}}{\bf Z}} + \act{{\bf H}_{{\bf 1}}{\bf H}_{{\bf 2}}, {\bf H}_{{\bf 1}}{\bf H}_{{\bf 2}}} \,.
	\end{align*}
	Collecting all the first order terms we have
	\begin{align*}
	&- \act{{\bf A}, {\bf U}{\bf H}_{{\bf 2}}}- \act{{\bf A}, {\bf H}_{{\bf 1}}{\bf Z}} + \act{{\bf U}{\bf Z}, {\bf U}{\bf H}_{{\bf 2}}} + \act{{\bf U}{\bf Z}, {\bf H}_{{\bf 1}}{\bf Z}} \\
	&-\act{{\bf U}{\bf H}_{{\bf 2}},{\bf A}} + \act{{\bf U}{\bf H}_{{\bf 2}}, {\bf U}{\bf Z}} -\act{{\bf H}_{{\bf 1}}{\bf Z},{\bf A}} + \act{{\bf H}_{{\bf 1}}{\bf Z}, {\bf U}{\bf Z}} \\
	& =- \act{{\bf A}, {\bf H}_{{\bf 1}}{\bf Z}} + \act{{\bf U}{\bf Z}, {\bf H}_{{\bf 1}}{\bf Z}} -\act{{\bf H}_{{\bf 1}}{\bf Z},{\bf A}} + \act{{\bf H}_{{\bf 1}}{\bf Z}, {\bf U}{\bf Z}} \\
	& - \act{{\bf A}, {\bf U}{\bf H}_{{\bf 2}}} +  \act{{\bf U}{\bf Z}, {\bf U}{\bf H}_{{\bf 2}}} -\act{{\bf U}{\bf H}_{{\bf 2}},{\bf A}} + \act{{\bf U}{\bf H}_{{\bf 2}}, {\bf U}{\bf Z}}\,,\\
	& = -2\act{{\bf A}, {\bf H}_{{\bf 1}}{\bf Z}} -2\act{{\bf A}, {\bf U}{\bf H}_{{\bf 2}}} + 2\act{{\bf U}{\bf Z}, {\bf H}_{{\bf 1}}{\bf Z}} +  2\act{{\bf U}{\bf Z}, {\bf U}{\bf H}_{{\bf 2}}}\,,\\
	&= -2{\bf tr}(({\bf A} - {\bf U}{\bf Z}){\bf Z}^T{\bf H}_{{\bf 1}}^T) - 2{\bf tr}(({\bf A} - {\bf U}{\bf Z}){\bf H}_{{\bf 2}}^T{\bf U}^T)\,,\\
	&= -2{\bf tr}(({\bf A} - {\bf U}{\bf Z}){\bf Z}^T{\bf H}_{{\bf 1}}^T) - 2{\bf tr}({\bf U}^T({\bf A} - {\bf U}{\bf Z}){\bf H}_{{\bf 2}}^T)\,,
	\end{align*}
	and similarly collecting all the second order terms we have
	\begin{align*}
	&- \act{{\bf A}, {\bf H}_{{\bf 1}}{\bf H}_{{\bf 2}}}+ \act{{\bf U}{\bf Z}, {\bf H}_{{\bf 1}}{\bf H}_{{\bf 2}}}+ \act{{\bf U}{\bf H}_{{\bf 2}}, {\bf U}{\bf H}_{{\bf 2}}} + \act{{\bf U}{\bf H}_{{\bf 2}}, {\bf H}_{{\bf 1}}{\bf Z}}\\
	& + \act{{\bf H}_{{\bf 1}}{\bf Z}, {\bf U}{\bf H}_{{\bf 2}}} + \act{{\bf H}_{{\bf 1}}{\bf Z}, {\bf H}_{{\bf 1}}{\bf Z}}  -\act{{\bf H}_{{\bf 1}}{\bf H}_{{\bf 2}},{\bf A}}+ \act{{\bf H}_{{\bf 1}}{\bf H}_{{\bf 2}}, {\bf U}{\bf Z}} \\
	& = -2\act{{\bf A} - {\bf U}{\bf Z}, {\bf H}_{{\bf 1}}{\bf H}_{{\bf 2}}} + \act{{\bf U}{\bf H}_{{\bf 2}} +  {\bf H}_{{\bf 1}}{\bf Z}, {\bf U}{\bf H}_{{\bf 2}} +  {\bf H}_{{\bf 1}}{\bf Z}}\,.
	\end{align*}
	Thus the statement follows using the second order Taylor expansion.
	\end{proof}
	\begin{lemma}\label{lem:l-smad-helper-1}
	Given $h_1 := \left(\frac{\norm{\bf U}_F^2 +\norm{\bf Z}_F^2 }{2} \right)^2$, then we have the following
	\[
		\nabla h_1({\bf U},{\bf Z}) = \left(\left(\norm{\bf U}_F^2+\norm{\bf Z}_F^2\right){\bf U}, \left(\norm{\bf U}_F^2+\norm{\bf Z}_F^2\right){\bf Z}\right)\,,
	\]
	\[
	\act{({\bf H}_{{\bf 1}},{\bf H}_{{\bf 2}}), \nabla^2 h_1({\bf U},{\bf Z}) ({\bf H}_{{\bf 1}}, {\bf H}_{{\bf 2}})} = (\norm{{\bf H}_{{\bf 1}}}_F^2+\norm{{\bf H}_{{\bf 2}}}_F^2)(\norm{\bf U}_F^2+\norm{\bf Z}_F^2)  + 2\norm{{\bf H}_{{\bf 1}}{\bf U}^T + {\bf Z}{\bf H}_{{\bf 2}}^T }_F^2
	\]
	\end{lemma}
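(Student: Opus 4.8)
The plan is to reuse the substitution-and-Taylor-expansion technique already employed for $g$ in Lemma~\ref{lem:technical-1}. First I would abbreviate $\phi({\bf U},{\bf Z}) := \norm{\bf U}_F^2 + \norm{\bf Z}_F^2$ so that $h_1 = \tfrac14\phi^2$. The gradient is then immediate from the chain rule: since $\nabla\phi = (2{\bf U},2{\bf Z})$, one obtains $\nabla h_1 = \tfrac14\cdot 2\phi\,\nabla\phi = \tfrac12\phi\,(2{\bf U},2{\bf Z}) = (\phi{\bf U},\phi{\bf Z})$, which is exactly the claimed expression $\left(\left(\norm{\bf U}_F^2+\norm{\bf Z}_F^2\right){\bf U},\left(\norm{\bf U}_F^2+\norm{\bf Z}_F^2\right){\bf Z}\right)$. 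This part is routine.

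For the Hessian quadratic form I would substitute ${\bf U}\mapsto{\bf U}+{\bf H}_{{\bf 1}}$ and ${\bf Z}\mapsto{\bf Z}+{\bf H}_{{\bf 2}}$ into $h_1$ and expand in the Frobenius inner product, exactly as done for $g$. The key intermediate step is to expand each squared norm to obtain
\begin{align*}
\phi({\bf U}+{\bf H}_{{\bf 1}},{\bf Z}+{\bf H}_{{\bf 2}}) = \phi({\bf U},{\bf Z}) + 2\left(\act{{\bf U},{\bf H}_{{\bf 1}}}+\act{{\bf Z},{\bf H}_{{\bf 2}}}\right) + \left(\norm{{\bf H}_{{\bf 1}}}_F^2 + \norm{{\bf H}_{{\bf 2}}}_F^2\right)\,,
\end{align*}
then square this and multiply by $\tfrac14$ to recover $h_1({\bf U}+{\bf H}_{{\bf 1}},{\bf Z}+{\bf H}_{{\bf 2}})$. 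Reading off the degree-zero term returns $h_1({\bf U},{\bf Z})$ and the degree-one term returns $\act{\nabla h_1,({\bf H}_{{\bf 1}},{\bf H}_{{\bf 2}})}$, which serves as a consistency check on the gradient computed above.

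The heart of the argument is collecting the degree-two terms, which arise from two sources: the square of the first-order part $\left(\act{{\bf U},{\bf H}_{{\bf 1}}}+\act{{\bf Z},{\bf H}_{{\bf 2}}}\right)^2$ and the cross product of the degree-zero part $\phi$ with the degree-two part $\norm{{\bf H}_{{\bf 1}}}_F^2+\norm{{\bf H}_{{\bf 2}}}_F^2$. Invoking the Taylor identity $\act{({\bf H}_{{\bf 1}},{\bf H}_{{\bf 2}}),\nabla^2 h_1\,({\bf H}_{{\bf 1}},{\bf H}_{{\bf 2}})} = \tfrac{d^2}{dt^2}\big|_{t=0}\,h_1({\bf U}+t{\bf H}_{{\bf 1}},{\bf Z}+t{\bf H}_{{\bf 2}})$, the second source produces the isotropic term $\left(\norm{\bf U}_F^2+\norm{\bf Z}_F^2\right)\left(\norm{{\bf H}_{{\bf 1}}}_F^2+\norm{{\bf H}_{{\bf 2}}}_F^2\right)$ cleanly, while the first source produces the remaining rank-one coupling contribution. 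I expect the main obstacle to be precisely the faithful identification of this coupling contribution with the claimed Frobenius-norm form $2\norm{{\bf H}_{{\bf 1}}{\bf U}^T+{\bf Z}{\bf H}_{{\bf 2}}^T}_F^2$: this step requires careful trace manipulation of the second-order term together with close attention to the block dimensions of the factors, and it is the only part of the computation that is not a mechanical expansion.
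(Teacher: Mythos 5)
Your computation is correct, and up to the last step it follows essentially the same route as the paper's proof: the paper also substitutes ${\bf U}+{\bf H}_{{\bf 1}}$, ${\bf Z}+{\bf H}_{{\bf 2}}$ and collects the first- and second-order terms, albeit by brute-force expansion of all the pairwise inner products, whereas your reduction to the scalar $\phi({\bf U},{\bf Z})=\norm{{\bf U}}_F^2+\norm{{\bf Z}}_F^2$ with $h_1=\tfrac14\phi^2$ and a one-dimensional Taylor expansion is tidier and less error-prone. Your bookkeeping of the two degree-two sources is right and, with the factor $\tfrac12$ in the Taylor quadratic term accounted for, yields
\[
\act{({\bf H}_{{\bf 1}},{\bf H}_{{\bf 2}}),\nabla^2 h_1({\bf U},{\bf Z})({\bf H}_{{\bf 1}},{\bf H}_{{\bf 2}})}
=\left(\norm{{\bf U}}_F^2+\norm{{\bf Z}}_F^2\right)\left(\norm{{\bf H}_{{\bf 1}}}_F^2+\norm{{\bf H}_{{\bf 2}}}_F^2\right)
+2\left(\act{{\bf H}_{{\bf 1}},{\bf U}}+\act{{\bf Z},{\bf H}_{{\bf 2}}}\right)^2,
\]
which is exactly where the paper's own proof also arrives (its collected second-order terms are one half of this expression).

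The one problem is the step you defer to the end: the identification of $2\left(\act{{\bf H}_{{\bf 1}},{\bf U}}+\act{{\bf Z},{\bf H}_{{\bf 2}}}\right)^2$ with the displayed $2\norm{{\bf H}_{{\bf 1}}{\bf U}^T+{\bf Z}{\bf H}_{{\bf 2}}^T}_F^2$ by ``careful trace manipulation'' cannot be carried out, because that identity is false as printed --- the displayed expression is a misprint in the statement, not an obstacle to be overcome. Concretely, ${\bf H}_{{\bf 1}}{\bf U}^T\in\R^{M\times M}$ while ${\bf Z}{\bf H}_{{\bf 2}}^T\in\R^{K\times K}$, so the sum is not even well-defined unless $M=K$; and even when $M=K$, one has $\act{{\bf H}_{{\bf 1}},{\bf U}}+\act{{\bf Z},{\bf H}_{{\bf 2}}}=\trace\left({\bf H}_{{\bf 1}}{\bf U}^T+{\bf Z}{\bf H}_{{\bf 2}}^T\right)$, and the square of a trace is not the squared Frobenius norm. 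The paper's proof simply terminates at the scalar-square form and asserts the lemma, so the norm notation in the statement must be read as shorthand for that squared scalar. Nothing downstream is affected by this: the proof of Proposition~\ref{prop:l-smad-1} only uses that the coupling term is nonnegative and then drops it, which holds under either reading. So your proof should end at the scalar form above; with that understanding, your derivation is complete and coincides with the paper's.
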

	\begin{proof}
	By the definition of Forbenius dot product, we have
	\begin{align*}
	&\frac{1}{4}\norm{\bf U}_F^4 + \frac{1}{4}\norm{\bf Z}_F^4 + \frac{1}{2}\norm{\bf U}_F^2\norm{\bf Z}_F^2 =\frac{1}{4} \act{{\bf U},{\bf U}}^2 + \frac{1}{4}\act{{\bf Z},{\bf Z}}^2 + \frac{1}{2}\act{{\bf U},{\bf U}}\act{{\bf Z},{\bf Z}}
	\end{align*}
	Now, considering $h_1({\bf U}+{\bf H}_{{\bf 1}}, {\bf Z}+{\bf H}_{{\bf 2}})$ we have
	\begin{align*}
	&\frac{1}{4} \act{{\bf U}+{\bf H}_{{\bf 1}},{\bf U}+{\bf H}_{{\bf 1}}}^2 + \frac{1}{4}\act{{\bf Z} + {\bf H}_{{\bf 2}},{\bf Z} + {\bf H}_{{\bf 2}}}^2 + \frac{1}{2}\act{{\bf U}+{\bf H}_{{\bf 1}},{\bf U}+{\bf H}_{{\bf 1}}}\act{{\bf Z} + {\bf H}_{{\bf 2}},{\bf Z} + {\bf H}_{{\bf 2}}}\\
	& =\frac{1}{4} \left( \act{{\bf U},{\bf U}}+ 2\act{{\bf H}_{{\bf 1}}, {\bf U}} + \act{{\bf H}_{{\bf 1}}, {\bf H}_{{\bf 1}}} \right)^2 + \frac{1}{4} \left( \act{{\bf Z},{\bf Z}} + 2\act{{\bf Z}, {\bf H}_{{\bf 2}}} + \act{{\bf H}_{{\bf 2}}, {\bf H}_{{\bf 2}}} \right)^2 \\
	&+ \frac{1}{2}\left(\act{{\bf U},{\bf U}}+ 2\act{{\bf H}_{{\bf 1}}, {\bf U}} + \act{{\bf H}_{{\bf 1}}, {\bf H}_{{\bf 1}}}\right)\left(\act{{\bf Z},{\bf Z}} + 2\act{{\bf Z}, {\bf H}_{{\bf 2}}} + \act{{\bf H}_{{\bf 2}}, {\bf H}_{{\bf 2}}} \right)\\
	& =\frac{1}{4}\left(\act{{\bf U},{\bf U}}^2 + 4\act{{\bf H}_{{\bf 1}}, {\bf U}}^2 + \act{{\bf H}_{{\bf 1}}, {\bf H}_{{\bf 1}}}^2 + 2\act{{\bf H}_{{\bf 1}}, {\bf H}_{{\bf 1}}}\act{{\bf U},{\bf U}} \right.\\
	&\left. + 4\act{{\bf U},{\bf U}}\act{{\bf H}_{{\bf 1}}, {\bf U}} + 4\act{{\bf H}_{{\bf 1}}, {\bf U}} \act{{\bf H}_{{\bf 1}}, {\bf H}_{{\bf 1}}}   \right)\\
	& + \frac{1}{4} \left( \act{{\bf Z},{\bf Z}}^2 + 4\act{{\bf Z}, {\bf H}_{{\bf 2}}}^2 + \act{{\bf H}_{{\bf 2}}, {\bf H}_{{\bf 2}}}^2 + 2\act{{\bf H}_{{\bf 2}}, {\bf H}_{{\bf 2}}}\act{{\bf Z},{\bf Z}} \right.\\
	&\left. + 4\act{{\bf Z}, {\bf H}_{{\bf 2}}}\act{{\bf Z},{\bf Z}} + 4\act{{\bf Z}, {\bf H}_{{\bf 2}}}\act{{\bf H}_{{\bf 2}}, {\bf H}_{{\bf 2}}} \right) \\
	&+ \frac{1}{2}\left(\act{{\bf U},{\bf U}}\act{{\bf Z},{\bf Z}} + 2\act{{\bf U},{\bf U}}\act{{\bf Z}, {\bf H}_{{\bf 2}}} + \act{{\bf U},{\bf U}}\act{{\bf H}_{{\bf 2}}, {\bf H}_{{\bf 2}}} \right)\\
	&+ \frac{1}{2}\left( 2\act{{\bf H}_{{\bf 1}}, {\bf U}}\act{{\bf Z},{\bf Z}} + 4\act{{\bf H}_{{\bf 1}}, {\bf U}}\act{{\bf Z}, {\bf H}_{{\bf 2}}} + 2\act{{\bf H}_{{\bf 1}}, {\bf U}}\act{{\bf H}_{{\bf 2}}, {\bf H}_{{\bf 2}}} \right)\\
	& + \frac{1}{2}\left(\act{{\bf H}_{{\bf 1}}, {\bf H}_{{\bf 1}}}\act{{\bf Z},{\bf Z}} + 2\act{{\bf H}_{{\bf 1}}, {\bf H}_{{\bf 1}}}\act{{\bf Z}, {\bf H}_{{\bf 2}}} + \act{{\bf H}_{{\bf 1}}, {\bf H}_{{\bf 1}}}\act{{\bf H}_{{\bf 2}}, {\bf H}_{{\bf 2}}} \right)
	\end{align*}
	Collecting all the first order terms, we have
	\begin{align*}
	\act{{\bf U},{\bf U}}\act{{\bf H}_{{\bf 1}}, {\bf U}} + \act{{\bf Z}, {\bf H}_{{\bf 2}}}\act{{\bf Z},{\bf Z}} + \act{{\bf U},{\bf U}}\act{{\bf Z}, {\bf H}_{{\bf 2}}}+ \act{{\bf H}_{{\bf 1}}, {\bf U}}\act{{\bf Z},{\bf Z}} \,,
	\end{align*}
	and similarly collecting all the second order terms we have
	\begin{align*}
	& \frac{1}{4}\left( 4\act{{\bf H}_{{\bf 1}}, {\bf U}}^2 + 2\act{{\bf H}_{{\bf 1}}, {\bf H}_{{\bf 1}}}\act{{\bf U},{\bf U}}  +  4\act{{\bf Z}, {\bf H}_{{\bf 2}}}^2  + 2\act{{\bf H}_{{\bf 2}}, {\bf H}_{{\bf 2}}}\act{{\bf Z},{\bf Z}}  \right)\\
	&+ \frac{1}{2}\left( \act{{\bf U},{\bf U}}\act{{\bf H}_{{\bf 2}}, {\bf H}_{{\bf 2}}}  +  4\act{{\bf H}_{{\bf 1}}, {\bf U}}\act{{\bf Z}, {\bf H}_{{\bf 2}}}  +\act{{\bf H}_{{\bf 1}}, {\bf H}_{{\bf 1}}}\act{{\bf Z},{\bf Z}}  \right)\,, \\
	& = \frac{1}{2}\left(2\act{{\bf H}_{{\bf 1}}, {\bf U}}^2 + (\act{{\bf H}_{{\bf 1}}, {\bf H}_{{\bf 1}}}+\act{{\bf H}_{{\bf 2}}, {\bf H}_{{\bf 2}}})(\act{{\bf U},{\bf U}} +\act{{\bf Z}, {\bf Z}}) \right.\\ 
	&\left. +  2\act{{\bf Z}, {\bf H}_{{\bf 2}}}^2 + 4\act{{\bf H}_{{\bf 1}}, {\bf U}}\act{{\bf Z}, {\bf H}_{{\bf 2}}} \right)\,, \\
	& = \frac{1}{2} \left( (\act{{\bf H}_{{\bf 1}}, {\bf H}_{{\bf 1}}}+\act{{\bf H}_{{\bf 2}}, {\bf H}_{{\bf 2}}})(\act{{\bf U},{\bf U}} +\act{{\bf Z}, {\bf Z}})  + 2 (\act{{\bf H}_{{\bf 1}}, {\bf U}} + \act{{\bf Z}, {\bf H}_{{\bf 2}}})^2 \right)\,.
	\end{align*}
	Thus the statement follows.
	\end{proof}
	\begin{lemma}\label{lem:l-smad-helper-2}
	Given $h_2({\bf U},{\bf Z}) :=  \frac{\norm{\bf U}_F^2 +\norm{\bf Z}_F^2 }{2}$,  then we have the following
	\[
	\nabla h_2({\bf U},{\bf Z}) = \left({\bf U},{\bf Z}\right),
	\]
	\[
	\act{({\bf H}_{{\bf 1}},{\bf H}_{{\bf 2}}), \nabla^2 h_2({\bf U},{\bf Z}) ({\bf H}_{{\bf 1}}, {\bf H}_{{\bf 2}})} = \norm{{\bf H}_{{\bf 1}}}_F^2 + \norm{{\bf H}_{{\bf 2}}}_F^2 \,.
	\]
	\end{lemma}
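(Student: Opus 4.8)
The plan is to mimic exactly the expansion strategy already used for Lemma~\ref{lem:technical-1} and Lemma~\ref{lem:l-smad-helper-1}, which here becomes especially short because $h_2$ is a pure quadratic. First I would rewrite $h_2$ through the Frobenius inner product as $h_2({\bf U},{\bf Z}) = \frac{1}{2}\left(\act{{\bf U},{\bf U}} + \act{{\bf Z},{\bf Z}}\right)$, and then perturb by substituting ${\bf U} \mapsto {\bf U} + {\bf H}_{{\bf 1}}$ and ${\bf Z} \mapsto {\bf Z} + {\bf H}_{{\bf 2}}$. Expanding each inner product by bilinearity and symmetry gives
\begin{align*}
h_2({\bf U}+{\bf H}_{{\bf 1}}, {\bf Z}+{\bf H}_{{\bf 2}}) &= \frac{1}{2}\left(\act{{\bf U},{\bf U}} + 2\act{{\bf H}_{{\bf 1}},{\bf U}} + \act{{\bf H}_{{\bf 1}},{\bf H}_{{\bf 1}}}\right)\\
&\quad + \frac{1}{2}\left(\act{{\bf Z},{\bf Z}} + 2\act{{\bf H}_{{\bf 2}},{\bf Z}} + \act{{\bf H}_{{\bf 2}},{\bf H}_{{\bf 2}}}\right).
\end{align*}

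Next I would read off the terms by order, as in the preceding lemmas. The zeroth-order part recovers $h_2({\bf U},{\bf Z})$; the first-order part is $\act{{\bf H}_{{\bf 1}},{\bf U}} + \act{{\bf H}_{{\bf 2}},{\bf Z}}$, which identifies $\nabla h_2({\bf U},{\bf Z}) = \left({\bf U},{\bf Z}\right)$; and the second-order part is $\frac{1}{2}\left(\norm{{\bf H}_{{\bf 1}}}_F^2 + \norm{{\bf H}_{{\bf 2}}}_F^2\right)$. Matching the second-order part against the quadratic term $\frac{1}{2}\act{({\bf H}_{{\bf 1}},{\bf H}_{{\bf 2}}), \nabla^2 h_2({\bf U},{\bf Z})({\bf H}_{{\bf 1}},{\bf H}_{{\bf 2}})}$ of the Taylor expansion then yields the stated identity $\act{({\bf H}_{{\bf 1}},{\bf H}_{{\bf 2}}), \nabla^2 h_2({\bf U},{\bf Z})({\bf H}_{{\bf 1}},{\bf H}_{{\bf 2}})} = \norm{{\bf H}_{{\bf 1}}}_F^2 + \norm{{\bf H}_{{\bf 2}}}_F^2$.

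There is essentially no obstacle here: since $h_2$ is quadratic, the expansion terminates exactly at second order, so there is no remainder to discard and the Hessian is the constant identity operator on $\R^{M\times K}\times\R^{K\times N}$. The only point warranting care is the factor of $\frac{1}{2}$ relating the collected second-order term to the Hessian quadratic form in the Taylor expansion, which is the same convention used in Lemma~\ref{lem:l-smad-helper-1}.
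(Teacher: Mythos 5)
Your proposal is correct and follows essentially the same route as the paper's proof: expand $h_2({\bf U}+{\bf H}_{{\bf 1}},{\bf Z}+{\bf H}_{{\bf 2}})$ via the Frobenius inner product, collect terms by order, and identify the gradient and Hessian quadratic form from the second-order Taylor expansion (including the factor $\frac{1}{2}$ convention). Nothing is missing; since $h_2$ is quadratic the expansion is exact, just as in the paper.
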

	\begin{proof}
	Considering $h_2({\bf U}+{\bf H}_{\bf 1},{\bf Z}+{\bf H}_{\bf 2})$, we have
	\begin{align*}
	& \frac{1}{2} \act{{\bf U}+{\bf H}_{{\bf 1}},{\bf U}+{\bf H}_{{\bf 1}}} + \frac{1}{2}\act{{\bf Z} + {\bf H}_{{\bf 2}},{\bf Z} + {\bf H}_{{\bf 2}}}\\
	&= \frac{1}{2} \left( \act{{\bf U},{\bf U}} + 2\act{{\bf U}, {\bf H}_{{\bf 1}}} + \act{{\bf H}_{{\bf 1}},{\bf H}_{{\bf 1}}} \right) + \frac{1}{2}\left(\act{{\bf Z},{\bf Z} } + 2\act{{\bf Z}, {\bf H}_{{\bf 2}}} + \act{{\bf H}_{{\bf 2}},{\bf H}_{{\bf 2}}}\right)\,.
	\end{align*}
	Collecting all the first order terms we have
	\begin{align*}
	\act{{\bf U}, {\bf H}_{{\bf 1}}} + \act{{\bf Z}, {\bf H}_{{\bf 2}}} \,,
	\end{align*}
	and similarly collecting all the second order terms we have
	\begin{align*}
	\frac{1}{2} \left( \act{{\bf H}_{{\bf 1}},{\bf H}_{{\bf 1}}}+ \act{{\bf H}_{{\bf 2}},{\bf H}_{{\bf 2}}}\right)\,.
	\end{align*}
	Thus the statement holds.
	\end{proof}

\subsection{Proof of Proposition~\ref{prop:l-smad-1}}\label{ssec:main-lsmad-proof-sec}
\begin{proof} We prove here the convexity of $Lh_a-g$ for a certain constant $L\geq 1$. 	With  Lemma~\ref{lem:technical-1} we obtain  
	 \begin{align*}
	& \act{({\bf H}_1,{\bf H}_2), \nabla^2 g({\bf A},{\bf U}{\bf Z}) ({\bf H}_1,{\bf H}_2)} \\
	 &=  \norm{{\bf H}_1{\bf Z} + {\bf U}{\bf H}_2}_F^2 - 2 \act{{\bf A} - {\bf U}{\bf Z}, {\bf H}_1{\bf H}_2}\,,\\
	& \leq 2 \norm{{\bf H}_1{\bf Z}}_F^2 + 2\norm{{\bf U}{\bf H}_2}_F^2  + 2 \norm{{\bf A}}_F\norm{{\bf H}_1{\bf H}_2}_F + 2\norm{{\bf U}{\bf Z}}_F\norm{{\bf H}_1{\bf H}_2}_F\,,\\
	& \leq 2 \norm{{\bf H}_1}_F^2\norm{\bf Z}_F^2 + 2\norm{\bf U}_F^2 \norm{{\bf H}_2}_F^2  + 2 \norm{{\bf A}}_F\norm{{\bf H}_1}_F\norm{{\bf H}_2}_F + 2\norm{\bf U}_F\norm{\bf Z}_F\norm{{\bf H}_1}_F\norm{{\bf H}_2}_F\,.
	 \end{align*}
	 With AM-GM inequality, for non-negative real numbers $a,b$ we have $2\sqrt{ab} \leq a + b $, we have
	 \begin{align*}
	  2\norm{\bf U}_F\norm{\bf Z}_F\norm{{\bf H}_1}_F\norm{{\bf H}_2}_F \leq \norm{{\bf H}_1}_F^2\norm{\bf Z}_F^2 + \norm{\bf U}_F^2 \norm{{\bf H}_2}_F^2\,,
	 \end{align*}
	 and similarly we have
	 \begin{align*}
	 2 \norm{{\bf A}}_F\norm{{\bf H}_1}_F\norm{{\bf H}_2}_F \leq \norm{{\bf A}}_F \norm{{\bf H}_1}_F^2 + \norm{{\bf A}}_F\norm{{\bf H}_2}_F^2\,.
	 \end{align*}
	Using the above two inequalities, we obtain
	 \begin{align}
	 \act{({\bf H}_1,{\bf H}_2), \nabla^2 g(A,{\bf U}{\bf Z}) ({\bf H}_1,{\bf H}_2)} \leq (3\norm{\bf Z}_F^2 + \norm{{\bf A}}_F)\norm{{\bf H}_1}_F^2 + (3\norm{\bf U}_F^2 + \norm{{\bf A}}_F)\norm{{\bf H}_2}_F^2\label{eq:proof-l-smad}\,.
	 \end{align}
	 Now, considering the kernel generating distances, via Lemma~\ref{lem:l-smad-helper-1} and \ref{lem:l-smad-helper-2} we obtain
	 \begin{align*}
	 & \act{({\bf H}_1,{\bf H}_2), \nabla^2 h_1({\bf U},{\bf Z}) ({\bf H}_1,{\bf H}_2)} \\
	 &=  2\norm{{\bf H}_1{\bf U} + {\bf H}_2{\bf Z}}_F^2  + (\norm{\bf U}_F^2 +\norm{\bf Z}_F^2)\norm{{\bf H}_1}_F^2 +  (\norm{\bf U}_F^2 +\norm{\bf Z}_F^2)\norm{{\bf H}_2}_F^2\\
	 &\geq \norm{\bf Z}_F^2\norm{{\bf H}_1}_F^2 + \norm{\bf U}_F^2\norm{{\bf H}_2}_F^2\,,
	 \end{align*}
	 and
	  \begin{align*}
	  &\act{({\bf H}_1,{\bf H}_2), \nabla^2 h_2({\bf U},{\bf Z}) ({\bf H}_1,{\bf H}_2)} = \norm{{\bf H}_1}_F^2 +\norm{{\bf H}_2}_F^2\,.
	 \end{align*}
	Now, it is easy to see that
	  \[
	 \act{({\bf H}_1,{\bf H}_2), \nabla^2 h_a({\bf U},{\bf Z}) ({\bf H}_1,{\bf H}_2)} \geq \act{({\bf H}_1,{\bf H}_2), \nabla^2 g({\bf A},{\bf U}{\bf Z}) ({\bf H}_1,{\bf H}_2)}\,.
	 \]
	 A similar proof holds for the convexity of $Lh_a +g$, however the choice of $L$ here need not be the same as it is for $Lh_a - g$ (see \cite[Remark 2.1]{BSTV2018}).
	\end{proof}
	
\section{Additional Experiments and Implementation Details}\label{sec:additional-experiments}
\subsection{Double Backtracking Implementation}
This subsection where we provide certain crucial implementation details of CoCaIn BPG-MF algorithm, is largely based on \cite[Section 5.4]{MOPS2019}. Note that CoCaIn BPG-MF is a sequential algorithm in the sense one can compute $Y_{\bf U}^{{\bf k}},Y_{\bf Z}^{{\bf k}}$ first via the steps \eqref{eq:cocain-0}, \eqref{eq:cocain-1} and \eqref{eq:cocain-2}. Then, the updates can be done exactly like BPG-MF, where step-size depends on the parameter  ${\bar L}_k$ obtained via \eqref{eq:cocain-4}. In  \eqref{eq:cocain-2} it is required to find ${\underline L}_k$ such that the following holds 
	\begin{equation}\label{eq:cocain-temp-2}
		D_{g}\left({\bf U}^{{\bf k+1}},{\bf Z}^{{\bf k+1}},Y_{\bf U}^{{\bf k}},Y_{\bf Z}^{{\bf k}}\right) \geq -{\underline L}_k D_h\left({\bf U}^{{\bf k+1}},{\bf Z}^{{\bf k+1}},Y_{\bf U}^{{\bf k}},Y_{\bf Z}^{{\bf k}}\right)\,,
	\end{equation}
similarly in  \eqref{eq:cocain-4}  it is required to find ${\bar L}_k$ such that
	\begin{equation}\label{eq:cocain-temp-4}
		D_{g}\left({\bf U}^{{\bf k+1}},{\bf Z}^{{\bf k+1}},Y_{\bf U}^{{\bf k}},Y_{\bf Z}^{{\bf k}}\right) \leq {\bar L}_k D_{h}\left({\bf U}^{{\bf k+1}},{\bf Z}^{{\bf k+1}},Y_{\bf U}^{{\bf k}},Y_{\bf Z}^{{\bf k}}\right)\,.
	\end{equation}
The above mentioned steps can be solved via the classical backtracking strategy for ${\underline L}_k$  and ${\bar L}_k$ individually, hence the name "double backtracking". We describe the backtracking procedure for ${\underline L}_k$ and it is easy to extend to ${\bar L}_k$. The backtracking strategy involves a scaling parameter $\nu \geq 1$ and an initialization point ${\underline L}_{k,0}>0$ (preferably small) both chosen by the user and the parameter ${\underline L}_k$ is set to the smallest element from the set $\left\{ {\underline L}_{k,0},\nu {\underline L}_{k,0}, \nu^2 {\underline L}_{k,0},\ldots \right\}$ such that \eqref{eq:cocain-2} holds. For ${\bar L}_k$ one requires to use \eqref{eq:cocain-4} and also due to the additional restriction that ${\bar L}_k \geq {\bar L}_{k-1}$ in CoCaIn BPG-MF it is required to start the initialization  ${\bar L}_{k,0} = {\bar L}_{k-1}$.

\subsection{Non-negative Matrix Factorization}
We consider the same setting as the simple matrix factorization problem considered in \ref{sec:exps}, however we set $\mathcal U = \R^{M \times K}_{+} \text{ and } \mathcal Z = \R^{K \times N}_{+}$. We consider Medulloblastoma dataset \cite{BTGM2004} dataset with matrix $A \in \R^{5893 \times 34}$. As evident from Figure~\ref{fig:nmf-data} PALM based methods outpeform BPG methods here. This raises new open questions and  hints at potential variants of BPG which are better suited for constrained problems.  \ifpaper\else\\\fi
\begin{figure}[hbt!]
	\begin{subfigure}{0.32\textwidth}
		\centering
		\includegraphics[width=1\textwidth]{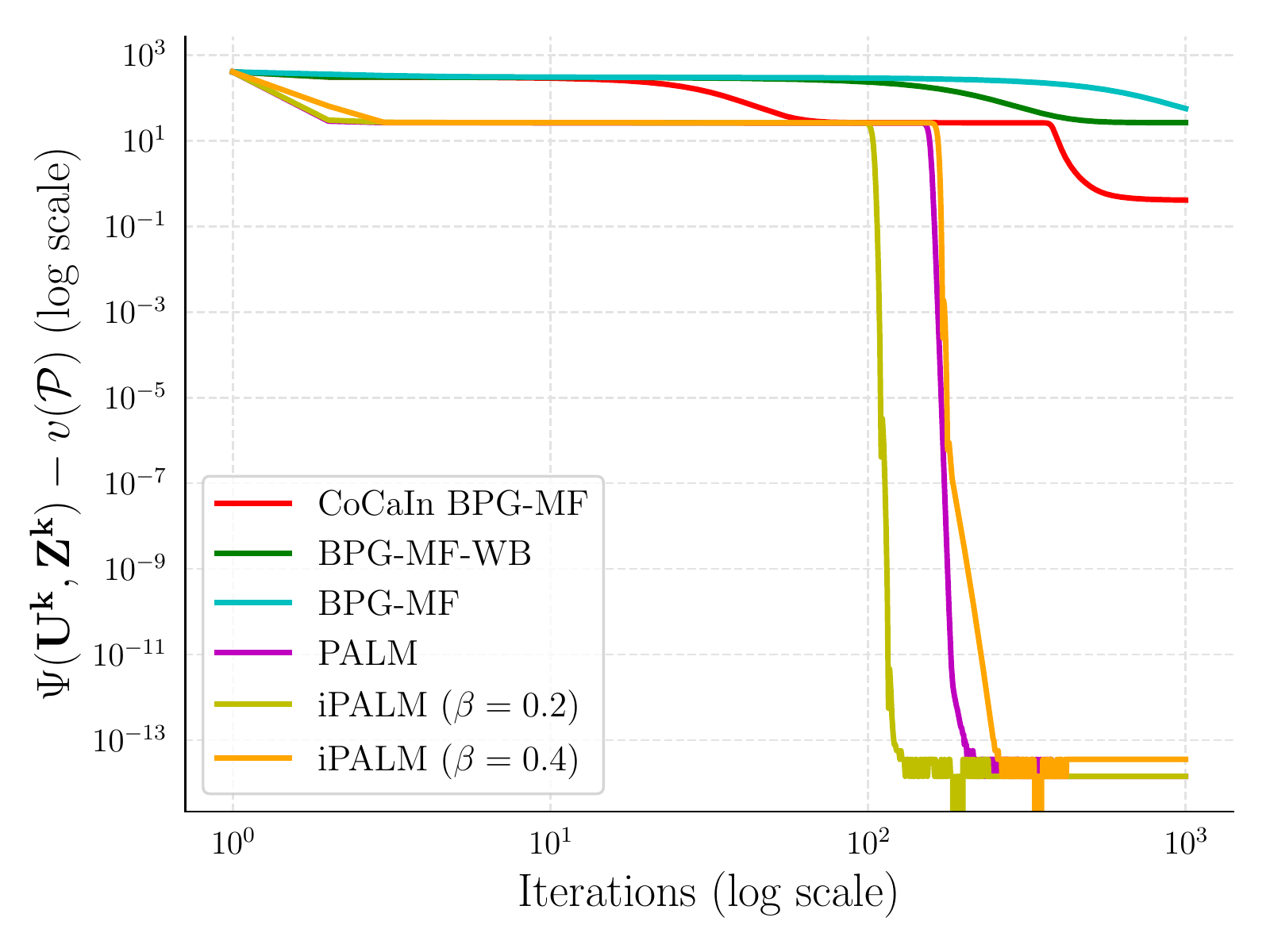}
		\caption{No-Regularization}
	\end{subfigure}
	\begin{subfigure}{0.32\textwidth}
		\centering
		\includegraphics[width=1\textwidth]{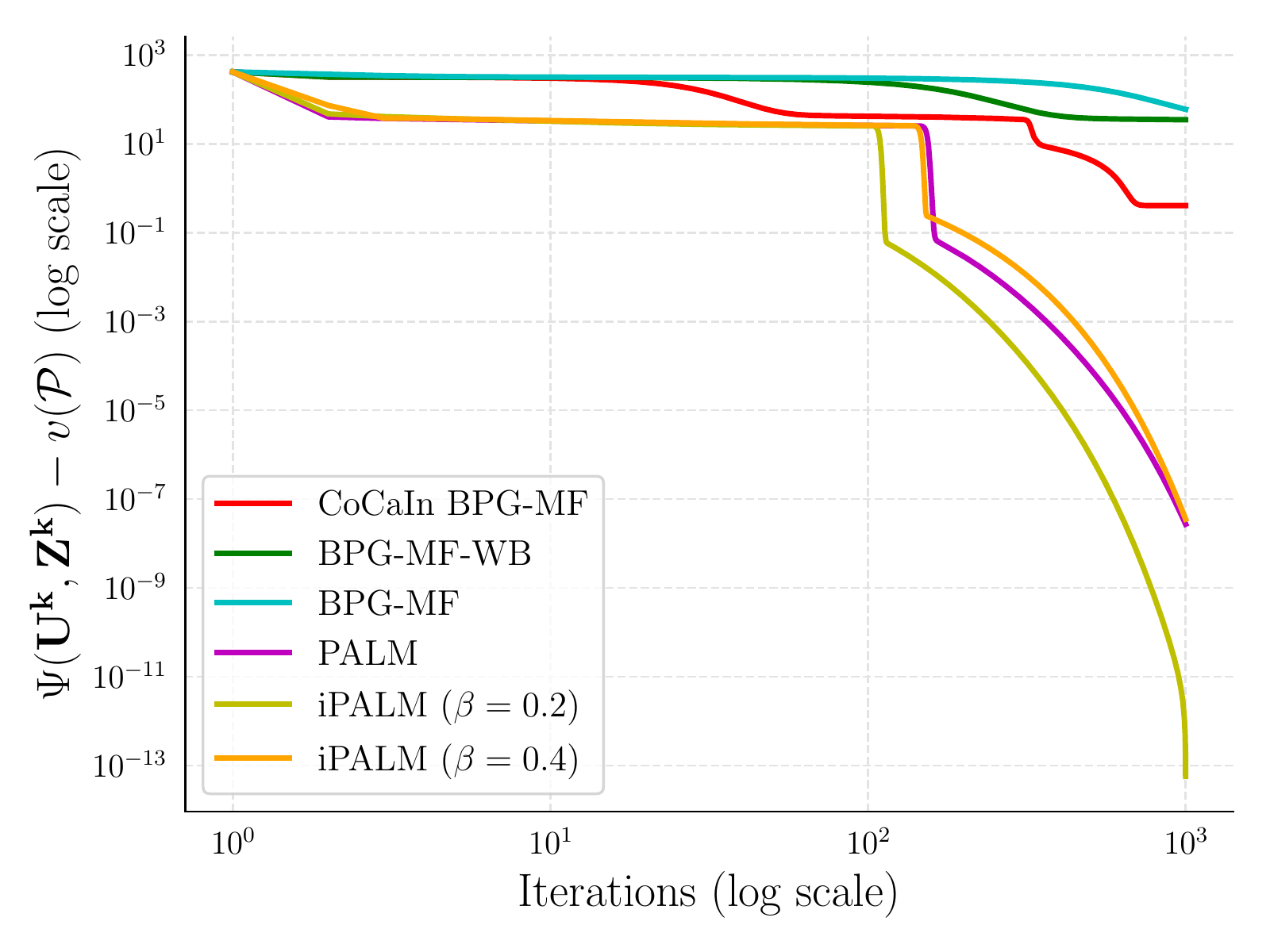}
		\caption{L2-Regularization}
	\end{subfigure}
	\begin{subfigure}{0.32\textwidth}
		\centering
		\includegraphics[width=1\textwidth]{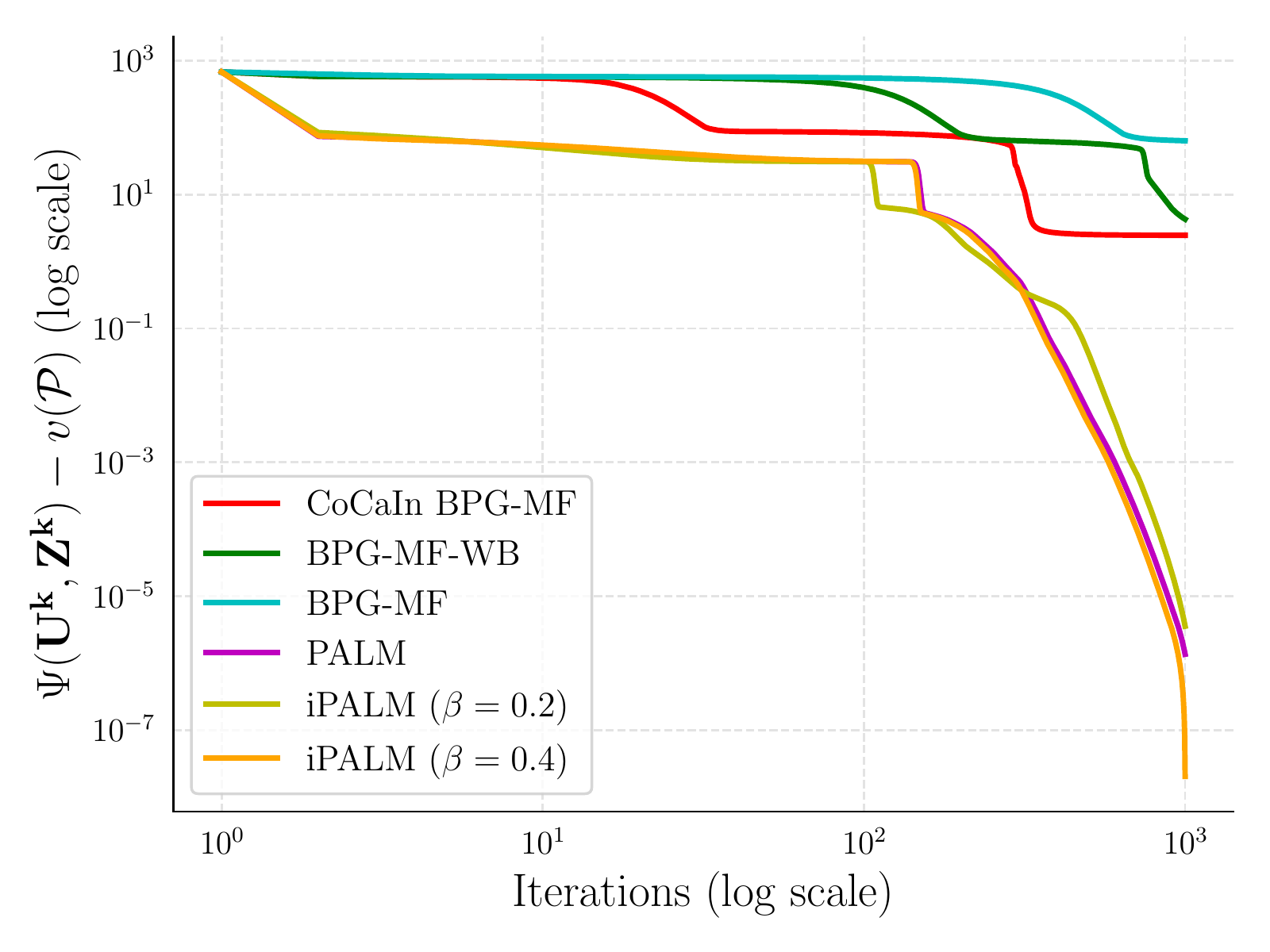}
		\caption{L1-Regularization}
	\end{subfigure}
	\caption{\textbf{Non-negative Matrix Factorization on  Medulloblastoma Dataset \cite{BTGM2004}.} }
	\label{fig:nmf-data}
\end{figure}

\subsection{Matrix Completion}

The MovieLens datasets are essentially a matrix $A \in \R^{M \times N}$, where $M$ denotes the number of users  and $N$ denotes the number of movies. Only a few non-zero entries are given and the entries denote the ratings which the user has provided for a particular movie. The ratings can take the value between 1 and 5, which we refer to as scale. The exact statistics of all the MovieLens datasets are given below.
\begin{center}
 \begin{tabular}{|c c c c c|}
 \hline
 \textbf{Dataset} & \textbf{Users} & \textbf{Movies} & \textbf{Non-zero entries} & \textbf{Scale} \\ [0.5ex] 
 \hline\hline
 MovieLens100K & 943 & 1682 & 100000 & 1-5 \\ 
 \hline
 MovieLens1M & 6040 & 3952 & 1000209 & 1-5\\
 \hline
 MovieLens10M & 71567 & 10681 & 10000054 & 1-5\\
 \hline
\end{tabular}
\end{center}
The plots provided for the matrix completion problem in Section~\ref{sec:exps} uses only 80\% of the data and we use the remaining 20\% as test data in order to obtain the generalization performance to unseen matrix entries with the resulting factors ${\bf U} \in \R^{M \times K}$ and ${\bf Z} \in \R^{K \times N}$ where we use $K=5$. The predicted rating to a particular $i \in \{1,2,\ldots, M\}$  and  $j \in \{1,2,\ldots,N\}$ is given by $({\bf UZ})_{ij}$. The test data is comprised of matrix indices with unseen entries and we denote this set of indices as $\Omega_T$. A popular measure for the test data is the Test RMSE, which is given by the following entity
\begin{align*}
\text{Test RMSE} = \sqrt{\frac{1}{|\Omega_T|} \sum_{i=1}^{M}\sum_{j=1}^{N} {\bf I}_{(i,j) \in \Omega_T}  \left({\bf A}_{ij} - ({\bf UZ})_{ij}\right)^2}
\end{align*}
where $|\Omega_T|$ denotes the cardinality of the set $\Omega_T$ and  ${\bf I}_{(i,j) \in \Omega_T} = 1$  if the index pair $(i,j)$ lies in the set $\Omega_T$ else it is zero. The Test RMSE comparisons for the MovieLens Dataset are given below in Figure~\ref{fig:test-rmse-plot}.
\begin{figure}[htb]
\begin{subfigure}{0.325\textwidth}
			\centering
			\includegraphics[width=0.9\textwidth]{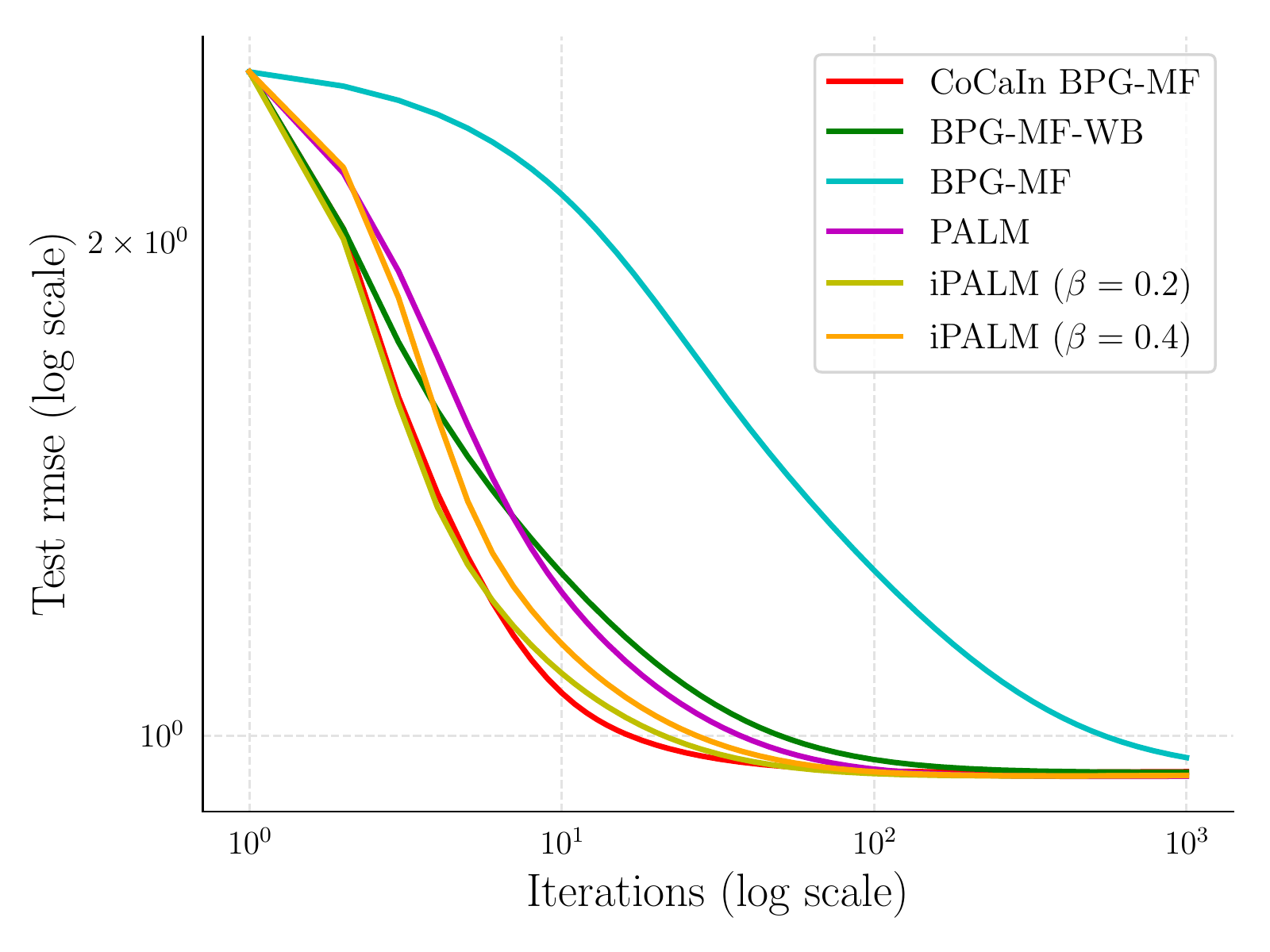}
			\caption{MovieLens-100K}
		\end{subfigure}
		\begin{subfigure}{0.325\textwidth}
			\centering
			\includegraphics[width=0.9\textwidth]{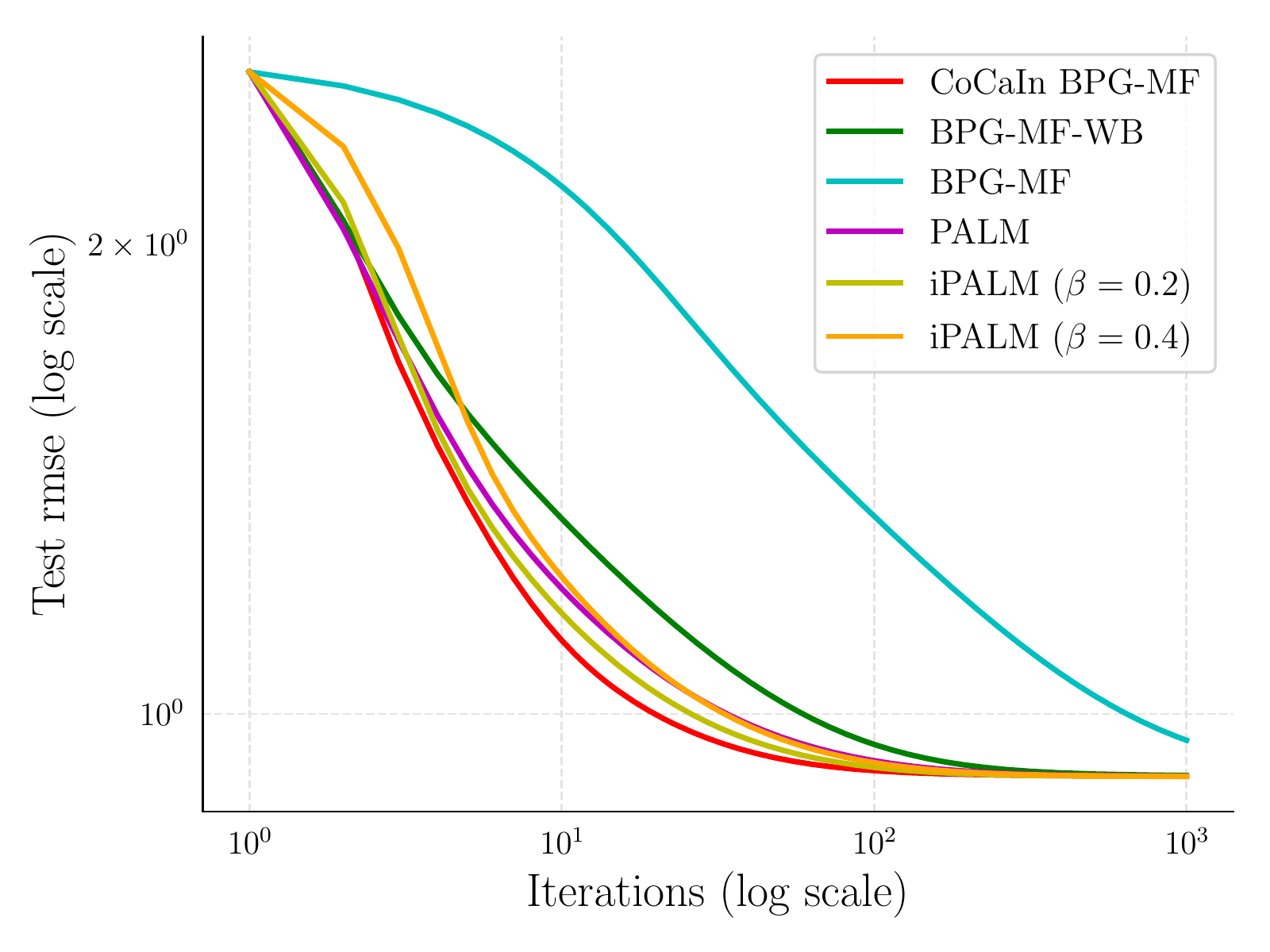}
			\caption{MovieLens-1M}
		\end{subfigure}
		\begin{subfigure}{0.325\textwidth}
			\centering
			\includegraphics[width=0.9\textwidth]{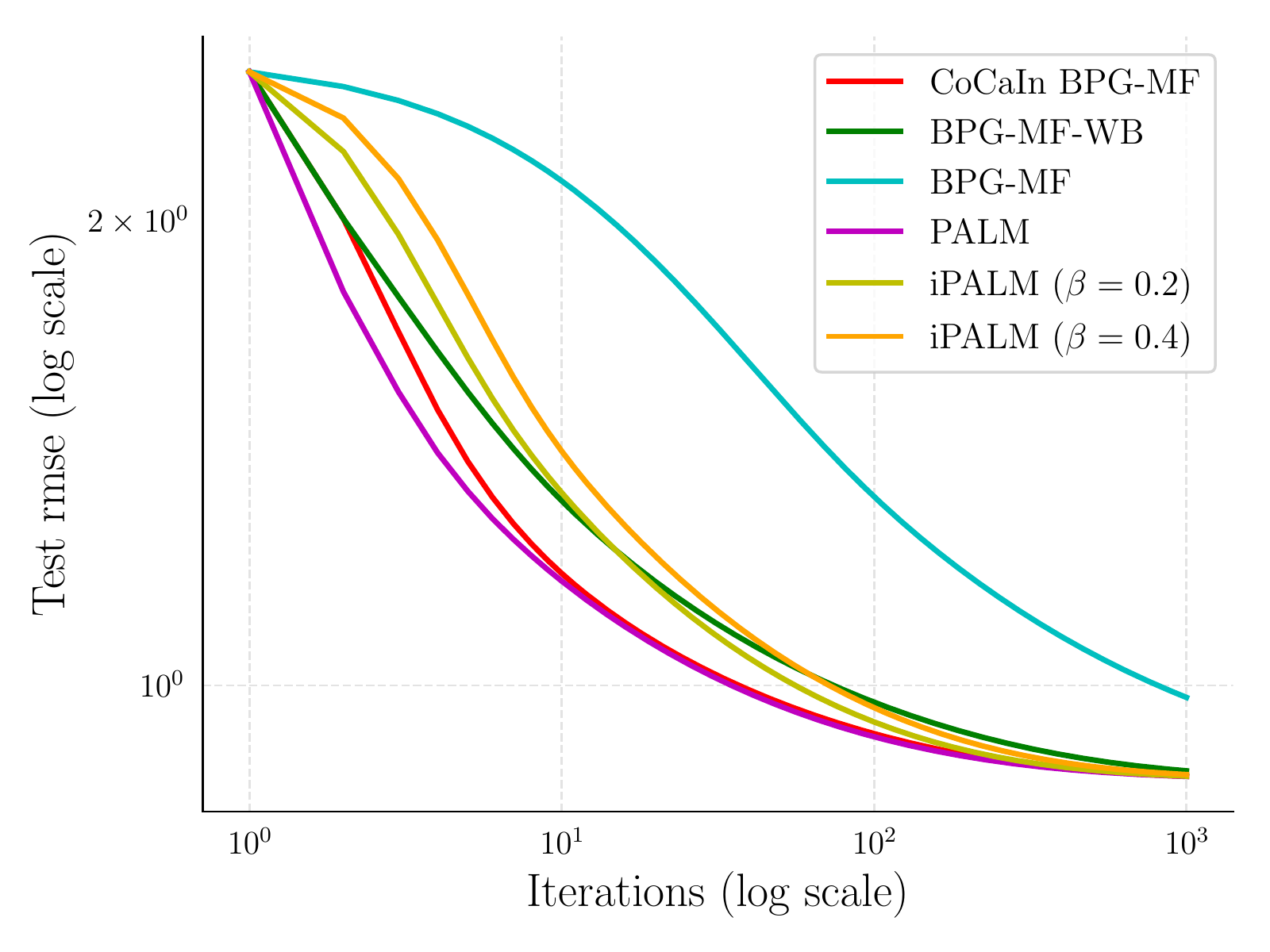}
			\caption{MovieLens-10M}
		\end{subfigure}
		\caption{\textbf{Test RMSE plot on MovieLens Datasets \cite{HK2016}.}}
		\label{fig:test-rmse-plot}
\end{figure}\\
The above given figures show that the proposed methods BPG-MF-WB and CoCaIn BPG-MF are competitive to PALM and iPALM. BPG-MF is slow in the beginning, however it is competitive to other methods towards the end.
\subsection{Time Comparisons}
We provide time comparisons in Figures~\ref{fig:time-synthetic}, \ref{fig:time-nmf}, \ref{fig:time-movielens} for all the experimental settings mentioned in Section~\ref{sec:exps}, where we mention the dataset in the caption. Since, we used logarithmic scaling, we used an offset of $10^{-2}$ for all algorithms for better visualization.\ifpaper\else\\\fi

\begin{figure}[htb!]
		\centering
		\begin{subfigure}{0.32\textwidth}
			\centering
			\includegraphics[width=1\textwidth]{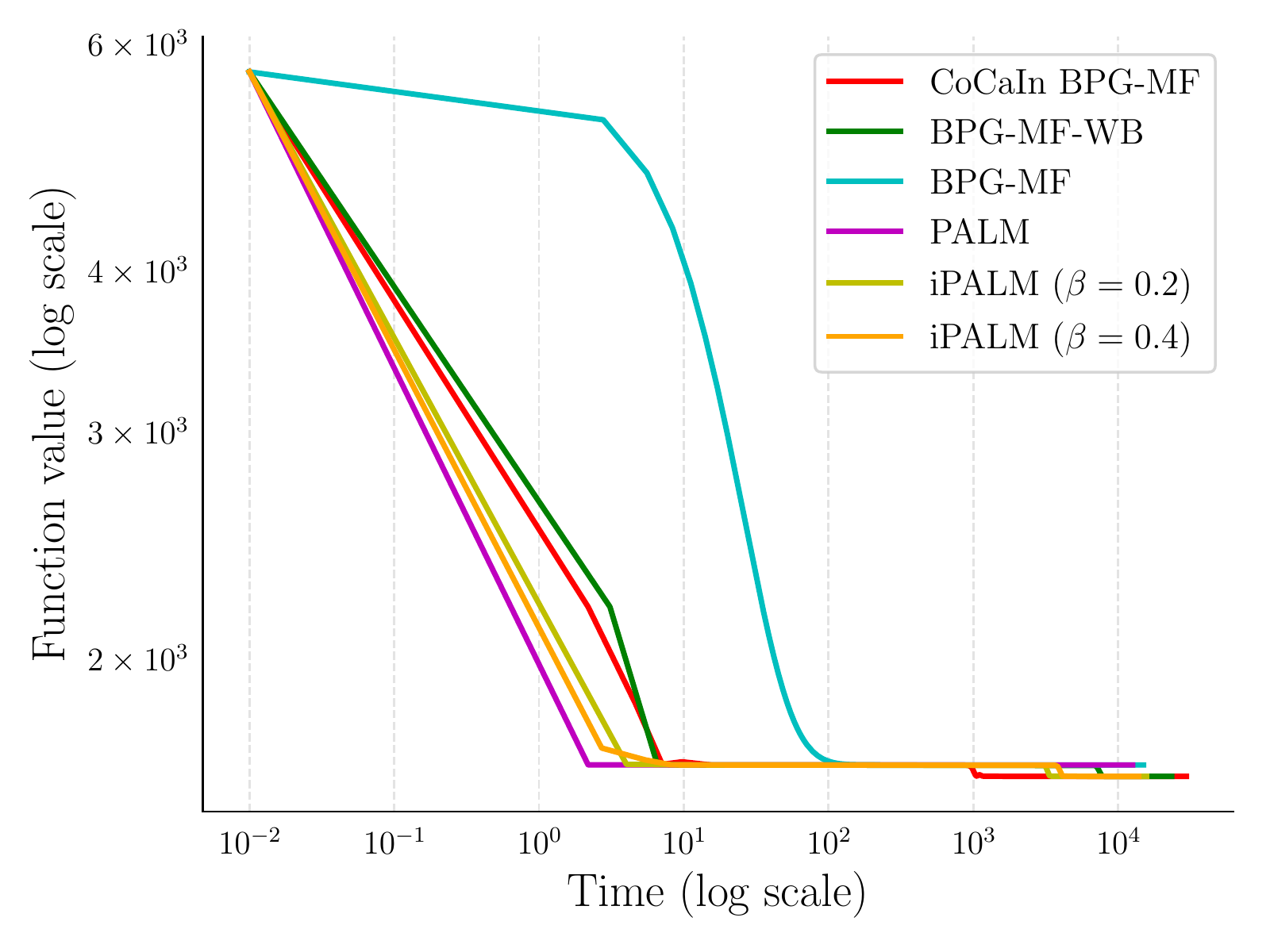}
			\caption{No Regularization}
		\end{subfigure}
		\begin{subfigure}{0.32\textwidth}
			\centering
			\includegraphics[width=1\textwidth]{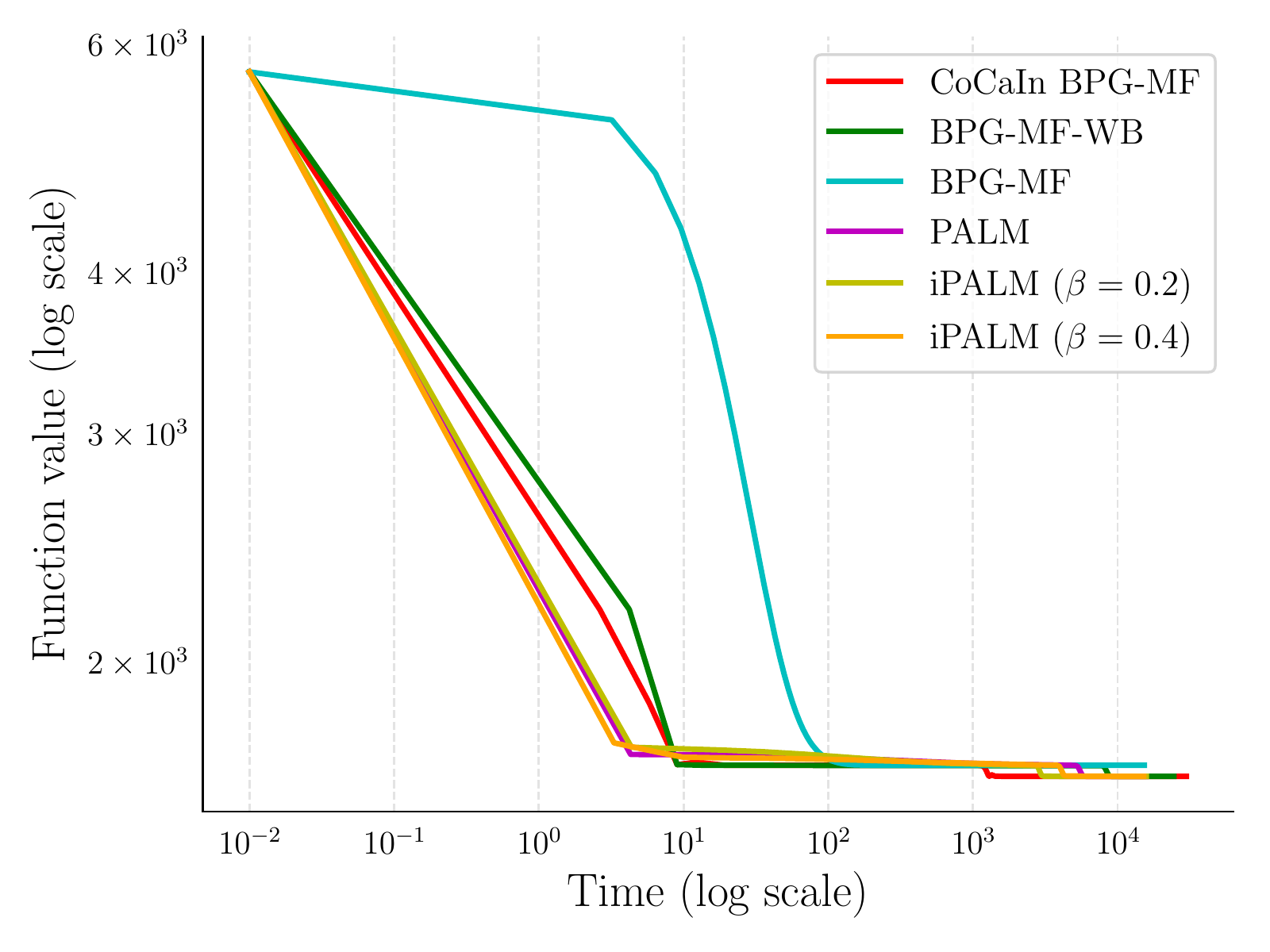}
			\caption{L2-Regularization}
		\end{subfigure}
		\begin{subfigure}{0.32\textwidth}
			\centering
			\includegraphics[width=1\textwidth]{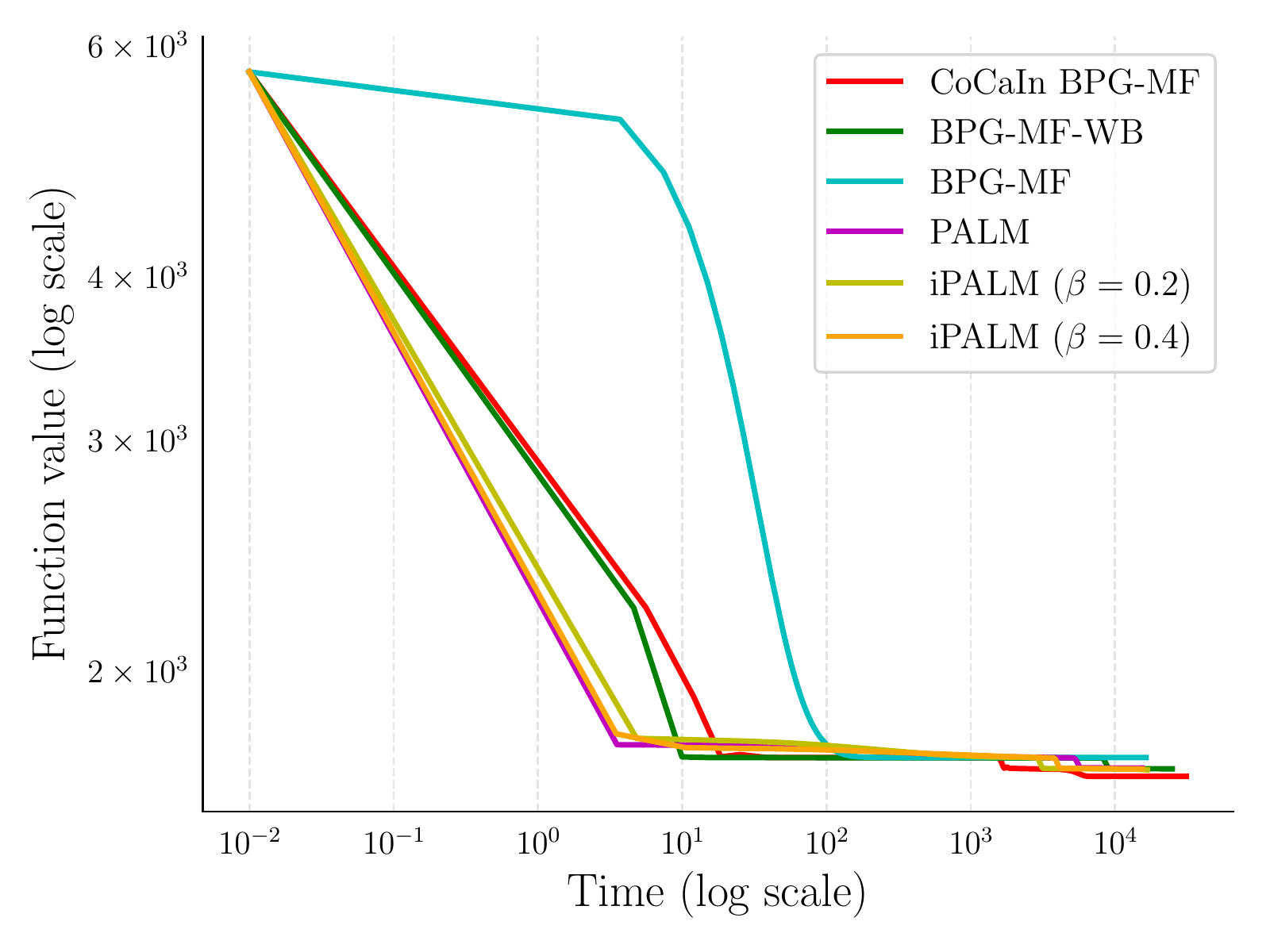}
			\caption{L1-Regularization}
		\end{subfigure}
		\caption{\textbf{Time plots for Simple Matrix Factorization on Synthetic Dataset.}}
		\label{fig:time-synthetic}
\end{figure}
\begin{figure}[htb!]
		\begin{subfigure}{0.32\textwidth}
			\centering
			\includegraphics[width=1\textwidth]{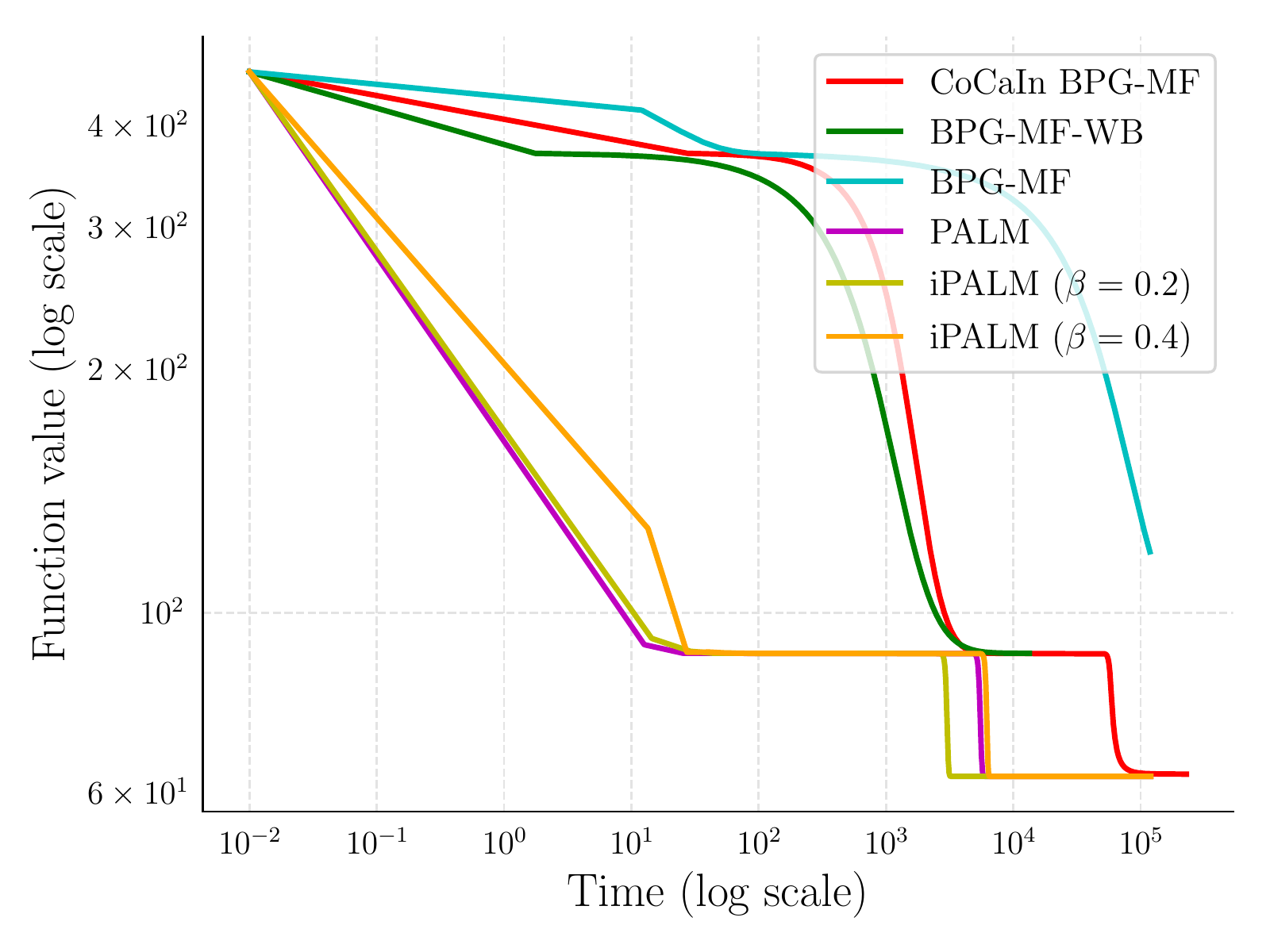}
			\caption{No-Regularization}
		\end{subfigure}
		\begin{subfigure}{0.32\textwidth}
			\centering
			\includegraphics[width=1\textwidth]{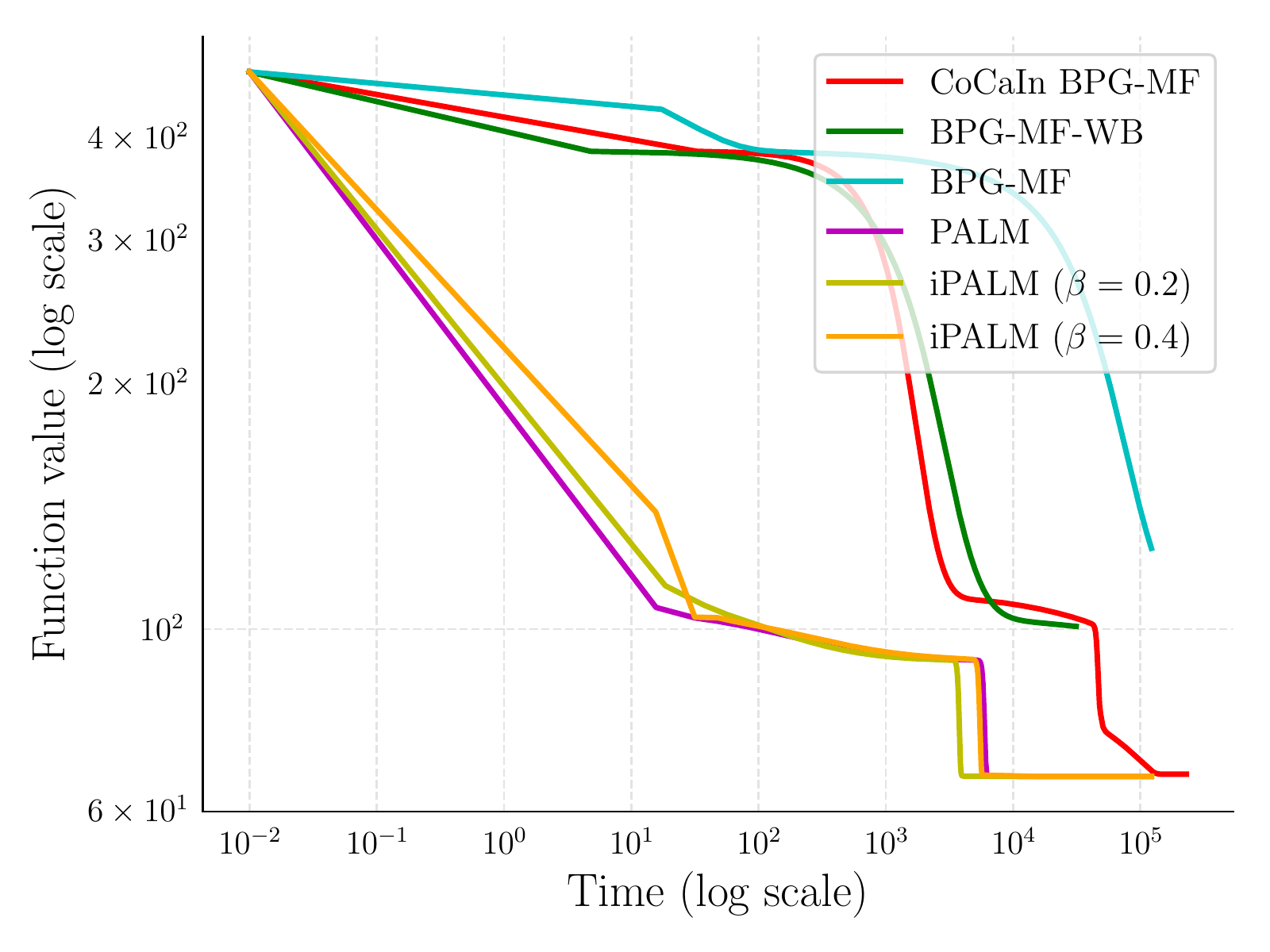}
			\caption{L2-Regularization}
		\end{subfigure}
		\begin{subfigure}{0.32\textwidth}
			\centering
			\includegraphics[width=1\textwidth]{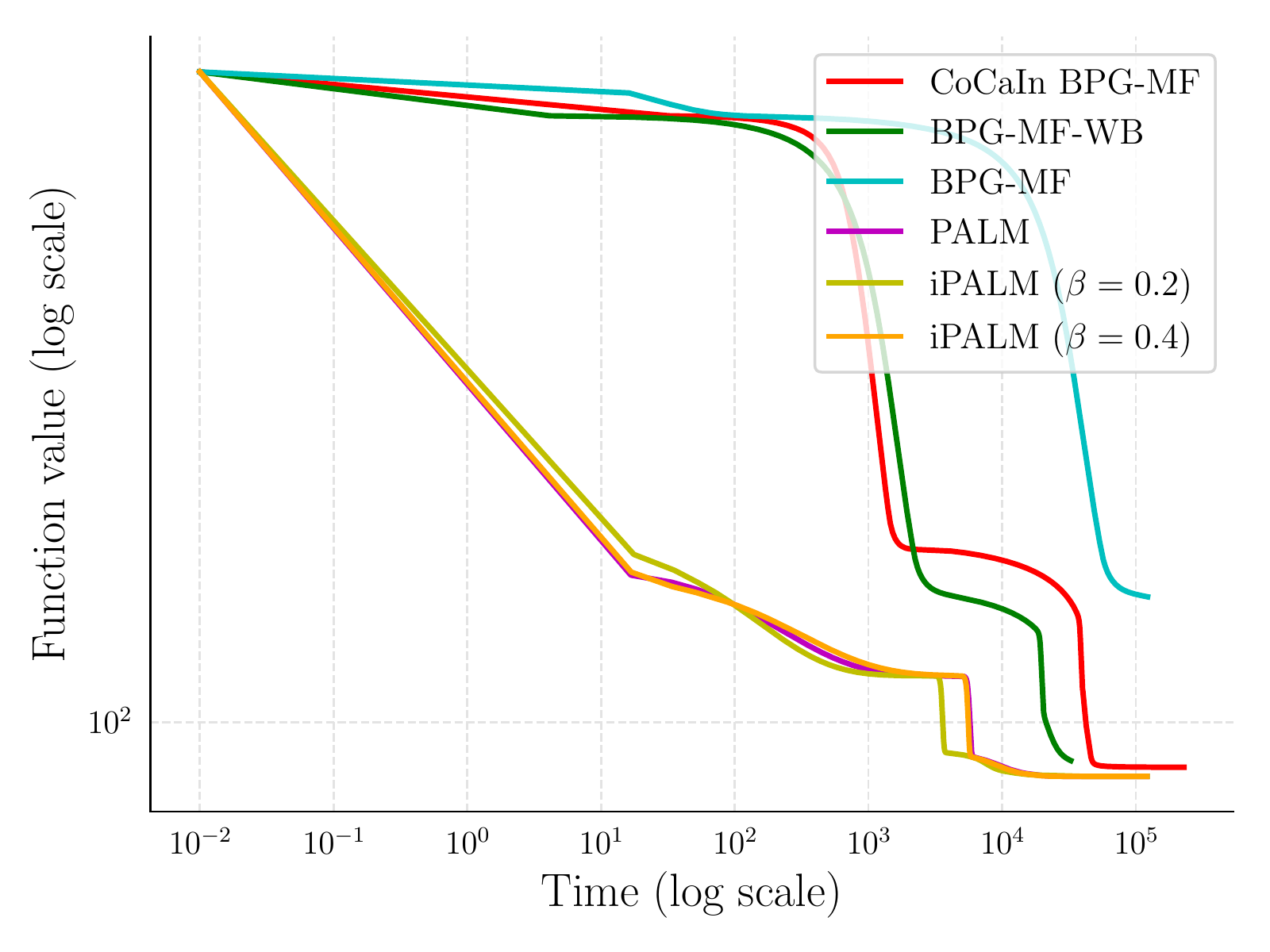}
			\caption{L1-Regularization}
		\end{subfigure}
		\caption{\textbf{Time plots for Non-negative Matrix Factorization on  Medulloblastoma dataset \cite{BTGM2004}.} }
		\label{fig:time-nmf}
\end{figure}
\begin{figure}[htb!]
		\begin{subfigure}{0.325\textwidth}
			\centering
			\includegraphics[width=0.9\textwidth]{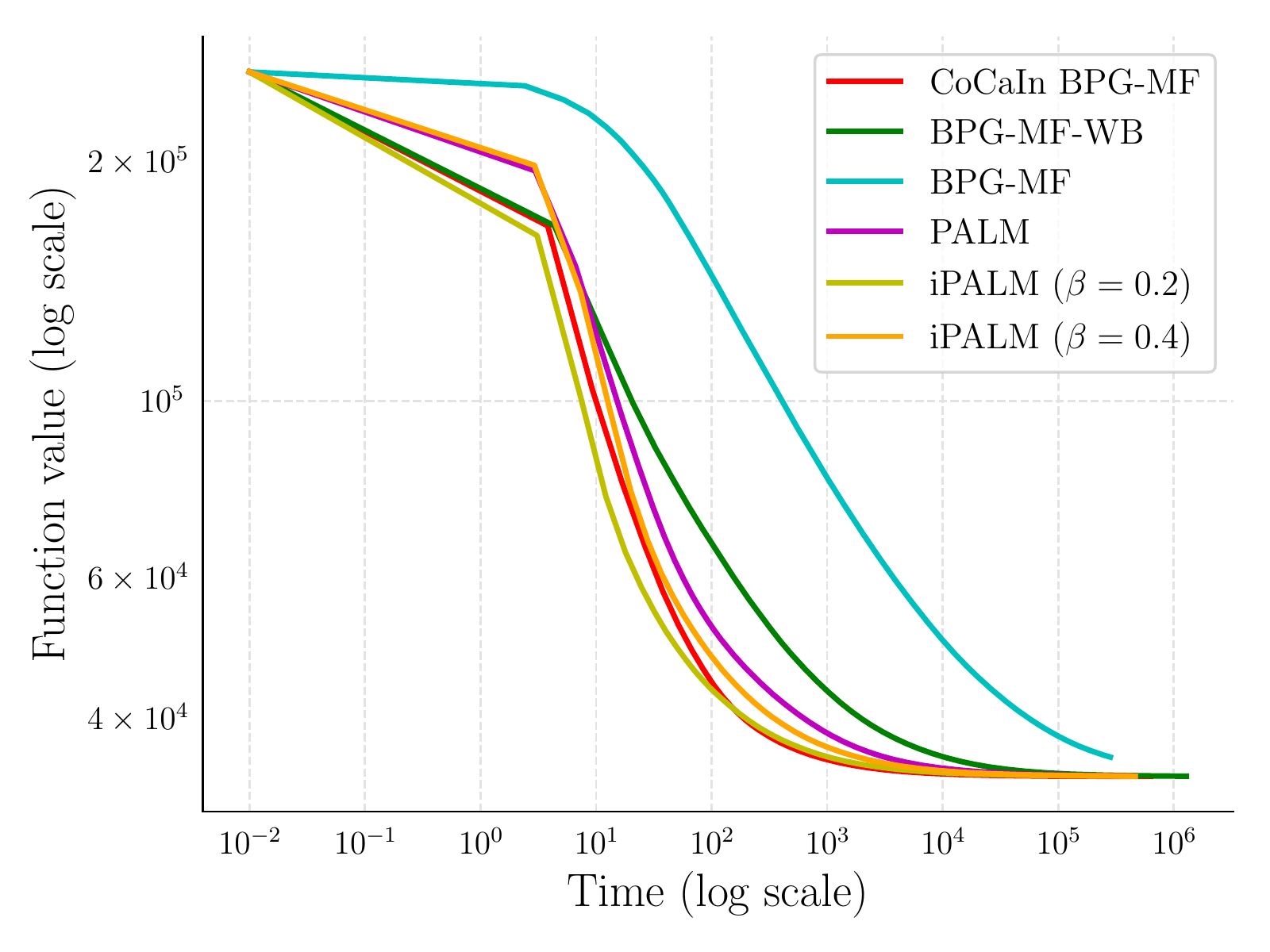}
			\caption{MovieLens-100K}
		\end{subfigure}
		\begin{subfigure}{0.325\textwidth}
			\centering
			\includegraphics[width=0.9\textwidth]{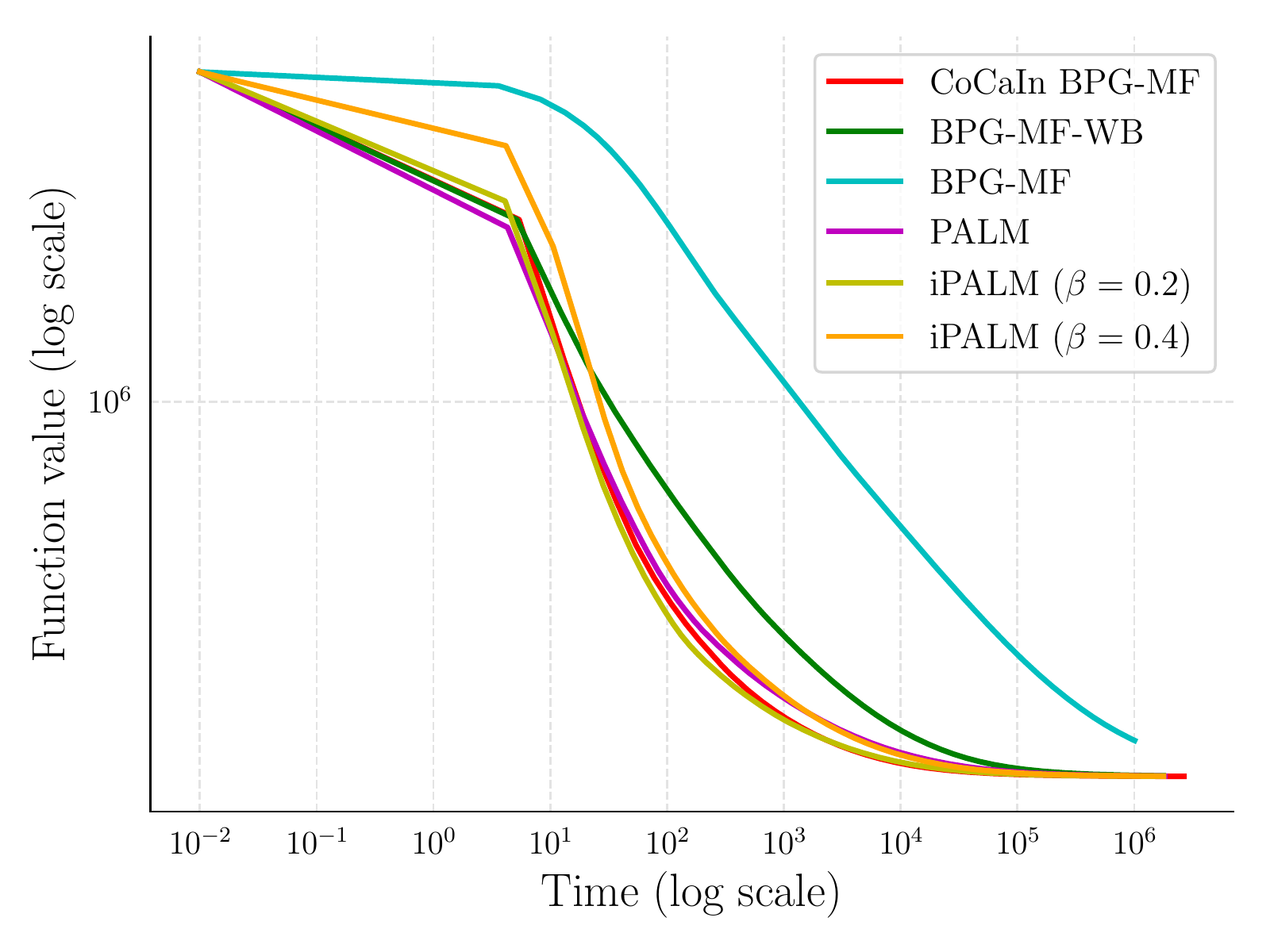}
			\caption{MovieLens-1M}
		\end{subfigure}
		\begin{subfigure}{0.325\textwidth}
			\centering
			\includegraphics[width=0.9\textwidth]{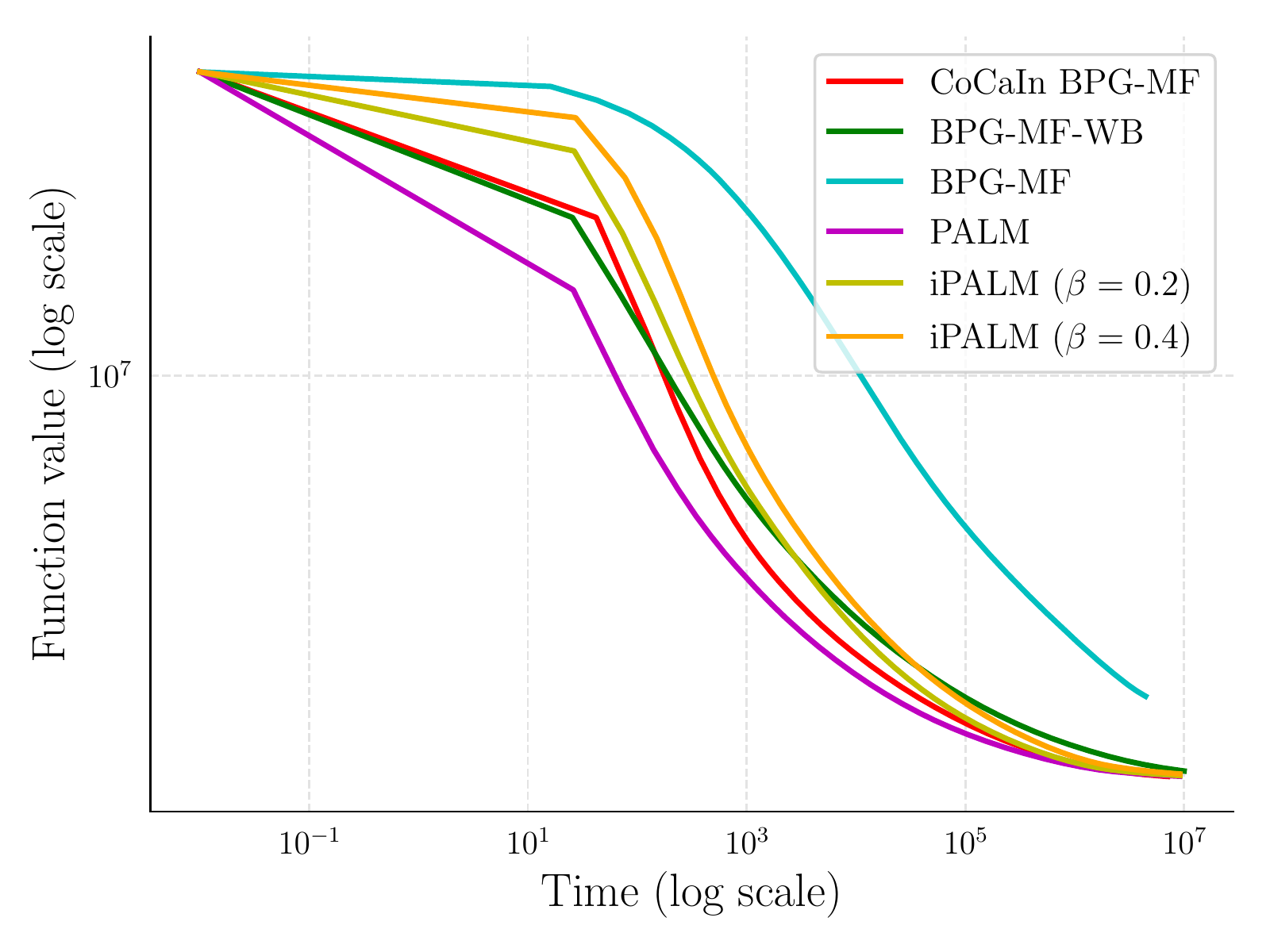}
			\caption{MovieLens-10M}
		\end{subfigure}
		\caption{\textbf{Time plots for Matrix Completion on MovieLens Datasets \cite{HK2016}.} }
		\label{fig:time-movielens}
	\end{figure}
 As evident from the plots, the proposed variants BPG-MF-WB and CoCaIn BPG-MF are competitive that PALM and iPALM. And, BPG-MF is mostly slow,  due to constant step-size, which can be potentially helpful when backtracking is computationally expensive.\ifpaper\else\\\fi

\ifpaper
\else
\bibliographystyle{plain}
\bibliography{notes}
\fi
\end{document}